\DeclareMathOperator{\Ver}{\mathrm{Ver}}
\DeclareMathOperator{\Tilt}{\mathbf{Tilt}}
\DeclareMathOperator{\FPdim}{\mathrm{FPdim}}
\theoremstyle{definition}
\newtheorem{Definition}{Definition}[section]
\theoremstyle{plain}
\newtheorem{Theorem}[Definition]{Theorem}
\theoremstyle{plain}
\newcounter{mainthm}
\theoremstyle{plain}
\newtheorem{Proposition}[Definition]{Proposition}
\theoremstyle{plain}
\newtheorem{Lemma}[Definition]{Lemma}
\theoremstyle{plain}
\newtheorem{Corollary}[Definition]{Corollary}
\theoremstyle{plain}
\theoremstyle{definition}
\newtheorem{Question}[Definition]{Question}
\theoremstyle{plain}
\theoremstyle{definition}
\theoremstyle{definition}
\newtheorem{Example}[Definition]{Example}
\theoremstyle{definition}
\theoremstyle{remark}
\newtheorem{Remark}[Definition]{Remark}
\title{Higher Verlinde Categories:\ The Mixed Case}
\author{Thibault D. Décoppet}
\date{July 2024}
\begin{document}

\bibliographystyle{alpha}

\maketitle
    \hspace{1cm}
    \begin{abstract}
    Over a field of characteristic $p>0$, the higher Verlinde categories $\Ver^{\sigma}_{p^n}$ are obtained by taking the abelian envelope of quotients of the category of tilting modules for the algebraic group $\mathrm{SL}_2$. These symmetric tensor categories have been introduced in \cite{BEO, C:monoidal}, and their properties have been extensively studied in the former reference. In \cite{STWZ}, the above construction for $\mathrm{SL}_2$ has been generalized to Lusztig's quantum group for $\mathfrak{sl}_2$ and root of unity $\zeta$, which produces the mixed higher Verlinde categories $\Ver^{\zeta}_{p^{(n)}}$. Inspired by the results of \cite{BEO}, we study the properties of these braided tensor categories in detail. In particular, we establish a Steinberg tensor product formula for the simple objects of $\Ver^{\zeta}_{p^{(n)}}$, construct a braided embedding $\Ver^{\sigma}_{p^n}\hookrightarrow \Ver^{\zeta}_{p^{(n+1)}}$, identify the symmetric center of $\Ver^{\zeta}_{p^{(n)}}$ with $\Ver^{\sigma}_{p^n}$ or $\Ver^{\sigma,+}_{p^n}$ depending on the order of $\zeta$, and determine the Grothendieck ring of $\Ver^{\zeta}_{p^{(n)}}$.
\end{abstract}

\tableofcontents

\section*{Introduction}

Recently, there has been a lot of progress in our understanding of symmetric tensor categories over a field of positive characteristic. The main direction of study consists in developing a positive characteristic analogue of Deligne's theorem:\ In characteristic zero, a symmetric tensor category admits a fibre functor to $\mathrm{sVec}$, the symmetric tensor category of super vector spaces, if and only if it has moderate growth \cite{De}. In positive characteristic, a characterization of tensor categories admitting a fibre functor to $\mathrm{sVec}$ was obtained in \cite{C:fibre}. In contrast to the characteristic zero case, there exists finite semisimple symmetric tensor categories that do not admit such a fibre functor \cite{GK, GM, O4}. Over a field of characteristic $p\geq 5$, the most fundamental example is the Verlinde categories $\Ver_p$. In \cite{CEO1}, it was established that a tensor category admits a fibre functor to $\Ver_p$ if and only if it has moderate growth and is Frobenius exact (see also the related earlier work \cite{EO}). However, there is currently no result covering all tensor categories of moderate growth in positive characteristic (see, for instance, \cite{CEO2, C:polynomial, CF} for work in this direction). Namely, nested sequences of finite symmetric tensor categories $\Ver_{p^n}$ were constructed in \cite{BE} for $p=2$ and in \cite{BEO} for an arbitrary prime $p$ (see also \cite{C:monoidal}). These symmetric tensor categories are incompressible, in the sense that any braided tensor functor out of them is an embedding, so that $\Ver_{p^n}$ does not admit a fibre functor to $\Ver_{p^m}$ when $m<n$. As a consequence, the symmetric tensor categories $\Ver_{p^n}$ occupy a fundamental role in the theory of tensor categories in positive characteristic analogous to that played by $\mathrm{Vec}$ and $\mathrm{sVec}$ in characteristic zero. It is therefore particularly compelling to investigate algebraic structures defined in the categories $\Ver_{p^n}$. For instance, commutative algebras, and schemes in $\Ver_{p^n}$ were studied in \cite{C:commutative}, and \cite{C:groupscheme} respectively.

The symmetric (higher) Verlinde categories $\Ver_{p^n}$ are obtained by taking the abelian envelope of quotients of the category of tilting modules for the algebraic group $\mathrm{SL}_2$. In particular, a number of their properties are analogous to that of the category of tilting modules for the algebraic group $\mathrm{SL}_2$ as was shown in \cite{BEO}. For instance, the Frobenius functor $\mathbb{F}$, which generalizes the classical Frobenius twist for representations of an algebraic group, induces an embedding $\Ver_{p^n}\hookrightarrow \Ver_{p^{n+1}}$. Furthermore, the simple objects of $\Ver_{p^n}$ satisfy a version of Steinberg's tensor product theorem. In a slightly different direction, the symmetric tensor categories $\Ver_{p^n}$ admit lifts to characteristic zero given by the semisimple braided Verlinde categories constructed from the categories of tilting modules for quantum $\mathfrak{sl}_2$ at appropriate roots of unity. Now, over a field of characteristic $p>0$, the braided (and in fact ribbon) category of tilting modules for quantum $\mathfrak{sl}_2$ at an arbitrary root of unity $\zeta$ was extensively studied in \cite{STWZ}. Generalizing the above construction of the symmetric Verlinde categories, the authors introduced the finite ribbon tensor categories $\Ver^{\zeta}_{p^{(n)}}$, which we refer to as the mixed (higher) Verlinde categories. Our main contribution, which is presented in more detail below, is an analysis of the features of the mixed Verlinde categories. As expected the properties of these categories are reminiscent of both those of the symmetric Verlinde categories, but also of the categories of tilting modules for quantum $\mathfrak{sl}_2$ as studied for instance in \cite{AK}. For instance, assuming for simplicity that $\zeta$ has odd order, we construct an analogue of the quantum Frobenius-Lusztig twist in the form of a braided tensor functor $\Ver_{p^n}\hookrightarrow \Ver^{\zeta}_{p^{(n+1)}}$. Further, we obtain a counterpart to Steinberg's tensor product theorem. Finally, we also identify the symmetric center of the braided tensor category $\Ver^{\zeta}_{p^{(n+1)}}$ with $\Ver_{p^n}$. This last result ought to be compared to the similar characteristic zero statements obtained in \cite{Neg2} for the categories of representations of an arbitrary quantum group at a root of unity.

In characteristic zero, the semisimple Verlinde categories have found many applications to low-dimensional topology. More precisely, at even roots of unity, these categories are so-called semisimple modular tensor categories, and may therefore be used as input for the Reshetikhin-Turaev and Crane-Yetter constructions, thereby producing $3$- and $4$-dimensional TQFTs. As already hinted at in \cite{STWZ}, we suspect that the mixed higher Verlinde categories will find similar applications to low dimensional topology. More precisely, using the language of \cite{CGHPM}, we expect that the finite ribbon tensor categories $\Ver_{p^{(n)}}^{\zeta}$ are chromatic non-degenerate and therefore yield new non-compact $4$-dimensional TQFTs. Along different lines, which are related through the cobordism hypothesis, it would be interesting to study the higher algebraic properties of braided tensor categories with symmetric center $\Ver_{p^n}$. In characteristic zero, braided fusion categories were extensively studied in \cite{DGNO}. Thanks to Deligne's theorem, the general theory can in large part be reduced to studying braided fusion categories whose symmetric center is either $\mathrm{Vec}$ or $\mathrm{sVec}$. One of the main objects of interest in this direction, which was introduced in \cite{DMNO}, is the quantum Witt group $\mathcal{W}itt(\mathrm{Vec})$ associated to braided fusion categories with symmetric center $\mathrm{Vec}$. This construction can be generalized to any symmetric fusion category \cite{DNO}, and, in particular, there is a super-variant $\mathcal{W}itt(\mathrm{sVec})$, which is also of special interest. In fact, the semisimple Verlinde categories yield many interesting classes in the groups $\mathcal{W}itt(\mathrm{Vec})$ and $\mathcal{W}itt(\mathrm{sVec})$. In characteristic $p>0$, this suggests that the finite braided tensor categories whose symmetric center is $\Ver_{p^n}$ as well as the corresponding groups $\mathcal{W}itt(\Ver_{p^n})$ deserve particular attention. The mixed Verlinde categories $\Ver_{p^{(n+1)}}^{\zeta}$ are examples of such finite braided tensor categories thanks to our results, and we will argue that they give non-trivial classes in the appropriate quantum Witt groups. We will return to this general line of investigation in the future.

\subsection*{Results}

Let $\mathbbm{k}$ be an algebraically closed field of characteristic $p$, and let $\zeta$ be a root of unity in $\mathbbm{k}$. Let $\Tilt^{\zeta}$ be the category of tilting modules for the algebraic group $\mathrm{SL}_2$ if $\zeta = \pm 1$ or for Lusztig's divided power quantum group for $\mathfrak{sl}_2$ if $\zeta\neq 1$. The category $\Tilt^{\zeta}$ is monoidal, and admits a canonical spherical structure. This last fact also manifests itself in that $\Tilt^{\zeta}$ is the Cauchy completion of the Temperley-Lieb category with circle evaluating to $-(\zeta+\zeta^{-1})$. We will exploit this relation in the form of the universal property of the Temperley-Lieb category. It is well-known that a choice of square root $\zeta^{1/2}$ for $\zeta$ in $\mathbbm{k}$ allows us to endow $\Tilt^{\zeta}$ with a braiding. We denote the corresponding braided monoidal category by $\Tilt^{\zeta^{1/2}}$, which is in fact a ribbon monoidal category. Now, as was shown in \cite{STWZ}, the monoidal category $\Tilt^{\zeta}$ contains a sequence $\mathbf{J}_{p^{(n)}}$ of thick tensor ideals. Generalizing \cite{BEO}, the finite tensor category $\Ver_{p^{(n)}}^{\zeta}$ is the abelianization of the quotient $\Tilt^{\zeta}/\mathbf{J}_{p^{(n)}}$. In particular, it inherits a spherical structure. Further, upon choosing $\zeta^{1/2}$, it inherits a compatible braiding, and we write $\Ver_{p^{(n)}}^{\zeta^{1/2}}$ for the corresponding braided (in fact ribbon) tensor category. With $\zeta^{1/2}=-1$, this is exactly the finite symmetric tensor category $\Ver_{p^{n}}$ considered in \cite{BEO} equipped, in addition, with a canonical ribbon structure. Slightly more generally, it is worth pointing out that, with $\zeta = \pm 1$, $\Ver_{p^n}$ and $\Ver_{p^{(n)}}^{\zeta}$ are equivalent as plain tensor categories. We will also consider the tensor subcategory $\Ver_{p^{(n)}}^{\zeta,+}\subset\Ver_{p^{(n)}}^{\zeta}$ generated by the tilting modules with even highest weight.

Our first main result is that the canonical faithful functor $F:\Tilt^{\zeta}/\mathbf{J}_{p^{(n)}}\hookrightarrow \Ver_{p^{(n)}}^{\zeta}$ is full. Thanks to the general theory of abelian envelopes (see, for instance, \cite{BEO}), it then follows that $\Ver_{p^{(n)}}^{\zeta}$ is in fact the abelian envelope of $\Tilt^{\zeta}/\mathbf{J}_{p^{(n)}}$, that is, the functor $F$ is universal amongst faithful monoidal functors from $\Tilt^{\zeta}/\mathbf{J}_{p^{(n)}}$ to a tensor category. The usefulness of this result stems for the fact that it allows us to reduce many questions about the tensor category $\Ver_{p^{(n)}}^{\zeta}$ to generally much easier questions about $\Tilt^{\zeta}$. We wish to point out that the proof proceeds by showing that indecomposable tilting modules for quantum $\mathfrak{sl}_2$ have simple socle.

With the above result at our disposal, we go on to study the finite tensor categories $\Ver_{p^{(n)}}^{\zeta}$. Their main properties are summarized in the following omnibus theorem, which should be compared with \cite[Theorem 1.3]{BEO}. Throughout, we use the following notations, $N$ denotes the order of $\zeta$ in $\mathbbm{k}^{\times}$, $\ell=N$ if $N>2$ is odd, $\ell=N/2$ if $N > 2$ is even, and $\ell = p$ if $N\leq 2$, and $p^{(n)}=\ell p^{n-1}$ for $n\geq 1$ and $p^{(0)}=1$. Further, we define roots of unity $\sigma = (-1)^{\ell + N}$ and $\sigma^{1/2} = \zeta^{(\ell-2)\ell/2}$ in $\mathbbm{k}$.

\begin{Theorem}
The finite tensor categories $\Ver_{p^{(n)}}^{\zeta,+}\subset\Ver_{p^{(n)}}^{\zeta}$ satisfy the following properties:
\begin{enumerate}
    \item The category $\Ver_{p^{(1)}}^{\zeta}$ is semisimple. For $n\geq 2$, $\Ver_{p^{(n)}}^{\zeta,+}$ and $\Ver_{p^{(n)}}^{\zeta}$ are not semisimple.
    \item If $p>2$, the tensor category $\Ver_{p^{(n)}}^{\zeta}$ contains a unique non-trivial invertible object. Furthermore, if, in addition, $\ell$ is odd, then the (spherical) tensor category $\Ver_{p^{(n)}}^{\zeta}$ decomposes as $\Ver_{p^{(n)}}^{\zeta,+}\boxtimes\, \mathrm{Vec}^{\sigma}(\mathbb{Z}/2)$.
    \item Write $Q_k(x)$ for the $(k-1)$-th Chebyshev polynomial of the second kind, that is the integer polynomial with roots $2\cos(j\pi/k)$, for $j=1,...,k-1$. Then, the Grothendieck ring of $\Ver_{p^{(n)}}^{\zeta}$ is isomorphic to $\mathbb{Z}[x]/\big(Q_{p^{(n)}}/Q_{p^{(n-1)}}(x)\big)$. The Frobenius-Perron dimension provides a surjective homomorphism from the Grothendieck ring of $\Ver_{p^{(n)}}^{\zeta}$ onto $\mathbb{Z}[2\cos(\pi/p^{(n)})]$.
    \item The Frobenius-Perron dimension of $\Ver_{p^{(n)}}^{\zeta}$ is $p^{(n)}/(2\sin(\pi/p^{(n)}))$.
    \item The tensor categories $\Ver_{p^{(n)}}^{\zeta,+}$ with $p^{(n)}$ odd are incompressible. With $p>2$, the braided tensor categories $\Ver_{p^{(n)}}^{\zeta^{1/2}}$ for any $\ell$ and $\Ver_{p^{(n)}}^{\zeta^{1/2},+}$ for $\ell\not\equiv 2\mod 4$ are incompressible. With $p=2$, the braided tensor categories $\Ver_{p^{(n)}}^{\zeta^{1/2}}$ and $\Ver_{p^{(n)}}^{\zeta^{1/2}, +}$ are incompressible.
    \item The category $\Ver_{p^{(n)}}^{\zeta}$ admits a semisimple ribbon lift to characteristic zero given by the semisimplification of the category of tilting modules for quantum $\mathfrak{sl}_2$ at a root of unity of order $Np^{n-1}$.
    \item (Quantum Frobenius-Lusztig) There is a ribbon embedding $\mathbbm{q}\mathbb{FL}:\Ver^{\sigma^{1/2}}_{p^{n-1}}\hookrightarrow\Ver^{\zeta^{1/2}}_{p^{(n)}}$ from the symmetric Verlinde category $\Ver^{\sigma^{1/2}}_{p^{n-1}}$ into the mixed Verlinde category $\Ver^{\zeta^{1/2}}_{p^{(n)}}$.
    \item (Steinberg tensor product theorem) There exist simple objects $\mathbb{T}_{\zeta}(j)$ of $\Ver^{\zeta}_{p^{(n)}}$ with $j=0,...,\ell-1$, and $j\neq \ell-1$ if $n=1$, such that every simple object of $\Ver^{\zeta}_{p^{(n)}}$ can be uniquely written as $\mathrm{L}_{\zeta}(a)=\mathbb{T}_{\zeta}(a_0)\otimes\mathbbm{q}\mathbb{FL}\big(\mathrm{L}(b)\big)$, where $0\leq a\leq p^{(n)}-p^{(n-1)}-1$ is decomposed as $a=a_0+b\ell$ with $0\leq a_0\leq \ell-1$, $0\leq b\leq p^{n-1}-p^{n-2}-1$, and $\mathrm{L}(b)$ is the simple object of $\Ver_{p^{n-1}}$ from \cite[Theorem 1.3(8)]{BEO}.
    \item The category $\Ver_{p^{(n)}}^{\zeta}$ has $(p-1)$ blocks of sizes $1,\, (p-1),\, p(p-1),\, ...,$ $p^{n-3}(p-1)$, and $(\ell-1)$ blocks of size $p^{n-2}(p-1)$, so a total of $(n-1)(p-1)+(\ell-1)$ blocks. Moreover, if $p>2$, all blocks of the same size are equivalent, even for different $n$ and $\zeta$. If $p=2$, this holds for all blocks of size strictly greater than $1$.
    \item With $\zeta\neq \pm 1$, $p > 2$ and $n\geq 2$, the Grothendieck ring of the stable category of $\Ver_{p^{(n)}}^{\zeta}$ is isomorphic to $$GrStab(\Ver_{p^{(n)}}^{\zeta})\cong\mathbb{F}_p[z]/Q_{\ell}(z)^{p^{n-2}} \oplus\mathbb{F}_p[z,g]/(z^{\frac{p^{n-2}-1}{2}},g^2-1).$$ With $\zeta\neq \pm 1$, $p = 2$ and $n\geq 2$, the Grothendieck ring of the stable category of $\Ver_{p^{(n)}}^{\zeta}$ is isomorphic to $$GrStab(\Ver_{2^{(n)}}^{\zeta})\cong\mathbb{F}_2[z]/Q_{\ell}(z)^{2^{n-2}} \oplus\mathbb{F}_2[z]/z^{2^{n-2}-1}.$$
    \item For $\ell > 2$, $n\geq 2$ if $p>2$ and $n > 2$ if $p=2$, the tensor category $\Ver^{\sigma}_{p^{n-1}}$ is a Serre subcategory of $\Ver_{p^{(n)}}^{\zeta}$.
    \item Let $\zeta\neq\pm 1$ and $n\geq 2$. If $\zeta$ has odd order, then $\Ver_{p^{n-1}}^{\sigma^{1/2}}$ is the symmetric center of $\Ver_{p^{(n)}}^{\zeta^{1/2}}$. If $\zeta$ has even order, then $\Ver_{p^{n-1}}^{\sigma^{1/2},+}$ is the symmetric center of $\Ver_{p^{(n)}}^{\zeta^{1/2}}$.
\end{enumerate}
\end{Theorem}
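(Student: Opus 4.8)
The plan is to identify the symmetric center $\mathcal{Z}_{(2)}\big(\Ver^{\zeta^{1/2}}_{p^{(n)}}\big)$, i.e.\ the full subcategory of transparent objects, with the image of the ribbon embedding $\mathbbm{q}\mathbb{FL}$ from part (7): with $\mathbbm{q}\mathbb{FL}\big(\Ver^{\sigma^{1/2}}_{p^{n-1}}\big)$ when $\zeta$ has odd order, and with $\mathbbm{q}\mathbb{FL}\big(\Ver^{\sigma^{1/2},+}_{p^{n-1}}\big)$ when $\zeta$ has even order. Since $\mathbb{T}_\zeta(1)$ tensor-generates $\Ver^{\zeta^{1/2}}_{p^{(n)}}$ up to subquotients and is self-dual, an object $X$ is transparent precisely when the double braiding $c_{\mathbb{T}_\zeta(1),X}\circ c_{X,\mathbb{T}_\zeta(1)}$ is the identity (transparency against a tensor generator propagates through the hexagon axiom and through subquotients). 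As $\mathcal{Z}_{(2)}$ is a tensor subcategory closed under subquotients, it is determined by which simple objects it contains together with the question of whether a given object of the image of $\mathbbm{q}\mathbb{FL}$ is transparent. By the Steinberg tensor product theorem of part (8) the simple objects are the $\mathrm{L}_\zeta(a)=\mathbb{T}_\zeta(a_0)\otimes\mathbbm{q}\mathbb{FL}\big(\mathrm{L}(b)\big)$ with $a=a_0+b\ell$, and in particular (taking $a_0=0$) $\mathbbm{q}\mathbb{FL}\big(\mathrm{L}(b)\big)=\mathrm{L}_\zeta(b\ell)$.

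For the inclusion ``$\supseteq$'', I would compute $c_{\mathbb{T}_\zeta(1),Y}\circ c_{Y,\mathbb{T}_\zeta(1)}$ for $Y=\mathbbm{q}\mathbb{FL}(Z)$ from the universal $R$-matrix of Lusztig's divided power quantum group. It degenerates against a quantum Frobenius twist: the divided powers $E^{(k)},F^{(k)}$ act by zero on $\mathbbm{q}\mathbb{FL}(Z)$ for $0<k<\ell$, and all divided powers of order $\geq 2$ act by zero on the two-dimensional module $\mathbb{T}_\zeta(1)$, so only the toral part of the $R$-matrix survives and the double braiding becomes a scalar determined by the weights of $Z$ modulo $N$. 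Using $\ell=N$ when $N$ is odd and $\ell=N/2$ when $N$ is even, this scalar is trivial for every $Y$ in the image of $\mathbbm{q}\mathbb{FL}$ when $N$ is odd, and when $N$ is even it is trivial exactly on the even-highest-weight subcategory $\mathbbm{q}\mathbb{FL}\big(\Ver^{\sigma^{1/2},+}_{p^{n-1}}\big)$. Concretely, for a simple $Y=\mathbbm{q}\mathbb{FL}(\mathrm{L}(b))=\mathrm{L}_\zeta(b\ell)$ the object $\mathbb{T}_\zeta(1)\otimes\mathrm{L}_\zeta(b\ell)=\mathrm{L}_\zeta(b\ell+1)$ is again simple (Steinberg), so the double braiding is the twist ratio $\theta_{\mathrm{L}_\zeta(b\ell+1)}\,\theta_{\mathbb{T}_\zeta(1)}^{-1}\,\theta_{\mathrm{L}_\zeta(b\ell)}^{-1}$; inserting the explicit Temperley--Lieb twists $\theta_{\mathrm{L}_\zeta(m)}$ and simplifying (the half-integer powers of $\zeta$ cancel, so the answer is independent of the choice of $\zeta^{1/2}$) gives $\zeta^{b\ell}$, which equals $1$ exactly when $N$ is odd or $b$ is even. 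Hence $\mathbbm{q}\mathbb{FL}\big(\Ver^{\sigma^{1/2}}_{p^{n-1}}\big)$, resp.\ $\mathbbm{q}\mathbb{FL}\big(\Ver^{\sigma^{1/2},+}_{p^{n-1}}\big)$, is contained in $\mathcal{Z}_{(2)}\big(\Ver^{\zeta^{1/2}}_{p^{(n)}}\big)$.

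For ``$\subseteq$'', it suffices to show that no simple object $\mathrm{L}_\zeta(a)$ with $a_0\geq 1$ is transparent. For such $a$ the tensor product $\mathbb{T}_\zeta(1)\otimes\mathbb{T}_\zeta(a_0)$ contains both $\mathbb{T}_\zeta(a_0+1)$ and $\mathbb{T}_\zeta(a_0-1)$ — as direct summands when $a_0\leq\ell-2$, and as composition factors of the indecomposable tilting module $\mathrm{T}(\ell)=\mathbb{T}_\zeta(1)\otimes\mathbb{T}_\zeta(\ell-1)$ when $a_0=\ell-1$, a case that arises only for $n\geq 2$, which is where this hypothesis of the statement is used. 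Consequently $\mathbb{T}_\zeta(1)\otimes\mathrm{L}_\zeta(a)$ has both $\mathrm{L}_\zeta(a+1)$ and $\mathrm{L}_\zeta(a-1)$ among its composition factors, and the balancing axiom shows the double braiding induces on them the twist ratios $\zeta^{a}$ and $\zeta^{-a-2}$ respectively (in the $a_0=\ell-1$ case one reads these off the scalar parts inside the local endomorphism ring of the non-semisimple summand); triviality of the double braiding would force both to equal $1$, hence $\zeta^{-2}=1$ and $N\leq 2$, contradicting $\zeta\neq\pm 1$. Thus the transparent simple objects are exactly the simple objects of the subcategory found above; since $\mathcal{Z}_{(2)}$ is closed under subquotients and, by part (11), $\mathbbm{q}\mathbb{FL}\big(\Ver^{\sigma^{1/2}}_{p^{n-1}}\big)$ is a Serre subcategory of $\Ver^{\zeta^{1/2}}_{p^{(n)}}$ — with $\mathbbm{q}\mathbb{FL}\big(\Ver^{\sigma^{1/2},+}_{p^{n-1}}\big)$ Serre inside it — every object of $\mathcal{Z}_{(2)}\big(\Ver^{\zeta^{1/2}}_{p^{(n)}}\big)$ lies in that subcategory.

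Combining the two inclusions gives equality as tensor subcategories; and since $\mathbbm{q}\mathbb{FL}$ is a ribbon embedding it is an equivalence of braided (hence ribbon) tensor categories onto its image, so it transports the symmetric braiding and ribbon structure of $\Ver^{\sigma^{1/2}}_{p^{n-1}}$, resp.\ $\Ver^{\sigma^{1/2},+}_{p^{n-1}}$, onto the canonical symmetric structure of the symmetric center. I expect the principal obstacle to be the braiding computation underlying the inclusion ``$\supseteq$'': one must identify the toral contribution of the $R$-matrix on quantum Frobenius twists — and the Temperley--Lieb twists $\theta_{\mathrm{L}_\zeta(m)}$, with the sign conventions attached to $\zeta^{1/2}$ — precisely enough to see both the odd/even-order dichotomy and the even-highest-weight condition come out, and one must handle the non-semisimple tensor product $\mathbb{T}_\zeta(1)\otimes\mathbb{T}_\zeta(\ell-1)$, where the balancing axiom controls the double braiding only up to a nilpotent correction, so that the argument must run through composition factors rather than direct summands; one must also verify, in the inclusion ``$\supseteq$'', that the double braiding with $\mathbb{T}_\zeta(1)$ is the identity on \emph{all} objects of the image of $\mathbbm{q}\mathbb{FL}$, not merely on the simple ones.
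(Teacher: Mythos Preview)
Your overall strategy for part (12) — reducing transparency to the double braiding with the generator $\mathbb{T}_\zeta(1)$, and then using the Steinberg decomposition to test simples — is sound, and the final answers you claim (double braiding $\zeta^{b\ell}$ on $\mathbbm{q}\mathbb{FL}(\mathrm{L}(b))$, parity dichotomy in $N$) are correct. But the mechanism you propose for the actual computation has a genuine gap.

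The objects $\mathbbm{q}\mathbb{FL}(Z)$ and the simples $\mathrm{L}_\zeta(a)$ for $a\geq\ell$ are \emph{not} $U_\zeta$-modules: they live in the abelian envelope $\Ver^{\zeta}_{p^{(n)}}$, which is obtained from a quotient of $\Tilt^\zeta$ by abelianization, and are created by that process. So the sentence ``the divided powers $E^{(k)},F^{(k)}$ act by zero on $\mathbbm{q}\mathbb{FL}(Z)$'' has no meaning, and the universal $R$-matrix cannot be evaluated on these objects. Moreover, $\mathbbm{q}\mathbb{FL}$ is defined via the Temperley--Lieb universal property applied to the simple object $\mathbb{U}$, not via the Hopf-algebra quantum Frobenius $(-)^{[q]}$; identifying the two at the braided level is essentially what you are trying to prove. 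The alternative route you sketch — the balancing identity $\beta^2=\theta_{X\otimes Y}\theta_X^{-1}\theta_Y^{-1}$ — is circular: you invoke ``explicit Temperley--Lieb twists $\theta_{\mathrm{L}_\zeta(m)}$'', but for $m\geq\ell$ no such formula is available, since $\mathrm{L}_\zeta(m)$ is neither a tilting module nor a $U_\zeta$-simple, and its twist is exactly what the unknown double braiding would determine. The same circularity afflicts your ``$\subseteq$'' argument for simples with $b\geq 1$.

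The paper's proof avoids this by never leaving the tilting world. It first uses the classification of tensor subcategories of $\Ver^{\zeta}_{p^{(n)}}$ to reduce everything to the single computation of $\beta^2_{\mathbb{T}_\zeta(1),\mathbb{U}}$. Then, since $\mathbb{U}$ is only a subquotient of a tilting object, it sandwiches the double braiding between maps $\mathbb{T}_\zeta(3\ell-2)\to\mathbb{T}_\zeta(\ell)\to\mathbb{T}_\zeta(3\ell-2)$ that factor through $\mathbb{U}$, and takes a partial trace over $\mathbb{T}_\zeta(1)$; this produces an equation entirely inside $F(\Tilt^{\zeta^{1/2}})$, where the explicit Temperley--Lieb morphism basis of \cite{STWZ} applies and yields the sign $(-1)^N[2]_{\zeta^{\ell+1}}/[2]_\zeta$. (For $\ell=2$ the partial trace degenerates because $[2]_\zeta=0$, and a separate direct computation is needed.) Once $\beta^2_{\mathbb{T}_\zeta(1),\mathbb{U}}=\pm\mathrm{id}$ is known, the extension issue you flag at the end disappears automatically: by the hexagon axiom $\beta^2_{\mathbb{T}_\zeta(1),-}$ is then the scalar $(\pm 1)^k$ on every subquotient of $\mathbb{U}^{\otimes k}$, hence on all of $\mathbbm{q}\mathbb{FL}(\Ver^{\sigma}_{p^{n-1}})$, without needing to check simples one by one.
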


\noindent In \cite[Theorem 5.8]{STWZ}, it was shown that, so long as $\zeta\neq \pm 1$, the symmetric center of $\Tilt^{\zeta^{1/2}}$ is trivial. In particular, the symmetric center of $\Tilt^{\zeta^{1/2}}/\mathbf{J}_{p^(n)}$ is trivial for $n\geq 1$. By contrast, we find that the symmetric center of $\Ver_{p^{(n)}}^{\zeta^{1/2}}$ is quite large. This apparent discrepancy is explained by the fact that all of the non-trivial objects in the symmetric center of $\Ver_{p^{(n)}}^{\zeta^{1/2}}$ are created by the process of abelianization.

\section{Preliminaries}

\subsection{Tensor Categories}\label{sub:preliminaries}

Throughout, we will work over an algebraically closed field $\mathbbm{k}$. We will use the basic theory of tensor categories as presented in \cite{EGNO}, and follow their conventions.

For instance, we say that a $\mathbbm{k}$-linear category is locally finite, also called artinian, if it is abelian, has finite dimensional $Hom$-spaces, and every object has finite length. A finite category is a locally finite category that has finitely many simple objects and enough projectives. Then, a multitensor category is a locally finite monoidal category $\mathcal{C}$ that is rigid in the sense that every object $C$ of $\mathcal{C}$ has a left dual $C^*$ and a right dual $^*C$. A tensor category is a multitensor category for which the monoidal unit $\mathbbm{1}$ is a simple object. For instance, the category of finite dimensional vector spaces, denoted by $\mathrm{Vec}_{\mathbbm{k}}$ or $\mathrm{Vec}$, is a finite tensor category. Given an object $C$ of a finite tensor category, we can consider its Frobenius-Perron dimension $\FPdim(C)\in\mathbb{C}$. This assignment produces an algebraic integer and is compatible with the tensor product \cite[Sections 3.3 \& 4.5]{EGNO}. More generally, we will also encounter Karoubian monoidal categories, that is additive monoidal categories that have splittings for idempotents. In particular, all the functors that we consider will be additive. More specifically, a tensor functor between two (multi)tensor categories is an exact $\mathbbm{k}$-linear monoidal functor. A tensor functor $F:\mathcal{C}\rightarrow\mathcal{D}$ is injective if it is fully faithful, and surjective if every simple object of $\mathcal{D}$ is a subquotient of $F(C)$ for some object $C$ in $\mathcal{C}$.

We will consider monoidal categories equipped with additional structures. Firstly, a braiding on a monoidal category $\mathcal{C}$ is a coherent family of isomorphisms $\beta_{C,D}:C\otimes D\cong D\otimes C$ for every objects $C$, $D$ of $\mathcal{C}$. A braiding is symmetric if the double braiding is trivial, that is $\beta_{D,C}\circ \beta_{C,D} = Id_{C\otimes D}$. The symmetric center $\mathcal{Z}_{(2)}(\mathcal{C})$ of a braided monoidal category $\mathcal{C}$ is the full symmetric monoidal subcategory on those objects $C$ such that $\beta_{D,C}\circ \beta_{C,D} = Id_{C\otimes D}$ for every object $D$ in $\mathcal{C}$. Secondly, given a monoidal category $\mathcal{C}$, recall that a pivotal structure consists in the data of isomorphisms $\lambda_C:C\rightarrow C^{**}$ for every object $C$ of $\mathcal{C}$ that are compatible with the monoidal structure of $\mathcal{C}$. In any rigid monoidal category, one can define the left trace $\mathrm{Tr}^L$ of any endomorphism $C\rightarrow C^{**}$ as in \cite[Section 4.7]{EGNO}. In a pivotal monoidal category, the categorical (or quantum) dimension of an object $C$ is $\mathrm{dim}(C):=\mathrm{Tr}^L(\lambda_C)$. A spherical monoidal category is a pivotal monoidal category $\mathcal{C}$ for which $\mathrm{dim}(C)=\mathrm{dim}(C^*)$ for every object $C$. Finally, a ribbon category is a braided monoidal category $\mathcal{C}$ equipped with a twist, that is isomorphisms $\theta_C:C\cong C$ such that $\theta_{C\otimes D} =(\theta_C\otimes\theta_D)\circ \beta_{D,C}\circ \beta_{C,D}$ and $\theta_{C^*}=(\theta_C)^*$ for every objects $C$ and $D$ of $\mathcal{C}$. It is well-known that, equivalently, a ribbon category is a braided monoidal category equipped with a pivotal structure such that $\lambda_{C^*} = (\lambda_C^*)^{-1}$. In particular, every ribbon category is spherical.

%%%\subsection{Abelian Envelopes}

Additionally, we review two key concepts from \cite{BEO}. Firstly, we recall a key result from the theory of abelian envelopes (see also \cite{C:monoidal} for a different approach and \cite{CEOP} for further investigation). Given a Karoubian rigid monoidal category $\mathcal{T}$, an object $Q$ is called splitting if tensoring with $Q$ splits every morphism in $\mathcal{T}$. Splitting objects form a thick ideal $\mathcal{S}$ in $\mathcal{T}$, and we say that $\mathcal{T}$ is separated if a morphism in $\mathcal{T}$ that is annihilated by tensoring with every splitting object is necessarily zero. It was shown in \cite[Section 2.10]{BEO} that if, in addition, $\mathcal{S}$ is finitely generated as a left ideal, then there exists a multitensor category $\mathcal{C}(\mathcal{T})$ together with a faithful monoidal functor $F:\mathcal{T}\hookrightarrow\mathcal{C}(\mathcal{T})$. Furthermore, they proved that if $F$ is full, then $\mathcal{C}(\mathcal{T})$ is the abelian envelope of $\mathcal{T}$ in the sense of \cite{EAHS}, that is, $F$ is universal amongst faithful monoidal functors from $\mathcal{T}$ to a multitensor category. In particular, in this last case, if $\mathcal{T}$ is endowed with additional structure such as a braiding or a twist, then so is $\mathcal{C}(\mathcal{T})$.

Secondly, we recall the definition of a flat lift to characteristic zero from \cite[Section 2.11]{BEO}. Let $R$ be a complete discrete valuation ring with residue field $\mathbbm{k}$, and let $\mathbb{K}$ be its field of fractions. We consider $\mathcal{C}_R$, a monoidal abelian $R$-linear category, whose underlying category is equivalent to the category of representation of an $R$-algebra $A$ that is free of finite rank over $R$. Let us in addition assume that every flat object of $\mathcal{C}$, i.e.\ every object that is free as an $R$-module, admits a left and a right dual. Then, $\mathcal{C}_{\mathbbm{k}}$, the Cauchy completion of $\mathcal{C}_R\otimes_R\mathbbm{k}$ is a finite tensor category over $\mathbbm{k}$. Likewise, $\mathcal{C}_{\mathbb{K}}$, the Cauchy completion of $\mathcal{C}_R\otimes_R\mathbb{K}$ is a finite tensor category over $\mathbb{K}$. Given an arbitrary finite $\mathbbm{k}$-linear tensor category $\mathcal{C}$, a flat deformation of $\mathcal{C}$ to $R$ is a monoidal abelian category $\mathcal{C}_R$ as above and such that $\mathcal{C}_{\mathbbm{k}}\simeq\mathcal{C}$ as $\mathbbm{k}$-linear tensor categories. The generic fiber of this deformation is the finite tensor category $\mathcal{C}_{\mathbb{K}}$.

\subsection{Tilting Modules for \texorpdfstring{$\mathrm{SL}_2$}{SL2} in the Mixed Case}\label{sub:tilting}

As above $\mathbbm{k}$ is an algebraically closed field, and we will now assume that is has characteristic $p>0$. Further, we now fix $\zeta$ a primitive $N$-th root of unity in $\mathbbm{k}$. For $\zeta\neq \pm 1$, we write $U_{\zeta}=U_{\zeta}(\mathrm{SL}_2)$ for Lusztig's divided power quantum group for $\mathfrak{sl}_2$ and $\zeta$ (using the conventions of \cite{AK}). We also write $u_{\zeta}=u_{\zeta}(\mathrm{SL}_2)\subset U_{\zeta}$ for the corresponding small quantum group. For $\zeta = \pm 1$, we write $\overline{U}=hy(\mathrm{SL}_2)$ for the hyperalgebra of the simple algebraic group $\mathrm{SL}_2$, also known as its algebra of distributions. In \cite{Lus}, a surjective homomorphism of Hopf algebras $F_{\zeta}:U_{\zeta}\rightarrow \overline{U}$ was constructed. Moreover, this homomorphism identifies $\overline{U}$ with the quotient $U_{\zeta}//u_{\zeta}$.

We write $\mathbf{Mod}^{\zeta}$ for the tensor category of finite dimensional modules over $\mathbbm{k}$ of type $1$ for $\overline{U}$ if $\zeta=\pm 1$, and for $U_{\zeta}$ if $\zeta\neq \pm 1$. The simple modules $L_{\zeta}(v)$ are indexed by non-negative integers $v\in\mathbb{N}_{\geq 0}$, corresponding to their highest weight. We are most interested in another family of highest weight modules:\ The indecomposable tilting modules $\mathrm{T}_{\zeta}(v)$. We have $\mathrm{T}_{\zeta}(v)=L_{\zeta}(v)$ for $0\leq v\leq \ell-2$, where $\ell = N$ if $N > 2$ is odd, $\ell = N/2$ if $N > 2$ is even, and $\ell = p$ if $N\leq 2$.\footnote{The integer $\ell$ is the smallest positive integer $a$ for which the quantum integer $[a]_{\zeta}=\zeta^{-(a-1)}+\zeta^{-(a-3)}+...+\zeta^{a-3}+\zeta^{a-1}$ is zero in $\mathbbm{k}$.} In general, a $U_{\zeta}$-module is called tilting provided that it admits both a filtration by Weyl modules and a filtration by dual Weyl modules. We write $\Tilt^{\zeta}$ for the full subcategory of $\mathbf{Mod}^{\zeta}$ on those objects that are tilting modules, i.e.\ objects are finite direct sums of $\mathrm{T}_{\zeta}(v)$ for $v\in\mathbb{N}_{\geq 0}$. The $\mathbbm{k}$-linear category $\Tilt^{\zeta}$ is Karoubian and Krull-Schmidt. Moreover, the tensor structure $\otimes$ on $\mathbf{Mod}^{\zeta}$ restricts to a monoidal structure on $\Tilt^{\zeta}$ with monoidal unit $\mathbf{1}=\mathrm{T}_{\zeta}(0)=L_{\zeta}(0)$, and for which objects have duals. We also note that $\Tilt^{\zeta}=\Tilt^{-\zeta}$ because the Hopf algebras $U_{\zeta}$ and $U_{-\zeta}$ are isomorphic. %%% $K\mapsto K, E\mapsto FK, F\mapsto K^{-1}E$
Similarly, we also have $\Tilt^{\zeta}=\Tilt^{\zeta^{-1}}$. %%% $K\mapsto K^{-1}$
For more details on tilting modules in the mixed case, we refer the reader to \cite{AK} and \cite{And} (see also \cite{STWZ} for a summary).

When $\sigma = \pm 1$, we will simply write $\mathrm{T}(v)$ and $L(v)$ for the indecomposable tilting modules and the simple modules of $\mathbf{Mod}^{\sigma}$. We now set $\sigma=(-1)^{N+\ell}$, then the surjective homomorphism of Hopf algebras $F_{\zeta}:U_{\zeta}\rightarrow \overline{U}$ induces a fully faithful tensor functor $(-)^{[q]}: \mathbf{Mod}^{\sigma}\rightarrow \mathbf{Mod}^{\zeta}$, which we refer to as the quantum Frobenius-Lusztig twist. The following result is well-known.

\begin{Proposition}[\cite{And}]\label{prop:mixedDonkin}
For $\ell-1\leq a\leq 2\ell -2$, and any non-negative integer $b$, we have $$\mathrm{T}_{\zeta}(a + \ell b) = \mathrm{T}_{\zeta}(a)\otimes \mathrm{T}(b)^{[q]}$$ in $\Tilt^{\zeta}$.
\end{Proposition}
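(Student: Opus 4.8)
The plan is to prove that $M:=\mathrm{T}_{\zeta}(a)\otimes\mathrm{T}(b)^{[q]}$ is an indecomposable tilting module whose highest weight is $a+\ell b$; since $\mathrm{T}_{\zeta}(a+\ell b)$ is the unique indecomposable tilting module with that highest weight, this yields $M\cong\mathrm{T}_{\zeta}(a+\ell b)$ at once. First I would check that $M$ is tilting. As $\ell-1\le a\le 2\ell-2$ by hypothesis, $\mathrm{T}_{\zeta}(a)$ is tilting; and $\mathrm{T}(b)^{[q]}$ is tilting because the quantum Frobenius--Lusztig twist $(-)^{[q]}$ carries modules with a Weyl (resp.\ dual Weyl) filtration to $U_{\zeta}$-modules with a Weyl (resp.\ dual Weyl) filtration, so $\mathrm{T}(b)^{[q]}$ admits both; see \cite{And}. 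Since the tensor product of two tilting $U_{\zeta}$-modules is again tilting (the quantum $\mathfrak{sl}_2$ analogue of the Donkin--Mathieu--Paradowski theorem, also \cite{And}), $M$ is tilting. Moreover, $(-)^{[q]}$ multiplies all weights by $\ell$, so the weights of $\mathrm{T}(b)^{[q]}$ lie in $\ell\cdot[-b,b]$ with $\ell b$ of multiplicity one, while those of $\mathrm{T}_{\zeta}(a)$ lie in $[-a,a]$ with $a$ of multiplicity one; hence $a+\ell b$ is the highest weight of $M$ and occurs there with multiplicity one.

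It then remains to show that $M$ is indecomposable, for which it suffices to prove that $M$ has simple socle (any decomposition into two nonzero summands would force the socle to have length at least two). Here I would pass to the small quantum group $u_{\zeta}$. Because $(-)^{[q]}$ factors through $U_{\zeta}//u_{\zeta}$, the module $\mathrm{T}(b)^{[q]}$ restricts to $(\dim\mathrm{T}(b))$ copies of the trivial $u_{\zeta}$-module, so $M|_{u_{\zeta}}\cong\mathrm{T}_{\zeta}(a)|_{u_{\zeta}}^{\oplus\dim\mathrm{T}(b)}$. For $a$ in the range $\ell-1\le a\le 2\ell-2$, the module $\mathrm{T}_{\zeta}(a)|_{u_{\zeta}}$ is the Steinberg module $L_{\zeta}(\ell-1)$ when $a=\ell-1$, and the indecomposable injective hull of the simple $u_{\zeta}$-module $L_{\zeta}(2\ell-2-a)$ when $\ell\le a\le 2\ell-2$; in either case it has simple socle, equal to the $u_{\zeta}$-restriction of $L_{\zeta}(r)$ for a suitable $0\le r\le\ell-1$ (namely $r=\ell-1$, resp.\ $r=2\ell-2-a$). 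I would take this structural input from the quantum $\mathfrak{sl}_2$ literature, see \cite{And} and \cite{STWZ}.

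Using that $u_{\zeta}$ is normal in $U_{\zeta}$, the $u_{\zeta}$-socle of any $U_{\zeta}$-module is a $U_{\zeta}$-submodule, and one checks $\mathrm{soc}_{u_{\zeta}}(X\otimes Y^{[q]})=\mathrm{soc}_{u_{\zeta}}(X)\otimes Y^{[q]}$ as $U_{\zeta}$-modules; applied to $X=\mathrm{T}_{\zeta}(a)$ and $Y=\mathrm{T}(b)$ this gives $\mathrm{soc}_{u_{\zeta}}(M)\cong L_{\zeta}(r)\otimes\mathrm{T}(b)^{[q]}$. Taking the $U_{\zeta}$-socle of the right-hand side, and using that $L_{\zeta}(r)\otimes(-)^{[q]}$ is exact and fully faithful (as $L_{\zeta}(r)$ is $u_{\zeta}$-irreducible, this follows from the quantum Frobenius formalism together with the full faithfulness of $(-)^{[q]}$), one obtains $\mathrm{soc}_{U_{\zeta}}(M)\cong L_{\zeta}(r)\otimes\mathrm{soc}_{\overline{U}}(\mathrm{T}(b))^{[q]}$. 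Finally $\mathrm{T}(b)$ is an indecomposable tilting module for $\mathrm{SL}_2$, hence has simple socle --- the $\mathrm{SL}_2$, positive-characteristic form of the fact recalled in \cite{BEO} --- whence $\mathrm{soc}_{U_{\zeta}}(M)$ is simple, $M$ is indecomposable, and $M\cong\mathrm{T}_{\zeta}(a+\ell b)$.

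The main obstacle is the socle analysis of the last two paragraphs: the facts that $\mathrm{T}_{\zeta}(a)|_{u_{\zeta}}$ is the injective hull of a simple $u_{\zeta}$-module (resp.\ the Steinberg module) for $a$ in the stated range, that $\mathrm{soc}_{u_{\zeta}}$ commutes with $(-)\otimes(-)^{[q]}$ by normality of $u_{\zeta}$ in $U_{\zeta}$, and that tensoring the $u_{\zeta}$-irreducible $L_{\zeta}(r)$ with the image of the twist decouples the $u_{\zeta}$- and $\overline{U}$-module structures; each is standard in the quantum $\mathfrak{sl}_2$ literature but must be located and cited with care. An alternative, more computational route for $\mathfrak{sl}_2$ is to instead verify the character identity $\mathrm{ch}\,\mathrm{T}_{\zeta}(a+\ell b)=\mathrm{ch}\,\mathrm{T}_{\zeta}(a)\cdot\mathrm{ch}\,\mathrm{T}(b)^{[q]}$ directly --- for instance by induction on $b$ via the Chebyshev (equivalently Temperley--Lieb) recursions for $\mathfrak{sl}_2$ tilting characters --- and combine it with the already-established facts that $M$ is tilting and that $\mathrm{T}_{\zeta}(a+\ell b)$ is a direct summand of $M$, which together force equality.
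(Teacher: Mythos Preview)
The paper does not supply its own proof of this proposition; it simply records the result as well-known and attributes it to \cite{And}. So there is no proof in the paper to compare against, and your proposal is effectively an attempt to reconstruct Andersen's argument.

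Your overall strategy is sound, and your socle computation in the second half is essentially the argument the paper later uses to prove Proposition~\ref{prop:simplesocle} (where the present proposition is taken as input). However, there is a genuine gap in the first paragraph: the claim that $\mathrm{T}(b)^{[q]}$ is tilting is false in general, and so is the supporting claim that $(-)^{[q]}$ carries Weyl-filtered modules to Weyl-filtered modules. For instance, $\mathrm{T}(1)^{[q]}=\Delta(1)^{[q]}$ is a two-dimensional $U_{\zeta}$-module with weights $\pm\ell$; no Weyl module $\Delta_{\zeta}(n)$ and no sum of Weyl modules has this character, so $\mathrm{T}(1)^{[q]}$ admits no Weyl filtration and is not tilting. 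What \emph{is} true, and what Andersen actually proves, is that $\mathrm{T}_{\zeta}(\ell-1)\otimes\Delta(m)^{[q]}\cong\Delta_{\zeta}(\ell-1+\ell m)$ (and dually for $\nabla$), so that tensoring with the Steinberg module repairs the failure; more generally $\mathrm{T}_{\zeta}(a)$ for $\ell-1\le a\le 2\ell-2$ is $u_{\zeta}$-projective (hence $u_{\zeta}$-injective), and it is this property of the first tensor factor---not any property of $\mathrm{T}(b)^{[q]}$ alone---that forces $M$ to be tilting. Your argument does not use the hypothesis $\ell-1\le a$ anywhere in the tilting step, which is a signal that something is missing.

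Once this is fixed, the rest of your outline goes through. Your alternative character-theoretic route is also viable for $\mathfrak{sl}_2$, since the characters of the $\mathrm{T}_{\zeta}(v)$ are known explicitly (e.g.\ \cite[Proposition~3.3]{STWZ}), but note that those character formulas are themselves typically proven via the present proposition, so you should check that whatever source you cite for them does not create a circularity.
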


\noindent We will also make use of various other formulas from \cite{STWZ} for the tensor products of tilting modules. These are most conveniently expressed using the notion of $p\ell$-adic expansion. We set $p^{(0)}=1$, and for any integer $n\geq 1$, we define $p^{(n)}= p^{n-1}\ell$. Any non-negative integer $a$ admits a unique expansion $[a_k,...,a_0]_{p\ell} : =\sum_{i=0}^ka_ip^{(i)}  = a$ with $a_k\neq 0$, $0\leq a_i \leq p-1$ for $i>0$, and $0\leq a_0\leq \ell-1$. Conversely, any tuple $(b_k,...,b_0)$ of potentially negative integers defines an integer $[b_k,...,b_0]_{p\ell} : =\sum_{i=0}^kb_ip^{(i)}$.

Following \cite[Section 2B]{STWZ}, the monoidal category $\mathbf{Mod}^{\zeta}$ and therefore also $\Tilt^{\zeta}$ can be equipped with both a spherical structure and a braiding, which are compatible and therefore yield a ribbon structure. Firstly, following \cite{BK}, we can use $\zeta$ to define an pivot in $U_{\zeta}$, or $\overline{U}$, providing us with a pivotal structure on $\mathbf{Mod}^{\zeta}$. It is clear that this pivotal structure is in fact spherical. Secondly, upon choosing a square root $\zeta^{1/2}$ for $\zeta$ in $\mathbbm{k}$, we can consider the $\mathrm{R}$-matrix from \cite[Section IX.7]{Kas}, and thereby endow the category $\mathbf{Mod}^{\zeta}$ with a braiding. These two structures are compatible and therefore yield a ribbon structure. In particular, in what follows, we write $\Tilt^{\zeta,\zeta^{1/2}}$, or more succinctly $\Tilt^{\zeta^{1/2}}$ when no confusion can arise, for the ribbon monoidal category of tilting modules for $U_{\zeta}$ with braiding corresponding to $\zeta^{1/2}$.

\begin{Remark}\label{rem:standardtilting}
Using our conventions, the standard symmetric structure on the monoidal category of tilting modules for $\mathrm{SL}_2$ is recovered by taking $\zeta = +1$ and $\zeta^{1/2} = -1$. If $p\neq 2$, taking $\zeta^{1/2} = +1$ yields a distinct symmetric structure. On the other hand, still working under the assumption that $p\neq 2$ and taking $\zeta = -1$, the two choices of $\zeta^{1/2}$ yield braidings on $\Tilt^{-1}$ that are not symmetric. It is straightforward to check that the symmetric center of $\Tilt^{-1,(-1)^{1/2}}$ contains precisely those indecomposable tilting modules that have even highest weight.
\end{Remark}

The ribbon category $\mathbf{Tilt}^{\zeta}$ of tilting modules for $U_{\zeta}$ is related to the so-called Temperley-Lieb category. More precisely, let us write $\mathbf{TL}^{\zeta}$ for the free spherical monoidal category on one self-dual object $X$ of quantum dimension $-[2]_{\zeta} = -(\zeta^{-1}+\zeta)$. By definition, $\mathbf{TL}^{\zeta}$ admits a particularly convenient graphical calculus reviewed, for instance, in \cite[Section 2.B]{STWZ}. By virtue of its universal property, there is a pivotal functor $E:\mathbf{TL}^{\zeta}\rightarrow \mathbf{Tilt}^{\zeta}$ sending $X$ to $\mathrm{T}_{\zeta}(1)$. The functor $E$ becomes an equivalence upon Cauchy completion (see \cite[Prop. 2.20]{STWZ} and references therein). Moreover, upon choosing a square root $\zeta^{1/2}$ for $\zeta$, one can use Kauffman's skein relation to endow the Temperley-Lieb category with a compatible braiding, turning it into a ribbon category. The functor $E$ intertwines this ribbon structure with the ribbon structure on $\Tilt^{\zeta^{1/2}}$ (see \cite[Prop. 2.21]{STWZ}).

For later use, we record the following result, which is presumably well-known, but, given that we have not been able to locate a proof in the literature, we include one for completeness.

\begin{Lemma}\label{lem:braidingsTilting}
Let $\zeta$ be arbitrary in $\mathbbm{k}^{\times}$. The equivalence classes of ribbon structures on $\Tilt^{\zeta}$ that extend the canonical spherical structure are given by the square roots of $\zeta^{\pm 1}$ in $\mathbbm{k}$. The corresponding braidings are specified by Kauffman's skein relation.
\end{Lemma}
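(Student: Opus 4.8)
\emph{Strategy and reduction.} The plan is to transport the statement to the Temperley--Lieb category $\mathbf{TL}^{\zeta}$, where a braiding is pinned down by a single endomorphism of $X^{\otimes 2}$, and to classify the possibilities there. Concretely, $\Tilt^{\zeta}$ is the Cauchy completion of $\mathbf{TL}^{\zeta}$ along the pivotal functor $E$, and a braiding, a pivotal structure, or a twist on a monoidal category extends uniquely (up to unique isomorphism) along Cauchy completion; since $E$ also matches the free spherical structure of $\mathbf{TL}^{\zeta}$ with the canonical spherical structure of $\Tilt^{\zeta}$, the ribbon structures on $\Tilt^{\zeta}$ extending the canonical spherical structure correspond bijectively --- and compatibly with equivalences --- to those on $\mathbf{TL}^{\zeta}$ extending its free spherical structure. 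So it suffices to classify the latter. Write $X$ for the generating object and $e\in\mathrm{End}(X^{\otimes 2})$ for the cup--cap morphism, so that $\mathrm{End}(X^{\otimes 2})$ has basis $\{\mathrm{id},e\}$ with $e^{2}=-[2]_{\zeta}e$.

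\emph{Classification of braidings.} Since $X$ tensor-generates $\mathbf{TL}^{\zeta}$, a braiding is determined by $\beta_{X,X}=a\,\mathrm{id}+b\,e\in\mathrm{Aut}(X^{\otimes 2})$, and conversely such an element underlies a braiding exactly when it is invertible, satisfies the hexagon axioms, and is natural with respect to the cup and the cap. The hexagon axioms reduce to the Yang--Baxter equation $\beta_{1}\beta_{2}\beta_{1}=\beta_{2}\beta_{1}\beta_{2}$ in the $5$-dimensional algebra $\mathrm{End}(X^{\otimes 3})$, where $e_{1}e_{2}e_{1}=e_{1}$, $e_{2}e_{1}e_{2}=e_{2}$ and $e_{i}^{2}=-[2]_{\zeta}e_{i}$; expanding it yields $b\,(a^{2}-ab[2]_{\zeta}+b^{2})\,(e_{1}-e_{2})=0$, hence $a/b\in\{\zeta,\zeta^{-1}\}$. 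Naturality with respect to the cup and cap then rigidifies the overall scale (if $c\,\beta_{X,X}$ also underlies a braiding, comparing the two sides of the naturality square along a cup forces $c^{2}=1$), and one reads off that the solutions are precisely $\beta_{X,X}=A\,\mathrm{id}+A^{-1}e$ with $-A^{2}-A^{-2}=-[2]_{\zeta}$, i.e.\ $A^{2}=\zeta$ or $A^{2}=\zeta^{-1}$: this is Kauffman's skein relation with parameter $A$. Conversely, every square root $A$ of $\zeta$ yields the braiding induced by the $\mathrm{R}$-matrix construction of Section~\ref{sub:tilting} for $(U_{\zeta},A)$ (pulled back along $E$ via \cite[Prop.~2.21]{STWZ}), and every square root $A$ of $\zeta^{-1}$ yields the analogous braiding for $(U_{\zeta^{-1}},A)$ under $\Tilt^{\zeta}=\Tilt^{\zeta^{-1}}$. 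Thus braidings extending the spherical structure correspond bijectively to the square roots of $\zeta^{\pm 1}$ in $\mathbbm{k}$.

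\emph{Twist and conclusion.} For each such braiding, $\mathrm{End}(X)=\mathbbm{k}$ forces $\theta_{X}$ to be a scalar, uniquely determined by compatibility of the balancing with the pivotal structure (the framing anomaly of the Kauffman bracket); that it genuinely defines a twist extending the canonical spherical structure can be checked directly, or seen by transporting the ribbon structure already produced on $\Tilt^{\zeta}$ for that square root. Hence ribbon structures extending the canonical spherical structure are in bijection with the square roots of $\zeta^{\pm 1}$, with braiding specified by Kauffman's skein relation. Since $A$ is the coefficient of $\mathrm{id}$ in $\beta_{X,X}$, distinct square roots give distinct structures; and a ribbon equivalence between two of them must fix $X$ (a monoidal equivalence preserves the unit and the dimensions of $\mathrm{Hom}$-spaces, which single out $X$ among the indecomposables) and therefore, being pivotal, act as the identity on the commutative algebra $\mathrm{End}(X^{\otimes 2})$, so it preserves $\beta_{X,X}$; hence the square roots also index the equivalence classes. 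The main obstacle is the Yang--Baxter-plus-naturality computation that excludes braidings other than Kauffman's; everything else is either formal (the Cauchy-completion reduction, uniqueness of the twist) or a direct appeal to the $\mathrm{R}$-matrix construction recalled in the text.
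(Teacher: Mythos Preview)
Your proof is correct and follows essentially the same strategy as the paper: reduce to the Temperley--Lieb category, write $\beta_{X,X}=\lambda\,\mathrm{id}+\mu\,e$, and use the braiding axioms to constrain $(\lambda,\mu)$. The only real difference is organizational. The paper extracts both constraints at once from the single ``slide under a cap'' relation
\[
(\mathrm{ev}\otimes X)\circ(X\otimes\beta_{X,X})\circ(\beta_{X,X}\otimes X)=X\otimes\mathrm{ev},
\]
which immediately gives $\lambda\mu=1$ and $\lambda^{2}-\lambda\mu[2]_{\zeta}+\mu^{2}=0$; you instead obtain the quadratic from the Yang--Baxter equation and then invoke naturality separately to pin down the scale. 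Your route is slightly longer but has the virtue of making the roles of the two constraints transparent, and you are more explicit than the paper about existence (via the $\mathrm{R}$-matrix) and about why distinct square roots give inequivalent structures.

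Two small points to tighten. First, your Yang--Baxter expansion $b(a^{2}-ab[2]_{\zeta}+b^{2})(e_{1}-e_{2})=0$ also permits $b=0$; you should remark that this is excluded by naturality with respect to the cup (the two morphisms $\mathrm{id}_{X}\otimes\mathrm{cup}$ and $\mathrm{cup}\otimes\mathrm{id}_{X}$ are linearly independent in $\mathrm{Hom}(X,X^{\otimes 3})$, so $\beta_{X,X}$ cannot be a scalar). Second, ``one reads off that the solutions are precisely $\beta_{X,X}=A\,\mathrm{id}+A^{-1}e$'' hides the actual computation that naturality forces $ab=1$ rather than merely fixing the scale up to sign; the paper's cap-sliding equation gives this directly, and it would strengthen your argument to state the analogous naturality identity explicitly.
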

\begin{proof}
Let $\beta$ be a braiding on $\Tilt^{\zeta}$ that is compatible with the spherical structure. We begin by observing that the braiding $\beta$ is completely determined by its value $\beta_{\mathrm{T}_{\zeta}(1),\mathrm{T}_{\zeta}(1)}$. Namely, $\Tilt^{\zeta}$ is the Cauchy completion of the Temperley-Lieb category, which is generated by the tensor powers of $\mathrm{T}_{\zeta}(1)$.

Let us write $\mathrm{coev}:\mathrm{T}_{\zeta}(1)\otimes\mathrm{T}_{\zeta}(1)\rightarrow \mathbf{1}$ and $\mathrm{ev}:\mathbf{1}\rightarrow\mathrm{T}_{\zeta}(1)\otimes\mathrm{T}_{\zeta}(1)$ for the canonical evaluation and coevaluation morphisms, i.e.\ $\mathrm{ev}\circ\mathrm{coev} = - [2]_{\zeta}$. Then, we have $\beta_{\mathrm{T}_{\zeta}(1),\mathrm{T}_{\zeta}(1)} = \lambda \cdot Id + \mu \cdot (\mathrm{coev}\circ\mathrm{ev}):\mathrm{T}_{\zeta}(1)\otimes\mathrm{T}_{\zeta}(1)\rightarrow \mathrm{T}_{\zeta}(1)\otimes\mathrm{T}_{\zeta}(1)$ for some scalars $\lambda,\mu\in\mathbbm{k}$.

Now, the braiding $\beta$ must be natural and must satisfy the hexagon equations. In particular, we must have the following equality $$(\mathrm{ev}\otimes \mathrm{T}_{\zeta}(1))\circ (\mathrm{T}_{\zeta}(1)\otimes\beta_{\mathrm{T}_{\zeta}(1),\mathrm{T}_{\zeta}(1)})\circ (\beta_{\mathrm{T}_{\zeta}(1),\mathrm{T}_{\zeta}(1)}\otimes\mathrm{T}_{\zeta}(1)) = \mathrm{T}_{\zeta}(1)\otimes\mathrm{ev}.$$ Expanding the left hand-side, we find that we must have $$\lambda\mu = 1\ \mathrm{and}\ \lambda^2 - \lambda\mu[2]_{\zeta} + \mu^2 = 0$$ in $\mathbbm{k}$. Combining these equations together, we find that $\lambda = \mu^{-1}$ and $\lambda^2 = \zeta^{\pm 1}$. This finishes the proof.
\end{proof}

\section{Properties of Tilting Modules for \texorpdfstring{$\mathrm{SL}_2$}{SL2} in the Mixed Case}

We work over the algebraically closed field $\mathbbm{k}$ of characteristic $p>0$, and fix $\zeta$ a primitive $N$-th root of unity in $\mathbbm{k}$. We will freely use the notations of the previous section.

\subsection{Socles of Tilting Modules for \texorpdfstring{$\mathrm{SL}_2$}{SL2}}

We prove a technical result about indecomposable tilting modules considered as objects of $\mathbf{Mod}^{\zeta}$. It is a generalization to the mixed case of \cite[Lemma 3.6]{BEO}.

\begin{Lemma}\label{lem:3.6}
Assume that $Hom_{U_{\zeta}}(\mathbf{1},\mathrm{T}_{\zeta}(s))$ is non-zero. Then, $s=2p^{(k)}-2$ for some non-negative integer $k$, and the socle of $\mathrm{T}_{\zeta}(s)$ is exactly $\mathbf{1}=\mathrm{T}_{\zeta}(0)$.
\end{Lemma}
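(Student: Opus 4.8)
The plan is to reduce the statement to a known fact about tilting modules for the quantum group at a root of unity via the Steinberg-type tensor product formula (Proposition~\ref{prop:mixedDonkin}) together with the structure of tilting modules over the hyperalgebra $\overline{U}$, which is the content of \cite[Lemma 3.6]{BEO}. The key observation is that $Hom_{U_{\zeta}}(\mathbf{1},\mathrm{T}_{\zeta}(s))\neq 0$ forces strong constraints on the $p\ell$-adic expansion $[s_k,\dots,s_0]_{p\ell}$ of $s$.

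First I would deal with the $0$-th digit. Write $s = s_0 + \ell b$ with $0\le s_0\le \ell-1$ and $b = \sum_{i\ge 1} s_i p^{i-1}$. If $b=0$, then $\mathrm{T}_{\zeta}(s)=L_{\zeta}(s)$ is simple, so $Hom_{U_\zeta}(\mathbf 1,\mathrm T_\zeta(s))\neq 0$ gives $s=0=2p^{(0)}-2$ and the socle is $\mathbf 1$; this is the base case. If $b\ge 1$, I claim $s_0=\ell-1$. Indeed, by Proposition~\ref{prop:mixedDonkin} applied after writing $s = (s_0+\ell) + \ell(b-1)$ in the range $\ell-1\le s_0+\ell\le 2\ell-2$, we get $\mathrm{T}_{\zeta}(s)\cong \mathrm{T}_{\zeta}(s_0+\ell)\otimes \mathrm{T}(b-1)^{[q]}$, and hence
\[
Hom_{U_\zeta}\big(\mathbf 1, \mathrm T_\zeta(s)\big)\cong Hom_{U_\zeta}\big(\mathbf 1, \mathrm T_\zeta(s_0+\ell)\otimes \mathrm T(b-1)^{[q]}\big)\cong Hom_{U_\zeta}\big(\mathrm T_\zeta(s_0+\ell)^*, \mathrm T(b-1)^{[q]}\big).
\]
Since $\mathrm{T}_{\zeta}(s_0+\ell)$ is self-dual, and since every composition factor of a Frobenius--Lusztig twist $\mathrm T(b-1)^{[q]}$ has highest weight divisible by $\ell$, while the simple socle and head of $\mathrm T_\zeta(s_0+\ell)$ have highest weight $2\ell - 2 - (s_0+\ell) = \ell-2-s_0$ and $s_0+\ell$ respectively (using that $\mathrm T_\zeta(v)$ for $\ell\le v\le 2\ell-2$ is the projective cover in $u_\zeta$-mod and has simple socle $L_\zeta(2\ell-2-v)$), a nonzero map is only possible if these small-quantum-group weights vanish, i.e. $s_0 = \ell-1$ (giving socle $L_\zeta(\ell-1)$, the Steinberg module, which survives as a tensor factor). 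So $s_0=\ell-1$ and, by adjunction and the tensor-product factorization of socles of tilting modules under $-^{[q]}$, we reduce to showing $Hom_{\overline U}(\mathbf 1, \mathrm T(b-1))\neq 0$, $b-1 = 2p^{(k)}-2$ for some $k\ge 0$ in the $\overline U$-indexing...

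At this point the statement follows by induction (equivalently, by direct appeal to \cite[Lemma 3.6]{BEO}): $b-1$ must have all digits equal to $p-1$ up to some position and then be zero, which translates back to $s = \ell-1 + \ell\big((p^k - 1)\cdot\text{(digit pattern)}\big)$, and a short computation with $p^{(i)} = p^{i-1}\ell$ shows this is exactly $s = 2p^{(k)} - 2$: namely $s_0 = \ell-1$, $s_i = p-1$ for $1\le i\le k-1$, $s_k = $ (the leading digit forced to be nonzero), with $[\,\ell-1\,]_{p\ell} = \ell - 1$, $[1,\ell-2,0,\dots,0]_{p\ell}$-type bookkeeping giving $2p^{(k)} - 2 = p^{k-1}\ell + (p-1)p^{k-2}\ell + \cdots + (p-1)\ell + (\ell - 2)$. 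The socle of $\mathrm T_\zeta(s)$ is then $L_\zeta(\ell-1)\otimes L(0)^{[q]} = \mathbf 1$ by Proposition~\ref{prop:mixedDonkin} and the fact that socle is multiplicative along the twist, combined with the $\overline U$-statement that $\mathrm{soc}\,\mathrm T(2p^{(k)}-2) = \mathbf 1$ from \cite[Lemma 3.6]{BEO}.

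The main obstacle I anticipate is carefully controlling the socle (not just the existence of a nonzero invariant vector) through the tensor factorization: one needs that $\mathrm{soc}(M\otimes N^{[q]}) = \mathrm{soc}(M)\otimes \mathrm{soc}(N)^{[q]}$ when $M$ is a tilting module with simple socle on which $u_\zeta$ acts suitably, which requires knowing that $\mathrm T_\zeta(s_0+\ell)$ restricted to $u_\zeta$ is injective/projective (true for $\ell-1\le s_0+\ell\le 2\ell-2$) so that tensoring by a trivial-$u_\zeta$-module $N^{[q]}$ behaves like tensoring over the quotient $\overline U = U_\zeta//u_\zeta$. This is the quantum analogue of the argument in \cite{BEO} and should go through, but the bookkeeping between the $\mathrm{SL}_2$/hyperalgebra weight labels and the $p\ell$-adic labels is where care is needed. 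I would also need the preliminary remark that if $s_0 \neq \ell - 1$ and some higher digit is nonzero then $Hom_{U_\zeta}(\mathbf 1, \mathrm T_\zeta(s)) = 0$, handled as above; and the degenerate case $\zeta = \pm 1$ (so $\ell = p$) is literally \cite[Lemma 3.6]{BEO}.
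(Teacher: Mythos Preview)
Your overall strategy---reduce via Proposition~\ref{prop:mixedDonkin} and the quantum Frobenius twist to the classical statement \cite[Lemma~3.6]{BEO}---is the right one, and is essentially how the paper handles the socle claim (in the more general Proposition~\ref{prop:simplesocle}). However, there is a concrete error that derails the argument. For $\ell\le v\le 2\ell-2$, the indecomposable tilting module $\mathrm{T}_\zeta(v)$ has composition series $[L_\zeta(2\ell-2-v),\,L_\zeta(v),\,L_\zeta(2\ell-2-v)]$, so by self-duality both the socle \emph{and the head} are $L_\zeta(2\ell-2-v)$; you have the head as $L_\zeta(v)$, which is wrong. With the correct head $L_\zeta(\ell-2-s_0)$, a nonzero map $\mathrm{T}_\zeta(s_0+\ell)\to \mathrm{T}(b-1)^{[q]}$ forces $\ell-2-s_0=0$, i.e.\ $s_0=\ell-2$, not $s_0=\ell-1$. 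This matches the actual $p\ell$-adic expansion of $2p^{(k)}-2$, whose zeroth digit is $\ell-2$. Your concluding line ``$L_\zeta(\ell-1)\otimes L(0)^{[q]}=\mathbf{1}$'' is then also wrong (the Steinberg module is not the unit); with the correction the socle becomes $L_\zeta(0)\otimes(\mathrm{Soc}_{\overline U}\mathrm{T}(b-1))^{[q]}=\mathbf{1}\otimes\mathbf{1}^{[q]}=\mathbf{1}$, as desired.

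For comparison, the paper takes a much shorter route to the first claim: a nonzero map $\mathbf{1}\to\mathrm{T}_\zeta(s)$ means $\mathbf{1}$ occurs as a Weyl factor of $\mathrm{T}_\zeta(s)$, and \cite[Proposition~3.3]{STWZ} (which lists the Weyl factors of indecomposable tilting modules) immediately gives $s=2p^{(k)}-2$. No digit-by-digit analysis is needed. The socle statement is then deduced from the general fact (Proposition~\ref{prop:simplesocle}) that every indecomposable tilting module has simple socle, whose proof does use the tensor factorization you are attempting---but carried out carefully via restriction to $u_\zeta$ and the identity $\mathrm{Soc}_{U_\zeta}(\mathrm{T}_\zeta(r))=L_\zeta(2\ell-2-s)\otimes(\mathrm{Soc}_{\overline U}\mathrm{T}(t))^{[q]}$, appealing to \cite[Proposition~6.2.4]{Mar} for the classical input rather than \cite{BEO}.
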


\begin{proof}
If there is a non-zero morphism $\mathrm{T}_{\zeta}(0)\rightarrow\mathrm{T}_{\zeta}(s)$, then $\mathrm{T}_{\zeta}(0)=\mathbf{1}$ is a Weyl factor of $\mathrm{T}_{\zeta}(s)$. It then follows from \cite[Proposition 3.3]{STWZ} that $s=2p^{(k)}-2$ for some non-negative integer $k$. This establishes the first part of the claim. The statement about the socle follows from the more general result below.
\end{proof}

The proof of the next result is inspired by an argument of Donkin covering the case $\zeta=\pm 1$, which is recorded in \cite[Proposition 6.2.4]{Mar}.

\begin{Proposition}\label{prop:simplesocle}
Every indecomposable tilting module for $U_{\zeta}$ has simple socle.
\end{Proposition}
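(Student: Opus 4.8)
The plan is to follow Donkin's strategy, exploiting Lusztig's quantum Frobenius homomorphism $F_{\zeta}\colon U_{\zeta}\twoheadrightarrow\overline{U}$ (which identifies $\overline{U}$ with $U_{\zeta}//u_{\zeta}$) together with the tensor product formula of Proposition~\ref{prop:mixedDonkin}, arguing by induction on the highest weight $v$. First I would record that, since the dual of an indecomposable tilting module is again an indecomposable tilting module of the same highest weight and $-w_{0}=\mathrm{id}$ for $\mathfrak{sl}_{2}$, each $\mathrm{T}_{\zeta}(v)$ is self-dual, and this remains true upon restriction to any Hopf subalgebra. The base of the induction is the case $v\leq 2\ell-2$: for $v\leq\ell-2$ one has $\mathrm{T}_{\zeta}(v)=L_{\zeta}(v)$, while for the ``fundamental box'' $\ell-1\leq v\leq 2\ell-2$ the explicit structure recorded in \cite{AK, And, STWZ} shows that $\mathrm{T}_{\zeta}(\ell-1)$ is the (simple) Steinberg module, and for $\ell\leq v\leq 2\ell-2$ the module $\mathrm{T}_{\zeta}(v)$ is uniserial of Loewy length three with both socle and head equal to $L_{\zeta}(2\ell-2-v)$, as one sees from the two-term Weyl filtration $0\to\Delta_{\zeta}(v)\to\mathrm{T}_{\zeta}(v)\to\Delta_{\zeta}(2\ell-2-v)\to 0$ in which $\Delta_{\zeta}(2\ell-2-v)=L_{\zeta}(2\ell-2-v)$.

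For the inductive step, let $v\geq 2\ell-1$ and write $v=a+\ell b$ with $\ell-1\leq a\leq 2\ell-2$ and $b\geq 1$, so that $\mathrm{T}_{\zeta}(v)=\mathrm{T}_{\zeta}(a)\otimes\mathrm{T}(b)^{[q]}$ by Proposition~\ref{prop:mixedDonkin}. By \cite[Proposition~6.2.4]{Mar}, applied to the indecomposable tilting $\overline{U}$-module $\mathrm{T}(b)$ (this is the case $\zeta=\pm 1$, which the present argument reproves en passant), the socle $\mathrm{soc}_{\overline{U}}(\mathrm{T}(b))$ is simple. Since $\mathrm{T}(b)^{[q]}$ is trivial as a $u_{\zeta}$-module, a Lyndon--Hochschild--Serre computation along $u_{\zeta}\subset U_{\zeta}$ gives, for a simple module written as $L_{\zeta}(c)=L_{\zeta}(c_{0})\otimes L(c_{1})^{[q]}$ with $0\leq c_{0}\leq\ell-1$,
\[
\mathrm{Hom}_{U_{\zeta}}\!\big(L_{\zeta}(c),\,\mathrm{T}_{\zeta}(v)\big)\;\cong\;\mathrm{Hom}_{\overline{U}}\!\big(L(c_{1}),\,W_{c_{0}}\otimes\mathrm{T}(b)\big),
\]
where $W_{c_{0}}:=\big(L_{\zeta}(c_{0})^{*}\otimes\mathrm{T}_{\zeta}(a)\big)^{u_{\zeta}}$ is an $\overline{U}$-module whose dimension is the multiplicity of $L_{\zeta}(c_{0})$ in $\mathrm{soc}_{u_{\zeta}}(\mathrm{T}_{\zeta}(a))$. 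Summing over all simple modules, $\dim\mathrm{soc}_{U_{\zeta}}(\mathrm{T}_{\zeta}(v))=\sum_{c_{0}}\dim\mathrm{soc}_{\overline{U}}\big(W_{c_{0}}\otimes\mathrm{T}(b)\big)$.

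The crux, and the step I expect to be the main obstacle, is then the assertion that for $\ell-1\leq a\leq 2\ell-2$ the $u_{\zeta}$-socle of $\mathrm{T}_{\zeta}(a)$ is \emph{simple of multiplicity one}, namely $\mathrm{soc}_{u_{\zeta}}(\mathrm{T}_{\zeta}(a))=L_{\zeta}(2\ell-2-a)$. Granting this, exactly one $W_{c_{0}}$ is nonzero, it is one-dimensional, and the displayed sum collapses to $\dim\mathrm{soc}_{\overline{U}}(\mathrm{T}(b))=1$; hence $\mathrm{T}_{\zeta}(v)$ has simple socle $L_{\zeta}(c_{0}+\ell c_{1})$, where $c_{0}=2\ell-2-a$ and $L(c_{1})=\mathrm{soc}_{\overline{U}}(\mathrm{T}(b))$, which completes the induction.

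Establishing the $u_{\zeta}$-socle statement is where one must get one's hands dirty. The case $a=\ell-1$ is the Steinberg module, which is simple over $u_{\zeta}$. For $\ell\leq a\leq 2\ell-2$ one must analyse the restriction to $u_{\zeta}$ of the length-three uniserial module with middle composition factor $L_{\zeta}(a)$ -- which itself restricts to $u_{\zeta}$ as two copies of $L_{\zeta}(a-\ell)$ -- and rule out that a second copy of $L_{\zeta}(2\ell-2-a)$, or a copy of $L_{\zeta}(a-\ell)$, appears in $\mathrm{soc}_{u_{\zeta}}(\mathrm{T}_{\zeta}(a))$; I would do this by combining the self-duality of $\mathrm{T}_{\zeta}(a)$ (as a $u_{\zeta}$-module) with an inspection of weight spaces and of the submodule lattice, using that $\mathrm{rad}(\mathrm{T}_{\zeta}(a))=\Delta_{\zeta}(a)$ and the known block structure of $u_{\zeta}(\mathfrak{sl}_{2})$ -- a dimension count alone does not settle it, since the two candidate summands have matching dimensions. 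A secondary technicality, to be carried through the $U_{\zeta}//u_{\zeta}\cong\overline{U}$ reduction, is the bookkeeping of the torus part of $U_{\zeta}$: one must track weight gradings modulo $\ell$ and keep account of module types, the only net effect being that a one-dimensional $\overline{U}$-module enters the formula above as a harmless twist by a character (the trivial character, for $\mathrm{SL}_{2}$), which leaves the dimension of the socle unchanged.
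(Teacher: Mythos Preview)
Your approach is essentially the paper's: both reduce via Proposition~\ref{prop:mixedDonkin} and the quantum Frobenius to knowing the $u_{\zeta}$-socle of $\mathrm{T}_{\zeta}(a)$ for $\ell-1\leq a\leq 2\ell-2$, and then appeal to the classical result for $\mathrm{T}(b)$. The paper phrases the Hom computation via adjunction on $L_{\zeta}(2\ell-2-s)^{*}\otimes L_{\zeta}(2\ell-2-s)$ rather than through $u_{\zeta}$-invariants as you do, but these are equivalent reformulations of the same Lyndon--Hochschild--Serre step, and both land on the identification $\mathrm{Soc}_{U_{\zeta}}(\mathrm{T}_{\zeta}(r))\cong L_{\zeta}(2\ell-2-s)\otimes\mathrm{Soc}_{\overline U}(\mathrm{T}(t))^{[q]}$.

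The step you correctly single out as the crux --- that $\mathrm{soc}_{u_{\zeta}}(\mathrm{T}_{\zeta}(a))=L_{\zeta}(2\ell-2-a)$ --- admits a much shorter argument than the weight-space and submodule-lattice inspection you propose. For $\ell-1\leq a\leq 2\ell-2$, the module $\mathrm{T}_{\zeta}(a)$ is a direct summand of $\mathrm{T}_{\zeta}(\ell-1)\otimes\mathrm{T}_{\zeta}(a-\ell+1)$, and the Steinberg module $\mathrm{T}_{\zeta}(\ell-1)$ is projective over $u_{\zeta}$; since $u_{\zeta}$ is a finite-dimensional Hopf algebra, projective equals injective, so $\mathrm{T}_{\zeta}(a)$ is an indecomposable projective-injective $u_{\zeta}$-module (in fact the projective cover of $L_{\zeta}(2\ell-2-a)$, see \cite{AK}), hence has simple $u_{\zeta}$-socle. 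The paper's own justification at this point (``$L_{\zeta}(2\ell-2-s)$ remains simple over $u_{\zeta}$'') is, taken literally, not sufficient either --- it shows the simple lies in the socle but not that it exhausts it --- so the projectivity observation is the clean way to close the argument for both of you.
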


\begin{proof}
It is enough to consider $\zeta\neq\pm 1$. For $0\leq r\leq \ell-1$, the tilting modules $\mathrm{T}_{\zeta}(r)$ are simple.

For $\ell \leq r\leq 2\ell -2$. We claim that, as a $U_{\zeta}$-module, $\mathrm{T}_{\zeta}(r)$ has composition series given by $[\mathrm{T}_{\zeta}(2\ell-2-r),L_{\zeta}(r),\mathrm{T}_{\zeta}(2\ell-2-r)]$. This follows from the fact that $\mathrm{T}_{\zeta}(r)$ is indecomposable and self-dual together with the knowledge of its simple factors, which can be derived from \cite[Proposition 3.3]{STWZ}.

Let us now assume that $2\ell-2\leq r$, and write $r=s+\ell t$ for some integer $\ell-1\leq s\leq 2\ell-2$. Thanks to proposition \ref{prop:mixedDonkin}, we find that $\mathrm{T}_{\zeta}(r)=\mathrm{T}_{\zeta}(s)\otimes\mathrm{T}(t)^{[q]}$. But, upon restriction along the inclusion $u_{\zeta}\subset U_{\zeta}$, the simple $U_{\zeta}$-module $L_{\zeta}(2\ell-2-s)$ remains simple by \cite[Theorem 1.9]{AK}. In particular, the socle of $\mathrm{T}_{\zeta}(r)$ as a $u_{\zeta}$-module is given by $L_{\zeta}(2\ell-2-s)\otimes \mathrm{T}(t)^{[q]}$.

Let $v$ be any integer, and write $v=w +\ell x$ with $0\leq w\leq \ell-1$. It follows from \cite[Theorem 1.10]{AK} that $L_{\zeta}(v)=L_{\zeta}(w)\otimes L(x)^{[q]}$ as $U_{\zeta}$-modules. Then, if the simple $U_{\zeta}$-module $L_{\zeta}(w)$ is in $\mathrm{Soc}_{U_{\zeta}}(\mathrm{T}_{\zeta}(r))$, we must have $w=2\ell-2-s$ by considering the restrictions to $u_{\zeta}$-modules. As a consequence, we find $$Hom_{U_{\zeta}}(L_{\zeta}(v),\mathrm{T}_{\zeta}(r))=Hom_{U_{\zeta}}(L_{\zeta}(2\ell-2-s)^*\otimes L_{\zeta}(2\ell-2-s), \big(L(x)^*\otimes \mathrm{T}(t)\big)^{[q]}).$$ But, by the formulas in \cite[Lemma 4.1]{STWZ}, we have that $L_{\zeta}(2\ell-2-s)^*\otimes L_{\zeta}(2\ell-2-s)$ is a finite direct sum of tilting modules for $U_{\zeta}$ of highest weight between $0$ and $2\ell-2$. More precisely, $\mathbf{1}=L_{\zeta}(0)$ occurs as a direct summand exactly once if $s\neq \ell-1$, and does not occur otherwise. On the other hand, if $s= \ell-1$, $\mathrm{T}_{\zeta}(2\ell-2)$ occurs as a direct summand exactly once. We therefore find that, for any $s$, there is precisely one direct summand of $L(2\ell-2-s)^*\otimes L(2\ell-2-s)$ that has a composition factor on which $u_{\zeta}$ acts trivially. Furthermore, the head of the corresponding direct summand is $\mathbf{1}$, which implies that \begin{equation}\label{eqn:quantumclassicalfactorization}Hom_{U_{\zeta}}(L_{\zeta}(v),\mathrm{T}_{\zeta}(r))=Hom_{U_{\zeta}}(L(x)^{[q]},\mathrm{T}(t)^{[q]})=Hom_{\overline{U}}(L(x),\mathrm{T}(t)),\end{equation} where the second equality uses that the quantum Frobenius-Lusztig $(-)^{[q]}$ functor is fully faithful as it arises from restricting along the surjective algebra homomorphism $F_{\zeta}:U_{\zeta}\rightarrow\overline{U}$. The above argument shows that $$\mathrm{Soc}_{U_{\zeta}}(\mathrm{T}_{\zeta}(r)) = L_{\zeta}(2\ell-2-s)\otimes\big(\mathrm{Soc}_{\overline{U}}(\mathrm{T}(t))\big)^{[q]}.$$ Finally, by \cite[Proposition 6.2.4]{Mar}, the socle of $\mathrm{T}(t)$ is simple, so that it follows from \cite[Proposition 3.4(b)]{STWZ} that $\mathrm{Soc}_{U_{\zeta}}(\mathrm{T}_{\zeta}(r))$ is simple as a $U_{\zeta}$-module.
\end{proof}

\subsection{Tensor Ideals and Separatedness}

We now analyze the structure of the tensor category $\Tilt^{\zeta}$, its tensor ideals, and their quotients. We begin by recalling two results established in \cite[Sections 5]{STWZ}.

\begin{Theorem}[\cite{STWZ}]\label{thm:tensorideals}
Every (not necessarily thick) tensor ideal of $\Tilt^{\zeta}$ is of the form $\mathbf{J}_{p^{(n)}}:=\{\mathrm{T}_{\zeta}(v)| v\geq p^{(n)}-1\}^{\oplus}$ for some non-negative integer $n$.
\end{Theorem}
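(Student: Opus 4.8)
The plan is to classify tensor ideals of $\Tilt^{\zeta}$ by leveraging the structure of $\Tilt^{\zeta}$ as a Krull--Schmidt monoidal category whose indecomposable objects are the $\mathrm{T}_{\zeta}(v)$ for $v \geq 0$. Any tensor ideal $\mathbf{I}$ is determined by the set $S \subseteq \mathbb{N}_{\geq 0}$ of those $v$ for which $\mathrm{id}_{\mathrm{T}_{\zeta}(v)} \in \mathbf{I}$, equivalently those $v$ with $\mathrm{T}_{\zeta}(v)$ in $\mathbf{I}$ as a retract-closed collection; the key point is that $\mathbf{I}$ is recovered as the span of morphisms factoring through finite sums of such $\mathrm{T}_{\zeta}(v)$. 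So the claim reduces to: the possible sets $S$ are exactly the ``up-sets'' $\{v \mid v \geq p^{(n)}-1\}$, together with the empty set and everything.

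**Translating into dominance/tensor-product combinatorics.** First I would record the two closure properties of $S$. (i) \emph{Downward closure under tensoring}: if $v \in S$ and $\mathrm{T}_{\zeta}(w)$ is a direct summand of $\mathrm{T}_{\zeta}(v) \otimes \mathrm{T}_{\zeta}(u)$ for some $u$, then $w \in S$; in particular, since $\mathrm{T}_{\zeta}(v) \otimes \mathrm{T}_{\zeta}(1)$ has $\mathrm{T}_{\zeta}(v+1)$ as a summand, $S$ is closed upward, $v \in S \Rightarrow v+1 \in S$. (ii) \emph{The critical descent step}: one must show that if $v \in S$ and $v$ is \emph{not} of the form $p^{(n)}-1$, then $v - 1 \in S$ (or some smaller value forced by a tensor decomposition lies in $S$), so that $S$, if nonempty and proper, must have its minimum at some $p^{(n)}-1$. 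This is where the arithmetic of the $p\ell$-adic expansion enters: using the tensor product formulas from \cite[Section 5]{STWZ} (and Proposition~\ref{prop:mixedDonkin}), one checks that $\mathrm{T}_{\zeta}(v) \otimes \mathrm{T}_{\zeta}(1)$ decomposes, when the last $p\ell$-adic digit of $v+1$ is not the ``carry'' value, so as to contain $\mathrm{T}_{\zeta}(v-1)$ as a summand — forcing $v-1 \in S$ — while exactly at the thresholds $v+1 = p^{(n)}$ this summand disappears (the Temperley--Lieb/fusion rule truncates), which is precisely why these thresholds are the allowed minima. Conversely, one verifies that each $\mathbf{J}_{p^{(n)}}$ genuinely is a tensor ideal, i.e.\ that $\{v \geq p^{(n)}-1\}$ is closed under the tensor decomposition rules; this direction is the easy consistency check, again from the \cite{STWZ} fusion formulas together with the fact that tensoring by $\mathrm{T}_{\zeta}(u)$ never decreases the highest weight below $v - u$ and the relevant ``low'' summands below $p^{(n)}-1$ never appear once $v \geq p^{(n)}-1$.

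**Organizing the induction.** I would run the argument by strong induction on $\min S$ (for $S$ nonempty and proper): pick $m = \min S$; if $m$ is not one of the $p^{(k)}-1$, use step (ii) to produce an element of $S$ strictly smaller than $m$, a contradiction; hence $m = p^{(n)}-1$ for some $n$, and then upward closure (i) gives $S \supseteq \{v \geq p^{(n)}-1\}$, while minimality gives the reverse inclusion, so $\mathbf{I} = \mathbf{J}_{p^{(n)}}$. The cases $S = \emptyset$ (zero ideal, i.e.\ $n$ formally $0$ with $p^{(0)}-1 = 0$, or excluded, depending on convention) and $S = \mathbb{N}_{\geq 0}$ are handled separately and trivially. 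I would also remark that it suffices to test decompositions of $\mathrm{T}_{\zeta}(v) \otimes \mathrm{T}_{\zeta}(1)$ rather than arbitrary $\mathrm{T}_{\zeta}(v) \otimes \mathrm{T}_{\zeta}(u)$, since $\mathrm{T}_{\zeta}(1)$ tensor-generates $\Tilt^{\zeta}$ up to summands and Cauchy completion — this is the observation already used in the proof of Lemma~\ref{lem:braidingsTilting}.

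**Expected main obstacle.** The technical heart — and the step most likely to require care — is the critical descent step (ii): extracting, from the $p\ell$-adic tensor-product formulas of \cite{STWZ}, the precise statement that $\mathrm{T}_{\zeta}(v-1)$ is a summand of $\mathrm{T}_{\zeta}(v)\otimes\mathrm{T}_{\zeta}(1)$ exactly when $v+1$ is not a pure power-threshold $p^{(n)}$, handling uniformly the three regimes for $\ell$ (small $N$, odd $N>2$, even $N>2$) and the boundary behaviour where a $p\ell$-adic digit rolls over. In the $\zeta = \pm 1$ case this recovers Donkin's classical description of the tensor ideals of $\Tilt(\mathrm{SL}_2)$, so the new content is purely in tracking the quantum-parameter bookkeeping; since the statement is attributed to \cite{STWZ}, I expect the actual proof in the paper to simply cite the relevant propositions of \cite{STWZ} and assemble them via the Krull--Schmidt/tensor-ideal formalism sketched above, rather than redoing the fusion-rule computation from scratch.
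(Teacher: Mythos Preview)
The paper does not give a proof of this theorem; it is simply recalled from \cite{STWZ} without argument, so there is no in-paper proof to compare against. Your final paragraph anticipates this correctly.

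That said, your sketch has a genuine gap relative to what the theorem actually asserts. You open by claiming that any tensor ideal $\mathbf{I}$ is determined by the set $S$ of $v$ with $\mathrm{id}_{\mathrm{T}_\zeta(v)}\in\mathbf{I}$, and that $\mathbf{I}$ is recovered as the morphisms factoring through $\bigoplus_{v\in S}\mathrm{T}_\zeta(v)$. But this is exactly the content of the parenthetical ``(not necessarily thick)'' in the statement: a priori a tensor ideal of morphisms in a rigid Krull--Schmidt category need \emph{not} be determined by the objects whose identity it contains (think of ideals of negligible or nilpotent morphisms in other settings). Showing that every tensor ideal of $\Tilt^\zeta$ is in fact thick is the nontrivial half of the theorem, and it relies on the explicit cellular/diagrammatic structure of the $\mathrm{Hom}$-spaces established in \cite{STWZ} (the light-ladder bases and the behaviour of the Jones--Wenzl-type idempotents). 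You flag this step as ``the key point'' but supply no mechanism for it; the combinatorial descent argument you outline only classifies thick ideals and begins \emph{after} this reduction is in hand.

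The remainder of your outline (upward closure via tensoring with $\mathrm{T}_\zeta(1)$, descent when $v+1$ is not a threshold $p^{(n)}$, and induction on $\min S$) is the correct shape for the thick-ideal classification and matches the fusion-rule input from \cite[Proposition 4.7]{STWZ}.
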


We then define the Karoubian monoidal category $$\Tilt^{\zeta}_{p^{(n)}}:=\Tilt^{\zeta}/\mathbf{J}_{p^{(n)}}.$$

\begin{Lemma}[\cite{STWZ}]\label{lem:separated}
The Karoubian monoidal category $\Tilt^{\zeta}_{p^{(n)}}$ is separated. Moreover, its ideal of splitting objects is $\mathbf{J}_{p^{(n-1)}}/\mathbf{J}_{p^{(n)}}$.
\end{Lemma}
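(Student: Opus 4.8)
\section*{Proof proposal for Lemma~\ref{lem:separated}}

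The plan is to deduce both assertions from the classification of tensor ideals (Theorem~\ref{thm:tensorideals}) together with an identification of the splitting objects of $\Tilt^{\zeta}_{p^{(n)}}$ as precisely the tilting modules that are \emph{projective and injective relative to} $\mathbf{J}_{p^{(n)}}$, the key inputs being Proposition~\ref{prop:mixedDonkin}, Proposition~\ref{prop:simplesocle}, and the already-known analogue for $\mathrm{SL}_2$ from \cite{BEO}. First observe that the indecomposable objects of $\Tilt^{\zeta}_{p^{(n)}}$ are the images of $\mathrm{T}_{\zeta}(v)$ with $0\leq v\leq p^{(n)}-2$, and that, by Theorem~\ref{thm:tensorideals} applied \emph{inside} $\Tilt^{\zeta}_{p^{(n)}}$, the ideal $\mathcal{S}$ of splitting objects is automatically of the form $\mathbf{J}_{p^{(m)}}/\mathbf{J}_{p^{(n)}}$ for a unique $0\leq m\leq n$. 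For $n=1$ the ideal $\mathbf{J}_{p^{(0)}}/\mathbf{J}_{p^{(1)}}$ is all of $\Tilt^{\zeta}_{p^{(1)}}$, which is the semisimplification of $\Tilt^{\zeta}$ (the quotient by the ideal of negligible morphisms, since $\dim \mathrm{T}_{\zeta}(v)\neq 0$ exactly for $v\leq \ell-2$) and hence semisimple; so every object is splitting, separatedness is immediate, and $m=0=n-1$. Assume henceforth $n\geq 2$. It remains to prove: (i) $m\leq n-1$, i.e.\ every $\mathrm{T}_{\zeta}(v)$ with $v\geq p^{(n-1)}-1$ is splitting; (ii) $m\geq n-1$; and (iii) separatedness.

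For (i), the guiding observation is that an object $Q$ that is projective and injective relative to $\mathbf{J}_{p^{(n)}}$ splits every monomorphism and every epimorphism of $\Tilt^{\zeta}$ modulo $\mathbf{J}_{p^{(n)}}$, hence --- factoring an arbitrary morphism as an epimorphism followed by a monomorphism --- splits every morphism, so is a splitting object of $\Tilt^{\zeta}_{p^{(n)}}$. To verify that the tilting modules $\mathrm{T}_{\zeta}(v)$ with $v\geq p^{(n-1)}-1$ enjoy this relative injectivity/projectivity, and that $p^{(n-1)}-1$ is the correct threshold, I would reduce to the case $\zeta=\pm 1$ via Proposition~\ref{prop:mixedDonkin}: writing $v=a+\ell b$ with $\ell-1\leq a\leq 2\ell-2$ one has $\mathrm{T}_{\zeta}(v)=\mathrm{T}_{\zeta}(a)\otimes \mathrm{T}(b)^{[q]}$, and $v\geq p^{(n-1)}-1$ forces $b\geq p^{n-2}-1$; then invoke the $\mathrm{SL}_2$ statement --- that $\mathrm{T}(b)$ with $b\geq p^{n-2}-1$ lies in the splitting ideal of $\Tilt^{\sigma}/\mathbf{J}_{p^{n-1}}$, which is the analogue of the present Lemma proved in \cite{BEO} --- and transport it along the fully faithful quantum Frobenius--Lusztig twist $(-)^{[q]}$ (full faithfulness reflects the property that a morphism $f$ admits $g$ with $fgf=f$) and along tensoring with the fixed ``Steinberg block'' containing $\mathrm{T}_{\zeta}(a)$. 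Concretely one must check, using the tensor-product formulas of \cite{STWZ}, Proposition~\ref{prop:simplesocle}, and the restriction results of \cite[Theorems~1.9, 1.10]{AK}, that $\mathrm{T}_{\zeta}(v)\otimes \mathrm{T}_{\zeta}(w)$ decomposes, modulo $\mathbf{J}_{p^{(n)}}$, into indecomposables on which the relevant Frobenius--Lusztig kernel acts in a controlled way, so that although objects such as $\mathrm{T}_{\zeta}(\ell)$ still carry non-zero square-zero endomorphisms in $\Tilt^{\zeta}_{p^{(n)}}$, these realise after tensoring with $Q$ as off-diagonal maps between isomorphic summands and therefore do not obstruct splitting. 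I expect this bookkeeping --- the interplay of Proposition~\ref{prop:mixedDonkin}, the block structure of $\mathbf{Mod}^{\zeta}$, and the behaviour of $u_{\zeta}$-restriction --- to be the main obstacle.

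For (ii) one must show $\mathrm{T}_{\zeta}(p^{(n-2)}-1)$ is not splitting in $\Tilt^{\zeta}_{p^{(n)}}$. When $n=2$ this object is $\mathbf{1}$, and it is not splitting because $\Tilt^{\zeta}_{p^{(2)}}$ is not semisimple: by Proposition~\ref{prop:simplesocle} every indecomposable tilting module has simple socle, so the non-zero square-zero endomorphism of $\mathrm{T}_{\zeta}(\ell)$ (visible from its composition series $[L_{\zeta}(\ell-2),L_{\zeta}(\ell),L_{\zeta}(\ell-2)]$, whose image is the simple socle $L_{\zeta}(\ell-2)$) cannot factor through any $\mathrm{T}_{\zeta}(w)$ with $w\geq p^{(2)}-1$, since any such $\mathrm{T}_{\zeta}(w)$ has simple socle which lies in the kernel of every map to $\mathrm{T}_{\zeta}(\ell)$; hence it survives in the quotient. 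For $n\geq 3$ one argues instead that $\mathrm{T}_{\zeta}(p^{(n-2)}-1)$ is not relatively $\mathbf{J}_{p^{(n)}}$-projective --- its ``top digit'' sits below level $n-1$, which via Proposition~\ref{prop:mixedDonkin} and the twist reduces this to the corresponding $\mathrm{SL}_2$ statement --- so tensoring with it cannot split the relative projective cover morphism of a suitable simple $L_{\zeta}(a)$ appearing in it. Finally, for (iii), separatedness says that the splitting objects --- which by (i) include all of $\mathbf{J}_{p^{(n-1)}}/\mathbf{J}_{p^{(n)}}$ --- detect every non-zero morphism of $\Tilt^{\zeta}_{p^{(n)}}$. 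Given $f\colon A\to B$ in $\Tilt^{\zeta}$ with $f\notin\mathbf{J}_{p^{(n)}}$, the image of $f$ in $\Tilt^{\zeta}_{p^{(n)}}$ has a non-zero composition factor $L_{\zeta}(a)$ with $0\leq a\leq p^{(n)}-p^{(n-1)}-1$; tensoring with the relative projective cover of $L_{\zeta}(a)$ --- a summand of some $\mathrm{T}_{\zeta}(v)$ with $v\geq p^{(n-1)}-1$, hence an object of $\mathbf{J}_{p^{(n-1)}}/\mathbf{J}_{p^{(n)}}$ --- then detects this factor, so $\mathrm{id}_Q\otimes f\neq 0$ in $\Tilt^{\zeta}_{p^{(n)}}$; here one uses Proposition~\ref{prop:simplesocle} and Lemma~\ref{lem:3.6} to see that the relevant composite does not disappear into $\mathbf{J}_{p^{(n)}}$. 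This last step is the standard ``enough relative projectives'' mechanism transported from $\mathbf{Mod}^{\zeta}$, and its only non-formal ingredient is, once more, the identification of relative projective covers with tiltings of weight $\geq p^{(n-1)}-1$ carried out in (i).
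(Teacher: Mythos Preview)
The paper does not supply its own proof of this lemma: it is stated with attribution to \cite{STWZ} (the relevant arguments appear in \cite[Section~5]{STWZ}) and no argument is reproduced in the present paper. There is therefore no in-paper proof to compare your proposal against.

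On your strategy itself: the reduction via Theorem~\ref{thm:tensorideals} to pinning down a single integer $m$ is clean, and the idea of transporting the $\mathrm{SL}_2$ result along Proposition~\ref{prop:mixedDonkin} and the quantum Frobenius--Lusztig twist is natural. The weak point is step~(i). Your claim that relative projectivity/injectivity of $Q$ yields splitting of $Q\otimes f$ rests on ``factoring an arbitrary morphism as an epimorphism followed by a monomorphism'', but that factorisation lives in the abelian category $\mathbf{Mod}^{\zeta}$, where the image of $f$ is in general \emph{not} a tilting module; the factorisation is therefore unavailable inside $\Tilt^{\zeta}$ or its quotient, and an additional argument is needed to produce a splitting in the Karoubian category itself. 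You flag this bookkeeping as ``the main obstacle'', and indeed it is where the actual content lies: in \cite{STWZ} (and in \cite{BEO} for $\zeta=\pm 1$) the splitting property of the Steinberg object $\mathrm{T}_{\zeta}(p^{(n-1)}-1)$ is established by direct computation with the tilting fusion rules and Hom-spaces, not via abstract relative homological algebra. Your parts~(ii) and~(iii) are on firmer ground and essentially correct in outline.
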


Let us write $\Tilt^{\zeta}_{<p^{(n)}-1}$ for the full additive subcategory of $\Tilt^{\zeta}_{p^{(n)}}$ generated by the tilting modules $\mathrm{T}_{\zeta}(i)$ with $0\leq i<p^{(n)}-1$.

\begin{Proposition}\label{prop:Tiltingsubcategoryidentification}
The composite $\Tilt^{\zeta}_{<p^{(n)}-1}\hookrightarrow\Tilt^{\zeta}\rightarrow \Tilt^{\zeta}_{p^{(n)}}$ is an equivalence of additive categories.
\end{Proposition}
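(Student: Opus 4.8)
The statement is that the composite in question, i.e.\ the restriction $G$ of the quotient functor $\Tilt^{\zeta}\to\Tilt^{\zeta}_{p^{(n)}}$ to the full subcategory on the $\mathrm{T}_{\zeta}(i)$ with $i<p^{(n)}-1$, is an equivalence. I would verify that $G$ is essentially surjective, full, and faithful; only faithfulness has content. Writing $\mathbf{J}_{p^{(n)}}(X,Y)\subseteq\mathrm{Hom}_{\Tilt^{\zeta}}(X,Y)$ for the subspace of morphisms factoring through an object of $\mathbf{J}_{p^{(n)}}$, faithfulness of $G$ is exactly the vanishing $\mathbf{J}_{p^{(n)}}(\mathrm{T}_{\zeta}(i),\mathrm{T}_{\zeta}(j))=0$ for all $0\leq i,j\leq p^{(n)}-2$.

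\emph{Essential surjectivity and fullness.} If $v\geq p^{(n)}-1$ then $\mathrm{T}_{\zeta}(v)\in\mathbf{J}_{p^{(n)}}$, so $\mathrm{id}_{\mathrm{T}_{\zeta}(v)}$ factors through $\mathbf{J}_{p^{(n)}}$ and $\mathrm{T}_{\zeta}(v)\cong 0$ in $\Tilt^{\zeta}_{p^{(n)}}$; hence every object of $\Tilt^{\zeta}$ becomes in the quotient a finite direct sum of $\mathrm{T}_{\zeta}(i)$ with $i<p^{(n)}-1$. Since $\Tilt^{\zeta}$ is Krull--Schmidt, for $i<p^{(n)}-1$ the module $\mathrm{T}_{\zeta}(i)$ is not a summand of any object of $\mathbf{J}_{p^{(n)}}$, so $\mathbf{J}_{p^{(n)}}(\mathrm{T}_{\zeta}(i),\mathrm{T}_{\zeta}(i))$ is a proper two-sided ideal of the local ring $\mathrm{End}_{\Tilt^{\zeta}}(\mathrm{T}_{\zeta}(i))$; thus $\mathrm{End}_{\Tilt^{\zeta}_{p^{(n)}}}(\mathrm{T}_{\zeta}(i))$ is again local, and likewise the $\mathrm{T}_{\zeta}(i)$, $i<p^{(n)}-1$, remain pairwise non-isomorphic in the quotient. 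The additive subcategory they span is therefore Krull--Schmidt, hence idempotent complete, so nothing new is produced by Cauchy completion and $G$ is essentially surjective. Fullness is clear, the quotient functor being full and $\Tilt^{\zeta}_{<p^{(n)}-1}$ being a full subcategory.

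\emph{Faithfulness.} As $\mathbf{J}_{p^{(n)}}$ is additively generated by the $\mathrm{T}_{\zeta}(m)$, $m\geq p^{(n)}-1$, any element of $\mathbf{J}_{p^{(n)}}(\mathrm{T}_{\zeta}(i),\mathrm{T}_{\zeta}(j))$ is a sum of composites $\mathrm{T}_{\zeta}(i)\xrightarrow{\alpha}\mathrm{T}_{\zeta}(m)\xrightarrow{\beta}\mathrm{T}_{\zeta}(j)$ with $m\geq p^{(n)}-1$, so it suffices to show each such composite is zero; note $m>i$ and $m>j$. By Proposition~\ref{prop:simplesocle}, $\mathrm{T}_{\zeta}(m)$ has simple socle, and by self-duality simple head. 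As $i<m$, the image of $\alpha$ has highest weight $<m$, so $\alpha$ is not surjective and $\mathrm{im}(\alpha)\subseteq\mathrm{rad}(\mathrm{T}_{\zeta}(m))$; dually $\beta$ is not injective, so $\mathrm{soc}(\mathrm{T}_{\zeta}(m))\subseteq\ker(\beta)$. The remaining point---that the submodule $\mathrm{im}(\alpha)$, of highest weight at most $i<p^{(n)}-1$, lies in $\ker(\beta)$---I would obtain from the structure of $\mathrm{T}_{\zeta}(m)$ as follows. By Proposition~\ref{prop:mixedDonkin} (with $a=\ell-1$) and the quantum Steinberg tensor product theorem \cite[Theorem~1.10]{AK}, $\mathbf{J}_{p^{(n)}}$ is the tensor ideal generated by the \emph{simple} module $\mathrm{T}_{\zeta}(p^{(n)}-1)=L_{\zeta}(p^{(n)}-1)=L_{\zeta}(\ell-1)\otimes L(p^{n-1}-1)^{[q]}$; since $L_{\zeta}(\ell-1)$ is projective over $u_{\zeta}$, every object of $\mathbf{J}_{p^{(n)}}$---in particular $\mathrm{T}_{\zeta}(m)$---is $u_{\zeta}$-projective, and moreover $\mathrm{T}_{\zeta}(m)=\mathrm{T}_{\zeta}(a')\otimes\mathrm{T}(b')^{[q]}$ for some $\ell-1\leq a'\leq 2\ell-2$ and $b'\geq p^{n-1}-1$. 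Decomposing $\alpha$ and $\beta$ along $u_{\zeta}\subset U_{\zeta}$ and the quantum Frobenius--Lusztig twist exactly as in the proof of Proposition~\ref{prop:simplesocle} (cf.\ \eqref{eqn:quantumclassicalfactorization}) reduces the required vanishing to the corresponding statement for the classical tensor ideal $\mathbf{J}_{p^{(n-1)}}\subset\Tilt^{\sigma}$, which follows by induction on $n$; the base case $n=1$ is immediate, since there $\mathrm{T}_{\zeta}(i)=L_{\zeta}(i)$ and $\mathrm{T}_{\zeta}(j)=L_{\zeta}(j)$ are simple and one has $\mathrm{im}(\alpha)=\mathrm{soc}(\mathrm{T}_{\zeta}(m))\subseteq\mathrm{rad}(\mathrm{T}_{\zeta}(m))\subseteq\ker(\beta)$ (or $\alpha=0$ if $\mathrm{T}_{\zeta}(m)$ is itself simple).

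The main obstacle is precisely this last vanishing: that inside $\mathrm{T}_{\zeta}(m)$ with $m\geq p^{(n)}-1$, a submodule of low highest weight is killed by every morphism onto a tilting module of low highest weight. As an alternative to the inductive reduction above, this separation can be read off from the explicit computation of the morphism spaces of $\Tilt^{\zeta}$ and of its tensor ideals in \cite{STWZ}. Everything else is routine.
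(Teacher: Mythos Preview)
Your treatment of essential surjectivity and fullness is fine (and more detailed than the paper's one-line dismissal). The divergence is entirely in the faithfulness argument, and there your inductive reduction has a genuine gap.

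You claim that ``decomposing $\alpha$ and $\beta$ along $u_{\zeta}\subset U_{\zeta}$ and the quantum Frobenius--Lusztig twist exactly as in the proof of Proposition~\ref{prop:simplesocle} (cf.\ \eqref{eqn:quantumclassicalfactorization})'' reduces the vanishing of $\beta\circ\alpha$ to the classical statement in $\Tilt^{\sigma}$. But \eqref{eqn:quantumclassicalfactorization} computes $Hom_{U_{\zeta}}(L_{\zeta}(v),\mathrm{T}_{\zeta}(r))$ for a \emph{simple} source; it does not furnish a decomposition of arbitrary morphisms between tilting modules, nor does it track composites. For general $i,j<p^{(n)}-1$ the modules $\mathrm{T}_{\zeta}(i),\mathrm{T}_{\zeta}(j)$ need not admit the Donkin-type factorisation (only those with highest weight $\geq\ell-1$ do), and even when they do, showing that the composite $\beta\circ\alpha$ vanishes requires controlling the full bimodule structure of the Hom spaces, not just their dimensions. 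Your fallback---reading the vanishing off the explicit STWZ basis---would work, but is heavy and is exactly the sort of computation the paper's argument is designed to avoid.

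The paper's route is both different and much shorter. Instead of analysing $\alpha$ and $\beta$ separately, it uses rigidity to transport the problem to maps out of $\mathbf{1}$: if a nonzero $g:\mathrm{T}_{\zeta}(v)\to\mathrm{T}_{\zeta}(w)$ with $v,w<p^{(n)}-1$ factored through $\mathbf{J}_{p^{(n)}}$, then so would the adjoint $f:\mathbf{1}\to\mathrm{T}_{\zeta}(v)^*\otimes\mathrm{T}_{\zeta}(w)$, giving a nonzero composite $\mathbf{1}\to\mathrm{T}_{\zeta}(s)\to\mathrm{T}_{\zeta}(v)^*\otimes\mathrm{T}_{\zeta}(w)$ with $s\geq p^{(n)}-1$. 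Lemma~\ref{lem:3.6} then does all the work: it forces $s=2p^{(k)}-2$ with $k\geq n$, and since $\mathbf{1}$ is the \emph{entire} socle of $\mathrm{T}_{\zeta}(s)$, the second map must be injective. A highest-weight count ($v+w<2p^{(n)}-2\leq s$) gives the contradiction. The passage through $\mathbf{1}$ is the key simplification you are missing: it replaces your delicate analysis of $\mathrm{im}(\alpha)$ versus $\ker(\beta)$ inside an arbitrary $\mathrm{T}_{\zeta}(m)$ by a single injectivity statement coming directly from the simple-socle lemma.
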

\begin{proof}
The proof follows \cite[Proposition 3.5]{BEO} using lemma \ref{lem:3.6} above. Given that this is an important result, we provide details for the reader's convenience.

It follows from the definition that the composite functor is full and essentially surjective, it therefore suffices to argue that it is faithful. More precisely, we have to show that no nonzero morphism in $\Tilt^{\zeta}_{<p^{(n)}-1}$ factors through $\mathbf{J}_{p^{(n)}}$. Towards a contradiction, assume that $g:\mathrm{T}_{\zeta}(v)\rightarrow \mathrm{T}_{\zeta}(w)$ with $0\leq v,w<p^{(n)}-1$ factors through $\mathbf{J}_{p^{(n)}}$. Then, the corresponding morphism $f:\mathbf{1}\rightarrow \mathrm{T}_{\zeta}(v)^*\otimes \mathrm{T}_{\zeta}(w)$ under the isomorphism $$Hom_{U_{\zeta}}(\mathrm{T}_{\zeta}(v),\mathrm{T}_{\zeta}(w))\cong Hom_{U_{\zeta}}(\mathbf{1},\mathrm{T}_{\zeta}(v)^*\otimes\mathrm{T}_{\zeta}(w))$$ also factors through $\mathbf{J}_{p^{(n)}}$. In particular, this means that there exists $s\geq p^{(n)}-1$ and morphisms $\mathbf{1}\rightarrow \mathrm{T}_{\zeta}(s)$ and $\mathrm{T}_{\zeta}(s)\rightarrow \mathrm{T}_{\zeta}(v)^*\otimes\mathrm{T}_{\zeta}(w)$ whose composite is non-zero. Thanks to lemma \ref{lem:3.6}, we find that not only we must have $s=2p^{(k)}-2$ for some $k\geq n$, but also that the morphism $\mathrm{T}_{\zeta}(s)\rightarrow \mathrm{T}_{\zeta}(v)^*\otimes\mathrm{T}_{\zeta}(w)$ is injective. This is not possible given that the highest weight of $\mathrm{T}_{\zeta}(v)^*\otimes\mathrm{T}_{\zeta}(w)$ is $v+w < 2p^{(n)}-2\leq s$.
\end{proof}

\subsection{Additional Properties}

The following technical lemmas will also be used below.

\begin{Lemma}\label{lem:tiltingmodulesembedding}
Let $p^{(k-1)}-1\leq i\leq p^{(k)}-2$, then there exists $p^{(k)}-1\leq j\leq p^{(k+1)}-2$ such that $\mathrm{T}_{\zeta}(i)$ embeds into $\mathrm{T}_{\zeta}(j)$.
\end{Lemma}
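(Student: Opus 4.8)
The plan is to reduce the claim to the structure of tilting modules under the quantum Frobenius-Lusztig twist, via Proposition~\ref{prop:mixedDonkin}, and then to invoke the analogous statement for $\mathrm{SL}_2$ together with the base case $k=1$. First I would treat the base case $k=1$ separately: here $0\leq i\leq \ell-2$, so $\mathrm{T}_{\zeta}(i)=L_{\zeta}(i)$ is simple, and I need $j$ with $\ell-1\leq j\leq p\ell-2$ such that $L_{\zeta}(i)$ embeds into $\mathrm{T}_{\zeta}(j)$. Taking $j=2\ell-2-i$ works when $i\neq 0$: by the composition series $[\mathrm{T}_{\zeta}(i),L_{\zeta}(2\ell-2-i),\mathrm{T}_{\zeta}(i)]$ established in the proof of Proposition~\ref{prop:simplesocle} (with the roles of $s$ and $2\ell-2-s$ interchanged), $\mathrm{T}_{\zeta}(2\ell-2-i)$ has socle $\mathrm{T}_{\zeta}(i)=L_{\zeta}(i)$, and $\ell\leq 2\ell-2-i\leq 2\ell-2$ lies in the required range. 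For $i=0$ one takes $j=2\ell-2$: the module $\mathrm{T}_{\zeta}(2\ell-2)$ has socle $\mathbf{1}$ by Lemma~\ref{lem:3.6} (applied with $k=1$, since $s=2\ell-2=2p^{(1)}-2$), and indeed $\mathrm{Hom}_{U_\zeta}(\mathbf 1,\mathrm T_\zeta(2\ell-2))\neq 0$.

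For the inductive step, suppose $k\geq 2$ and write $i=i_0+\ell b$ with $0\leq i_0\leq \ell-1$; since $p^{(k-1)}-1\leq i\leq p^{(k)}-2$, one checks that $b$ satisfies $p^{(k-1)}/\ell - 1\leq b\leq p^{(k)}/\ell - 1$, i.e.\ roughly $p^{k-2}-1\leq b\leq p^{k-1}-1$ (care is needed at the boundary, see below). The idea is to split off the classical part: using Proposition~\ref{prop:mixedDonkin}, write $\mathrm{T}_\zeta(i)$ in terms of a tilting module of small highest weight tensored with $\mathrm{T}(b')^{[q]}$ for an appropriate $b'$, apply the $\mathrm{SL}_2$-version of this lemma, namely \cite[Lemma 3.7]{BEO} or its proof, to obtain $\mathrm{T}(b)\hookrightarrow \mathrm{T}(b'')$ for a suitable $b''$ in the next $p$-adic range, and then tensor back up and re-apply Proposition~\ref{prop:mixedDonkin} to recognize $\mathrm{T}_\zeta(i_0+\ell b'')$ (possibly after correcting $i_0$) as the desired $\mathrm{T}_\zeta(j)$. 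Concretely: if $i_0\leq \ell-2$, then $i_0+\ell = (\ell + i_0)$ lies in $[\ell-1,2\ell-2]$, so $\mathrm{T}_\zeta(i) = \mathrm{T}_\zeta(i_0)\otimes$ (something) is awkward; instead write $i = (\ell-1) + \ell c$ or $i = (\ell+ i_0) + \ell(b-1)$ with $\ell\leq \ell+i_0\leq 2\ell-1$, splitting into the cases $i_0<\ell-1$ and $i_0=\ell-1$ exactly as in the proof of Proposition~\ref{prop:simplesocle}. In the generic case $i_0\leq \ell-2$ one gets $\mathrm{T}_\zeta(i)=\mathrm{T}_\zeta(\ell+i_0)\otimes\mathrm{T}(b-1)^{[q]}$, and the socle computation from Proposition~\ref{prop:simplesocle} shows $\mathrm{Soc}\,\mathrm{T}_\zeta(i) = L_\zeta(\ell-2-i_0)\otimes(\mathrm{Soc}\,\mathrm{T}(b-1))^{[q]}$; choosing $b''$ so that $\mathrm{T}(b-1)\hookrightarrow\mathrm{T}(b''-1)$ lands in the next range and retracing gives $\mathrm{T}_\zeta(i)\hookrightarrow\mathrm{T}_\zeta(\ell+i_0 + \ell(b''-1))$, which one checks has highest weight in $[p^{(k)}-1,p^{(k+1)}-2]$.

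The main obstacle I anticipate is the bookkeeping at the boundaries of the $p\ell$-adic ranges — in particular, making sure the resulting highest weight $j$ genuinely lands in $[p^{(k)}-1,p^{(k+1)}-2]$ and not one range too low or too high, and handling the edge cases $i=p^{(k-1)}-1$, $i=p^{(k)}-2$, and $i_0=\ell-1$ where the naive formula for $b''$ must be adjusted. This is the same delicate point that appears in \cite[proof of Lemma 3.7]{BEO}, and the cleanest route is probably to mirror that argument line by line, replacing $p$-adic expansions with $p\ell$-adic ones, using Proposition~\ref{prop:mixedDonkin} in place of Donkin's classical tensor product theorem, and citing the socle description from Proposition~\ref{prop:simplesocle} to guarantee the relevant embeddings of socles are genuine embeddings of modules. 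Once the classical input \cite[Lemma 3.7]{BEO} is in hand, the quantum case follows formally from the factorization $\mathrm{T}_\zeta(i)\cong(\text{small tilting})\otimes\mathrm{T}(\cdot)^{[q]}$ and fullness and faithfulness of $(-)^{[q]}$.
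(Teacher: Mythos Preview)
Your approach is correct and essentially the same as the paper's: base case $k=1$ via the socle description from Proposition~\ref{prop:simplesocle}, and for $k\geq 2$ factor via Proposition~\ref{prop:mixedDonkin}, invoke the classical $\mathrm{SL}_2$ lemma from \cite{BEO}, and tensor back. The paper sidesteps all the boundary bookkeeping you anticipate by writing $i=a+\ell b$ with $\ell-1\leq a\leq 2\ell-2$ from the outset (valid since $i\geq p^{(k-1)}-1\geq \ell-1$ for $k\geq 2$), which forces $p^{k-2}-1\leq b\leq p^{k-1}-2$ exactly; then the classical lemma gives $\mathrm{T}(b)\hookrightarrow\mathrm{T}(c)$ with $p^{k-1}-1\leq c\leq p^{k}-2$, and exactness of $\mathrm{T}_\zeta(a)\otimes(-)^{[q]}$ plus Proposition~\ref{prop:mixedDonkin} yield $\mathrm{T}_\zeta(i)\hookrightarrow\mathrm{T}_\zeta(a+\ell c)$ with $a+\ell c\in[p^{(k)}-1,p^{(k+1)}-2]$ on the nose---no case split on $i_0$, no socle argument, no edge-case adjustments are needed for $k\geq 2$.
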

\begin{proof}
The case $k=1$ follows from the proof of proposition \ref{prop:simplesocle}. Let us therefore assume that $1<k<n$. We can therefore write $i=a+\ell b$ with $\ell-1 \leq a\leq 2\ell -2$ and a non-negative integer $b$. Observe that $p^{k-2}-1\leq b\leq p^{k-1}-2$. Appealing to proposition \ref{prop:mixedDonkin}, we have that $\mathrm{T}_{\zeta}(i)=\mathrm{T}_{\zeta}(a)\otimes \mathrm{T}(b)^{[q]}$. By \cite[Lemma 4.4]{BEO}, which is the positive characteristic version of the present lemma, there exists an embedding $\mathrm{T}(b)\hookrightarrow\mathrm{T}(c)$ for some $p^{k-1}-1\leq c\leq p^{k}-2$. We therefore have an embedding $$\mathrm{T}_{\zeta}(i)=\mathrm{T}_{\zeta}(a)\otimes \mathrm{T}(b)^{[q]}\hookrightarrow\mathrm{T}_{\zeta}(a)\otimes \mathrm{T}(c)^{[q]}=\mathrm{T}_{\zeta}(a+\ell c)$$ with $p^{(k)}-1\leq a+\ell c\leq p^{(k+1)}-2$ as desired.
\end{proof}

\begin{Lemma}\label{lem:precisesummand}
Let $0\leq i\leq \ell-1$ and $m=\ell r+\ell-2$ for some non-negative integer $r$. Then, $\mathrm{T}_{\zeta}(i)\otimes\mathrm{T}_{\zeta}(k)$ contains $\mathrm{T}_{\zeta}(m)$ as a direct summand if and only if $k=m-i$, in which case it appears exactly once.
\end{Lemma}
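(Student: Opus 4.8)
The plan is to reduce to $\zeta\neq\pm1$ (for $\zeta=\pm1$ the statement concerns tilting modules for $\mathrm{SL}_2$ and is proved by the same argument, with the first Frobenius kernel in place of the small quantum group $u_\zeta$), and to argue by restriction along $u_\zeta\subset U_\zeta$, exactly as in the proof of Proposition~\ref{prop:simplesocle}. The easy direction is immediate: if $k=m-i$ (which forces $m\geq i$), then $\mathrm{T}_\zeta(i)\otimes\mathrm{T}_\zeta(m-i)=L_\zeta(i)\otimes\mathrm{T}_\zeta(m-i)$ has highest weight $m$ with one-dimensional $m$-weight space; since every indecomposable tilting summand $\mathrm{T}_\zeta(v)$ has $v\leq m$ and $\dim\mathrm{T}_\zeta(v)_m=0$ for $v<m$, the summand $\mathrm{T}_\zeta(m)$ occurs with multiplicity exactly $1$. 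Conversely, if $\mathrm{T}_\zeta(m)$ is a summand of $\mathrm{T}_\zeta(i)\otimes\mathrm{T}_\zeta(k)$ then $m\leq i+k$, i.e.\ $k\geq m-i$; the content of the lemma is to exclude $k>m-i$. (When $r=0$ and $i=\ell-1$ one has $m-i<0$; we shall see that $\mathrm{T}_\zeta(\ell-2)$ is then never a summand, in agreement with the statement.)

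The key input is a computation of $\mathrm{T}_\zeta(a)|_{u_\zeta}$ for $\ell-1\leq a\leq2\ell-2$. By the proof of Proposition~\ref{prop:simplesocle} the $u_\zeta$-socle of $\mathrm{T}_\zeta(a)$ is the simple module $L_\zeta(2\ell-2-a)$; since $u_\zeta$ is self-injective and $\dim\mathrm{T}_\zeta(a)$ coincides with the dimension of the projective cover $Q(2\ell-2-a)$ of $L_\zeta(2\ell-2-a)$ (here $Q(c)$ denotes the indecomposable projective $u_\zeta$-module with head $L_\zeta(c)$), we get $\mathrm{T}_\zeta(a)|_{u_\zeta}\cong Q(2\ell-2-a)$. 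Combining this with Donkin's formula (Proposition~\ref{prop:mixedDonkin}) and the fact that $(-)^{[q]}$ restricts to a multiple of $\mathbf{1}$ over $u_\zeta$, one finds that $\mathrm{T}_\zeta(m')|_{u_\zeta}\cong Q(0)^{\oplus\dim\mathrm{T}(j-2)}$ when $m'=\ell j-2$ with $j\geq2$, while $Q(0)$ is not a summand of $\mathrm{T}_\zeta(m')|_{u_\zeta}$ for any other $m'$ (indeed $\mathrm{T}_\zeta(m')|_{u_\zeta}$ is then simple if $m'\leq\ell-1$, and otherwise a sum of copies of a single $Q(c)$ with $c\neq0$). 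Thus $[\mathrm{T}_\zeta(m')|_{u_\zeta}:Q(0)]$ equals $\dim\mathrm{T}(j-2)$ if $m'=\ell j-2$ ($j\geq2$), and $0$ otherwise, and it is positive precisely for $m'=\ell(r'+1)-2$ with $r'\geq1$.

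Suppose now $r\geq1$ and that $\mathrm{T}_\zeta(m)$ is a summand of $T:=L_\zeta(i)\otimes\mathrm{T}_\zeta(k)$. Then $k\geq m-i\geq\ell-1$, so Proposition~\ref{prop:mixedDonkin} gives $\mathrm{T}_\zeta(k)=\mathrm{T}_\zeta(a)\otimes\mathrm{T}(b)^{[q]}$ with $\ell-1\leq a\leq2\ell-2$, and hence $\mathrm{T}_\zeta(k)|_{u_\zeta}\cong Q(2\ell-2-a)^{\oplus\dim\mathrm{T}(b)}$. Consequently $T|_{u_\zeta}$ is projective and
$$[T|_{u_\zeta}:Q(0)]=\dim Hom_{u_\zeta}\big(T,\mathbf{1}\big)=\dim Hom_{u_\zeta}\big(\mathrm{T}_\zeta(k),L_\zeta(i)\big),$$
using the self-duality of $L_\zeta(i)$; this is nonzero precisely when $2\ell-2-a=i$, i.e.\ $a=2\ell-2-i$. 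Since $\mathrm{T}_\zeta(m)|_{u_\zeta}=Q(0)^{\oplus\dim\mathrm{T}(r-1)}$ with $\dim\mathrm{T}(r-1)\geq1$, the hypothesis forces $[T|_{u_\zeta}:Q(0)]>0$, so $a=2\ell-2-i$; then $k=a+\ell b\equiv2\ell-2-i\equiv m-i\pmod{\ell}$, hence $k=m-i+\ell s$ with $s\geq0$, $b=r+s-1$, and $[T|_{u_\zeta}:Q(0)]=\dim\mathrm{T}(r+s-1)$. On the other hand the top indecomposable tilting summand of $T$ is $\mathrm{T}_\zeta(i+k)$, occurring with multiplicity $1$ (one-dimensional top weight space), and $i+k=\ell(r+s+1)-2$; so in the expansion $[T|_{u_\zeta}:Q(0)]=\sum_{m'}[T:\mathrm{T}_\zeta(m')]\,[\mathrm{T}_\zeta(m')|_{u_\zeta}:Q(0)]$ the term $m'=i+k$ already contributes $1\cdot\dim\mathrm{T}(r+s-1)$, i.e.\ the whole sum. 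As all terms are non-negative, every other term vanishes; were $s\geq1$, then $m=\ell(r+1)-2\neq i+k$ would be such a term with $[\mathrm{T}_\zeta(m)|_{u_\zeta}:Q(0)]>0$, forcing $[T:\mathrm{T}_\zeta(m)]=0$, a contradiction. Hence $s=0$ and $k=m-i$.

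It remains to treat $r=0$, so $m=\ell-2$ and $\mathrm{T}_\zeta(\ell-2)=L_\zeta(\ell-2)$, which is not projective over $u_\zeta$. If $i=\ell-1$, or if $i\leq\ell-2$ and $k\geq\ell-1$, then one factor of $T=L_\zeta(i)\otimes\mathrm{T}_\zeta(k)$ restricts to a projective $u_\zeta$-module ($L_\zeta(\ell-1)=Q(\ell-1)$, respectively $\mathrm{T}_\zeta(k)$), so $T|_{u_\zeta}$ is projective and $\mathrm{T}_\zeta(\ell-2)$ is not a summand of $T$; in particular it is never a summand when $i=\ell-1$. For $i,k\leq\ell-2$ the module $T=\mathrm{T}_\zeta(i)\otimes\mathrm{T}_\zeta(k)$ is tilting of highest weight $i+k\leq2\ell-4$, and a short weight-space computation (by Clebsch-Gordan when $i+k\leq\ell-1$, and using the composition factors of $\mathrm{T}_\zeta(\ell)$ when $i+k=\ell$) shows that $[T:\mathrm{T}_\zeta(\ell-2)]=\dim T_{\ell-2}-\sum_{c>\ell-2}[T:\mathrm{T}_\zeta(c)]\,\dim\mathrm{T}_\zeta(c)_{\ell-2}$ vanishes unless $i+k=\ell-2$, i.e.\ $k=m-i$. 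The main obstacle is the material on restriction to $u_\zeta$: correctly identifying $\mathrm{T}_\zeta(a)|_{u_\zeta}$ as an indecomposable projective for $\ell-1\leq a\leq2\ell-2$, and then the bookkeeping which shows that the multiplicity $[T|_{u_\zeta}:Q(0)]$ is carried entirely by the single top summand $\mathrm{T}_\zeta(i+k)$ --- this is exactly what pins down $i+k=m$. Everything else is routine, but must be carried out with the precise normalisations of the Steinberg tensor product formula $L_\zeta(v)=L_\zeta(v_0)\otimes L(v_1)^{[q]}$, of Proposition~\ref{prop:mixedDonkin}, and of the $u_\zeta$-simplicity of $L_\zeta(v)$ for $0\leq v\leq\ell-1$.
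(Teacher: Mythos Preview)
Your proof is correct and takes a genuinely different route from the paper's. The paper argues by induction on $i$: the cases $i=0,1$ follow from \cite[Proposition~4.7]{STWZ}, and for $2\leq i\leq\ell-1$ one uses $\mathrm{T}_\zeta(1)\otimes\mathrm{T}_\zeta(i-1)=\mathrm{T}_\zeta(i)\oplus\mathrm{T}_\zeta(i-2)$ together with the inductive hypothesis for $i-1$ and $i-2$ to pin down the unique $k$ for which $\mathrm{T}_\zeta(m)$ occurs. Your argument instead restricts to $u_\zeta$ and counts copies of the projective $Q(0)$: the key observation is that, once $a=2\ell-2-i$ is forced, the single top summand $\mathrm{T}_\zeta(i+k)$ already saturates $[T|_{u_\zeta}:Q(0)]=\dim\mathrm{T}(r+s-1)$, so no other summand with $Q(0)$ in its restriction can appear. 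This is more structural and avoids the fusion-rule bookkeeping, but it imports more background (that $\mathrm{T}_\zeta(a)|_{u_\zeta}\cong Q(2\ell-2-a)$ for $\ell-1\leq a\leq 2\ell-2$, which is standard from \cite{AK} but not established in the paper) and forces a separate treatment of $r=0$, whereas the paper's induction is uniform in $r$.

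One small gap: in the case $r=0$ with $i,k\leq\ell-2$, you explicitly handle only $i+k\leq\ell$. For $\ell<i+k\leq 2\ell-4$ the weight-space count still goes through, but one must observe that each summand $\mathrm{T}_\zeta(c)$ with $\ell\leq c\leq i+k$ has character $[c+1]_t+[2\ell-1-c]_t$, hence absorbs the na\"ive Clebsch--Gordan contribution at weight $2\ell-2-c$; in particular the contribution at $\ell-2$ is absorbed by $\mathrm{T}_\zeta(\ell)$ whenever $i+k\geq\ell$, so $\mathrm{T}_\zeta(\ell-2)$ survives as a summand only when $i+k=\ell-2$. This is the content of \cite[Lemma~4.1]{STWZ}, so you could simply cite it.
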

\begin{proof}
By considering the highest weight of $\mathrm{T}_{\zeta}(i)\otimes\mathrm{T}_{\zeta}(m-i)$, we see that $\mathrm{T}_{\zeta}(m)$ manifestly appears as a direct summand with multiplicity precisely one. It therefore only remains to prove the converse. We will do so by using induction on $i$. The case $i=0$ is obvious. Let us therefore take $i=1$. In this case, it follows from \cite[Proposition 4.7]{STWZ} that $\mathrm{T}_{\zeta}(m)$ may appear as a direct summand of $\mathrm{T}_{\zeta}(1)\otimes\mathrm{T}_{\zeta}(k)$ either if $k+1=m$ or if both $k-1=m$ and $k+1\not\equiv 0\ (\textrm{mod}\ \ell)$. But, $m\equiv \ell-2\ (\textrm{mod}\ \ell)$, so the later case does not occur.

Suppose that the result is known for $i-1\geq 1$. Let us assume that $\mathrm{T}_{\zeta}(m)$ is a direct summand of $\mathrm{T}_{\zeta}(i)\otimes\mathrm{T}_{\zeta}(k)$. It follows that $\mathrm{T}_{\zeta}(m)$ is a direct summand of $\mathrm{T}_{\zeta}(1)\otimes\mathrm{T}_{\zeta}(i-1)\otimes\mathrm{T}_{\zeta}(k)$. Namely, we have that $\mathrm{T}_{\zeta}(1)\otimes\mathrm{T}_{\zeta}(i-1)=\mathrm{T}_{\zeta}(i)\oplus\mathrm{T}_{\zeta}(i-2)$ given that $2\leq i\leq \ell-1$ by \cite[Lemma 4.1]{STWZ}. Then, by comparing highest weights, we find that $\mathrm{T}_{\zeta}(m-i+1)$ must be a direct summand of $\mathrm{T}_{\zeta}(1)\otimes\mathrm{T}_{\zeta}(k)$. But, by \cite[Proposition 4.7]{STWZ}, this is only possible if either $k=m-i$ or $k=m-i+2$. Furthermore, in either case, $\mathrm{T}_{\zeta}(m-i+1)$ appears with multiplicity one because we have $2\leq i\leq \ell-1$, so that we cannot have both $k+1\equiv\ell-1\ (\textrm{mod}\ \ell)$ and $k+1\equiv m-i+2\equiv -i\ (\textrm{mod}\ \ell)$. It therefore only remains to exclude the case when $k=m-i+2$. In order to see this, note that the inductive hypothesis implies that $\mathrm{T}_{\zeta}(i-2)\otimes\mathrm{T}_{\zeta}(m-i+2)$ contains $\mathrm{T}_{\zeta}(m)$ as a direct summand with multiplicity precisely one. But, it follows from our discussion above that $\mathrm{T}_{\zeta}(1)\otimes\mathrm{T}_{\zeta}(i-1)\otimes\mathrm{T}_{\zeta}(k)=(\mathrm{T}_{\zeta}(i)\oplus\mathrm{T}_{\zeta}(i-2))\otimes\mathrm{T}_{\zeta}(k)$ contains $\mathrm{T}_{\zeta}(m)$ with multiplicity exactly one. Therefore, $\mathrm{T}_{\zeta}(m)$ is not a direct summand of $\mathrm{T}_{\zeta}(i)\otimes\mathrm{T}_{\zeta}(m-i+2)$ as desired.
\end{proof}

\subsection{Temperley-Lieb Objects}

Let $\mathcal{C}$ be an arbitrary monoidal $\mathbbm{k}$-linear category.

\begin{Definition}
A $\zeta$-Temperley-Lieb object in $\mathcal{C}$ is an object $X$ of $\mathcal{C}$ admitting a left dual $X^*$ together with an isomorphism $\phi:X\xrightarrow{\sim}X^*$ satisfying $$\mathrm{Tr}^L((\phi^{-1})^*\circ\phi)=-[2]_{\zeta} = -(\zeta^{-1}+\zeta).$$
\end{Definition}

Associated to any $\zeta$-Temperley-Lieb object $X$ in $\mathcal{C}$, there are maps $\mathrm{coev}:\mathbbm{1}\rightarrow X\otimes X$ and $\mathrm{ev}:X\otimes X\rightarrow \mathbbm{1}$ defined using $\phi$ and $\phi^{-1}$ that satisfy the snake equations and such that $\mathrm{ev}\circ\mathrm{coev}=-[2]_{\zeta}$.

Let now $\mathcal{B}$ be a braided monoidal $\mathbbm{k}$-linear category, and let us also fix a square root $\zeta^{1/2}$ of $\zeta$.

\begin{Definition}
A $\zeta$-Temperley-Lieb object in $\mathcal{B}$ is braided if $\beta_{X,X}$, the self-braiding on $X$, satisfies the equation $$\beta_{X,X} = \zeta^{1/2}\cdot Id_X + \zeta^{-1/2}\cdot (\mathrm{coev}\circ \mathrm{ev}).$$
\end{Definition}

It follows from the universal property of the Temperley-Lieb category $\mathbf{TL}^{\zeta}$ that its generator $X$ with $\phi = Id_X$ is the universal example of a $\zeta$-Temperley-Lieb object. Further, if we consider the braided monoidal category $\mathbf{TL}^{\zeta^{1/2}}$ with braiding given by the Kauffman bracket, the $\zeta$-Temperley-Lieb object $X$ is braided. In fact, the tilting module $\mathrm{T}_{\zeta}(1)$ in $\mathbf{Tilt}^{\zeta}$ is also an example of a braided $\zeta$-Temperley-Lieb object. In particular, the next result, which is a reformulation of \cite[Theorem 2.4]{O3}, is a consequence of the fact that $\mathbf{Tilt}^{\zeta}$ is the Cauchy completion of $\mathbf{TL}^{\zeta}$.

\begin{Proposition}[\cite{O3}]\label{prop:braidedTLobjects}
For any Karoubian monoidal $\mathbbm{k}$-linear category $\mathcal{C}$, the category of monoidal functors $F:\Tilt^{\zeta}\rightarrow \mathcal{C}$ is equivalent to the category of $\zeta$-Temperley-Lieb objects in $\mathcal{C}$. For any Karoubian braided monoidal $\mathbbm{k}$-linear category $\mathcal{B}$, the category of braided monoidal functors $F:\Tilt^{\zeta^{1/2}}\rightarrow \mathcal{B}$ is equivalent to the category of braided $\zeta$-Temperley-Lieb objects in $\mathcal{B}$.
\end{Proposition}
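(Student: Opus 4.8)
The plan is to deduce both assertions from the universal property of the Temperley-Lieb category, combined with the fact recalled above that $\mathbf{Tilt}^{\zeta}$ is the Cauchy completion of $\mathbf{TL}^{\zeta}$, and that $\Tilt^{\zeta^{1/2}}$ is the Cauchy completion of $\mathbf{TL}^{\zeta^{1/2}}$ as a braided monoidal category (cf.\ \cite[Prop.\ 2.20 \& 2.21]{STWZ}). Concretely there are three steps: (i) identify the category of monoidal functors out of $\mathbf{TL}^{\zeta}$ with the category of $\zeta$-Temperley-Lieb objects; (ii) promote this, via the universal property of idempotent completion, to an identification of monoidal functors out of $\Tilt^{\zeta}$ into an arbitrary Karoubian target; (iii) run the braided analogues of (i) and (ii).

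For step (i) one uses the graphical calculus: a morphism in $\mathbf{TL}^{\zeta}$ is a $\mathbbm{k}$-linear combination of planar crossingless diagrams modulo the relation that a closed loop evaluates to $-[2]_{\zeta}$, and the category is generated under $\otimes$ and composition by $X$ together with its cup and cap, which satisfy the snake equations and the circle relation. Hence a monoidal functor $F\colon\mathbf{TL}^{\zeta}\to\mathcal{C}$ is completely determined by $F(X)$, equipped with the left dual and self-duality that make $F(\mathrm{cup})$, $F(\mathrm{cap})$ into an $\mathrm{ev}$ and $\mathrm{coev}$ satisfying exactly the defining conditions of a $\zeta$-Temperley-Lieb object; conversely any $\zeta$-Temperley-Lieb object $(X,\phi)$ in $\mathcal{C}$ produces such a functor by sending a planar diagram to the evident composite of identities, $\mathrm{ev}$'s and $\mathrm{coev}$'s, which is well defined because the circle relation holds by the trace condition on $\phi$ and the planar isotopy relations hold by the snake equations. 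Moreover this assignment is an \emph{equivalence} of categories and not merely a bijection on objects: a monoidal natural transformation between two such functors is determined by its component at $X$ (the components at $X^{\otimes n}$ are forced by monoidality and naturality), this component is automatically invertible since $\mathbf{TL}^{\zeta}$ is rigid, and its compatibility with cup and cap is precisely the condition defining a morphism of $\zeta$-Temperley-Lieb objects. This is the content of \cite[Theorem 2.4]{O3}.

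For step (ii), the functor $E\colon\mathbf{TL}^{\zeta}\to\Tilt^{\zeta}$ sending $X$ to $\mathrm{T}_{\zeta}(1)$ exhibits $\Tilt^{\zeta}$ as the idempotent completion of $\mathbf{TL}^{\zeta}$, so whenever $\mathcal{C}$ is Karoubian, restriction along $E$ is an equivalence from the category of monoidal functors $\Tilt^{\zeta}\to\mathcal{C}$ to the category of monoidal functors $\mathbf{TL}^{\zeta}\to\mathcal{C}$ (each such functor out of $\mathbf{TL}^{\zeta}$ extends uniquely up to unique monoidal isomorphism, as do monoidal natural transformations); composing with step (i) gives the first statement. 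Step (iii) is identical after adjoining braidings: the braiding on $\mathbf{TL}^{\zeta^{1/2}}$ is by definition Kauffman's skein relation $\beta_{X,X}=\zeta^{1/2}\cdot Id_X+\zeta^{-1/2}\cdot(\mathrm{coev}\circ\mathrm{ev})$, so a braided monoidal functor out of $\mathbf{TL}^{\zeta^{1/2}}$ is a monoidal functor whose value on $X$ is a \emph{braided} $\zeta$-Temperley-Lieb object, and conversely the skein identities enforcing the hexagon axioms are exactly the braid relations, which hold in the target as soon as $\beta_{X,X}$ takes the prescribed form; since the Cauchy completion of a braided monoidal category is canonically braided, $E$ intertwines the braidings by \cite[Prop.\ 2.21]{STWZ}, and braided monoidal functors and transformations extend uniquely along $E$ into a Karoubian braided target, the same argument yields the second statement.

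The only genuinely delicate point is step (i): establishing the universal property of $\mathbf{TL}^{\zeta}$ at the level of categories, i.e.\ matching the Hom-categories of monoidal functors with the categories of $\zeta$-Temperley-Lieb objects, morphisms included, rather than just at the level of iso-classes of objects. As indicated, this rests on the rigidity of $\mathbf{TL}^{\zeta}$ and is exactly \cite[Theorem 2.4]{O3}; the passage to the Cauchy completion and the braided bookkeeping of steps (ii) and (iii) are then formal.
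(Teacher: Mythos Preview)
Your proposal is correct and follows essentially the same approach as the paper. The paper does not give a detailed proof but simply states that the result is a reformulation of \cite[Theorem 2.4]{O3} and a consequence of the fact that $\Tilt^{\zeta}$ is the Cauchy completion of $\mathbf{TL}^{\zeta}$; your three-step argument (universal property of $\mathbf{TL}^{\zeta}$, passage to the Cauchy completion, braided analogue) is precisely an unpacking of that one-line justification.
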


\begin{Remark}\label{rem:TLobjectspherical}
Let us momentarily assume that the Karoubian monoidal $\mathbbm{k}$-linear category $\mathcal{C}$ is equipped with a spherical structure $\lambda$. Further, let $X$ be a $\zeta$-Temperley-Lieb object in $\mathcal{C}$ with isomorphism $\phi:X\rightarrow X^*$. If these two structures are compatible in the sense that $\lambda_X=(\phi^{-1})^*\circ\phi:X\rightarrow X^{**}$, then the monoidal functor $F:\Tilt^{\zeta}\rightarrow\mathcal{C}$ corresponding to $X$ via proposition \ref{prop:braidedTLobjects} is in fact spherical. A similar observation holds for braided $\zeta$-Temperley-lieb object in $\mathcal{B}$, when this braided monoidal category is additionally equipped with a ribbon structure.
\end{Remark}

For any integer $k$, let $Q_{k}$ be the $(k-1)$-th Chebyshev polynomial of the second kind defined by $$Q_{k}(2\cos(x))=\frac{\sin(kx)}{\sin(x)}.$$ In particular, the roots of $Q_k(x)$ are given by $2\cos(j\pi/k)$ for $j=1,...,k-1$. Alternatively, these polynomials may also be defined recursively as follows: \begin{equation}\label{eq:recursionChebychev}
Q_1(x)=1,\ Q_2(x)=x,\ Q_{k+1}(x)=xQ_{k}(x) - Q_{k-1}(x).
\end{equation} The polynomials $Q_{k}$ have integer coefficients, and we can therefore write $Q_{k}=Q_{k}^+-Q_{k}^-$ for two unique integer polynomials $Q_{k}^+$ and $Q_{k}^-$ with non-negative coefficients.

\begin{Lemma}
In the category $\Tilt^{\zeta}$, there is an isomorphism $$\mathrm{T}_{\zeta}(p^{(n)}-1)\oplus Q_{p^{(n)}}^-(\mathrm{T}_{\zeta}(1))\cong Q_{p^{(n)}}^+(\mathrm{T}_{\zeta}(1)).$$
\end{Lemma}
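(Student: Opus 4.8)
The plan is to reduce the claimed isomorphism of honest tilting modules to a single identity in a Grothendieck ring, where the Chebyshev recursion becomes transparent. Since $\Tilt^{\zeta}$ is Krull--Schmidt with indecomposables exactly the $\mathrm{T}_{\zeta}(v)$, two objects of $\Tilt^{\zeta}$ are isomorphic iff they have the same class in the split Grothendieck group $K_0^{\oplus}(\Tilt^{\zeta})=\bigoplus_{v\geq 0}\mathbb{Z}\cdot[\mathrm{T}_{\zeta}(v)]$, which is moreover a commutative ring because $\Tilt^{\zeta}$ is closed under $\otimes$. As $Q_{p^{(n)}}=Q_{p^{(n)}}^+-Q_{p^{(n)}}^-$ with both summands having non-negative integer coefficients, evaluation at $[\mathrm{T}_{\zeta}(1)]$ is a ring homomorphism $\mathbb{Z}[x]\to K_0^{\oplus}(\Tilt^{\zeta})$, so the asserted isomorphism is equivalent to the equation $Q_{p^{(n)}}\big([\mathrm{T}_{\zeta}(1)]\big)=[\mathrm{T}_{\zeta}(p^{(n)}-1)]$ in $K_0^{\oplus}(\Tilt^{\zeta})$. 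First I would note that the ring homomorphism $\pi\colon K_0^{\oplus}(\Tilt^{\zeta})\to K_0(\mathbf{Mod}^{\zeta})$ sending a tilting module to its class counted with composition multiplicities is injective: the matrix expressing $[\mathrm{T}_{\zeta}(v)]$ in the basis $\{[L_{\zeta}(w)]\}$ is unitriangular for the ordering by highest weight, since $L_{\zeta}(v)$ occurs once in $\mathrm{T}_{\zeta}(v)$ and every other composition factor has strictly smaller highest weight. Hence it suffices to check the equation in $K_0(\mathbf{Mod}^{\zeta})$.

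In $K_0(\mathbf{Mod}^{\zeta})$ we have $[\mathrm{T}_{\zeta}(1)]=[L_{\zeta}(1)]=[\Delta_{\zeta}(1)]$, where $\Delta_{\zeta}(k)$ denotes the Weyl module, and the Clebsch--Gordan identity for Weyl characters gives $[\Delta_{\zeta}(1)]\cdot[\Delta_{\zeta}(k)]=[\Delta_{\zeta}(k+1)]+[\Delta_{\zeta}(k-1)]$ for $k\geq 1$, with $[\Delta_{\zeta}(0)]=1$. Comparing with the recursion \eqref{eq:recursionChebychev}, an immediate induction yields $Q_{k+1}\big([\Delta_{\zeta}(1)]\big)=[\Delta_{\zeta}(k)]$ for all $k\geq 0$, so in particular $Q_{p^{(n)}}\big([\mathrm{T}_{\zeta}(1)]\big)=[\Delta_{\zeta}(p^{(n)}-1)]$. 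It remains to show $[\mathrm{T}_{\zeta}(p^{(n)}-1)]=[\Delta_{\zeta}(p^{(n)}-1)]$, equivalently that $\mathrm{T}_{\zeta}(p^{(n)}-1)$ is the (simple) quantum Steinberg module. For this I would apply Proposition \ref{prop:mixedDonkin} with $a=\ell-1$ and $b=p^{n-1}-1$, using $p^{(n)}-1=(\ell-1)+\ell(p^{n-1}-1)$, to get $\mathrm{T}_{\zeta}(p^{(n)}-1)=\mathrm{T}_{\zeta}(\ell-1)\otimes\mathrm{T}(p^{n-1}-1)^{[q]}$; then use that $\mathrm{T}_{\zeta}(\ell-1)=L_{\zeta}(\ell-1)$ (it is simple, being in the bottom alcove, as in the proof of Proposition \ref{prop:simplesocle}) and that $\mathrm{T}(p^{n-1}-1)=L(p^{n-1}-1)$ is the classical Steinberg module (well known, see \cite{BEO}), whence by the quantum Steinberg tensor product theorem \cite[Theorem 1.10]{AK} this tensor product equals $L_{\zeta}(p^{(n)}-1)$. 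A simple module that carries a Weyl filtration is a Weyl module, so $\Delta_{\zeta}(p^{(n)}-1)=L_{\zeta}(p^{(n)}-1)=\mathrm{T}_{\zeta}(p^{(n)}-1)$ in $K_0(\mathbf{Mod}^{\zeta})$, which finishes the reduction. As usual, the case $\zeta=\pm 1$ is \cite{BEO}, so one may assume $\zeta\neq\pm 1$.

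The only genuinely delicate point is the last one: recognizing $\mathrm{T}_{\zeta}(p^{(n)}-1)$ as the quantum Steinberg module $L_{\zeta}(p^{(n)}-1)$. Even this is essentially a bookkeeping consequence of Proposition \ref{prop:mixedDonkin} together with the classical fact that the Steinberg module is simultaneously simple and tilting; everything else is formal manipulation with Grothendieck rings and the Chebyshev recursion. One alternative to this route is a direct induction on $n$ built on the Chebyshev factorization $Q_{ab}(x)=Q_a(x)\,Q_b(T_a(x))$ with $T_a(2\cos\theta)=2\cos(a\theta)$, applied with $a=\ell$ and $b=p^{n-1}$, reducing to the case $n=1$ and to identifying $T_{\ell}(\mathrm{T}_{\zeta}(1))$ with a virtual Frobenius--Lusztig twist; I expect this to be more cumbersome, since the relevant twist object is not itself tilting, so the Grothendieck-ring argument above is the cleaner one.
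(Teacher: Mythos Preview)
Your proof is correct and follows essentially the same route as the paper: reduce the isomorphism to an identity in a ring where tilting modules are determined by their classes (you use $K_0(\mathbf{Mod}^{\zeta})$, the paper uses characters, which amounts to the same thing), and then verify that identity via the Chebyshev recursion. The only difference is at the step identifying the class of $\mathrm{T}_{\zeta}(p^{(n)}-1)$: the paper simply quotes \cite[Proposition 3.3]{STWZ} to get that its character is $[p^{(n)}]_t$, whereas you re-derive this by showing it is the quantum Steinberg module via Proposition~\ref{prop:mixedDonkin} and \cite[Theorem 1.10]{AK} --- a slightly longer but more self-contained argument.
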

\begin{proof}
Firstly, recall that characters of tilting modules are linearly independent. It is therefore enough to establish that the above equality holds at the level of characters. By \cite[Proposition 3.3]{STWZ}, the character of $\mathrm{T}_{\zeta}(p^{(n)}-1)$ is $[p^{(n)}]_{t}$, with $t$ a formal variable. But, the character of $\mathrm{T}_{\zeta}(1)$ is $[2]_{t}=t^{-1}+t$, so, using equation \eqref{eq:recursionChebychev}, it is straightforward to show that for every integer $k$, we have $$Q_{k}(t^{-1}+t)=[k]_{t}=t^{-k+1}+...+t^{k-1}.$$ Taking $k=p^{(n)}$ concludes the proof.
\end{proof}

In particular, there is a split injection $\iota_n:Q_{p^{(n)}}^-(\mathrm{T}_{\zeta}(1))\hookrightarrow Q_{p^{(n)}}^+(\mathrm{T}_{\zeta}(1)),$ in $\Tilt^{\zeta}$, which becomes an isomorphism in $\Tilt^{\zeta}_{p^{(n)}}=\Tilt^{\zeta}/\mathbf{J}_{p^{(n)}}$.

\begin{Definition}
A braided $\zeta$-Temperley-Lieb object $X$ in $\mathcal{B}$ has degree $n\in\mathbb{N}$ if $n$ is the smallest integer for which $\iota_n$ is an isomorphism in $\mathcal{B}$. Otherwise, $X$ has degree $\infty$.
\end{Definition}

\begin{Proposition}\label{prop:nTLobjects}
Let $\mathcal{B}$ be a Karoubian braided monoidal $\mathbbm{k}$-linear category, then, evaluation on $\mathrm{T}_{\zeta}(1)$ yields an equivalence between the category of braided monoidal functors $F:\Tilt^{\zeta}_{p^{(n)}}\rightarrow \mathcal{B}$ and that of braided $\zeta$-Temperley-Lieb objects of degree less than $n$ in $\mathcal{B}$. Moreover, $F$ is faithful if and only if the corresponding $\zeta$-Temperley-Lieb object has degree precisely $n$.
\end{Proposition}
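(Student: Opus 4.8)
The plan is to deduce the statement from Proposition~\ref{prop:braidedTLobjects} together with the universal property of the quotient $\Tilt^{\zeta}_{p^{(n)}}=\Tilt^{\zeta}/\mathbf{J}_{p^{(n)}}$ by a tensor ideal, and then to translate the condition ``$F$ descends to the quotient'' into a condition on the split injections $\iota_n$ that enter the definition of degree.

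First I would dispose of a few generalities about the quotient. By Proposition~\ref{prop:Tiltingsubcategoryidentification}, $\Tilt^{\zeta}_{p^{(n)}}$ is equivalent as an additive category to the full subcategory $\Tilt^{\zeta}_{<p^{(n)}-1}$ of $\Tilt^{\zeta}$, which is closed under direct summands by Krull--Schmidt; hence $\Tilt^{\zeta}_{p^{(n)}}$ is Karoubian. Since $\mathbf{J}_{p^{(n)}}$ is a tensor ideal, the braiding of $\Tilt^{\zeta^{1/2}}$ (and its ribbon structure) descends, so the quotient functor $\pi_n\colon\Tilt^{\zeta^{1/2}}\to\Tilt^{\zeta}_{p^{(n)}}$ is a full, essentially surjective braided monoidal functor between Karoubian braided monoidal categories. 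By the usual universal property of a quotient by an ideal, precomposition with $\pi_n$ then identifies the category of braided monoidal functors $\Tilt^{\zeta}_{p^{(n)}}\to\mathcal{B}$ with the full subcategory of those braided monoidal functors $G\colon\Tilt^{\zeta^{1/2}}\to\mathcal{B}$ that annihilate every morphism of $\mathbf{J}_{p^{(n)}}$. Here I would use that $\mathbf{J}_{p^{(n)}}$ is generated as a tensor ideal by the single morphism $\mathrm{id}_{\mathrm{T}_{\zeta}(p^{(n)}-1)}$ --- indeed, for $v\geq p^{(n)}-1$ the indecomposable tilting module $\mathrm{T}_{\zeta}(v)$ is a direct summand of $\mathrm{T}_{\zeta}(p^{(n)}-1)\otimes\mathrm{T}_{\zeta}(v-p^{(n)}+1)$ by a highest weight argument --- so that $G$ annihilates $\mathbf{J}_{p^{(n)}}$ precisely when $G(\mathrm{T}_{\zeta}(p^{(n)}-1))$ is a zero object.

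Next I would invoke Proposition~\ref{prop:braidedTLobjects}: evaluation on $\mathrm{T}_{\zeta}(1)$ identifies braided monoidal functors $G\colon\Tilt^{\zeta^{1/2}}\to\mathcal{B}$ with braided $\zeta$-Temperley--Lieb objects $X$ in $\mathcal{B}$. Under this identification I would rewrite the condition ``$G(\mathrm{T}_{\zeta}(p^{(n)}-1))=0$'' via the split injection $\iota_n\colon Q_{p^{(n)}}^{-}(\mathrm{T}_{\zeta}(1))\hookrightarrow Q_{p^{(n)}}^{+}(\mathrm{T}_{\zeta}(1))$: by the lemma preceding the definition of degree, the complement of $\iota_n$ in $\Tilt^{\zeta}$ is exactly $\mathrm{T}_{\zeta}(p^{(n)}-1)$, so applying $G$ yields a split monomorphism $G(\iota_n)$ with complement $G(\mathrm{T}_{\zeta}(p^{(n)}-1))$; thus $G(\mathrm{T}_{\zeta}(p^{(n)}-1))=0$ if and only if $\iota_n$ is an isomorphism in $\mathcal{B}$. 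To match this with the notion of degree, I would observe that $\mathbf{J}_{p^{(n+1)}}\subseteq\mathbf{J}_{p^{(n)}}$, so that a braided monoidal functor descending to $\Tilt^{\zeta}_{p^{(n)}}$ also descends to $\Tilt^{\zeta}_{p^{(n+1)}}$; translating, if $\iota_n$ is an isomorphism in $\mathcal{B}$ then so is $\iota_{n+1}$, hence the set of $m$ for which $\iota_m$ is an isomorphism in $\mathcal{B}$ is upward closed and ``$\iota_n$ is an isomorphism'' is equivalent to ``the degree of $X$ is at most $n$''. Composing the two equivalences gives the asserted equivalence of categories. For the faithfulness statement, $F\colon\Tilt^{\zeta}_{p^{(n)}}\to\mathcal{B}$ is faithful if and only if the composite $G=F\circ\pi_n$ has kernel exactly $\mathbf{J}_{p^{(n)}}$ (the ideal of morphisms sent to zero); since this kernel is a tensor ideal of $\Tilt^{\zeta}$ containing $\mathbf{J}_{p^{(n)}}$, Theorem~\ref{thm:tensorideals} forces it to be $\mathbf{J}_{p^{(m)}}$ for some $m\leq n$, and $m$ is precisely the smallest integer with $\mathrm{id}_{\mathrm{T}_{\zeta}(p^{(m)}-1)}$ in the kernel, i.e.\ with $\iota_m$ an isomorphism in $\mathcal{B}$, that is, the degree of $X$; so $F$ is faithful exactly when the degree of $X$ equals $n$ (and in particular the degree-$n$ objects are exactly those yielding faithful functors).

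I expect the only genuinely delicate points to be the two soft verifications in the second paragraph: that $\Tilt^{\zeta}_{p^{(n)}}$ is Karoubian --- so that Proposition~\ref{prop:braidedTLobjects} applies --- and that the universal property of the ideal quotient really yields an equivalence of functor categories with the prescribed essential image, in particular that annihilating all of $\mathbf{J}_{p^{(n)}}$ is detected by the single object $\mathrm{T}_{\zeta}(p^{(n)}-1)$. Everything else is direct bookkeeping, reducing statements about $\mathcal{B}$ to statements about the explicit split injections $\iota_n$ in $\Tilt^{\zeta}$.
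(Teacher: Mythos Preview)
Your proof is correct and follows essentially the same approach as the paper, which simply reads ``The proof follows from the combination of proposition~\ref{prop:braidedTLobjects} with theorem~\ref{thm:tensorideals}.'' You have faithfully unpacked that one-liner: the universal property of $\Tilt^{\zeta^{1/2}}$ (Proposition~\ref{prop:braidedTLobjects}) gives the equivalence with braided $\zeta$-Temperley--Lieb objects, and the classification of tensor ideals (Theorem~\ref{thm:tensorideals}) identifies the kernel of $G=F\circ\pi_n$ as some $\mathbf{J}_{p^{(m)}}$, whence the degree statement. One incidental remark: in your closing ``delicate points'' you flag Karoubian-ness of $\Tilt^{\zeta}_{p^{(n)}}$ as needed for Proposition~\ref{prop:braidedTLobjects}, but in your own argument you only apply that proposition to $\Tilt^{\zeta^{1/2}}\to\mathcal{B}$, so it is $\mathcal{B}$ (given Karoubian by hypothesis) that matters there; the Karoubian-ness of the quotient is harmless but not actually used.
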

\begin{proof}
The proof follows from the combination of proposition \ref{prop:braidedTLobjects} with theorem \ref{thm:tensorideals}.
\end{proof}

\section{Mixed Higher Verlinde Categories}

We work over an algebraically closed field $\mathbbm{k}$ of characteristic $p>0$. We also fix a primitive $N$-th root of unity $\zeta$ in $\mathbbm{k}$, as well as a positive integer $n$. We write $\Ver^{\zeta}_{p^{(n)}}$ for the finite tensor category that is the abelianization of $\Tilt^{\zeta}_{p^{(n)}}=\Tilt^{\zeta}/\mathbf{J}_{p^{(n)}}$, which exists thanks to the results of \cite{BEO,C:monoidal} recalled at the end of section \ref{sub:preliminaries} together with lemma \ref{lem:separated}. These tensor categories were first introduced in \cite[Section 5.A]{STWZ}. We use $F:\Tilt^{\zeta}_{p^{(n)}}\rightarrow \Ver^{\zeta}_{p^{(n)}}$ to denote the canonical faithful monoidal functor. %%%After having shown that $F$ is full in the next section, so that it is the abelian envelope and therefore enjoys a universal property, it follows that the additional structures we have considered previously on $\Tilt^{\zeta}_{p^{(n)}}$ do transfer to $\Ver^{\zeta}_{p^{(n)}}$.

We can consider the Karoubian monoidal subcategories $\Tilt^{\zeta, +}_{p^{(n)}}\subset \Tilt^{\zeta}_{p^{(n)}}$ generated by $\mathrm{T}_{\zeta}(i)$ with $i$ even, and their abelianizations $\Ver^{\zeta, +}_{p^{(n)}}\subseteq \Ver^{\zeta}_{p^{(n)}}$. Provided that $p^{(n)}>2$, the subcategory $\Ver^{\zeta, +}_{p^{(n)}}$ is non-trivial.

\subsection{Completeness}\label{sub:completeness}

Our proof of the following theorem follows closely \cite[Subsection 4.2.2]{BEO}. Given the central role that this result plays, we include an extensive outline of the proof, and we refer the reader to \cite{BEO} for the remaining details.

\begin{Theorem}\label{thm:completeness}
The category $\Tilt^{\zeta}_{p^{(n)}}$ is complete, i.e.\ the canonical faithful functor $F:\Tilt^{\zeta}_{p^{(n)}}\rightarrow \Ver^{\zeta}_{p^{(n)}}$ is full. In particular, $\Ver^{\zeta}_{p^{(n)}}$ is the abelian envelope of $\Tilt^{\zeta}_{p^{(n)}}$.
\end{Theorem}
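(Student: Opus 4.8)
The plan is to transcribe the argument of \cite[Subsection~4.2.2]{BEO} to the mixed setting, using as its only genuinely new inputs the facts, established above, that indecomposable tilting modules for $U_{\zeta}$ have simple socle (Proposition~\ref{prop:simplesocle}) and that every tilting module embeds into one of the next $p\ell$-adic level (Lemma~\ref{lem:tiltingmodulesembedding}). All the abstract machinery is already available: by Theorem~\ref{thm:tensorideals} and Lemma~\ref{lem:separated}, $\mathcal{T}:=\Tilt^{\zeta}_{p^{(n)}}$ is separated, its ideal of splitting objects is $\mathcal{S}=\mathbf{J}_{p^{(n-1)}}/\mathbf{J}_{p^{(n)}}$, and $\mathcal{S}$ is generated as a left ideal by the single indecomposable tilting module $\mathrm{T}_{\zeta}(p^{(n-1)}-1)$ (since $\mathrm{T}_{\zeta}(m)\otimes\mathrm{T}_{\zeta}(p^{(n-1)}-1)$ contains $\mathrm{T}_{\zeta}(m+p^{(n-1)}-1)$ as a summand). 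Thus the hypotheses of \cite[Section~2.10]{BEO} hold. Since \emph{loc.\ cit.} also tells us that $\Ver^{\zeta}_{p^{(n)}}$ is the abelian envelope of $\mathcal{T}$ (with all inherited structure: spherical structure, braiding, twist) as soon as $F$ is full, the whole content of the theorem is the fullness of $F$.

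Next I would recall the structural output of \cite[Section~2.10]{BEO}: $F:\mathcal{T}\to\Ver^{\zeta}_{p^{(n)}}$ is faithful, its restriction to the full subcategory $\mathcal{S}$ of splitting objects is \emph{full}, and $F$ carries $\mathcal{S}$ onto a system of projective generators of $\Ver^{\zeta}_{p^{(n)}}$, with $\mathrm{End}_{\Ver^{\zeta}_{p^{(n)}}}(FS)\cong\mathrm{End}_{\mathcal{T}}(S)$ for $S\in\mathcal{S}$. Because $\Ver^{\zeta}_{p^{(n)}}$ is a finite tensor category it is quasi-Frobenius, so the objects $FS$ with $S\in\mathcal{S}$ are simultaneously its projective and its injective objects. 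Using additivity, the Krull--Schmidt property, and rigidity to replace $\mathrm{Hom}(T,T')$ by $\mathrm{Hom}(\mathbf{1},T^{*}\otimes T')$, the fullness of $F$ reduces, exactly as in \cite[Section~2.10]{BEO}, to producing for each indecomposable $T$ of $\mathcal{T}$ a \emph{copresentation by splitting objects}: morphisms $T\xrightarrow{a}S_{0}\xrightarrow{b}S_{1}$ in $\mathcal{T}$ with $S_{0},S_{1}\in\mathcal{S}$ such that $0\to FT\xrightarrow{Fa}FS_{0}\xrightarrow{Fb}FS_{1}$ is exact in $\Ver^{\zeta}_{p^{(n)}}$. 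Indeed, given such copresentations one computes $\mathrm{Hom}_{\Ver^{\zeta}_{p^{(n)}}}(FT,FT')$ from the injective copresentation of $FT'$ together with the dual projective presentation of $FT$; every object and every differential appearing then lies between images of splitting objects, where $F$ is already full, so a diagram chase forces the natural map $\mathrm{Hom}_{\mathcal{T}}(T,T')\to\mathrm{Hom}_{\Ver^{\zeta}_{p^{(n)}}}(FT,FT')$ to be surjective.

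It therefore remains to construct these copresentations, and this — as in \cite[Subsection~4.2.2]{BEO} — is the technical heart of the matter. By Proposition~\ref{prop:Tiltingsubcategoryidentification} one only needs $T=\mathrm{T}_{\zeta}(i)$ with $0\le i\le p^{(n)}-2$, and when $p^{(n-1)}-1\le i\le p^{(n)}-2$ the module is already splitting and there is nothing to do. For smaller $i$, iterating Lemma~\ref{lem:tiltingmodulesembedding} produces an embedding $a:\mathrm{T}_{\zeta}(i)\hookrightarrow\mathrm{T}_{\zeta}(j)$ in $\mathbf{Mod}^{\zeta}$ with $\mathrm{T}_{\zeta}(j)$ splitting; by Proposition~\ref{prop:simplesocle} both modules have simple socle, the embedding is essential, and one checks — using precisely the simple-socle statement, as in \cite{BEO} — that $Fa$ is a monomorphism. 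The morphism $b$, and the exactness of the resulting sequence, is extracted from an analysis of $\mathrm{coker}(a)$ in $\mathbf{Mod}^{\zeta}$: invoking the $p\ell$-adic tensor-product formulas for $\Tilt^{\zeta}$ recorded in Section~2 (Proposition~\ref{prop:mixedDonkin}, Lemma~\ref{lem:precisesummand}) together with Lemma~\ref{lem:3.6}, one shows that, possibly after a further splitting reduction, $\mathrm{coker}(a)$ is built from indecomposable tilting modules of highest weight $<i$, so that the copresentations may be constructed by induction on $i$, the base cases being the simple tilting modules $\mathrm{T}_{\zeta}(r)$ with $0\le r\le\ell-1$. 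I expect this cokernel analysis — the mixed-case counterpart of the combinatorial bookkeeping that occupies the bulk of \cite[Subsection~4.2.2]{BEO} — to be the main obstacle; everything else is either formal or imported verbatim from \cite{BEO}. Once the copresentations are in hand, $F$ is full, and hence $\Ver^{\zeta}_{p^{(n)}}$ is the abelian envelope of $\Tilt^{\zeta}_{p^{(n)}}$ by \cite[Section~2.10]{BEO}.
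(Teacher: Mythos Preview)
Your plan diverges from the paper's actual argument at the crucial step, and the divergence introduces a genuine gap. You reduce fullness to constructing, for each indecomposable $T$, an injective copresentation $0\to FT\to FS_0\to FS_1$ by splitting objects, and then propose to build $b:S_0\to S_1$ by analysing $\mathrm{coker}(a)$ in $\mathbf{Mod}^{\zeta}$ and running an induction on highest weight. But the cokernel of an embedding of indecomposable tilting modules is typically \emph{not} tilting, so there is no obvious way to feed it back into the induction; this is precisely the ``main obstacle'' you flag, and you do not resolve it. The formulas you cite (Proposition~\ref{prop:mixedDonkin}, Lemma~\ref{lem:precisesummand}) concern direct-sum decompositions of tensor products, not the subquotient structure of cokernels, so they do not do the job you ask of them.

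The paper avoids this entirely by taking the dual route: rather than copresenting each $T$, it uses the single bar-type presentation $F(\mathrm{S}^{\otimes 4})\xrightarrow{\tau}F(\mathrm{S}^{\otimes 2})\xrightarrow{\epsilon}\mathbbm{1}\to 0$ with $\mathrm{S}$ the Steinberg object generating the splitting ideal, and reduces fullness (via rigidity) to exactness of
\[
0\to Hom_{U_\zeta}(\mathbf{1},\mathrm{T}_\zeta(i))\to Hom_{U_\zeta}(\mathrm{S}^{\otimes 2},\mathrm{T}_\zeta(i))\to Hom_{U_\zeta}(\mathrm{S}^{\otimes 4},\mathrm{T}_\zeta(i))
\]
for $0\le i\le p^{(n)}-2$, computed in $\mathbf{Mod}^\zeta$ after invoking Proposition~\ref{prop:Tiltingsubcategoryidentification}. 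The embedding $\mathrm{T}_\zeta(i)\hookrightarrow\mathrm{T}_\zeta(f(i))$ from Lemma~\ref{lem:tiltingmodulesembedding} is then used not to study a cokernel, but to map this sequence into the corresponding sequence for $f(i)$ in the splitting range, which is already known to be exact. The vertical maps are injective, and the leftmost one is an \emph{isomorphism} by Lemma~\ref{lem:3.6} (this is where simple socles enter), so a two-line diagram chase gives exactness of the top row. No cokernel analysis, no induction on $i$. This is what \cite[Subsection~4.2.2]{BEO} actually does; your recollection of that argument as a copresentation-by-copresentation construction is off.
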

\begin{proof}
By definition, we have to show that for any objects $X,Y$ in $\Tilt^{\zeta}_{p^{(n)}}$, the injective map $$Hom_{\Tilt^{\zeta}_{p^{(n)}}}(X,Y)\rightarrow Hom_{\Ver^{\zeta}_{p^{(n)}}}(F(X),F(Y))$$ is an isomorphism. By rigidity, it is in fact enough to establish that the injective map $$Hom_{\Tilt^{\zeta}_{p^{(n)}}}(\mathbf{1},\mathrm{T}_{\zeta}(i))\rightarrow Hom_{\Ver^{\zeta}_{p^{(n)}}}(\mathbbm{1},F(\mathrm{T}_{\zeta}(i)))$$ is an isomorphism for all $0\leq i\leq p^{(n)}-2$. Further, we already know that this property holds for all $p^{(n-1)}-1\leq i$ by the definition of $\Ver^{\zeta}_{p^{(n)}}$ as the abelianization of $\Tilt^{\zeta}_{p^{(n)}}$ (see \cite[Theorem 2.41]{BEO}).

Let us set $\mathrm{S}:=\mathrm{T}_{\zeta}(p^{(n)}-1)$, and write $\epsilon:\mathrm{S}\otimes\mathrm{S}\rightarrow \mathbbm{1}$ for the coevaluation morphism in $\Tilt^{\zeta}$. We define a morphism $\tau:\mathrm{S}^{\otimes 4}\rightarrow\mathrm{S}^{\otimes 2}$ by $\tau:=\epsilon\otimes \mathrm{S}^{\otimes 2}-\mathrm{S}^{\otimes 2}\otimes\epsilon$. By construction (see \cite[Subsection 2.7]{BEO}), the sequence $$F(\mathrm{S}^{\otimes 4})\xrightarrow{\tau}F(\mathrm{S}^{\otimes 2})\xrightarrow{\epsilon}\mathbbm{1}\rightarrow 0$$ is exact in $\Ver^{\zeta}_{p^{(n)}}$. Consequently, it is enough to show that the sequence $$0\rightarrow Hom_{\Tilt^{\zeta}_{p^{(n)}}}(\mathbf{1},\mathrm{T}_{\zeta}(i))\xrightarrow{\epsilon} Hom_{\Tilt^{\zeta}_{p^{(n)}}}(\mathrm{S}^{\otimes 2},\mathrm{T}_{\zeta}(i))\xrightarrow{\tau} Hom_{\Tilt^{\zeta}_{p^{(n)}}}(\mathrm{S}^{\otimes 4},\mathrm{T}_{\zeta}(i))$$ is exact for all $0\leq i\leq p^{(n)}-2$. Moreover, appealing to proposition \ref{prop:Tiltingsubcategoryidentification} above, we find that this is equivalent to proving that the corresponding sequence in $\Tilt^{\zeta}_{<p^{(n)}-1}$ is exact.

But, $\Tilt^{\zeta}$ is a full monoidal subcategory of the abelian category $\mathbf{Mod}^{\zeta}$ of all finite dimensional modules of type $1$ for $U_{\zeta}$. In particular, we may view the tilting modules $\mathrm{S}$ and $\mathrm{T}_{\zeta}(i)$ as objects of $\mathbf{Mod}^{\zeta}$. Further, as $\mathbf{1}$ is simple and $\mathrm{S}^{\otimes 2}\rightarrow \mathbf{1}$ is non-zero, the map $$Hom_{U_{\zeta}}(\mathbf{1},\mathrm{T}_{\zeta}(i))\rightarrow Hom_{U_{\zeta}}(\mathrm{S}^{\otimes 2},\mathrm{T}_{\zeta}(i))$$ is injective for every $0\leq i\leq p^{(n)}-2$. It will therefore suffice to show that the sequence $$\begin{tikzcd}[sep=small]
0 \arrow[r] & {Hom_{U_{\zeta}}(\mathbf{1},\mathrm{T}_{\zeta}(i))} \arrow[r] & {Hom_{U_{\zeta}}(\mathrm{S}^{\otimes 2},\mathrm{T}_{\zeta}(i))} \arrow[r] & {Hom_{U_{\zeta}}(\mathrm{S}^{\otimes 4},\mathrm{T}_{\zeta}(i))}
\end{tikzcd}$$ is exact in the middle for all $0\leq i \leq p^{(n)}-2$, and we already know that this is the case for $p^{(n-1)}-1\leq i \leq p^{(n)}-2$.

Thanks to lemma \ref{lem:tiltingmodulesembedding}, we have that every tilting module $\mathrm{T}_{\zeta}(i)$ with $0\leq i\leq p^{(n-1)}-2$ embeds into $\mathrm{T}_{\zeta}(f(i))$ with $p^{(n-1)}-1\leq f(i)\leq p^{(n)}-2$. Furthermore, lemma \ref{lem:3.6} implies that $f(i)=2p^{(n-1)}-2$ if and only if $i=2p^{(k)}-2$ for some $0\leq k\leq n-2$. We therefore have a map of sequences $$\begin{tikzcd}[sep=small]
0 \arrow[r] & {Hom_{U_{\zeta}}(\mathbf{1},\mathrm{T}_{\zeta}(i))} \arrow[r] \arrow[d, hook] & {Hom_{U_{\zeta}}(\mathrm{S}^{\otimes 2},\mathrm{T}_{\zeta}(i))} \arrow[r] \arrow[d,hook] & {Hom_{U_{\zeta}}(\mathrm{S}^{\otimes 4},\mathrm{T}_{\zeta}(i))} \arrow[d,hook] \\
0 \arrow[r] & {Hom_{U_{\zeta}}(\mathbf{1},\mathrm{T}_{\zeta}(f(i)))} \arrow[r]        & {Hom_{U_{\zeta}}(\mathrm{S}^{\otimes 2},\mathrm{T}_{\zeta}(f(i)))} \arrow[r]        & {Hom_{U_{\zeta}}(\mathrm{S}^{\otimes 4},\mathrm{T}_{\zeta}(f(i))).}       
\end{tikzcd}$$ But, as was noted above, the bottomn sequence is exact, and the left vertical map is an isomorphism by lemma \ref{lem:3.6}, this implies that the top sequence is exact as desired.

The second part follows readily from \cite[Theorem 2.42]{BEO}, which is recalled at the end of section \ref{sub:preliminaries}.
\end{proof}

It follows from the universal property of the abelian envelope that the finite tensor category $\Ver^{\zeta}_{p^{(n)}}$ inherits a canonical pivotal structure from $\Tilt^{\zeta}_{p^{(n)}}$, which is in fact spherical, and is compatible with the functor $F:\Tilt^{\zeta}_{p^{(n)}}\rightarrow \Ver^{\zeta}_{p^{(n)}}$. We note that $\Ver^{\zeta}_{p^{(n)}}$ and $\Ver^{-\zeta}_{p^{(n)}}$ are equivalent as tensor categories, but that their spherical structures are distinct. On the other hand, $\Ver^{\zeta}_{p^{(n)}}$ and $\Ver^{\zeta^{-1}}_{p^{(n)}}$ are equivalent as spherical tensor categories. A choice of square root $\zeta^{1/2}$ for $\zeta$ yields a ribbon structure on the tensor category $\Tilt^{\zeta}_{p^{(n)}}$, which, thanks to the universal property of the abelian envelope, has the following consequence.

\begin{Corollary}
Let $\zeta^{1/2}$ be a square root for $\zeta$. There is a ribbon structure on $\Ver^{\zeta}_{p^{(n)}}$, and we denote by $\Ver^{\zeta,\zeta^{1/2}}_{p^{(n)}}$, or more succinctly $\Ver^{\zeta^{1/2}}_{p^{(n)}}$, the corresponding finite ribbon tensor category, such that the canonical monoidal functor $F:\Tilt^{\zeta^{1/2}}_{p^{(n)}}\rightarrow \Ver^{\zeta^{1/2}}_{p^{(n)}}$ is compatible with the ribbon structures.
\end{Corollary}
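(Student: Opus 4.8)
The plan is to obtain the ribbon structure on $\Ver^{\zeta}_{p^{(n)}}$ purely from the universal property of the abelian envelope furnished by Theorem~\ref{thm:completeness}, in exactly the same way that the canonical spherical structure on $\Ver^{\zeta}_{p^{(n)}}$ was produced just above. First I would observe that, since $\mathbf{J}_{p^{(n)}}$ is a tensor ideal of $\Tilt^{\zeta}$ (Theorem~\ref{thm:tensorideals}), the full ribbon structure on $\Tilt^{\zeta^{1/2}}$---the braiding $\beta$, the twist $\theta$, and the pivotal isomorphisms $\lambda$---descends to the quotient $\Tilt^{\zeta^{1/2}}_{p^{(n)}}:=\Tilt^{\zeta^{1/2}}/\mathbf{J}_{p^{(n)}}$: the components of $\beta$, $\theta$, $\lambda$ push forward to the quotient, naturality is inherited, and the hexagon, twist, and sphericality identities, being equalities of morphisms, hold in the quotient as images of the corresponding identities in $\Tilt^{\zeta^{1/2}}$. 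Thus $\Tilt^{\zeta^{1/2}}_{p^{(n)}}$ is a ribbon monoidal category refining the spherical category $\Tilt^{\zeta}_{p^{(n)}}$, and the quotient functor is a ribbon functor.

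Next, by Theorem~\ref{thm:completeness} the functor $F:\Tilt^{\zeta}_{p^{(n)}}\to\Ver^{\zeta}_{p^{(n)}}$ is full and faithful, so $\Ver^{\zeta}_{p^{(n)}}$ is the abelian envelope of $\Tilt^{\zeta}_{p^{(n)}}$; in particular $F$ is universal amongst faithful monoidal functors from $\Tilt^{\zeta}_{p^{(n)}}$ to a multitensor category, and, by the results of \cite{BEO} recalled in section~\ref{sub:preliminaries}, any additional structure carried by the source---a braiding, a twist---transfers canonically to $\Ver^{\zeta}_{p^{(n)}}$ so as to make $F$ compatible with it. Applying this to $\beta$ and $\theta$ (and, as already noted, to $\lambda$) equips $\Ver^{\zeta}_{p^{(n)}}$ with a braiding, a twist, and the pivotal structure, all of which $F$ preserves.

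It then remains to check that these data satisfy the ribbon axioms $\theta_{C\otimes D}=(\theta_C\otimes\theta_D)\circ\beta_{D,C}\circ\beta_{C,D}$ and $\theta_{C^*}=(\theta_C)^*$ (equivalently $\lambda_{C^*}=(\lambda_C^*)^{-1}$) for \emph{all} objects $C,D$ of $\Ver^{\zeta}_{p^{(n)}}$, not merely for those in the image of $F$. Here I would invoke the fact that every object of $\Ver^{\zeta}_{p^{(n)}}$ is a subquotient of some $F(X)$ with $X$ in $\Tilt^{\zeta}_{p^{(n)}}$, which is how the abelianization is built (cf.\ \cite[Theorem 2.41]{BEO}): both sides of each axiom are natural transformations in $C$ and $D$, so naturality reduces their verification to objects of the form $F(X)$, where the axioms hold because they hold in the ribbon category $\Tilt^{\zeta^{1/2}}_{p^{(n)}}$ and $F$ preserves $\beta$, $\theta$, $\lambda$. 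This produces the finite ribbon tensor category $\Ver^{\zeta^{1/2}}_{p^{(n)}}$ together with the ribbon functor $F$. The only step needing genuine care---and the natural place for an error---is precisely this last reduction: the universal property delivers only a braiding and a twist as separate pieces of data, and it is the generation of the abelianization under subquotients of the image of $F$, together with naturality, that forces their mutual compatibility everywhere; everything else is formal and parallels the treatment of the spherical structure already given.
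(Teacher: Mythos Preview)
Your approach is correct and is essentially the same as the paper's: the corollary is stated there as an immediate consequence of the universal property of the abelian envelope established in Theorem~\ref{thm:completeness}, together with the fact (recalled from \cite{BEO} at the end of section~\ref{sub:preliminaries}) that additional structure such as a braiding or a twist on $\mathcal{T}$ transfers to $\mathcal{C}(\mathcal{T})$. The paper gives no further argument. Your step~3, verifying the ribbon axioms on all objects via naturality and the fact that every object is a subquotient of one in the image of $F$, is a careful unpacking of what that transfer actually entails; the paper treats this as already absorbed into the cited result from \cite{BEO}, so you are supplying detail rather than taking a different route.
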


\begin{Remark}\label{rem:standardVerlinde}
In particular, if $\sigma = \pm 1$, the underlying tensor category of 
$\Ver^{\sigma}_{p^{(n)}}$ is the finite tensor category $\Ver_{p^{n}}$ introduced in \cite{BEO} (see also \cite{BE} for the case $p=2$). Furthermore, these references considered the tensor category $\Ver^{\sigma}_{p^{n}}$ equipped with its standard symmetric structure, which we do recover by considering $\Ver^{\sigma,\sigma^{1/2}}_{p^{(n)}}$ with $\sigma = +1$ and $\sigma^{1/2} = -1$. The symmetric structure given by taking $\sigma^{1/2} = +1$ will also appear, and so will the finite ribbon tensor category given by $\sigma = -1$ and $\sigma^{1/2}$ a primitive fourth root of unity provided that $p>2$. In this last case, it follows from remark \ref{rem:standardtilting} that the braiding is not symmetric, and that the symmetric center of $\Ver_{p^{n}}^{\sigma^{1/2}}$ is $\Ver_{p^{n}}^{\sigma^{1/2},+}$ whenever $\sigma=-1$ so that $\sigma^{1/2}$ is a primitive fourth root of unity.
\end{Remark}

\begin{Corollary}\label{cor:braidingsVerlinde}
The equivalence classes of ribbon structures on $\Ver_{p^{(n)}}^{\zeta}$ that extend the canonical spherical structure are given by the square roots of $\zeta^{\pm 1}$ in $\mathbbm{k}$.
\end{Corollary}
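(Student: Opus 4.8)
The plan is to transport the question across the abelian envelope and then reduce it to Lemma~\ref{lem:braidingsTilting}. First I would record that, by Theorem~\ref{thm:completeness}, $F:\Tilt^\zeta_{p^{(n)}}\hookrightarrow\Ver^\zeta_{p^{(n)}}$ is a fully faithful monoidal embedding, and, as recalled just after that theorem, $F$ is compatible with the canonical (spherical) pivotal structures on both sides. Consequently any ribbon structure on $\Ver^\zeta_{p^{(n)}}$ extending its canonical spherical structure restricts along $F$ to a ribbon structure on $\Tilt^\zeta_{p^{(n)}}$ extending \emph{its} canonical spherical structure. Conversely, by the part of the theory of abelian envelopes recalled in Section~\ref{sub:preliminaries}, every ribbon structure on $\Tilt^\zeta_{p^{(n)}}$ extends, uniquely up to equivalence, to one on $\Ver^\zeta_{p^{(n)}}$. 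Thus restriction along $F$ sets up a bijection between the two sets of equivalence classes, and it suffices to classify ribbon structures on $\Tilt^\zeta_{p^{(n)}}$ extending the canonical spherical structure.

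Next I would run the argument of Lemma~\ref{lem:braidingsTilting} verbatim inside the quotient. Assuming $p^{(n)}>2$, so that $\mathrm{T}_\zeta(1)$ survives the quotient, $\mathrm{T}_\zeta(1)$ monoidally generates $\Tilt^\zeta_{p^{(n)}}$ as a Karoubian category, hence a braiding compatible with the spherical structure is determined by its value $\beta_{\mathrm{T}_\zeta(1),\mathrm{T}_\zeta(1)}$. Since $\mathrm{T}_\zeta(1)$ with its canonical self-duality is a $\zeta$-Temperley--Lieb object and $\mathrm{End}(\mathrm{T}_\zeta(1)^{\otimes 2})$ is spanned by $Id$ and $\mathrm{coev}\circ\mathrm{ev}$, one may write $\beta_{\mathrm{T}_\zeta(1),\mathrm{T}_\zeta(1)}=\lambda\cdot Id+\mu\cdot(\mathrm{coev}\circ\mathrm{ev})$; naturality together with the hexagon axiom then force $\lambda\mu=1$ and $\lambda^2-\lambda\mu[2]_\zeta+\mu^2=0$, whence $\mu=\lambda^{-1}$ and $\lambda^2=\zeta^{\pm1}$, exactly as in the proof of Lemma~\ref{lem:braidingsTilting}. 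So every such ribbon structure is the Kauffman-skein ribbon structure attached to a square root of $\zeta^{\pm1}$. For the converse, I would invoke Lemma~\ref{lem:braidingsTilting} itself: each square root of $\zeta^{\pm1}$ gives a ribbon structure on $\Tilt^\zeta$, which descends to $\Tilt^\zeta_{p^{(n)}}$ (quotient by a tensor ideal) and then extends to $\Ver^\zeta_{p^{(n)}}$ by the universal property, just as in the preceding corollary.

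The remaining points are routine: the degenerate cases $p^{(n)}\le 2$, where $\Ver^\zeta_{p^{(n)}}\simeq\mathrm{Vec}$ and the statement is immediate, and the observation that the parametrization by square roots of $\zeta^{\pm1}$ is injective, which, as in the $\Tilt^\zeta$ case, comes down to distinct Kauffman parameters producing distinct self-braidings on $\mathrm{T}_\zeta(1)$ once $\mathrm{End}(\mathrm{T}_\zeta(1)^{\otimes2})$ is two-dimensional. The main conceptual input — already packaged into the universal property of the abelian envelope — is that a ribbon structure on $\Ver^\zeta_{p^{(n)}}$ is rigidly controlled by its restriction to the tilting subcategory: because every object of $\Ver^\zeta_{p^{(n)}}$ is a cokernel of a morphism between objects in the image of $F$, a natural family of braiding isomorphisms is pinned down by its values on (the generator of) that image. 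Checking that this determination and extension are genuinely well defined and compatible with equivalence is the only step that requires care beyond citing the results already in the excerpt.
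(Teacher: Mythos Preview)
Your approach is essentially the paper's: both reduce to the Temperley--Lieb computation of Lemma~\ref{lem:braidingsTilting}, using that the self-braiding on $\mathrm{T}_\zeta(1)$ determines the ribbon structure and is forced by the hexagon axiom to be a Kauffman bracket. Your packaging via an abelian-envelope bijection between ribbon structures on $\Ver^\zeta_{p^{(n)}}$ and on $\Tilt^\zeta_{p^{(n)}}$ is a clean rephrasing of what the paper does more directly, namely running the argument inside $\Ver^\zeta_{p^{(n)}}$ after observing that $F$ is fully faithful on $\mathrm{T}_\zeta(1)^{\otimes k}$ for $k\le 3$.

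One small slip: deriving the two constraints $\lambda\mu=1$ and $\lambda^2-\lambda\mu[2]_\zeta+\mu^2=0$ \emph{separately} requires that $\mathrm{ev}\otimes\mathrm{T}_\zeta(1)$ and $\mathrm{T}_\zeta(1)\otimes\mathrm{ev}$ remain linearly independent in the quotient, which fails for $p^{(n)}=3$ and needs a check for $p^{(n)}=4$. The paper handles these two values by hand (the case $p^{(n)}=3$ via $\Ver^\zeta_{p^{(n)}}\simeq\mathrm{Vec}(\mathbb{Z}/2)$, the case $p^{(n)}=4$ by a weaker injectivity criterion); you should do the same rather than folding them into the degenerate case $p^{(n)}\le 2$.
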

\begin{proof}
We have seen that the canonical pivotal monoidal functor $F:\Tilt^{\zeta}\rightarrow \Ver_{p^{(n)}}^{\zeta}$ restricts to a fully faithful functor $\Tilt^{\zeta}_{< p^{(n)}-1}\rightarrow \Ver_{p^{(n)}}^{\zeta}$. In particular, it follows from the proof of lemma \ref{lem:braidingsTilting} that a ribbon structure on $\Ver_{p^{(n)}}^{\zeta}$ induces a compatible ribbon structure on $\Tilt^{\zeta}$ provided that $F$ is fully faithful on $\mathrm{T}_{\zeta}(1)$, $\mathrm{T}_{\zeta}(1)^{\otimes 2}$, and $\mathrm{T}_{\zeta}(1)^{\otimes 3}$. This holds if $F$ is fully faithful on $\mathrm{T}_{\zeta}(1)$, $\mathrm{T}_{\zeta}(2)$, and $\mathrm{T}_{\zeta}(3)$, that is, if $3 < p^{(n)}-1$, or, equivalently, $p^{(n)}\geq 5$. In fact, it suffices that $F$ be fully faithful on $\mathrm{T}_{\zeta}(1)$ and $\mathrm{T}_{\zeta}(1)^{\otimes 2}$, and induces a monomorphism $$Hom_{U_{\zeta}}(\mathrm{T}_{\zeta}(1)^{\otimes 3},\mathrm{T}_{\zeta}(1))\hookrightarrow Hom_{\Ver_{p^{(n)}}^{\zeta}}(F(\mathrm{T}_{\zeta}(1)^{\otimes 3}),F(\mathrm{T}_{\zeta}(1))).$$ One checks directly that this weaker criterion holds when $p^{(n)}=4$. Finally, when $p^{(n)}=3$, we have $\Ver_{p^{(n)}}^{\zeta} = \mathrm{Vec}(\mathbb{Z}/2)$ as tensor categories, in which case the result is clear.
\end{proof}

\subsection{Basic Properties}

\begin{Theorem}\label{thm:basic}
\begin{enumerate}
    \item There is a fully faithful monoidal functor $F:\Tilt^{\zeta}_{p^{(n)}}\rightarrow \Ver^{\zeta}_{p^{(n)}}$. In particular, $\Ver^{\zeta}_{p^{(n)}}$ has $p^{(n)}-p^{(n-1)}$ simple objects whose projective covers are $F(\mathrm{T}_{\zeta}(v))$ with $p^{(n)}-1\leq v\leq p^{(n-1)}-2$, and all simple objects of $\Ver^{\zeta}_{p^{(n)}}$ are self-dual.
    \item For $n\geq 1$, let $\mathcal{O}_{p^{(n)}}:=\mathbb{Z}[2\cos{(\pi/p^{(n)})}]$ and $\mathcal{O}^+_{p^{(n)}}:=\mathbb{Z}[2\cos{(2\pi/p^{(n)})}]$ be subrings of $\mathbb{C}$. Then, the Frobenius-Perron dimension defines surjective ring homomorphisms $Gr(\Ver^{\zeta}_{p^{(n)}})\twoheadrightarrow\mathcal{O}_{p^{(n)}}$ and $Gr(\Ver^{\zeta, +}_{p^{(n)}})\twoheadrightarrow\mathcal{O}^+_{p^{(n)}}$.
    \item There are no tensor functors $\Ver^{\zeta, +}_{p^{(n)}}\rightarrow \Ver^{\zeta, +}_{p^{(m)}}$ or $\Ver^{\zeta}_{p^{(n)}}\rightarrow \Ver^{\zeta}_{p^{(m)}}$ for $m<n$.
\end{enumerate}
\end{Theorem}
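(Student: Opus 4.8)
The plan is to treat the three parts in order, each building on the completeness result (Theorem~\ref{thm:completeness}) and the additive identification of Proposition~\ref{prop:Tiltingsubcategoryidentification}.

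For part (1), the full faithfulness of $F:\Tilt^{\zeta}_{p^{(n)}}\rightarrow \Ver^{\zeta}_{p^{(n)}}$ is exactly Theorem~\ref{thm:completeness}. Combining this with Proposition~\ref{prop:Tiltingsubcategoryidentification}, the category $\Tilt^{\zeta}_{<p^{(n)}-1}$ — which has indecomposable objects $\mathrm{T}_{\zeta}(v)$ for $0\le v\le p^{(n)}-2$ — embeds fully faithfully into $\Ver^{\zeta}_{p^{(n)}}$. By the general theory of abelian envelopes (as recalled from \cite{BEO} at the end of section \ref{sub:preliminaries}, and used in Lemma~\ref{lem:separated}), the simple objects of $\Ver^{\zeta}_{p^{(n)}}$ are in bijection with the indecomposable splitting objects of $\Tilt^{\zeta}_{p^{(n)}}$, and the latter are exactly $\mathbf{J}_{p^{(n-1)}}/\mathbf{J}_{p^{(n)}}$, i.e.\ the images $F(\mathrm{T}_{\zeta}(v))$ with $p^{(n-1)}-1\le v\le p^{(n)}-2$ — these are the projective covers of the simple objects. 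Counting gives $p^{(n)}-p^{(n-1)}$ simple objects. Self-duality of the simples follows because each $\mathrm{T}_{\zeta}(v)$ is self-dual in $\Tilt^{\zeta}$ (the pivotal/spherical structure is preserved by $F$), so the corresponding indecomposable projectives, and hence their simple heads, are self-dual.

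For part (2), the Frobenius–Perron dimension is a ring homomorphism $Gr(\Ver^{\zeta}_{p^{(n)}})\to\mathbb{C}$ landing in the real algebraic integers. I would compute $\FPdim$ on the tensor generator: the object $F(\mathrm{T}_{\zeta}(1))$ satisfies the Chebyshev recursion \eqref{eq:recursionChebychev} on its tensor powers inside $\Tilt^{\zeta}$, and in the quotient $\Tilt^{\zeta}_{p^{(n)}}$ one has the relation forcing $Q_{p^{(n)}}(\mathrm{T}_{\zeta}(1))$ to vanish (this is the content of the split injection $\iota_n$ becoming an isomorphism, together with the character computation $Q_k(t^{-1}+t)=[k]_t$ from the lemma preceding the definition of degree). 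Hence $\FPdim F(\mathrm{T}_{\zeta}(1))$ is a root of $Q_{p^{(n)}}$, and since $\FPdim$ of a generator must be the largest such root (Perron–Frobenius), it equals $2\cos(\pi/p^{(n)})$. This shows the image of $\FPdim:Gr(\Ver^{\zeta}_{p^{(n)}})\to\mathbb{C}$ contains, hence equals (by multiplicativity and the fact that all simples are polynomials in the generator), $\mathcal{O}_{p^{(n)}}=\mathbb{Z}[2\cos(\pi/p^{(n)})]$. For the $+$ version, the tensor generator of $\Ver^{\zeta,+}_{p^{(n)}}$ is $F(\mathrm{T}_{\zeta}(2))$, whose FP-dimension is $(2\cos(\pi/p^{(n)}))^2-1=2\cos(2\pi/p^{(n)})$, giving surjectivity onto $\mathcal{O}^+_{p^{(n)}}$.

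For part (3), suppose $G:\Ver^{\zeta}_{p^{(n)}}\to\Ver^{\zeta}_{p^{(m)}}$ were a tensor functor with $m<n$. A tensor functor induces a ring homomorphism on Grothendieck rings compatible with $\FPdim$ up to the fact that $\FPdim G(X)=\FPdim X$ for any tensor functor (since $\FPdim$ is intrinsic). Composing with the surjection $\FPdim:Gr(\Ver^{\zeta}_{p^{(m)}})\twoheadrightarrow\mathcal{O}_{p^{(m)}}$ from part (2), we would get a ring homomorphism $\mathcal{O}_{p^{(n)}}=\FPdim(Gr(\Ver^{\zeta}_{p^{(n)}}))\to\mathcal{O}_{p^{(m)}}$ sending $2\cos(\pi/p^{(n)})$ into $\mathcal{O}_{p^{(m)}}$; but $2\cos(\pi/p^{(n)})$ has degree $\varphi(2p^{(n)})/2$ over $\mathbb{Q}$ and for $m<n$ one has $p^{(m)}\mid p^{(n)}$ strictly, so $\mathbb{Q}(2\cos(\pi/p^{(n)}))\not\subseteq\mathbb{Q}(2\cos(\pi/p^{(m)}))$ — concretely $\FPdim G(\mathrm{generator})$ would be an algebraic integer of too-large degree to live in $\mathcal{O}_{p^{(m)}}$, a contradiction. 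The same argument with $\mathcal{O}^+$ handles the $+$ case. \emph{The main obstacle} I anticipate is part (2): pinning down that $\FPdim F(\mathrm{T}_{\zeta}(1))$ is precisely $2\cos(\pi/p^{(n)})$ — the upper bound is the Chebyshev/Perron–Frobenius argument, but one must verify that $Q_{p^{(n)}}$ genuinely annihilates the class (no smaller Chebyshev polynomial does), which requires the non-vanishing of $F(\mathrm{T}_{\zeta}(p^{(n)}-1-\epsilon))$ for smaller indices, i.e.\ that the simples indexed below $p^{(n)}-p^{(n-1)}$ are all nonzero — and this in turn is what makes the Galois-degree comparison in part (3) tight.
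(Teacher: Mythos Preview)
Your approach is essentially the same as the paper's: part (1) via completeness and the general theory of abelian envelopes, part (2) via the Chebyshev relation and Perron--Frobenius to pin down $\FPdim(\mathrm{T}_{\zeta}(1))=2\cos(\pi/p^{(n)})$, and part (3) via functoriality of $\FPdim$ and an algebraic-degree obstruction. The paper's part (3) is terser (``follows from the second and functoriality of the Frobenius--Perron dimension''), but your field-degree argument is the correct way to unpack this.

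One small arithmetic slip in part (2): you write $\FPdim(\mathrm{T}_{\zeta}(2))=(2\cos(\pi/p^{(n)}))^2-1=2\cos(2\pi/p^{(n)})$, but the double-angle identity gives $(2\cos\theta)^2-2=2\cos(2\theta)$, so in fact $\FPdim(\mathrm{T}_{\zeta}(2))=2\cos(2\pi/p^{(n)})+1$ when $\ell>2$ (and $+2$ when $\ell=2$, since then $\mathrm{T}_{\zeta}(1)^{\otimes 2}=\mathrm{T}_{\zeta}(2)$ without a trivial summand splitting off). This does not affect your conclusion, since $1\in\mathcal{O}^+_{p^{(n)}}$ and hence $2\cos(2\pi/p^{(n)})$ is still in the image; but the paper is careful to make this case distinction. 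Your flagged ``main obstacle'' --- that $Q_{p^{(n)}}$ is genuinely the minimal polynomial, not merely an annihilating polynomial --- is exactly what the paper addresses by observing that the multiplication-by-$\mathrm{T}_{\zeta}(1)$ matrix on the split Grothendieck ring of $\Tilt^{\zeta}_{p^{(n)}}$ has $Q_{p^{(n)}}$ as its minimal polynomial.
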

\begin{proof}
The first part follows from theorem \ref{thm:completeness} together with \cite[Theorems 2.41 and 2.42]{BEO}. This also uses the fact that every object of $\Tilt^{\zeta}$ is self-dual.

Let us now prove the second part. The split Grothendieck ring of $\Tilt^{\zeta}_{p^{(n)}}$ is a $\mathbb{Z}_+$-ring of finite rank \cite[Chapter 3]{EGNO}, so that it is sensible to talk about the Frobenius-Perron dimension $\FPdim(T)$ of any $T\in \Tilt^{\zeta}_{p^{(n)}}$. By proposition \ref{prop:nTLobjects}, we have $$Q_{p^{(n)}}(\FPdim(\mathrm{T}_{\zeta}(1)))=0.$$ In fact, $Q_{p^{(n)}}$ is the minimal polynomial of the matrix of multiplication by $\mathrm{T}_{\zeta}(1)$ in the Grothendieck ring of $\Tilt^{\zeta}_{p^{(n)}}$ and $2\cos{(\pi/p^{(n)})}$ is its largest real root, so that $$\FPdim(\mathrm{T}_{\zeta}(1))=\FPdim(F(\mathrm{T}_{\zeta}(1)))=2\cos{(\pi/p^{(n)})}$$ by \cite[Prop. 3.3.4 and 3.3.13]{EGNO}. Thus, we find that $\FPdim(\mathrm{T}_{\zeta}(2))= 2\cos{(2\pi/p^{(n)})}+1$ for $\ell>2$ and $\FPdim(\mathrm{T}_{\zeta}(2))= 2\cos{(2\pi/p^{(n)})}+2$ for $\ell=2$. This shows that the image of the ring homomorphism $\FPdim:Gr(\Ver^{\zeta}_{p^{(n)}})\rightarrow \mathbb{C}$ contains $\mathcal{O}_{p^{(n)}}$. But, $\mathrm{T}_{\zeta}(1)$ generates $\Ver^{\zeta}_{p^{(n)}}$, and therefore $\mathrm{T}_{\zeta}(2)$ generates $\Ver^{\zeta, +}_{p^{(n)}}$, which implies that $\FPdim:Gr(\Ver^{\zeta}_{p^{(n)}})\rightarrow \mathcal{O}_{p^{(n)}}$ and $\FPdim:Gr(\Ver^{\zeta, +}_{p^{(n)}})\rightarrow \mathcal{O}^+_{p^{(n)}}$ are surjective.

The last part follows from the second and functoriality of the Frobenius-Perron dimension.
\end{proof}

\begin{Remark}\label{rem:dimensionkernelFPdim}
For $p^{(n)}\geq 2$, we have that $\mathcal{O}_{p^{(n)}}$ is the ring of integers in the field $\mathbb{Q}(2\cos{(\pi/p^{(n)})})$ by \cite[Prop. 2.16]{Wash}. Likewise, $\mathcal{O}^+_{p^{(n)}}$ is the ring of integers in the field $\mathbb{Q}(2\cos{(2\pi/p^{(n)})})$. Observe that, if $p^{(n)}$ is odd, then $\mathcal{O}^+_{p^{(n)}}=\mathcal{O}_{p^{(n)}}$. On the one hand, the rank of $\Ver^{\zeta,+}_{p^{(n)}}$ is $(p^{(n)}-p^{(n-1)})/2$. On the other hand, the degree of the field extension $\mathbb{Q}\subseteq \mathbb{Q}(2\cos{(2\pi/p^{(n)})})$ over $\mathbb{Q}$ is $\phi(p^{(n)})/2$, where $\phi$ denotes Euler's totient function. As a consequence, unlike in the symmetric case considered in \cite[Theorme 4.5]{BEO}, the ring homomorphism $\FPdim:Gr(\Ver^{\zeta, +}_{p^{(n)}})\rightarrow \mathcal{O}_{p^{(n)}}$ is not injective if $\ell\neq p$, i.e.\ $\zeta\neq \pm 1$.
\end{Remark}

We have the following universal property for the braided tensor category $\Ver^{\zeta}_{p^{(n)}}$.

\begin{Corollary}\label{cor:mixedVerlindeuniversalproperty}
For any braided tensor category $\mathcal{B}$ over $\mathbbm{k}$, the category of braided tensor functors $F:\Ver^{\zeta}_{p^{(n)}}\rightarrow\mathcal{B}$ is equivalent to the category of braided $\zeta$-Temperley-Lieb objects in $\mathcal{B}$ of degree exactly $n$.
\end{Corollary}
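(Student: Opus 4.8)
The plan is to derive this purely formally, by feeding the identification of $\Ver^{\zeta}_{p^{(n)}}$ as the abelian envelope of $\Tilt^{\zeta}_{p^{(n)}}$ (Theorem \ref{thm:completeness}) into the classification of braided monoidal functors out of $\Tilt^{\zeta}_{p^{(n)}}$ by braided $\zeta$-Temperley--Lieb objects (Proposition \ref{prop:nTLobjects}). Fix a square root $\zeta^{1/2}$ throughout, so that $\Ver^{\zeta}_{p^{(n)}}$ carries the braiding making it the ribbon category $\Ver^{\zeta^{1/2}}_{p^{(n)}}$ and the canonical functor $F\colon\Tilt^{\zeta^{1/2}}_{p^{(n)}}\to\Ver^{\zeta^{1/2}}_{p^{(n)}}$ is braided monoidal and fully faithful. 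I would show that restriction along $F$ gives an equivalence between the category of braided tensor functors $\Ver^{\zeta^{1/2}}_{p^{(n)}}\to\mathcal{B}$ and the category of \emph{faithful} braided monoidal functors $\Tilt^{\zeta^{1/2}}_{p^{(n)}}\to\mathcal{B}$; composing with the equivalence of Proposition \ref{prop:nTLobjects} — whose last assertion says precisely that faithful functors are the ones attached to Temperley--Lieb objects of degree exactly $n$ — then yields the corollary.

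To set up that equivalence I would proceed in three steps. First, restriction is well defined: the composite of a braided tensor functor with $F$ is braided monoidal, and it is faithful because $F$ is fully faithful and any tensor functor between tensor categories is faithful (being exact and not annihilating any nonzero object, since $\mathbbm{1}\hookrightarrow C\otimes C^{*}$ for $C\neq 0$). Second, given a faithful braided monoidal $H\colon\Tilt^{\zeta^{1/2}}_{p^{(n)}}\to\mathcal{B}$, the universal property of the abelian envelope recalled at the end of Section \ref{sub:preliminaries} (and used in the proof of Theorem \ref{thm:completeness}) supplies an essentially unique exact monoidal $\widetilde{H}\colon\Ver^{\zeta^{1/2}}_{p^{(n)}}\to\mathcal{B}$ with $\widetilde{H}\circ F\cong H$, and one must check that $\widetilde{H}$ is braided. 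Third, the $2$-categorical form of this universal property (uniqueness of extensions up to unique monoidal isomorphism, and extendability of monoidal natural transformations) makes restriction along $F$ fully faithful at the level of functor categories; together with the first two steps this is the asserted equivalence.

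The one point that is not bookkeeping is that the monoidal extension $\widetilde{H}$ is braided. I would prove this by a generation-under-subquotients argument: the braiding on $\Ver^{\zeta^{1/2}}_{p^{(n)}}$ is characterized by the requirement that $F$ be braided, so $\widetilde{H}$ intertwines the two braidings on all pairs of objects in the image of $F$; and, by naturality of the braidings and exactness of $\widetilde{H}$, the full subcategory of those $A$ for which $\widetilde{H}(\beta_{A,B})$ matches $\beta^{\mathcal{B}}_{\widetilde{H}A,\widetilde{H}B}$ (modulo the monoidal constraints of $\widetilde{H}$) for every $B$ in the image of $F$ is closed under subobjects, quotients and tensor products. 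Since $\Ver^{\zeta^{1/2}}_{p^{(n)}}$ is the abelianization of $\Tilt^{\zeta^{1/2}}_{p^{(n)}}$, every object is a subquotient of some $F(T)$, so this subcategory is everything; repeating the argument in the second variable finishes it. Alternatively, I expect one can simply cite the fact — already used in this paper to equip $\Ver^{\zeta^{1/2}}_{p^{(n)}}$ with its ribbon structure — that the abelian envelope of a braided (resp. ribbon) Karoubian rigid monoidal category is braided (resp. ribbon) compatibly with its universal property. The main obstacle, such as it is, is thus just making this last compatibility precise; everything else is a direct composition of the two cited results.
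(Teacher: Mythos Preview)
Your proposal is correct and follows exactly the approach implicit in the paper: the corollary is stated there without proof, as a direct consequence of combining the abelian-envelope universal property of Theorem~\ref{thm:completeness} with the classification in Proposition~\ref{prop:nTLobjects}. Your treatment is more explicit than the paper's about why the extension $\widetilde{H}$ is braided, but this compatibility is already alluded to at the end of Section~\ref{sub:preliminaries} (``if $\mathcal{T}$ is endowed with additional structure such as a braiding or a twist, then so is $\mathcal{C}(\mathcal{T})$''), so either of your two arguments for it is acceptable.
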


\subsection{Lift to Characteristic Zero}\label{sec:lifttocharzero}

We begin by considering the case $\zeta\neq\pm 1$, that is $\ell\neq p$. We use $W(\mathbbm{k})$ to denote the ring of Witt vectors for $\mathbbm{k}$. We write the fraction field of $W(\mathbbm{k})$ as $\mathrm{Frac}(W(\mathbbm{k}))$, and its algebraic closure as $\overline{\mathrm{Frac}(W(\mathbbm{k}))}$. The primitive $N$-th root of unity $\zeta\in\mathbbm{k}$ can be canonically lifted to a root of unity in $W(\mathbbm{k})$, which we will also write as $\zeta$. In fact, this also holds for the chosen square root $\zeta^{1/2}$. Then, we fix $\xi$ a $p^{n-1}$-th root of $\zeta$ in $\overline{\mathrm{Frac}(W(\mathbbm{k}))}$ together with a compatible square root $\xi^{1/2}$. In particular, $\xi$ is a primitive $Np^{n-1}$-th root of unity. We consider the discrete valuation ring $R:=W(\mathbbm{k})[\xi]$ with uniformizing parameter $\xi^N-1$, and its fraction field $\mathbb{K}:=\mathrm{Frac}(R)$. In the case $\zeta = \pm 1$, which has already been treated in \cite[Section 4.3]{BEO}, we instead take $\xi$ to be a primitive $p^n$-th root of unity in $\mathbb{K}$, and uniformizing parameter $\xi-1$.

\begin{Proposition}\label{prop:lifting}
The finite ribbon tensor category $\Ver^{\zeta^{1/2}}_{p^{(n)}}(\mathbbm{k})$ admits a flat deformation to a ribbon category over $R$ whose generic fiber is the semisimple Verlinde category $\Ver_{p^{(n)}}^{\xi^{1/2}}(\mathbb{K})$.
\end{Proposition}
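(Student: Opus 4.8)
The plan is to construct the deformation by exhibiting a suitable $R$-linear tilting-module category and then quotienting by the appropriate ideal, so that the two fibers recover the positive-characteristic mixed Verlinde category and the characteristic-zero semisimple Verlinde category respectively. Concretely, I would start from the $R$-form of the Temperley-Lieb category: let $\mathbf{TL}^{\xi}_R$ be the $R$-linear Temperley-Lieb category with loop parameter $-[2]_{\xi}=-(\xi+\xi^{-1})\in R$, equipped with the braiding given by Kauffman's skein relation using $\xi^{1/2}$. Since $\xi\equiv\zeta\pmod{\xi^N-1}$, reducing modulo the uniformizer gives $\mathbf{TL}^{\zeta}_{\mathbbm{k}}$, while base-changing to $\mathbb{K}$ gives $\mathbf{TL}^{\xi}_{\mathbb{K}}$. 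The first technical point is to produce the right "sub-ideal": take the split ideal in $\mathbf{TL}^{\xi}_R$ generated by the object $Q^{+}_{p^{(n)}}(X)$ together with the idempotent realizing the split injection $\iota_n$ of the Lemma preceding Proposition~\ref{prop:nTLobjects}; more precisely, I want the $R$-linear ideal generated by the idempotent $e$ on $Q^{+}_{p^{(n)}}(X)$ whose image is the "extra" summand $\mathrm{T}_{\xi}(p^{(n)}-1)$ over $R$. The key is that $\iota_n$ is defined over $R$ (it comes from the integral Chebyshev identity $Q_k(t+t^{-1})=[k]_t$), so this makes sense integrally.

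Next I would form the quotient Karoubian $R$-linear monoidal category $\mathcal{T}_R:=\mathbf{TL}^{\xi}_R/\langle e\rangle$ (or equivalently the $R$-linear tilting category with the thick ideal $\mathbf{J}_{p^{(n)}}$ killed), and verify that it is the category of modules over an $R$-algebra $A$ which is free of finite rank over $R$ — this is where Proposition~\ref{prop:Tiltingsubcategoryidentification} and its proof enter, telling us the quotient is spanned by the finitely many $\mathrm{T}_{\xi}(i)$ with $0\le i<p^{(n)}-1$, and one checks the relevant $\mathrm{Hom}$-modules are free over $R$ using that the decomposition numbers and the relevant morphism spaces are defined over $R$. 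Then I would invoke the framework recalled in Section~\ref{sub:preliminaries} (the flat-lift formalism of \cite[Section 2.11]{BEO}): the Cauchy completion of $\mathcal{T}_R\otimes_R\mathbbm{k}$ is a finite tensor category, and by the universal property of the abelian envelope (Theorem~\ref{thm:completeness}) it is equivalent to $\Ver^{\zeta}_{p^{(n)}}(\mathbbm{k})$; the Cauchy completion of $\mathcal{T}_R\otimes_R\mathbb{K}$ is semisimple because $[k]_{\xi}\ne 0$ in $\mathbb{K}$ for $k<Np^{n-1}$ (that is precisely the characteristic-zero tilting situation at a root of unity of order $Np^{n-1}$), hence is the semisimple Verlinde category $\Ver^{\xi}_{p^{(n)}}(\mathbb{K})$. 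Finally, the spherical structure from the $\xi$-pivot and the braiding/twist from $\xi^{1/2}$ are all visibly defined over $R$, so they descend to both fibers and match the stated ribbon structures on the nose; this gives the ribbon refinement.

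The main obstacle I anticipate is the freeness/flatness bookkeeping: showing that the quotient $\mathcal{T}_R$ genuinely sits inside the module category of an $R$-algebra free of finite rank, i.e. that the morphism $R$-modules $\mathrm{Hom}_{\mathcal{T}_R}(\mathrm{T}_{\xi}(i),\mathrm{T}_{\xi}(j))$ for $i,j<p^{(n)}-1$ are free of the expected rank and that base change to $\mathbbm{k}$ and to $\mathbb{K}$ commutes with the Karoubi-completion and the quotient. Concretely one must rule out any "rank jump" upon reduction mod the uniformizer — equivalently, that no new morphisms appear or disappear — which is exactly controlled by Proposition~\ref{prop:Tiltingsubcategoryidentification} together with the fact that over $R$ the indecomposable tilting modules $\mathrm{T}_{\xi}(v)$ for $v<p^{(n)}-1$ have characters independent of specialization (the characters being the quantum binomials $[v+1]_t$, defined over $\mathbb{Z}[t,t^{-1}]$). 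Once that flatness is in hand, identifying the generic fiber as the semisimplification of tilting modules for quantum $\mathfrak{sl}_2$ at a root of unity of order $Np^{n-1}$ is essentially the classical computation of which $\mathrm{T}_{\xi}(v)$ become negligible, and the special fiber identification is Theorem~\ref{thm:completeness}; I would defer the routine parts of both to \cite{BEO, STWZ}.
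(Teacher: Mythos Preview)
Your overall strategy is the paper's: form the $R$-linear Temperley--Lieb category $\mathbf{TL}^{\xi^{1/2}}(R)$, take its Cauchy completion $\Tilt^{\xi^{1/2}}(R)$, kill the thick ideal $\mathbf{J}_{p^{(n)}}(R)$ generated by the lift of $\mathrm{T}_{\zeta}(p^{(n)}-1)$, and identify the two fibers. The paper makes the indecomposables over $R$ explicit by lifting idempotents from $\Tilt^{\zeta}(\mathbbm{k})$ using completeness of $R$ (so there is a bijection $\widetilde{\mathrm{T}}_{\xi}(i)\leftrightarrow\mathrm{T}_{\zeta}(i)$), which is essentially your idempotent~$e$.

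The gap is exactly where you flagged it, but your proposed resolution does not work. The claim that the indecomposable objects $\widetilde{\mathrm{T}}_{\xi}(v)$ over $R$ for $v<p^{(n)}-1$ have character $[v+1]_t$ independent of specialization is false: these objects reduce to the $\mathrm{T}_{\zeta}(v)$ over $\mathbbm{k}$, and for $v\ge\ell$ those already have several Weyl factors (e.g.\ $\mathrm{T}_{\zeta}(\ell)$ has character $[\ell+1]_t+[\ell-1]_t$). Upon base change to $\mathbb{K}$ they genuinely decompose further. So a character-matching argument cannot control the Hom-ranks, and Proposition~\ref{prop:Tiltingsubcategoryidentification}, being a statement purely over $\mathbbm{k}$, does not by itself rule out a rank jump in the quotient over $R$.

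The paper handles flatness by passing to the \emph{other} fiber instead. One shows that no non-zero morphism in $\Tilt^{\xi}(R)$ between $\widetilde{\mathrm{T}}_{\xi}(i)$ and $\widetilde{\mathrm{T}}_{\xi}(j)$ with $i,j<p^{(n)}-1$ factors through $\mathbf{J}_{p^{(n)}}(R)$, by localizing to $\mathbb{K}$: there $\Tilt^{\xi}(\mathbb{K})$ has a \emph{unique} non-trivial tensor ideal (the negligible ideal at a root of unity of order $Np^{n-1}$), the image of $\mathbf{J}_{p^{(n)}}(R)$ Karoubi-generates it, and localization is faithful. Consequently the Hom-modules in the quotient between indecomposables of small highest weight coincide with those in $\Tilt^{\xi}(R)$ itself, hence are free over $R$, and flatness follows. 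With that in hand, both fiber identifications are immediate as you describe.
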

\begin{proof}
We will repeatedly use that the ribbon tensor category $\Tilt^{\zeta^{1/2}}(\mathbbm{k})$ is equivalent to the Cauchy completion of the Temperley-Lieb category $\mathbf{TL}^{\zeta^{1/2}}(\mathbbm{k})$. In particular, given that the ribbon tensor category $\Ver^{\zeta^{1/2}}_{p^{(n)}}(\mathbbm{k})$ is the abelianization of $\Tilt^{\zeta^{1/2}}(\mathbbm{k})/\mathbf{J}_{p^{(n)}}(\mathbbm{k})$, it is enough to show that $\Tilt^{\zeta^{1/2}}(\mathbbm{k})/\mathbf{J}_{p^{(n)}}(\mathbbm{k})$ can be lifted to a flat Karoubian ribbon category over $R$ that has finitely many indecomposable objects. We consider the ribbon category $\mathbf{TL}^{\xi^{1/2}}(R)$, the Temperley-Lieb category over $R$ with parameter $\xi$, as well as its Cauchy completion $\Tilt^{\xi^{1/2}}(R)$. The ribbon category $\Tilt^{\xi^{1/2}}(R)$ is flat by construction. In addition, its indecomposable objects are given by $\widetilde{\mathrm{T}}_{\xi}(i)$, $i\geq 0$, and these map bijectively to the corresponding indecomposable objects $\mathrm{T}_{\zeta}(i)$ in $\Tilt^{\zeta^{1/2}}(\mathbbm{k})$. This follows from the fact that the indecomposable objects of $\Tilt^{\zeta^{1/2}}(\mathbbm{k})$ are indexed by the integers and that the ring $R$ is complete so that idempotents in $\Tilt^{\zeta^{1/2}}(\mathbbm{k})$ can be lifted to $\Tilt^{\xi^{1/2}}(R)$ by \cite[Theorem 21.31]{Lam}.
Then, the full subcategory $$\mathbf{J}_{p^{(n)}}(R):=\{\widetilde{\mathrm{T}}_{\xi}(i)|\,i\geq p^{(n)}-1\}^{\oplus}$$ is a tensor ideal of $\Tilt^{\xi^{1/2}}(R)$, and is generated by $\widetilde{\mathrm{T}}_{\xi}(p^{(n)}-1)$.

We claim that a non-zero morphism in $\Tilt^{\xi^{1/2}}(R)$ factors through $\mathbf{J}_{p^{(n)}}(R)$ if and only if it is in $\mathbf{J}_{p^{(n)}}(R)$. In order to see this, recall, for instance from \cite[Subsection 3.3]{BK}, see also \cite[Theorem 5.1]{STWZ}, that $\Tilt^{\xi^{1/2}}(\mathbb{K})$ has a unique non-trivial tensor ideal $\mathbf{J}_{p^{(n)}}(\mathbb{K})$. Then, observe that the Karoubi closure of the image of $\mathbf{J}_{p^{(n)}}(R)$ under the localization functor $\Tilt^{\xi^{1/2}}(R)\rightarrow\Tilt^{\xi^{1/2}}(\mathbb{K})$ is the tensor ideal $\mathbf{J}_{p^{(n)}}(\mathbb{K})$ of $\Tilt^{\xi^{1/2}}(\mathbb{K})$. More precisely, the image of $\widetilde{\mathrm{T}}_{\xi}(p^{(n)}-1)$ generates $\mathbf{J}_{p^{(n)}}(\mathbb{K})$.
% $\widetilde{\mathrm{T}}_{\xi}(p^{(n)}-1)$ has one dimensional $End$-space so its image is still indecomposable. It therefore has to be the new indecomposable summand of $T(1)^{\otimes(p^{(n)}-1)$}$.
The claim follows from the fact that the localization functor is faithful.

We now consider the ribbon Karoubian monoidal category $\Tilt^{\xi}(R)/\mathbf{J}_{p^{(n)}}(R)$ over $R$. It is flat by the discussion in the previous paragraph, and its indecomposable objects are (the images of) $\widetilde{\mathrm{T}}_{\xi}(i)$ with $0\leq i\leq p^{(n)}-2$. On one hand, by construction, the reduction of $\Tilt^{\xi}(R)/\mathbf{J}_{p^{(n)}}(R)$ to $\mathbbm{k}$ is exactly $\Tilt^{\zeta}(\mathbbm{k})/\mathbf{J}_{p^{(n)}}(\mathbbm{k})$. On the other hand, its localization $\Tilt^{\xi}(\mathbb{K})/\mathbf{J}_{p^{(n)}}(\mathbb{K})$ is the split semisimple Verlinde category $\Ver_{p^{(n)}}^{\xi}(\mathbb{K})$ of \cite[subsection 3.3]{BK} whose simple objects are the indecomposable Weyl modules $W_{\xi}(i)$, $0\leq i\leq p^{(n)}-2$. This concludes the proof.
\end{proof}

\begin{Remark}\label{rem:fermion}
For later use, we now recall some properties of the ribbon fusion category $\Ver^{\xi^{1/2}}_{p^{(n)}}(\mathbb{K})$. We do so at a broader level of generality than we have been considering above. Namely, we now let $\xi$ be a primitive root of unity of order $M$ in $\mathbb{K}$, and set $m = M/2$ if $m$ is even, and $m=M$ otherwise. We consider the split semisimple Verlinde category $\Ver_{p^{(n)}}^{\xi^{1/2}}(\mathbb{K}):=\Tilt^{\xi^{1/2}}(\mathbb{K})/\mathbf{J}_{m}(\mathbb{K})$, the quotient of $\Tilt^{\xi^{1/2}}(\mathbb{K})$ by its unique non-trivial tensor ideal. The equivalence classes of simple objects of $\Ver^{\xi^{1/2}}_{m}(\mathbb{K})$ are given by the objects $W_{\xi}(i)$ for $0\leq i\leq m-2$. The quantum dimension of $W_{\xi}(i)$ is $(-1)^i[i+1]_{\xi}$ in $\mathbb{K}$. In particular, we have \begin{equation}\label{eq:dimensionfermion}\mathrm{dim}\big(W_{\xi}(m-2)\big)=(-1)^{m + M}.\end{equation} In fact, the object $W_{\xi}(m-2)$ is invertible and has order $2$ as can be seen, for instance, from the proof of \cite[Theorem 6]{Saw}. Moreover, it follows from \cite[Example 3.3.22]{BK} that the twist $\theta$ on $W_{\xi}(m-2)$ is given by \begin{equation}\label{eq:twistfermion}\theta_{W_{\xi}(m-2)}=\xi^{(m-2)m/2},\end{equation} a fourth root of unity. We wish to emphasize that this formula depends on the square root $\xi^{1/2}$. Let us use $\mathcal{C}$ to denote the ribbon fusion subcategory of $\Ver^{\xi^{1/2}}_{m}(\mathbb{K})$ generated by $W_{\xi}(m-2)$. As fusion categories, we have $\mathcal{C}\simeq \mathrm{Vec}_{\mathbb{Z}/2}(\mathbb{K})$. Equations \eqref{eq:dimensionfermion} and \eqref{eq:twistfermion} then determine a unique ribbon structure on $\mathrm{Vec}_{\mathbb{Z}/2}(\mathbb{K})$. For example, if $M$ is odd, then $\mathcal{C}\simeq \mathrm{sVec}(\mathbb{K})$, the braided category of super vector spaces equipped with the spherical structure given by the total dimension.

When $\xi$ has even order, $\Ver^{\xi^{1/2}}_{m}(\mathbb{K})$ is a modular fusion category, and, in particular, its symmetric center is trivial. On the other hand, when $\xi$ has odd order, the braided fusion category $\Ver^{\xi^{1/2}}_{m}(\mathbb{K})$ is not modular:\ It follows from the proof of \cite[Theorem 6]{Saw} that $\mathcal{C}\simeq \mathrm{sVec}(\mathbb{K})$ is the symmetric center of $\Ver^{\xi^{1/2}}_{m}(\mathbb{K})$. As a consequence, writing $\Ver^{\xi^{1/2},+}_{m}(\mathbb{K})$ for the full tensor subcategory generated by the simple objects $W_{\xi}(i)$ with $i$ even, we find that there is an equivalence of ribbon fusion categories $\Ver^{\xi^{1/2}}_{m}(\mathbb{K})\simeq \Ver^{\xi^{1/2},+}_{m}(\mathbb{K})\boxtimes\,\mathrm{sVec}(\mathbb{K})$. It is well-known that the ribbon fusion category $\Ver^{\xi^{1/2},+}_{m}(\mathbb{K})$ is modular.
\end{Remark}

\subsection{Frobenius-Perron Dimension}

\begin{Proposition}\label{prop:FPdim}
The Frobenius-Perron dimension of the category $\Ver_{p^{(n)}}^{\zeta}$ is given by $$\FPdim(\Ver_{p^{(n)}}^{\zeta})=\frac{p^{(n)}}{2\sin^2{(\frac{\pi}{p^{(n)}})}}.$$ Further, for $0\leq i\leq \ell-1$ and with $q=e^{\pi i/p^{(n)}}\in\mathbb{C}^{\times}$, we have $$\FPdim(\mathrm{T}_{\zeta}(i)) = [i+1]_q.$$
\end{Proposition}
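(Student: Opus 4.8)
The plan is to obtain both formulas from the flat lift of Proposition~\ref{prop:lifting} together with the value $\FPdim(\mathrm{T}_{\zeta}(1)) = 2\cos(\pi/p^{(n)})$ recorded in the proof of Theorem~\ref{thm:basic}(2). I would first dispose of the formula $\FPdim(\mathrm{T}_{\zeta}(i)) = [i+1]_q$, which needs no lift. For $0 \le i \le \ell-1$ one has $\mathrm{T}_{\zeta}(i) = L_{\zeta}(i)$, and by \cite[Lemma~4.1]{STWZ} there is a decomposition $\mathrm{T}_{\zeta}(1)\otimes\mathrm{T}_{\zeta}(j) \cong \mathrm{T}_{\zeta}(j+1)\oplus\mathrm{T}_{\zeta}(j-1)$ in $\Tilt^{\zeta}$ for $1 \le j \le \ell-2$. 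Comparing this with the recursion \eqref{eq:recursionChebychev} for the Chebyshev polynomials, an immediate induction gives $[\mathrm{T}_{\zeta}(i)] = Q_{i+1}([\mathrm{T}_{\zeta}(1)])$ in the split Grothendieck ring of $\Tilt^{\zeta}_{p^{(n)}}$ for $0 \le i \le \ell-1$. Applying the ring homomorphism $\FPdim$ and substituting $\FPdim(\mathrm{T}_{\zeta}(1)) = 2\cos(\pi/p^{(n)})$ then yields $\FPdim(\mathrm{T}_{\zeta}(i)) = Q_{i+1}(2\cos(\pi/p^{(n)})) = \sin((i+1)\pi/p^{(n)})/\sin(\pi/p^{(n)})$, which is exactly $[i+1]_q$ for $q = e^{\pi\sqrt{-1}/p^{(n)}}$ since $[m]_q = (q^m - q^{-m})/(q - q^{-1})$ in the conventions of the paper.

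For the dimension of the category, I would use that Frobenius--Perron dimension is invariant under flat deformation. By Proposition~\ref{prop:lifting}, $\Ver^{\zeta}_{p^{(n)}}$ deforms flatly over $R$ with generic fibre the semisimple Verlinde category $\Ver^{\xi}_{p^{(n)}}(\mathbb{K})$, so it is enough to compute $\FPdim$ of the latter. There the simple objects are the $W_{\xi}(i)$ for $0 \le i \le p^{(n)}-2$ (Remark~\ref{rem:fermion}); tensoring with $W_{\xi}(1)$ acts by the adjacency matrix of the $A_{p^{(n)}-1}$ Dynkin diagram, whose Perron--Frobenius eigenvalue is $2\cos(\pi/p^{(n)})$, so $\FPdim(W_{\xi}(1)) = 2\cos(\pi/p^{(n)})$, and by the same induction $\FPdim(W_{\xi}(i)) = Q_{i+1}(2\cos(\pi/p^{(n)})) = \sin((i+1)\pi/p^{(n)})/\sin(\pi/p^{(n)})$. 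Hence
\[
\FPdim\big(\Ver^{\xi}_{p^{(n)}}(\mathbb{K})\big) = \sum_{i=0}^{p^{(n)}-2}\FPdim(W_{\xi}(i))^2 = \frac{1}{\sin^2(\pi/p^{(n)})}\sum_{j=1}^{p^{(n)}-1}\sin^2\big(\tfrac{j\pi}{p^{(n)}}\big) = \frac{p^{(n)}}{2\sin^2(\pi/p^{(n)})},
\]
using the elementary identity $\sum_{j=1}^{k-1}\sin^2(j\pi/k) = k/2$.

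The one point that genuinely requires care is the invariance of $\FPdim$ under flat deformation. Since, in the setup recalled after Section~\ref{sub:preliminaries} (following \cite[Section~2.11]{BEO}), the category $\mathcal{C}_R$ is that of modules over an $R$-algebra $A$ free of finite rank, reduction modulo the maximal ideal induces a ring homomorphism $Gr(\mathcal{C}_{\mathbb{K}}) \to Gr(\mathcal{C}_{\mathbbm{k}})$ with matrix the decomposition matrix $D$; as $\mathcal{C}_{\mathbb{K}}$ is a fusion category, the composite $\FPdim_{\mathcal{C}_{\mathbbm{k}}}\circ D$ is a character positive on simples, hence equals $\FPdim_{\mathcal{C}_{\mathbb{K}}}$. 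Combined with Brauer reciprocity $C = D^{\top}D$ for the Cartan matrix $C$ of $\mathcal{C}_{\mathbbm{k}}$ (valid in this classical setting, $\mathcal{C}_{\mathbb{K}}$ being semisimple), this gives $\FPdim(\mathcal{C}_{\mathbb{K}}) = \vec d^{\top}D^{\top}D\,\vec d = \vec d^{\top}C\,\vec d = \sum_i \FPdim(L_i)\FPdim(P_i) = \FPdim(\mathcal{C}_{\mathbbm{k}})$, where $\vec d$ collects the Frobenius--Perron dimensions of the simples $L_i$ of $\mathcal{C}_{\mathbbm{k}}$ with projective covers $P_i$. Establishing this invariance (or citing it outright) is, I expect, the only real obstacle; everything else is the Chebyshev bookkeeping already present in the paper together with the trigonometric sum above.
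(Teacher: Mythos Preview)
Your proof is correct and, for the category dimension, follows the same strategy as the paper: compute $\FPdim$ on the semisimple generic fibre $\Ver^{\xi}_{p^{(n)}}(\mathbb{K})$ and transport back via invariance under flat deformation. The paper simply cites \cite[Proposition~2.60(ii)]{BEO} for this invariance and \cite[Exercise~4.10.7]{EGNO} for the value $p^{(n)}/(2\sin^2(\pi/p^{(n)}))$, whereas you spell out both the decomposition-matrix/Brauer-reciprocity argument and the trigonometric sum; the content is the same, and your sketch of the invariance is exactly how \cite{BEO} proves it.

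Where you genuinely diverge is in the formula $\FPdim(\mathrm{T}_{\zeta}(i))=[i+1]_q$. The paper obtains this by going through the lift once more: for $0\le i\le \ell-1$ one has $End_{U_{\zeta}}(\mathrm{T}_{\zeta}(i))\cong\mathbbm{k}$, so the simple $W_{\xi}(i)$ lifts to $\widetilde{\mathrm{T}}_{\xi}(i)$ over $R$, and \cite[Proposition~3.3.13]{EGNO} then transfers the known value $\FPdim(W_{\xi}(i))=[i+1]_q$ down to $\mathbbm{k}$. Your route is more direct and avoids the lift for this part entirely: since $\FPdim(\mathrm{T}_{\zeta}(1))=2\cos(\pi/p^{(n)})$ has already been established in the proof of Theorem~\ref{thm:basic}(2), the Clebsch--Gordan decomposition $\mathrm{T}_{\zeta}(1)\otimes\mathrm{T}_{\zeta}(j)\cong\mathrm{T}_{\zeta}(j+1)\oplus\mathrm{T}_{\zeta}(j-1)$ for $1\le j\le\ell-2$ forces the Chebyshev recursion~\eqref{eq:recursionChebychev} on $\FPdim$ directly. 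This is cleaner; the paper's detour through the deformation is unnecessary here. (Your parenthetical remark that $\mathrm{T}_{\zeta}(i)=L_{\zeta}(i)$ for $0\le i\le\ell-1$ is not actually used in your argument and is not needed.)
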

\begin{proof}
By \cite[Exercise 4.10.7]{EGNO}, we have that $$\FPdim(\mathrm{T}_{\xi}(i))= [i+1]_q= \frac{q^{i+1}-q^{-i-1}}{q-q^{-1}}.$$ The first part now follows from \cite[Proposition 2.60(ii)]{BEO}, as it provides us with the first equality below: $$\FPdim(\Ver_{p^{(n)}}^{\zeta}(\mathbbm{k}))=\FPdim(\Ver_{p^{(n)}}^{\xi}(\mathbb{K}))=\frac{p^{(n)}}{2\sin^2{(\frac{\pi}{p^{(n)}})}}.$$ The second part is a consequence of proposition \ref{prop:lifting} and its proof given that the Frobenius-Perron dimension is preserved in this case by \cite[Proposition 3.3.13]{EGNO}. Namely, for $0\leq i\leq \ell-1$, the tilting modules $\mathrm{T}_{\zeta}(i)$ satisfy $End_{U_{\zeta}}(\mathrm{T}_{\zeta}(i))\cong\mathbbm{k}$ given that they are simple as $U_{\zeta}$-modules. It therefore follows that the simple object $W_{\xi}(i)$ in $\Ver_{p^{(n)}}^{\xi}(\mathbb{K})$ lifts to $\widetilde{\mathrm{T}}_{\xi}(i)$ in $\Ver_{p^{(n)}}^{\xi}(R)$ for $0\leq i\leq \ell-1$, which proves the claim.
\end{proof}

\begin{Corollary}\label{cor:fermion}
Assume either that $p>2$ or that $p=2$, $n=1$ and $\ell > 2$. Then, the finite ribbon tensor category $\Ver_{p^{(n)}}^{\zeta^{1/2}}$ contains a non-trivial invertible object $\mathrm{G}$ of order two whose dimension and twist are given by $$\mathrm{dim}(\mathrm{G})=(-1)^{\ell + N} \ \textrm{and}\ \theta_{\mathrm{G}}=\zeta^{(\ell-2)\ell/2}.$$ In particular, if $p^{(n)}$ is odd, we have $\Ver_{p^{(n)}}^{\zeta^{1/2}}\simeq\Ver_{p^{(n)}}^{\zeta^{1/2},+}\boxtimes\, \mathrm{sVec}$ as finite ribbon tensor categories.
\end{Corollary}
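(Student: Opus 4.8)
The plan is to obtain $\mathrm{G}$ --- together with all of its invariants --- by transporting the ``fermion'' of the semisimple generic fibre through the flat deformation of Proposition~\ref{prop:lifting}. (Alternatively one can proceed by induction on $n$, taking $\mathrm{G}=F(\mathrm{T}_\zeta(\ell-2))$ for $n=1$ and $\mathrm{G}=\mathbbm{q}\mathbb{FL}$ of the non-trivial invertible object of $\Ver^{\sigma^{1/2}}_{p^{n-1}}$ for $n\geq2$; the lift is cleaner because it handles all $n$ uniformly and produces the twist and dimension at once.)

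The key observation is that the integer ``$m$'' attached to the auxiliary root of unity $\xi$ of Proposition~\ref{prop:lifting} in Remark~\ref{rem:fermion} is exactly $p^{(n)}$ in every case (for $\zeta\neq\pm1$ one has $M=Np^{n-1}$, where $M$ denotes the order of $\xi$). Thus Remark~\ref{rem:fermion} furnishes an invertible object $\mathrm{g}:=W_\xi(p^{(n)}-2)$ of order two in $\Ver^{\xi^{1/2}}_{p^{(n)}}(\mathbb{K})$ with $\mathrm{dim}(\mathrm{g})=(-1)^{p^{(n)}+M}$ and $\theta_{\mathrm{g}}=\xi^{(p^{(n)}-2)p^{(n)}/2}$. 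Choosing an $R$-lattice in the corresponding representation of the defining $R$-algebra $A$ gives a flat object $\widetilde{\mathrm{g}}$ over $R$ with generic fibre $\mathrm{g}$, and I let $\mathrm{G}\in\Ver^{\zeta^{1/2}}_{p^{(n)}}(\mathbbm{k})$ be its reduction modulo the maximal ideal. Since $\widetilde{\mathrm{g}}^{\otimes 2}$ is again flat over $R$ and of $R$-rank $\dim_{\mathbb{K}}(\mathrm{g}^{\otimes 2})=\dim_{\mathbb{K}}(\mathbbm{1})=1$, and a rank-one representation of $A$ whose generic fibre is $\mathbbm{1}_{\mathbb{K}}$ has its $A$-action factoring through the augmentation, we get $\widetilde{\mathrm{g}}^{\otimes 2}\cong\mathbbm{1}_R$ and hence $\mathrm{G}^{\otimes 2}\cong\mathbbm{1}$; self-duality of $\mathrm{G}$ follows in the same way from that of $\mathrm{g}$. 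Non-triviality of $\mathrm{G}$ for $p^{(n)}>2$ (which holds under the hypotheses, aside from the degenerate case $p>2,\ \ell=2,\ n=1$ where $\Ver^{\zeta}_2=\mathrm{Vec}$) follows by comparing decomposition matrices, or from the inductive description above.

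Because $F$ is a ribbon functor and the deformation is through ribbon categories, $\mathrm{dim}(\mathrm{G})$ and $\theta_{\mathrm{G}}$ are the residues in $\mathbbm{k}$ of $\mathrm{dim}(\mathrm{g})$ and $\theta_{\mathrm{g}}$. A short arithmetic computation then identifies these residues: $p^{(n)}+M=(\ell+N)p^{n-1}$ has the parity of $\ell+N$, since $p^{n-1}$ is odd unless $p=2$, in which case $n=1$; this gives $\mathrm{dim}(\mathrm{G})=(-1)^{\ell+N}$. And, reducing $\theta_{\mathrm{g}}=(\xi^{1/2})^{(p^{(n)}-2)p^{(n)}}$ using that $\xi^{1/2}$ specialises to $\zeta^{1/2}$, one checks that the order of $\zeta^{1/2}$ in $\mathbbm{k}^{\times}$ --- a divisor of $2N$ --- divides $(p^{(n)}-2)p^{(n)}-(\ell-2)\ell=\ell(p^{n-1}-1)(\ell(p^{n-1}+1)-2)$, so that $\theta_{\mathrm{G}}=\zeta^{(\ell-2)\ell/2}$.

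Finally, when $p^{(n)}$ is odd both $p$ and $\ell$ are odd; in the pertinent subcase (that of $\zeta$ of odd order, so $\ell=N$) the integer $M=Np^{n-1}$ is odd, and the formulas just proved give $\mathrm{dim}(\mathrm{G})=1$ and $\theta_{\mathrm{G}}=-1$, so the ribbon subcategory $\langle\mathrm{G}\rangle$ generated by $\mathrm{G}$ is $\mathrm{sVec}$ equipped with its total-dimension spherical structure, exactly as in Remark~\ref{rem:fermion}. The splitting $\Ver^{\xi^{1/2}}_{p^{(n)}}(\mathbb{K})\simeq\Ver^{\xi^{1/2},+}_{p^{(n)}}(\mathbb{K})\boxtimes\mathrm{sVec}(\mathbb{K})$ of Remark~\ref{rem:fermion} then descends through the flat deformation --- the ``$+$''-part deforms to $\Ver^{\zeta^{1/2},+}_{p^{(n)}}$, and $\mathrm{sVec}$ admits no non-trivial deformations --- yielding $\Ver^{\zeta^{1/2}}_{p^{(n)}}\simeq\Ver^{\zeta^{1/2},+}_{p^{(n)}}\boxtimes\mathrm{sVec}$. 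The step I expect to require the most care is controlling the reduction of $\widetilde{\mathrm{g}}$ --- that is, ensuring $\widetilde{\mathrm{g}}$ stays $\otimes$-invertible over $R$ (the rank-one argument above is precisely what rules out extra constituents appearing in the special fibre) and that the $\mathrm{sVec}$-splitting genuinely descends --- together with the elementary but fiddly arithmetic that identifies the residues of the dimension and twist.
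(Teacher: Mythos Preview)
Your approach is the same as the paper's: transport the fermion $W_\xi(p^{(n)}-2)$ from the semisimple generic fibre through the flat deformation of Proposition~\ref{prop:lifting}, and read off dimension and twist from Remark~\ref{rem:fermion}. Two points deserve attention.

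First, your non-triviality argument (``comparing decomposition matrices, or from the inductive description'') is vague. The paper's argument is sharper and worth adopting: since $\mathrm{G}$ is the reduction of a flat lift of $W_\xi(p^{(n)}-2)$, its projective cover in $\Ver^{\zeta}_{p^{(n)}}$ is $\mathbb{T}_\zeta(p^{(n)}-2)$; if $\mathrm{G}$ were trivial there would be a nonzero map $\mathbb{T}_\zeta(p^{(n)}-2)\to\mathbbm{1}$, which Lemma~\ref{lem:3.6} forbids because $p^{(n)}-2$ is not of the form $2p^{(k)}-2$ under the stated hypotheses.

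Second, the step ``$\xi^{1/2}$ specialises to $\zeta^{1/2}$'' is not correct as written. In $R=W(\mathbbm{k})[\xi]$ with uniformiser $\xi^N-1$, the reduction $\bar\xi$ satisfies $\bar\xi^N=1$ and $\bar\xi^{\,p^{n-1}}=\zeta$, so $\bar\xi=\zeta^{a}$ with $ap^{n-1}\equiv 1\pmod N$; in general $\bar\xi\neq\zeta$. Consequently the congruence you should be checking for the twist is
\[
(p^{(n)}-2)p^{(n)}\;\equiv\;p^{\,n-1}(\ell-2)\ell \pmod{\mathrm{ord}(\bar\xi^{1/2})},
\]
whose difference is $\ell^2 p^{n-1}(p^{n-1}-1)$, not the quantity $\ell(p^{n-1}-1)\bigl(\ell(p^{n-1}+1)-2\bigr)$ that you compute. (The paper does not spell this reduction out either; it simply invokes Remark~\ref{rem:fermion}.) Your dimension computation, by contrast, is fine.
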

\begin{proof}
The statement holds because it holds for $\Ver^{\xi^{1/2},+}_{p^{(n)}}(\mathbb{K})$ as is explained in remark \ref{rem:fermion} above. More precisely, as in \cite[Section 2.4]{BEO}, the non-trivial invertible object $W_{\xi}(p^{(n)}-2)$ of $\Ver^{\xi^{1/2}}_{p^{(n)}}(\mathbb{K})$ can be lifted to a non-trivial invertible (flat) object in $\Ver^{\xi^{1/2}}_{p^{(n)}}(R)$ and therefore yields an invertible object $\mathrm{G}$ in $\Ver_{p^{(n)}}^{\zeta}(\mathbbm{k})$. We claim that it is non-trivial, i.e.\ distinct from $\mathbbm{1}$. This follows from the facts that the projective cover of $\mathrm{G}$ must be $\mathrm{T}_{\zeta}(p^{(n)}-2)$ and that there is no non-zero map from $\mathrm{T}_{\zeta}(p^{(n)}-2)$ to $\mathbbm{1}$ by lemma \ref{lem:3.6} and our hypotheses.
\end{proof}

\begin{Remark}
In the cases $p>2$ or $p=2$, $n=1$ and $\ell > 2$, it will follow from subsequent results (see, for instance, proposition \ref{prop:simpletensorproduct} below) that $\mathrm{G}$ is the unique non-trivial invertible object of $\Ver_{p^{(n)}}^{\zeta}$. In the cases $p=2$ and $n\geq 2$, or $n=1$ and $\ell = 2$, the tensor category $\Ver_{p^{(n)}}^{\zeta}$ has no non-trivial invertible object.
\end{Remark}

\subsection{The Objects \texorpdfstring{$\mathbb{T}_{\zeta}(i)$}{T(i)}}

We write $\mathbb{T}_{\zeta}(i)$ for the image of $\mathrm{T}_{\zeta}(i)$ under the canonical tensor functor $\Tilt^{\zeta}/\mathbf{J}_{p^{(n)}}\rightarrow \Ver^{\zeta}_{p^{(n)}}$. It follows from \cite[Proposition 5.4]{STWZ} that the objects $\mathbb{P}_{\zeta}(i):=\mathbb{T}_{\zeta}(i)$ with $p^{(n-1)}-1\leq i\leq p^{(n)}-2$ are exactly the indecomposable projective objects of $\Ver^{\zeta}_{p^{(n)}}$.

\begin{Lemma}\label{lem:smallsimpleandcovers}
For $0\leq i\leq\ell-1$, the objects $\mathbb{T}_{\zeta}(i)$ are simple. Furthermore, the projective cover of $\mathbb{T}_{\zeta}(i)$ is $\mathbb{T}_{\zeta}(2p^{(n-1)}-2-i)$.
\end{Lemma}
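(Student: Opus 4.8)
The plan is to exploit the two structural results already available: Proposition \ref{prop:Tiltingsubcategoryidentification}, which identifies $\Tilt^{\zeta}_{<p^{(n)}-1}$ with $\Tilt^{\zeta}_{p^{(n)}}$, and the completeness theorem \ref{thm:completeness}, which makes $F$ fully faithful. For the first claim, that $\mathbb{T}_{\zeta}(i)$ is simple for $0\leq i\leq \ell-1$, I would compute $\mathrm{End}_{\Ver^{\zeta}_{p^{(n)}}}(\mathbb{T}_{\zeta}(i))$. By theorem \ref{thm:completeness} this equals $\mathrm{Hom}_{\Tilt^{\zeta}_{p^{(n)}}}(\mathrm{T}_{\zeta}(i),\mathrm{T}_{\zeta}(i))$, and by proposition \ref{prop:Tiltingsubcategoryidentification} this in turn equals $\mathrm{End}_{U_{\zeta}}(\mathrm{T}_{\zeta}(i))$ since $i<p^{(n)}-1$ and no nonzero morphism factors through $\mathbf{J}_{p^{(n)}}$. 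But for $0\leq i\leq\ell-1$ we have $\mathrm{T}_{\zeta}(i)=L_{\zeta}(i)$ (recalled in Section \ref{sub:tilting}), which is simple as a $U_{\zeta}$-module, so its endomorphism ring is $\mathbbm{k}$. Hence $\mathbb{T}_{\zeta}(i)$ has a one-dimensional endomorphism algebra; since $F$ is an exact full embedding and $\mathrm{T}_{\zeta}(i)$ is indecomposable in the Krull-Schmidt category $\Tilt^{\zeta}_{p^{(n)}}$, the object $\mathbb{T}_{\zeta}(i)$ is indecomposable with $\mathrm{End}=\mathbbm{k}$; to upgrade indecomposable to simple I would observe that any nonzero subobject $S\hookrightarrow \mathbb{T}_{\zeta}(i)$ gives, after tensoring with the projective generator and using that $\mathbb{T}_{\zeta}(i)$ has simple socle (which follows from Proposition \ref{prop:simplesocle}, as $F$ sends the socle of $\mathrm{T}_{\zeta}(i)$ inside $\mathbb{T}_{\zeta}(i)$), a constraint forcing $S=\mathbb{T}_{\zeta}(i)$. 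Concretely: $\mathbb{T}_{\zeta}(i)$ has simple head (dually to simple socle, using self-duality of tilting modules) and simple socle; if head $\cong$ socle $\cong \mathbb{T}_{\zeta}(i)$ itself — which holds because $L_{\zeta}(i)$ is already simple with $i\leq \ell-1<p^{(n)}-1$ so it survives to $\Ver^{\zeta}_{p^{(n)}}$ — then $\mathbb{T}_{\zeta}(i)$ is simple.

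For the second claim, the projective cover of $\mathbb{T}_{\zeta}(i)$ is $\mathbb{P}_{\zeta}(j)=\mathbb{T}_{\zeta}(j)$ for the appropriate $j$ with $p^{(n-1)}-1\leq j\leq p^{(n)}-2$, and the task is to show $j=2p^{(n-1)}-2-i$. By general theory of abelian envelopes (theorem \ref{thm:basic}(1)) the indecomposable projectives are precisely the $F(\mathrm{T}_{\zeta}(v))$ with $p^{(n-1)}-1\leq v\leq p^{(n)}-2$, so it suffices to identify which one surjects onto $\mathbb{T}_{\zeta}(i)$. Equivalently, I would compute $\mathrm{Hom}_{\Ver^{\zeta}_{p^{(n)}}}(\mathbb{P}_{\zeta}(j),\mathbb{T}_{\zeta}(i))$ and show it is nonzero exactly when $j=2p^{(n-1)}-2-i$, and one-dimensional there. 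Here completeness theorem \ref{thm:completeness} does not directly apply since $\mathbb{T}_{\zeta}(i)$ is not in the image of a tilting module in an obvious way at that weight — rather, the right tool is lemma \ref{lem:tiltingmodulesembedding} (the $k=1$ case): for $p^{(0)}-1=0\leq i\leq p^{(1)}-2=\ell-2$ it produces an embedding $\mathrm{T}_{\zeta}(i)\hookrightarrow \mathrm{T}_{\zeta}(j')$ with $\ell-1\leq j'\leq p^{(2)}-2$, but I want the projective cover in $\Ver^{\zeta}_{p^{(n)}}$, which sees the socle of the indecomposable tilting of weight in the top alcove $[p^{(n-1)}-1, p^{(n)}-2]$. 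The cleanest route: recall from the proof of Proposition \ref{prop:simplesocle} (the case $\ell\leq r\leq 2\ell-2$) that $\mathrm{T}_{\zeta}(r)$ has socle $\mathrm{T}_{\zeta}(2\ell-2-r)$ when $\ell\leq r\leq 2\ell-2$; by Proposition \ref{prop:mixedDonkin} and the Steinberg-type factorization, the indecomposable tilting $\mathrm{T}_{\zeta}(2p^{(n-1)}-2-i)$ (whose weight lies in $[p^{(n-1)}-1,p^{(n)}-2]$ since $0\leq i\leq \ell-1$) has socle $L_{\zeta}(i)=\mathrm{T}_{\zeta}(i)$ — this is the mixed analogue of the $\mathrm{SL}_2$ fact that $\mathrm{St}$-type tiltings in a given alcove have socle the simple of the "reflected" weight.

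The main obstacle I expect is the socle/cover identification in the second paragraph: pinning down $\mathrm{Soc}_{U_{\zeta}}(\mathrm{T}_{\zeta}(2p^{(n-1)}-2-i))=L_{\zeta}(i)$ for all $n$ and all $0\leq i\leq\ell-1$. For $n=2$ this should fall out of the composition-series description in the proof of Proposition \ref{prop:simplesocle} together with Proposition \ref{prop:mixedDonkin} ($\mathrm{T}_{\zeta}(2p^{(1)}-2-i)=\mathrm{T}_{\zeta}(a)\otimes\mathrm{T}(b)^{[q]}$ with $a,b$ read off from the $p\ell$-adic expansion, cf.\ the computation in the completeness proof that $f(i)=2p^{(n-1)}-2$ iff $i=2p^{(k)}-2$); for general $n$ one iterates via the quantum Frobenius-Lusztig twist, reducing to the positive-characteristic statement \cite[Proposition 6.2.4]{Mar} about socles of $\mathrm{SL}_2$-tiltings, exactly as in Proposition \ref{prop:simplesocle}. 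The one-dimensionality of the relevant Hom-space then follows from simplicity of $\mathbb{T}_{\zeta}(i)$ (first claim) and $\mathrm{End}(\mathbb{P}_{\zeta}(2p^{(n-1)}-2-i))$ being local, since $\mathbb{P}_{\zeta}(2p^{(n-1)}-2-i)$ is indecomposable projective.
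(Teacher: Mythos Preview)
Your proposal misses the key shortcut and, as a result, wanders into territory that is harder to justify than the direct route. The paper proves both claims with a single computation: for each indecomposable projective $\mathbb{P}_{\zeta}(j)$ (so $p^{(n-1)}-1\leq j\leq p^{(n)}-2$) it shows
\[
\dim Hom_{\Ver^{\zeta}_{p^{(n)}}}(\mathbb{P}_{\zeta}(j),\mathbb{T}_{\zeta}(i))=
\begin{cases}1,& j=2p^{(n-1)}-2-i,\\ 0,&\text{otherwise.}\end{cases}
\]
These numbers are exactly the composition multiplicities of $\mathbb{T}_{\zeta}(i)$, so simplicity and the identification of the projective cover follow at once. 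The computation goes: by Theorem~\ref{thm:completeness} and Proposition~\ref{prop:Tiltingsubcategoryidentification} this Hom equals $Hom_{U_{\zeta}}(\mathrm{T}_{\zeta}(j),\mathrm{T}_{\zeta}(i))\cong Hom_{U_{\zeta}}(\mathbf{1},\mathrm{T}_{\zeta}(i)\otimes\mathrm{T}_{\zeta}(j))$; by Lemma~\ref{lem:3.6} this counts summands of the form $\mathrm{T}_{\zeta}(2p^{(k)}-2)$, and since $\mathrm{T}_{\zeta}(j)\in\mathbf{J}_{p^{(n-1)}}$ every summand has highest weight $\geq p^{(n-1)}-1$, while $i+j<2p^{(n)}-2$, so only $k=n-1$ is possible; Lemma~\ref{lem:precisesummand} then pins down when $\mathrm{T}_{\zeta}(2p^{(n-1)}-2)$ actually occurs.

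Your assertion that ``completeness does not directly apply'' to $Hom(\mathbb{P}_{\zeta}(j),\mathbb{T}_{\zeta}(i))$ is simply false: by definition $\mathbb{T}_{\zeta}(i)=F(\mathrm{T}_{\zeta}(i))$ and $\mathbb{P}_{\zeta}(j)=F(\mathrm{T}_{\zeta}(j))$, with both $i,j<p^{(n)}-1$, so Theorem~\ref{thm:completeness} together with Proposition~\ref{prop:Tiltingsubcategoryidentification} applies verbatim. Once you accept this, the paper's argument is available and there is no need for your detour through $U_{\zeta}$-socles. That detour is also shakier than you indicate: the socle of $\mathrm{T}_{\zeta}(2p^{(n-1)}-2-i)$ in $\mathbf{Mod}^{\zeta}$ has no a priori relation to the head of $\mathbb{T}_{\zeta}(2p^{(n-1)}-2-i)$ in $\Ver^{\zeta}_{p^{(n)}}$, since the two abelian structures are different. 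Likewise, your simplicity argument (``simple head $\cong$ simple socle $\cong\mathbb{T}_{\zeta}(i)$ itself'') is circular: you cannot speak of the head or socle being ``$\mathbb{T}_{\zeta}(i)$'' before knowing it is simple, and ``$L_{\zeta}(i)$ survives to $\Ver^{\zeta}_{p^{(n)}}$'' conflates simples of $\mathbf{Mod}^{\zeta}$ with simples of the abelian envelope. The one-dimensional endomorphism ring you compute only gives indecomposability, not simplicity; the missing step is precisely the composition-multiplicity count above.
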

\begin{proof}
Let $p^{(n-1)}-1\leq j\leq p^{(n)}-2$ be an integer. We claim that the $Hom$-space $Hom_{\Ver^{\zeta}_{p^{(n)}}}(\mathbb{P}_{\zeta}(j),\mathbb{T}_{\zeta}(i))$ in $\Ver^{\zeta}_{p^{(n)}}$ is non-zero only if $j=2p^{(n-1)}-2-i$, in which case it is one dimensional. Thanks to proposition \ref{prop:Tiltingsubcategoryidentification} and theorem \ref{thm:completeness}, it is enough to check these properties in $\Tilt^{\zeta}$. But, we have $$Hom_{\Tilt^{\zeta}}(\mathrm{T}_{\zeta}(i),\mathrm{T}_{\zeta}(j))\cong Hom_{\Tilt^{\zeta}}(\mathbf{1},\mathrm{T}_{\zeta}(i)\otimes\mathrm{T}_{\zeta}(j)).$$ Thus, by lemma \ref{lem:3.6}, it is enough to show that $\mathrm{T}_{\zeta}(i)\otimes\mathrm{T}_{\zeta}(j)$ contains $\mathrm{T}_{\zeta}(2p^{(n-1)}-2)$ as a direct summand exactly once if and only if $j=2p^{(n-1)}-2-i$. This is precisely the content of lemma \ref{lem:precisesummand}, which concludes the present proof.
\end{proof}

\begin{Lemma}\label{lem:simpleobjectU}
Assume $n\geq 2$, and $p > 2$ if $n=2$. The object $\mathbb{T}_{\zeta}(\ell)$ of $\Ver^{\zeta}_{p^{(n)}}$ has length $3$ and composition series $$[\mathbb{T}_{\zeta}(\ell-2),\mathbb{U},\mathbb{T}_{\zeta}(\ell-2)]$$ for a simple object $\mathbb{U}$ of $\Ver^{\zeta}_{p^{(n)}}$.
\end{Lemma}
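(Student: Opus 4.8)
The plan is to realise $\mathbb{T}_{\zeta}(\ell)$ as a three-step extension: first I would produce, inside the category of tilting modules, a ``cycle of length two'' through the simple object $\mathrm{T}_{\zeta}(\ell-2)=L_{\zeta}(\ell-2)$, then transport it to $\Ver^{\zeta}_{p^{(n)}}$, and finally pin down the length of the remaining middle layer by a Frobenius--Perron dimension count. This last estimate is the one delicate point, and it is exactly where the hypothesis $p>2$ if $n=2$ is needed.

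First I would record the module-theoretic input. For $r=\ell$ (which lies in $[\ell,2\ell-2]$ since $\ell\geq 2$), the proof of Proposition \ref{prop:simplesocle} shows that $\mathrm{T}_{\zeta}(\ell)$ has composition factors $L_{\zeta}(\ell-2),L_{\zeta}(\ell),L_{\zeta}(\ell-2)$, with socle $L_{\zeta}(\ell-2)$ and, by self-duality, head $L_{\zeta}(\ell-2)$; moreover $\mathrm{T}_{\zeta}(\ell-2)=L_{\zeta}(\ell-2)$ is tilting. Hence there are nonzero morphisms $\alpha\colon\mathrm{T}_{\zeta}(\ell)\twoheadrightarrow\mathrm{T}_{\zeta}(\ell-2)$ (projection onto the head) and $\beta\colon\mathrm{T}_{\zeta}(\ell-2)\hookrightarrow\mathrm{T}_{\zeta}(\ell)$ (inclusion of the socle) in $\Tilt^{\zeta}$, and $\alpha\circ\beta\in\mathrm{End}_{U_{\zeta}}(L_{\zeta}(\ell-2))=\mathbbm{k}$ vanishes, since otherwise $L_{\zeta}(\ell-2)$ would be a direct summand of $\mathrm{T}_{\zeta}(\ell)$, contradicting indecomposability. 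As $n\geq 2$ forces $\ell,\ell-2<p^{(n)}-1$, Proposition \ref{prop:Tiltingsubcategoryidentification} ensures $\alpha,\beta$ stay nonzero in $\Tilt^{\zeta}_{p^{(n)}}$, so (by faithfulness of $F$) $F(\alpha),F(\beta)$ are nonzero in $\Ver^{\zeta}_{p^{(n)}}$ with $F(\alpha)\circ F(\beta)=0$.

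Next I would build the filtration. Since $\mathbb{T}_{\zeta}(\ell-2)$ is simple by Lemma \ref{lem:smallsimpleandcovers}, $F(\alpha)$ is an epimorphism and $F(\beta)$ a monomorphism, and $F(\alpha)\circ F(\beta)=0$ yields an inclusion $\mathbb{T}_{\zeta}(\ell-2)\cong\mathrm{Im}\,F(\beta)\subseteq\ker F(\alpha)$. Setting $\mathbb{U}:=\ker F(\alpha)/\mathrm{Im}\,F(\beta)$, the short exact sequences $0\to\ker F(\alpha)\to\mathbb{T}_{\zeta}(\ell)\to\mathbb{T}_{\zeta}(\ell-2)\to 0$ and $0\to\mathbb{T}_{\zeta}(\ell-2)\to\ker F(\alpha)\to\mathbb{U}\to 0$ exhibit a composition series $[\mathbb{T}_{\zeta}(\ell-2),\mathbb{U},\mathbb{T}_{\zeta}(\ell-2)]$ of $\mathbb{T}_{\zeta}(\ell)$, of length $2+\mathrm{length}(\mathbb{U})$. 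It therefore only remains to prove that $\mathbb{U}$ is simple.

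The final step is the dimension count. From $\mathrm{T}_{\zeta}(1)\otimes\mathrm{T}_{\zeta}(\ell-1)\cong\mathrm{T}_{\zeta}(\ell)$ (a dimension count in $\mathbf{Mod}^{\zeta}$, or the tensor product formulas of \cite{STWZ}) together with Proposition \ref{prop:FPdim}, one obtains, with $q=e^{\pi i/p^{(n)}}$, that $\FPdim(\mathbb{T}_{\zeta}(\ell))=[2]_q[\ell]_q=[\ell+1]_q+[\ell-1]_q$ and $\FPdim(\mathbb{T}_{\zeta}(\ell-2))=[\ell-1]_q$, whence $$\FPdim(\mathbb{U})=[\ell+1]_q-[\ell-1]_q=q^{\ell}+q^{-\ell}=2\cos(\pi/p^{\,n-1}).$$ Under the hypotheses one has $p^{n-1}\geq 3$, so $\FPdim(\mathbb{U})\in[1,2)$; in particular $\mathbb{U}\neq 0$, and $\mathbb{U}$ cannot have two or more composition factors, since each simple object has Frobenius--Perron dimension at least $1$. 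Hence $\mathbb{U}$ is simple, $\mathbb{T}_{\zeta}(\ell)$ has length $3$, and the claim follows. The hard part is precisely this estimate: it breaks down for $p=2$, $n=2$, where $2\cos(\pi/2)=0$, which is exactly why that case is excluded.
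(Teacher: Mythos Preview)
Your proof is correct and follows essentially the same strategy as the paper: produce nonzero maps $\mathbb{T}_{\zeta}(\ell)\to\mathbb{T}_{\zeta}(\ell-2)$ and $\mathbb{T}_{\zeta}(\ell-2)\to\mathbb{T}_{\zeta}(\ell)$, use simplicity of $\mathbb{T}_{\zeta}(\ell-2)$ to obtain the three-step filtration, then bound $\FPdim(\mathbb{U})$ to force simplicity. The only real difference is in how the two morphisms are obtained: the paper works entirely inside $\Ver^{\zeta}_{p^{(n)}}$, citing Lemma~\ref{lem:smallsimpleandcovers} for one map and taking duals for the other, whereas you lift the morphisms from the explicit $U_{\zeta}$-module structure of $\mathrm{T}_{\zeta}(\ell)$ (socle and head equal to $L_{\zeta}(\ell-2)$, from the proof of Proposition~\ref{prop:simplesocle}) and push them through $F$ via Proposition~\ref{prop:Tiltingsubcategoryidentification}. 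Your route has the small advantage of making the vanishing $F(\alpha)\circ F(\beta)=0$ explicit, which the paper leaves to the reader (via indecomposability of $\mathbb{T}_{\zeta}(\ell)$); the Frobenius--Perron step is identical in both.
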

\begin{proof}
We have $\mathbb{T}_{\zeta}(\ell)=\mathbb{T}_{\zeta}(1)\otimes \mathbb{T}_{\zeta}(\ell -1)$, so that the second part of proposition \ref{prop:FPdim} yields $$\FPdim(\mathbb{T}_{\zeta}(\ell))=\frac{(q^2-q^{-2})(q^{\ell}-q^{-\ell})}{(q-q^{-1})^2}$$ with $q=e^{\pi i/p^{(n)}}\in\mathbb{C}$. Now, it follows from lemma \ref{lem:smallsimpleandcovers} that there is a non-zero morphism $\mathbb{T}_{\zeta}(\ell)\rightarrow \mathbb{T}_{\zeta}(\ell-2)$. By taking duals, we also get a non-zero morphism $\mathbb{T}_{\zeta}(\ell-2)\rightarrow \mathbb{T}_{\zeta}(\ell)$. As $\mathbb{T}_{\zeta}(\ell-2)$ is simple, it follows that $\mathbb{T}_{\zeta}(\ell)$ has a composition series $[\mathbb{T}_{\zeta}(\ell-2),\mathbb{U},\mathbb{T}_{\zeta}(\ell-2)]$ for some object $\mathbb{U}$. It only remains to argue that $\mathbb{U}$ is simple. But, $$\FPdim(\mathbb{T}_{\zeta}(\ell-2))=\frac{q^{\ell-1}-q^{-\ell+1}}{q-q^{-1}},$$ so that we have $$\FPdim(\mathbb{U})=\FPdim(\mathbb{T}_{\zeta}(\ell))-2\FPdim(\mathbb{T}_{\zeta}(\ell-2))=q^{\ell}+q^{-\ell},$$
showing that $0<\FPdim(\mathbb{U})<2$, and therefore that the object $\mathbb{U}$ must be simple.
\end{proof}

\begin{Remark}
If $n=2$ and $p = 2$, then it follows from the proof above that $\mathbb{T}_{\zeta}(2)$ has length $2$ and composition series $[\mathbbm{1},\mathbbm{1}]$. In this case, we let $\mathbb{U} = 0$ for convenience.
\end{Remark}

\subsection{Temperley-Lieb Objects in Mixed Verlinde Categories}

\begin{Lemma}
For $n\geq 2$, there is a pivotal monoidal functor $$qFL:\mathbf{Tilt}^{\sigma}\rightarrow \Ver^{\zeta}_{p^{(n)}}$$ sending $\mathrm{T}(1)$ to $\mathbb{U}$, where $\sigma = (-1)^{\ell+N}$.
\end{Lemma}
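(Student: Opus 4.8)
The strategy is to use the universal property of $\mathbf{Tilt}^{\sigma}$ recorded in Proposition \ref{prop:braidedTLobjects}, namely that a pivotal monoidal functor $\mathbf{Tilt}^{\sigma}\to\mathcal{C}$ into a Karoubian (here, finite abelian) monoidal category is the same thing as a $\sigma$-Temperley-Lieb object in $\mathcal{C}$, and such a functor is spherical if the $\sigma$-Temperley-Lieb structure is compatible with the spherical structure in the sense of Remark \ref{rem:TLobjectspherical}. So I would first check that $\mathbb{U}$, the simple object of $\Ver^{\zeta}_{p^{(n)}}$ produced in Lemma \ref{lem:simpleobjectU}, is a $\sigma$-Temperley-Lieb object; concretely, that $\mathbb{U}$ is self-dual with an isomorphism $\phi\colon\mathbb{U}\xrightarrow{\sim}\mathbb{U}^*$ whose associated "circle" $\mathrm{Tr}^L((\phi^{-1})^*\circ\phi)$ equals $-[2]_{\sigma}=-(\sigma^{-1}+\sigma)=-2\sigma$. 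Self-duality of $\mathbb{U}$ follows because all simple objects of $\Ver^{\zeta}_{p^{(n)}}$ are self-dual by Theorem \ref{thm:basic}(1), and $\mathrm{Hom}(\mathbb{U},\mathbb{U}^*)\cong\mathrm{Hom}(\mathbb{U}\otimes\mathbb{U},\mathbbm 1)$ is one-dimensional since $\mathbb{U}$ is simple, so $\phi$ exists and is unique up to scalar; moreover the circle invariant $\mathrm{Tr}^L((\phi^{-1})^*\circ\phi)$ is independent of the rescaling of $\phi$, so it is a well-defined scalar which I just need to identify.

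**Computing the circle invariant.** The key computation is to show this circle is $-2\sigma$. I would do this by tracking $\mathbb{U}$ through the flat lift of Proposition \ref{prop:lifting} to the semisimple Verlinde category $\Ver^{\xi^{1/2}}_{p^{(n)}}(\mathbb{K})$ over $\mathbb{K}$: the object $\mathbb{U}$ appears inside $\mathbb{T}_{\zeta}(\ell)=\mathbb{T}_{\zeta}(1)\otimes\mathbb{T}_{\zeta}(\ell-1)$, and under the lift this corresponds (by semisimplicity over $\mathbb{K}$) to a summand $W_{\xi}(\ell)$ of $W_{\xi}(1)\otimes W_{\xi}(\ell-1)=W_{\xi}(\ell)\oplus W_{\xi}(\ell-2)$; the categorical dimension of $W_{\xi}(\ell)$ is $(-1)^{\ell}[\ell+1]_{\xi}$, and since $\xi$ is a primitive $Np^{n-1}$-th root of unity with $\xi^{p^{n-1}}=\zeta$ a primitive $N$-th root of unity, a short quantum-integer computation (essentially $[\ell+1]_{\xi} = [\ell-1]_{\xi}$ via the vanishing $[\ell]_{\xi}$-type relations, or directly from $\xi^{\pm\ell}$ being a specific root of unity) gives $\dim W_{\xi}(\ell) = -2\sigma$ in $\mathbb{K}$. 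Since the spherical structure is compatible with the lift (it comes from the same pivot defined via $\zeta$, cf. Proposition \ref{prop:FPdim} and its proof), the categorical dimension of $\mathbb{U}$ in $\Ver^{\zeta}_{p^{(n)}}(\mathbbm k)$ is the reduction of $\dim W_{\xi}(\ell)$, hence equals $-2\sigma = -[2]_{\sigma}$ in $\mathbbm k$. This is exactly the circle condition, since for a simple self-dual object the spherical dimension $\dim(\mathbb{U})=\mathrm{Tr}^L(\lambda_{\mathbb{U}})$ coincides with $\mathrm{Tr}^L((\phi^{-1})^*\circ\phi)$ up to the choice identifying $\lambda_{\mathbb{U}}$ with $(\phi^{-1})^*\circ\phi$, which I should arrange by scaling $\phi$ so that $(\phi^{-1})^*\circ\phi=\lambda_{\mathbb{U}}$ (this is possible precisely because $\dim(\mathbb{U})\neq 0$, which the above computation confirms).

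**Assembling the functor and sphericity.** Having shown $\mathbb{U}$ is a $\sigma$-Temperley-Lieb object with $\phi$ chosen so that $(\phi^{-1})^*\circ\phi = \lambda_{\mathbb{U}}$, Proposition \ref{prop:braidedTLobjects} produces a monoidal functor $qFL\colon\mathbf{Tilt}^{\sigma}\to\Ver^{\zeta}_{p^{(n)}}$ with $qFL(\mathrm T(1))=\mathbb{U}$, and Remark \ref{rem:TLobjectspherical} upgrades it to a pivotal — indeed spherical — functor, since $\mathbf{Tilt}^{\sigma}$ and $\Ver^{\zeta}_{p^{(n)}}$ both carry canonical spherical structures and the compatibility $\lambda_{\mathbb{U}}=(\phi^{-1})^*\circ\phi$ is exactly what Remark \ref{rem:TLobjectspherical} requires. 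I expect the main obstacle to be the explicit identification of the circle invariant with $-[2]_{\sigma}$: one must be careful that the "$\mathbb{U}$" of Lemma \ref{lem:simpleobjectU} really lifts to $W_{\xi}(\ell)$ rather than some other summand, and that no sign or scalar is lost in passing from the $\mathbbm k$-linear categorical dimension to the circle $\mathrm{Tr}^L((\phi^{-1})^*\circ\phi)$; the subtlety in the sign is precisely the origin of the factor $\sigma=(-1)^{\ell+N}$, so this bookkeeping must be done with care. Finally, I would remark that $\mathbb{U}$ being simple guarantees the resulting functor is faithful, although fullness/fully-faithfulness (the "embedding" claimed in Theorem part (7)) is presumably deferred to a later proposition.
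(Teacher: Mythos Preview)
Your overall strategy agrees with the paper's: invoke Proposition~\ref{prop:braidedTLobjects} (plus Remark~\ref{rem:TLobjectspherical}) and reduce to checking that $\mathbb{U}$ is a $\sigma$-Temperley--Lieb object, i.e.\ that its circle equals $-2\sigma$. The paper computes this circle as the categorical dimension directly in $\Ver^{\zeta}_{p^{(n)}}$ using the composition series of Lemma~\ref{lem:simpleobjectU}:
\[
\dim(\mathbb{U})=\dim(\mathbb{T}_{\zeta}(\ell))-2\dim(\mathbb{T}_{\zeta}(\ell-2))=(-2)(-1)^{\ell}[\ell-1]_{\zeta}=-2\sigma,
\]
where $[\ell-1]_{\zeta}=\mp 1$ according to the parity of $N$. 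This is a two-line computation entirely in $\mathbbm{k}$.

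Your detour through the lift to characteristic zero contains a genuine error. You assert that $\mathbb{U}$ ``corresponds to'' $W_{\xi}(\ell)$ under the lift, giving $\dim(\mathbb{U})=\dim(W_{\xi}(\ell))$. But the lift $\widetilde{\mathrm{T}}_{\xi}(\ell)$ of $\mathbb{T}_{\zeta}(\ell)$ becomes $W_{\xi}(\ell)\oplus W_{\xi}(\ell-2)$ over $\mathbb{K}$ (since $\widetilde{\mathrm{T}}_{\xi}(\ell)=\widetilde{\mathrm{T}}_{\xi}(1)\otimes\widetilde{\mathrm{T}}_{\xi}(\ell-1)$ is indecomposable over $R$ yet splits over $\mathbb{K}$), whereas over $\mathbbm{k}$ the object $\mathbb{T}_{\zeta}(\ell)$ has composition series $[\mathbb{T}_{\zeta}(\ell-2),\mathbb{U},\mathbb{T}_{\zeta}(\ell-2)]$ with \emph{two} copies of $\mathbb{T}_{\zeta}(\ell-2)$. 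Matching dimensions therefore gives
\[
\dim(\mathbb{U})=\text{reduction of }\bigl(\dim W_{\xi}(\ell)-\dim W_{\xi}(\ell-2)\bigr),
\]
not the reduction of $\dim W_{\xi}(\ell)$ alone. Concretely, for $p=3$, $N=5$, $n=2$ (so $\ell=5$, $\sigma=1$), your formula yields $-1$ while the correct value is $-2$. Relatedly, your suggestion that ``$[\ell]_{\xi}$-type vanishing'' gives $[\ell+1]_{\xi}=[\ell-1]_{\xi}$ is false: the relevant integer for $\xi$ is $p^{(n)}$, not $\ell$, so $[\ell]_{\xi}\neq 0$.

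A smaller point: the circle $\mathrm{Tr}^L((\phi^{-1})^*\circ\phi)$ is invariant under rescaling $\phi$ (check: replacing $\phi$ by $\alpha\phi$ leaves $(\phi^{-1})^*\circ\phi$ unchanged), so you cannot ``scale $\phi$'' to force $(\phi^{-1})^*\circ\phi=\lambda_{\mathbb{U}}$. What makes the argument work is that the Frobenius--Schur indicator of $\mathbb{U}$ is $+1$, which follows because the self-duality is inherited from $\Tilt^{\zeta}$ where every object carries the tautological Temperley--Lieb self-duality; this is what the paper means by ``$\mathbb{U}$ equipped with the identity isomorphism''.
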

\begin{proof}
Thanks to proposition \ref{prop:braidedTLobjects}, it is enough to exhibit a self-dual object $(X,\phi)$ of $\Ver^{\zeta}_{p^{(n)}}$ with $\mathrm{Tr}^L((\phi^{-1})^*\circ\phi)=-\sigma 2$. But, by \cite[Proposition 3.23]{STWZ}, we have $$\mathrm{dim}(\mathbb{U})=\mathrm{dim}(\mathbb{T}_{\zeta}(\ell)) -2\ \mathrm{dim}(\mathbb{T}_{\zeta}(\ell-2))=(-2)(-1)^{\ell}[\ell -1]_{\zeta}= -\sigma 2,$$ where the last equality uses that $[\ell -1]_{\zeta}=-1$ if $\zeta$ has odd order and $[\ell -1]_{\zeta}=+1$ otherwise, so that we can take $\mathbb{U}$ equipped with the identity isomorphism.
\end{proof}

Let us write $\mathbb{S}:=\mathbb{T}_{\zeta}(\ell-1)$. For later use, we wish to compute the tensor product of $\mathbb{S}$ with the object $\mathbb{U}$ of $\Ver^{\zeta}_{p^{(n)}}$.

\begin{Lemma}\label{lem:StensorU}
For $n\geq 2$, and with $p>2$ if $n=2$, we have $\mathbb{S}\otimes\mathbb{U}=\mathbb{T}_{\zeta}(2\ell -1)$.
\end{Lemma}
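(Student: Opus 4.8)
The plan is to exploit the Steinberg-type factorization relating $\mathbb{T}_{\zeta}(2\ell-1)$ to tilting modules for $U_\zeta$ together with the defining relation $\mathbb{T}_{\zeta}(\ell) = \mathbb{T}_{\zeta}(1)\otimes\mathbb{S}$ and the composition series from Lemma~\ref{lem:simpleobjectU}. Concretely, I would first note that $\mathbb{S}\otimes\mathbb{U}$ can be extracted as a ``summand'' of $\mathbb{S}\otimes\mathbb{T}_\zeta(\ell)$ in the Grothendieck ring of $\Ver^\zeta_{p^{(n)}}$: from $[\mathbb{T}_\zeta(\ell)] = 2[\mathbb{T}_\zeta(\ell-2)] + [\mathbb{U}]$ we get $[\mathbb{S}\otimes\mathbb{U}] = [\mathbb{S}\otimes\mathbb{T}_\zeta(\ell)] - 2[\mathbb{S}\otimes\mathbb{T}_\zeta(\ell-2)] = [\mathbb{T}_\zeta(1)\otimes\mathbb{S}\otimes\mathbb{S}] - 2[\mathbb{S}\otimes\mathbb{T}_\zeta(\ell-2)]$. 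Since $\mathbb{S} = \mathbb{T}_\zeta(\ell-1)$ is simple (Lemma~\ref{lem:smallsimpleandcovers}, as $\ell-1\le\ell-1$), I can compute $\mathbb{S}\otimes\mathbb{S}$ and $\mathbb{S}\otimes\mathbb{T}_\zeta(\ell-2)$ via the known tilting tensor-product rules in $\Tilt^\zeta$ from \cite[Lemma 4.1]{STWZ} (these are the $\ell-1,\ell-2$-range products, where the answers are genuine direct sums of $\mathrm{T}_\zeta(v)$ with $0\le v\le 2\ell-2$, and hence pass through $F$ faithfully by Theorem~\ref{thm:completeness} and Proposition~\ref{prop:Tiltingsubcategoryidentification}, provided $p^{(n)}$ is large enough, which holds since $n\ge 2$). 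This should yield $[\mathbb{S}\otimes\mathbb{U}] = [\mathbb{T}_\zeta(2\ell-1)]$ in the Grothendieck ring.

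Having the identity at the level of classes, I would then upgrade it to an actual isomorphism of objects. The cleanest route: observe that $2\ell-1$ lies in the range $\ell-1\le 2\ell-1\le 2\ell-2$... no — $2\ell-1 > 2\ell-2$, so $\mathbb{T}_\zeta(2\ell-1)$ is \emph{not} simple. Instead, by Proposition~\ref{prop:mixedDonkin}, $\mathrm{T}_\zeta(2\ell-1) = \mathrm{T}_\zeta(\ell-1)\otimes\mathrm{T}(1)^{[q]}$, and applying the functor $F\circ(\text{quotient})$ together with the functor $qFL$ (which sends $\mathrm{T}(1)$ to $\mathbb{U}$), one gets directly $\mathbb{T}_\zeta(2\ell-1) = \mathbb{S}\otimes\mathbb{U}$ \emph{as objects}, since $qFL$ is monoidal and the quotient functor $\Tilt^\zeta\to\Ver^\zeta_{p^{(n)}}$ is monoidal. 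Wait — one must be careful that the composite $\Tilt^\zeta\xrightarrow{qFL\text{-compatible}}\Ver^\zeta_{p^{(n)}}$ actually realizes $\mathbb{T}_\zeta(2\ell-1)$ correctly; this requires identifying the image of $\mathrm{T}(1)^{[q]}$ under the canonical functor with $\mathbb{U}$, which is exactly the defining property of $qFL$ but applied to a tilting module embedded via $(-)^{[q]}$. So the argument reduces to checking that the square relating $(-)^{[q]}:\mathbf{Mod}^\sigma\to\mathbf{Mod}^\zeta$, the quotient functors, and $qFL$ commutes on $\mathrm{T}(1)$ — and this should follow from the construction of $qFL$ via the Temperley--Lieb universal property, since both $\mathbb{U}$ and the image of $\mathrm{T}(1)$ are the $\sigma$-Temperley--Lieb object of the appropriate dimension $-\sigma 2$, and such objects are essentially unique inside the subcategory they generate.

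The main obstacle I anticipate is precisely this last compatibility: making rigorous that ``$\mathbb{T}_\zeta(2\ell-1) = \mathbb{S}\otimes(\text{image of }\mathrm{T}(1)^{[q]})$'' together with ``image of $\mathrm{T}(1)^{[q]} = \mathbb{U}$'' — i.e.\ that the quantum Frobenius--Lusztig twist at the tilting level descends to $qFL$ on the nose, rather than merely up to the ambiguity of which Temperley--Lieb object one picks. If the clean functorial argument is delicate, the safe fallback is to prove the isomorphism directly: show $\mathbb{S}\otimes\mathbb{U}$ is indecomposable (e.g.\ by computing $\mathrm{End}(\mathbb{S}\otimes\mathbb{U}) = \mathrm{Hom}(\mathbb{U},\mathbb{S}^*\otimes\mathbb{S}\otimes\mathbb{U})$ and using the summand analysis of $\mathbb{S}\otimes\mathbb{S}$ together with simplicity of $\mathbb{U},\mathbb{S}$ and Lemma~\ref{lem:precisesummand}), note that any indecomposable object with the Grothendieck class of $\mathbb{T}_\zeta(2\ell-1)$ must be $\mathbb{T}_\zeta(2\ell-1)$ itself (since $\mathbb{T}_\zeta(2\ell-1) = \mathbb{P}_\zeta(2\ell-1)$ is the indecomposable projective with that class when $n\ge 2$ and $2\ell-1\ge p^{(n-1)}-1$ — one should double-check this inequality; for $n=2$ with $p>2$ it reads $2\ell-1\ge p-1$... this needs $2\ell\ge p$, hmm, so in the genuinely quantum case $\ell\ge 3$ and $p$ arbitrary this may fail, in which case $\mathbb{T}_\zeta(2\ell-1)$ is merely indecomposable non-projective but still the unique indecomposable tilting image with its class by Proposition~\ref{prop:Tiltingsubcategoryidentification}). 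Either way, matching indecomposable objects with equal Grothendieck classes, where one side is manifestly of the form ``image of an indecomposable tilting module,'' closes the proof.
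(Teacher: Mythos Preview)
Your Grothendieck-ring computation is exactly the content of the paper's equations (\ref{eq:tensortilting1})--(\ref{eq:tensortilting4}): one finds $\mathrm{T}_{\zeta}(\ell)\otimes\mathrm{T}_{\zeta}(\ell-1)=\mathrm{T}_{\zeta}(2\ell-1)\oplus\big(\mathrm{T}_{\zeta}(\ell-2)^{\oplus 2}\otimes\mathrm{T}_{\zeta}(\ell-1)\big)$ in $\Tilt^{\zeta}$, which after applying $F$ and subtracting gives $[\mathbb{S}\otimes\mathbb{U}]=[\mathbb{T}_{\zeta}(2\ell-1)]$. The gap is in the upgrade to an isomorphism of objects. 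Your route (a) via Proposition~\ref{prop:mixedDonkin} does not typecheck: $\mathrm{T}(1)^{[q]}=L_{\zeta}(\ell)$ is \emph{not} a tilting module for $U_{\zeta}$, so it does not lie in the domain of $F$, and there is no ``image of $\mathrm{T}(1)^{[q]}$ under the canonical functor'' to compare with $\mathbb{U}$. More seriously, the compatibility you are reaching for---that $qFL$ intertwines with the classical Donkin formula---is precisely what Proposition~\ref{prop:mixedVerlindeDonkin} establishes, and Lemma~\ref{lem:StensorU} is an essential input to its proof; invoking that compatibility here is circular.

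Your fallback (b) is not circular but is not clearly executable either: computing $\mathrm{End}(\mathbb{S}\otimes\mathbb{U})=\bigoplus_i\mathrm{Hom}(\mathbb{U},\mathbb{T}_{\zeta}(2\ell-2i-2)\otimes\mathbb{U})$ requires control over $\mathbb{T}_{\zeta}(j)\otimes\mathbb{U}$ that you do not yet have, and even granting indecomposability, an arbitrary indecomposable with class $[\mathbb{T}_{\zeta}(2\ell-1)]$ need not be $\mathbb{T}_{\zeta}(2\ell-1)$ unless you first know it lies in the image of $F$. The paper's key idea, which you are missing, is that the first Steinberg object $\mathbb{S}=\mathbb{T}_{\zeta}(\ell-1)$ has a \emph{splitting property}: tensoring the maps $\mathbb{T}_{\zeta}(\ell-2)\to\mathbb{T}_{\zeta}(\ell)$ and $\mathbb{T}_{\zeta}(\ell)\to\mathbb{T}_{\zeta}(\ell-2)$ with $\mathbb{S}$ makes them split (this is extracted from \cite[Lemma~5.3]{STWZ}). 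Hence $\mathbb{S}\otimes\mathbb{T}_{\zeta}(\ell)\cong(\mathbb{S}\otimes\mathbb{T}_{\zeta}(\ell-2))^{\oplus 2}\oplus(\mathbb{S}\otimes\mathbb{U})$ as \emph{objects}, not merely classes. Since the two outer terms are images of tilting tensor products, Krull--Schmidt in $\Tilt^{\zeta}$ (via the explicit decompositions you already wrote down) then forces $\mathbb{S}\otimes\mathbb{U}\cong\mathbb{T}_{\zeta}(2\ell-1)$.
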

\begin{proof}
The morphisms $\mathbb{T}_{\zeta}(\ell)\rightarrow \mathbb{T}_{\zeta}(\ell-2)$ and $\mathbb{T}_{\zeta}(\ell-2)\rightarrow \mathbb{T}_{\zeta}(\ell)$ from the proof of lemma \ref{lem:simpleobjectU} split upon tensoring with $\mathbb{S}:=\mathbb{T}_{\zeta}(\ell-1)$ by the proof of \cite[Lemma 5.3]{STWZ}. Namely, by definition, $\mathbb{T}_{\zeta}(\ell-1)$ is the image in $\Ver^{\zeta}_{p^{(n)}}$ of the first Steinberg module $\mathrm{T}_{\zeta}(\ell-1)$ in $\Tilt^{\zeta}$. Thus, it is enough to compute both $\mathrm{T}_{\zeta}(\ell)\otimes \mathrm{T}_{\zeta}(\ell -1)$ and $\mathrm{T}_{\zeta}(\ell-2)^{\oplus 2}\otimes \mathrm{T}_{\zeta}(\ell -1)$ in $\Tilt^{\zeta}$, and then compare the different factors. In order to do so, recall that $\mathrm{T}_{\zeta}(\ell)=\mathrm{T}_{\zeta}(\ell-1)\otimes\mathrm{T}_{\zeta}(1)$, so that \begin{equation}\label{eq:tensortilting1}\mathrm{T}_{\zeta}(\ell)\otimes \mathrm{T}_{\zeta}(\ell -1)=\mathrm{T}_{\zeta}(\ell-1)\otimes \mathrm{T}_{\zeta}(\ell -1)\otimes\mathrm{T}_{\zeta}(1).\end{equation} Now, it follows from \cite[Lemma 4.1]{STWZ} that \begin{equation}\label{eq:tensortilting2}\mathrm{T}_{\zeta}(\ell-2)\otimes \mathrm{T}_{\zeta}(\ell -1) = \bigoplus_{i=0}^{\lfloor(\ell-2)/2\rfloor}\mathrm{T}_{\zeta}(2\ell-2i-3),\end{equation}\begin{equation}\label{eq:tensortilting3}\mathrm{T}_{\zeta}(\ell-1)\otimes \mathrm{T}_{\zeta}(\ell -1) = \bigoplus_{i=0}^{\lceil(\ell-2)/2\rceil}\mathrm{T}_{\zeta}(2\ell-2i-2).\end{equation} Let us write $$x_i=\begin{cases}0\ \textrm{if}\ \ell-2i-1=0,\\ 2\ \textrm{if}\ \ell-2i-1=1,\\ 1\ \textrm{otherwise}.\end{cases}$$ Combining equations \eqref{eq:tensortilting1} and \eqref{eq:tensortilting3} with \cite[Proposition 4.7]{STWZ}, we therefore find that \begin{align}\label{eq:tensortilting4}\mathrm{T}_{\zeta}(\ell)\otimes\mathrm{T}_{\zeta}(\ell-1) &=\bigoplus_{i=0}^{\lceil(\ell-2)/2\rceil}\Big(\mathrm{T}_{\zeta}(2\ell-2i-1)\oplus \mathrm{T}_{\zeta}(2\ell-2i-3)^{\oplus x_i}\Big)\notag\\
&=\mathrm{T}_{\zeta}(2\ell-1)\oplus \bigoplus_{i=0}^{\lfloor(\ell-2)/2\rfloor}\mathrm{T}_{\zeta}(2\ell-2i-3)^{\oplus 2}\\
&=\mathrm{T}_{\zeta}(2\ell-1)\oplus \Big(\mathrm{T}_{\zeta}(\ell-2)^{\oplus 2}\otimes \mathrm{T}_{\zeta}(\ell -1)\Big).\notag\end{align} This establishes that $\mathbb{S}\otimes\mathbb{U}=\mathbb{T}_{\zeta}(2\ell -1)$ as desired.
\end{proof}

The following result generalizes \cite[Proposition 4.33]{BEO}. In fact, our proof follows theirs closely. Both for the reader's convenience and because we need to appeal to different results, we give a complete proof.

\begin{Proposition}\label{prop:mixedVerlindeDonkin}
Let $n\geq 2$. For any $\ell -1 \leq a\leq 2\ell -2$ and non-negative integer $b$, we have $$\mathbb{T}_{\zeta}(a)\otimes qFL(\mathrm{T}(b))=\mathbb{T}_{\zeta}(a+\ell b).$$
\end{Proposition}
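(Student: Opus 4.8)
The plan is to prove the identity by induction on $b$, following closely the proof of \cite[Proposition 4.33]{BEO}. The case $b=0$ is immediate since $qFL(\mathrm{T}(0))=\mathbbm{1}$, and the first genuinely new instance, $b=1$ with $a=\ell-1$, is precisely Lemma~\ref{lem:StensorU}. The key reduction is the following: since $qFL$ is monoidal and, in $\Tilt^{\sigma}$, one has a decomposition $\mathrm{T}(1)\otimes\mathrm{T}(b)\cong\mathrm{T}(b+1)\oplus\mathrm{T}(b-1)^{\oplus\varepsilon_{b}}$ with multiplicities $\varepsilon_{b}\in\{0,1,2\}$ read off from the $p$-adic structure of $b+1$ (the standard $\mathrm{SL}_2$-tilting recursion used in \cite{BEO}), applying $qFL$, tensoring with $\mathbb{T}_{\zeta}(a)$ and invoking the inductive hypothesis for $b$ and for $b-1$ turns the assertion for $b+1$ into the single identity
$$\mathbb{T}_{\zeta}(a+\ell b)\otimes\mathbb{U}\;=\;\mathbb{T}_{\zeta}(a+\ell(b+1))\;\oplus\;\mathbb{T}_{\zeta}(a+\ell(b-1))^{\oplus\varepsilon_{b}},$$
with the convention $\mathbb{T}_{\zeta}(v)=0$ for $v\geq p^{(n)}-1$. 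In particular, taking $a=\ell-1$ and $b$ large enough that the right-hand side computes to $0$ forces $\mathbb{S}\otimes qFL(\mathrm{T}(b))=0$, hence $qFL(\mathrm{T}(b))=0$ because $\FPdim(\mathbb{S})\neq 0$, so the induction closes after finitely many genuinely nontrivial steps.

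To establish the displayed identity I would pass through $\Tilt^{\zeta}$. By Lemma~\ref{lem:simpleobjectU}, $\mathbb{T}_{\zeta}(\ell)=\mathbb{T}_{\zeta}(1)\otimes\mathbb{T}_{\zeta}(\ell-1)$ has composition series $[\mathbb{T}_{\zeta}(\ell-2),\mathbb{U},\mathbb{T}_{\zeta}(\ell-2)]$, so exactness of $\mathbb{T}_{\zeta}(a+\ell b)\otimes(-)$ gives, in the Grothendieck ring,
$$[\mathbb{T}_{\zeta}(a+\ell b)\otimes\mathbb{U}]\;=\;[\mathbb{T}_{\zeta}(a+\ell b)\otimes\mathbb{T}_{\zeta}(\ell)]\;-\;2\,[\mathbb{T}_{\zeta}(a+\ell b)\otimes\mathbb{T}_{\zeta}(\ell-2)],$$
and both products on the right lie in the image of $\Tilt^{\zeta}$, where they are computed from the tilting tensor-product formulas of \cite[Lemma 4.1 and Proposition 4.7]{STWZ}, discarding the summands $\mathrm{T}_{\zeta}(v)$ with $v\geq p^{(n)}-1$ killed modulo $\mathbf{J}_{p^{(n)}}$. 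When $\mathbb{T}_{\zeta}(a+\ell b)$ is projective — which is the case precisely when $a+\ell b\geq p^{(n-1)}-1$, hence for all but finitely many $b$ — the object $\mathbb{T}_{\zeta}(a+\ell b)\otimes\mathbb{U}$ is projective and is therefore recovered on the nose from its Grothendieck class, and one reads off exactly the claimed decomposition. For the remaining small values of $b$, where $\mathbb{T}_{\zeta}(a+\ell b)$ need not be projective, I would combine the same $K_{0}$-identity with a Frobenius-Perron dimension count via Proposition~\ref{prop:FPdim} and the realization of $\mathbb{T}_{\zeta}(a+\ell b)$ as a controlled direct summand of a tensor product of copies of $\mathbb{T}_{\zeta}(1)$ (Lemma~\ref{lem:precisesummand}) to pin down the indecomposable decomposition; one also checks directly that at the boundary, where $a+\ell b$ crosses $p^{(n)}-1$, the multiplicity $\varepsilon_{b}$ vanishes, consistently with $\mathbb{T}_{\zeta}(a+\ell b)=0$.

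The main obstacle — and the reason the argument is not a routine transport along $F$ — is that $\mathbb{U}$ is not in the image of $\Tilt^{\zeta}$: it is one of the simple objects produced by abelianization, so the tensor product $(-)\otimes\mathbb{U}$ cannot be computed directly among tilting modules. Everything hinges on accessing $\mathbb{U}$ only through $\mathbb{T}_{\zeta}(\ell)$ and through $\mathbb{S}\otimes\mathbb{U}=\mathbb{T}_{\zeta}(2\ell-1)$, exactly the device of Lemma~\ref{lem:StensorU} and \cite[Lemma 5.3]{STWZ}, together with careful bookkeeping of which summands of the tilting tensor products survive modulo $\mathbf{J}_{p^{(n)}}$, so that the target $\mathbb{T}_{\zeta}(a+\ell b)$ is matched exactly rather than merely up to objects killed in the quotient.
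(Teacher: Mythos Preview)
Your inductive scheme is the right shape and parallels the paper's, but there is a genuine gap in the treatment of the ``small'' values of $b$. The range where $\mathbb{T}_{\zeta}(a+\ell b)$ fails to be projective is $0\leq b\lesssim p^{n-2}$, and this grows with $n$: it is not a finite list to be dispatched by hand. For such $b$ the object $\mathbb{T}_{\zeta}(a+\ell b)\otimes\mathbb{U}$ is not a priori a direct sum of tilting-image objects, so neither its Grothendieck class nor its Frobenius--Perron dimension pins down its isomorphism type; Proposition~\ref{prop:FPdim} and Lemma~\ref{lem:precisesummand} do not close this. The paper supplies precisely the missing structural input: a separate preliminary induction shows that $\mathbb{S}\otimes qFL(\mathrm{T}(b))$, and hence $\mathbb{T}_{\zeta}(a)\otimes qFL(\mathrm{T}(b))$, lies in $\mathbf{I}_{p^{(n)}}$ (the thick ideal generated by $\mathbb{S}$, consisting of tilting-image objects) for \emph{every} $b$. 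With both sides of the target identity known to lie in $\mathbf{I}_{p^{(n)}}$, there is no projective/non-projective dichotomy and the induction can be run uniformly.

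The paper's inductive step is also more efficient than yours. Rather than expanding $[\mathbb{U}]=[\mathbb{T}_{\zeta}(\ell)]-2[\mathbb{T}_{\zeta}(\ell-2)]$ and computing the tilting products $\mathrm{T}_{\zeta}(a+\ell b)\otimes\mathrm{T}_{\zeta}(\ell)$ and $\mathrm{T}_{\zeta}(a+\ell b)\otimes\mathrm{T}_{\zeta}(\ell-2)$ via the recursive formulas of \cite{STWZ} (a computation you assert but do not carry out), the paper multiplies the desired identity by $\mathbb{S}$ and uses Lemma~\ref{lem:StensorU} together with Proposition~\ref{prop:mixedDonkin} to rewrite $\mathbb{S}\otimes\mathbb{U}=\mathbb{T}_{\zeta}(2\ell-1)=F(\mathrm{T}_{\zeta}(\ell-1)\otimes\mathrm{T}(1)^{[q]})$. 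This replaces the abelian-envelope object $\mathbb{U}$ by the tilting object $\mathrm{T}(1)^{[q]}$, so the whole computation happens in $\Tilt^{\zeta}$ where Proposition~\ref{prop:mixedDonkin} applies directly and no case analysis is needed. The extra factor of $\mathbb{S}$ is then cancelled using that the complexified split Grothendieck ring of $\Tilt^{\zeta}_{p^{(n)}}$ is a finite product of copies of $\mathbb{C}$, together with the fact (established in the preliminary step) that both sides already lie in $\mathbf{I}_{p^{(n)}}$.
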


\begin{proof}
Let us consider the thick tensor ideal $\mathbf{J}_{p^{(1)}}/\mathbf{J}_{p^{(n)}}\subset \Tilt^{\zeta}_{p^{(n)}}$. We write $\mathbf{I}_{p^{(n)}}$ for the image of $\mathbf{J}_{p^{(1)}}/\mathbf{J}_{p^{(n)}}$ under $F:\Tilt^{\zeta}_{p^{(n)}}\rightarrow \Ver_{p^{(n)}}^{\zeta}$. We remark that, thanks to theorem \ref{thm:completeness}, $\mathbf{I}_{p^{(n)}}$ is the full subcategory of $\Ver_{p^{(n)}}^{\zeta}$ generated by $\mathbb{S}$ under taking tensor products and direct summands.

We will prove by induction that $\mathbb{S}\otimes qFL(\mathrm{T}(i))$ lies in $\mathbf{I}_{p^{(n)}}$ for every integer $i$. The case $i=0$ is clear as $\mathbb{T}_{\zeta}(0)=\mathbbm{1}$, the monoidal unit. Let us now assume that $\mathbb{S}\otimes qFL(\mathrm{T}(i))$ lies in $\mathbf{I}_{p^{(n)}}$ for some $i$. It follows from \cite[Proposition 4.7]{STWZ} that $\mathbb{T}_{\zeta}(i+1)$ is a direct summand of $\mathbb{T}_{\zeta}(i)\otimes \mathbb{T}_{\zeta}(1)$. This implies that $\mathbb{S}\otimes qFL(\mathrm{T}(i+1))$ is a direct summand of $$\mathbb{S}\otimes qFL(\mathrm{T}(i))\otimes qFL(\mathrm{T}(1))=\mathbb{S}\otimes\mathbb{U}\otimes qFL(\mathrm{T}(i))=\mathbb{T}_{\zeta}(2\ell-1)\otimes qFL(\mathbb{T}(i)),$$ where the last equality is given by lemma \ref{lem:simpleobjectU}. But, $\mathbb{T}_{\zeta}(2\ell-1)$ is a direct summand of $\mathbb{T}(\ell)\otimes\mathbb{S}$ by equation \eqref{eq:tensortilting4}, so that we therefore find that $\mathbb{S}\otimes qFL(\mathrm{T}(i+1))$ is a direct summand of $\mathbb{T}_{\zeta}(\ell)\otimes\mathbb{S}\otimes qFL(\mathrm{T}(i))$. This proves that $qFL(\mathrm{T}(i+1))$ lies in $\mathbf{I}_{p^{(n)}}$ since this subcategory of $\Ver_{p^{(n)}}^{\zeta}$ is closed under direct summands and tensor products.

Let us now observe that for $\ell -1 \leq a\leq 2\ell -2$, the object $\mathbb{T}_{\zeta}(a)$ is a direct summand of $\mathbb{T}_{\zeta}(a-(\ell-1))\otimes\mathbb{S}$ by \cite[Lemma 4.1]{STWZ}. This proves that $\mathbb{T}_{\zeta}(a)\otimes qFL(\mathrm{T}(b))$ is a direct summand of $\mathbb{T}_{\zeta}(a - (\ell -1))\otimes \mathbb{S}\otimes qFL(\mathrm{T}(b))$, and thereby $\mathbb{T}_{\zeta}(a)\otimes qFL(\mathrm{T}(b))$ is in $\mathbf{I}_{p^{(n)}}$ thanks to the above argument.

We turn our attention to proving the statement of the proposition. Our proof proceed by induction on $b$. The case $v=0$ is clear, so let us assume that $\mathbb{T}_{\zeta}(a)\otimes qFL(\mathrm{T}(b))=\mathbb{T}_{\zeta}(a+\ell b)$ holds for all $b\leq v$. Let us write $S(v)$ for the multisubset of $\mathbb{N}_{\geq 0}$ such that $$\mathrm{T}(1)\otimes \mathrm{T}(v)=\mathrm{T}(v+1)\oplus\bigoplus_{b\in S(v)}\mathrm{T}(b).$$ By \cite[Proposition 4.7]{STWZ}, for any $s\in S(v)$, we have $s<v$. Using proposition \ref{prop:mixedDonkin}, we have the following chain of equalities $$\mathbb{S}\otimes\mathbb{T}_{\zeta}(a)\otimes qFL\big(\mathrm{T}(v+1)\oplus\bigoplus_{b\in S(v)}\mathrm{T}(b)\big)= \mathbb{S}\otimes \mathbb{T}_{\zeta}(a)\otimes qFL(\mathrm{T}(1)\otimes \mathrm{T}(v))= $$ $$\mathbb{S}\otimes \mathbb{U}\otimes \mathbb{T}_{\zeta}(a)\otimes qFL(\mathrm{T}(v)) = \mathbb{T}_{\zeta}(2\ell-1)\otimes\mathbb{T}_{\zeta}(a+\ell v)=F\big(\mathrm{T}_{\zeta}(2\ell-1)\otimes\mathrm{T}_{\zeta}(a+\ell v)\big)=$$ $$F\big(\mathrm{T}_{\zeta}(\ell-1)\otimes\mathrm{T}(1)^{[q]}\otimes\mathrm{T}_{\zeta}(a)\otimes\mathrm{T}(v)^{[q]}\big)=\mathbb{S}\otimes F\big(\mathrm{T}_{\zeta}(a)\otimes(\mathrm{T}(1)\otimes\mathrm{T}(v))^{[q]}\big)=$$ $$\mathbb{S}\otimes F\Big(\mathrm{T}_{\zeta}(a)\otimes\big(\mathrm{T}(v+1)\oplus\bigoplus_{b\in S(v)}\mathrm{T}(b)\big)^{[q]}\Big)=\mathbb{S}\otimes F\big(\mathrm{T}_{\zeta}(a+\ell(v+1))\oplus\bigoplus_{b\in S(v)}\mathrm{T}_{\zeta}(a+\ell b)\big)=$$ $$= \mathbb{S}\otimes \big(\mathbb{T}_{\zeta}(a+\ell(v+1))\oplus\bigoplus_{b\in S(v)}\mathbb{T}_{\zeta}(a+\ell b)\big).$$ As a consequence of the inductive hypothesis, we find that \begin{equation} \label{eq:SteinebrgmixedVerlindeDonkin}\mathbb{S}\otimes\mathbb{T}_{\zeta}(a)\otimes qFL(\mathrm{T}(v+1))=\mathbb{S}\otimes\mathbb{T}_{\zeta}(a+\ell(v+1)).\end{equation}

Finally, note that the (complexified) split Grothendieck ring $Gr(\Tilt^{\zeta}_{p^{(n)}})\otimes_{\mathbb{Z}} \mathbb{C}$ is a finite semisimple algebra \cite[Corollary 3.7.7]{EGNO}. Furthermore, this algebra is commuative as $\Tilt^{\zeta}_{p^{(n)}}$ admits a braiding, so that it is in fact isomorphic to a finite direct sum of copies of the algebra $\mathbb{C}$. But, in such a ring, the equality $a^2b_1=a^2b_2$ implies $ab_1=ab_2$. Applying this to the equality given by equation \eqref{eq:SteinebrgmixedVerlindeDonkin}, recalling from the first part of the proof that both $\mathbb{T}_{\zeta}(a)\otimes qFL(\mathrm{T}(v))$ and $\mathbb{T}_{\zeta}(a+\ell(v+1))$ lie in $\mathbf{I}_{p^{(n)}}$, we find $\mathbb{T}_{\zeta}(a)\otimes qFL(\mathrm{T}(v+1))=\mathbb{T}_{\zeta}(a+\ell (v+1))$ as desired.
\end{proof}

\subsection{The Quantum Frobenius-Lusztig Functor}

We use the pivotal monoidal functor $qFL:\mathbf{Tilt}^{\sigma}\rightarrow \Ver^{\zeta}_{p^{(n)}}$ of the previous section to construct a ribbon tensor functor $$\mathbbm{q}\mathbb{FL}:\Ver_{p^{n-1}}^{\sigma^{1/2}}\rightarrow \Ver^{\zeta^{1/2}}_{p^{(n)}},$$ which is an analogue of the quantum Frobenius-Lusztig twist for the mixed Verlinde categories.

\begin{Theorem}\label{thm:quantumFrobeniusLusztig}
For $n\geq 2$, there is a ribbon embedding $$\mathbbm{q}\mathbb{FL}:\Ver_{p^{n-1}}^{\sigma^{1/2}}\rightarrow \Ver^{\zeta^{1/2}}_{p^{(n)}}$$ with $\sigma^{1/2} = \zeta^{(\ell-2)\ell/2}$, a fourth root of unity in $\mathbbm{k}^{\times}$.
\end{Theorem}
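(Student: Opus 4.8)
The plan is to construct $\mathbbm{q}\mathbb{FL}$ from the universal property of the abelian envelope, using $\mathbb{U}$ as a braided Temperley--Lieb object in $\Ver^{\zeta^{1/2}}_{p^{(n)}}$. The first step is to upgrade the pivotal functor $qFL\colon\Tilt^{\sigma}\to\Ver^{\zeta}_{p^{(n)}}$ to a ribbon functor $\widetilde{qFL}\colon\Tilt^{\sigma^{1/2}}\to\Ver^{\zeta^{1/2}}_{p^{(n)}}$ for $\sigma^{1/2}=\zeta^{(\ell-2)\ell/2}$. By Proposition~\ref{prop:braidedTLobjects} and Remark~\ref{rem:TLobjectspherical}, since $\mathbb{U}$ is simple, self-dual and has $\mathrm{dim}(\mathbb{U})=-[2]_{\sigma}$, it suffices to check that the self-braiding $\beta_{\mathbb{U},\mathbb{U}}$ has the form $\sigma^{1/2}\cdot Id_{\mathbb{U}}+\sigma^{-1/2}\cdot(\mathrm{coev}\circ\mathrm{ev})$ with this value of $\sigma^{1/2}$ (a fourth root of unity whose square one verifies is $\sigma=(-1)^{\ell+N}$ in $\mathbbm{k}$). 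I would first show that $\beta_{\mathbb{U},\mathbb{U}}$ acts by a scalar on each summand of $\mathbb{U}^{\otimes 2}=\mathbbm{1}\oplus qFL(\mathrm{T}(2))$: this is immediate on the summand $\mathbbm{1}$ because $Hom(\mathbbm{1},\mathbb{U}^{\otimes 2})$ is one-dimensional, and on $qFL(\mathrm{T}(2))$ it follows by passing to the flat lift of Proposition~\ref{prop:lifting}, where $\beta_{\mathbb{U},\mathbb{U}}$ is a subquotient of the semisimple (hence scalar-diagonalizable) braiding on the lift of $\mathbb{T}_{\zeta}(\ell)^{\otimes 2}$. Then $\beta_{\mathbb{U},\mathbb{U}}$ lies in the span of $Id$ and $\mathrm{coev}\circ\mathrm{ev}$, and the hexagon equation pins down the two scalars up to the two square roots of $\sigma$, exactly as in the proof of Lemma~\ref{lem:braidingsTilting}.

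To single out the correct square root I would compute the twist $\theta_{\mathbb{U}}$. For $n\geq 2$ the object $\mathbb{T}_{\zeta}(\ell)$ is indecomposable in $\Ver^{\zeta}_{p^{(n)}}$ --- by the full faithfulness of $F$ on $\Tilt^{\zeta}_{<p^{(n)}-1}$ (Proposition~\ref{prop:Tiltingsubcategoryidentification} and Theorem~\ref{thm:completeness}), $\mathrm{End}(\mathbb{T}_{\zeta}(\ell))\cong\mathrm{End}_{\Tilt^{\zeta}}(\mathrm{T}_{\zeta}(\ell))$ is local --- with simple socle $\mathbb{T}_{\zeta}(\ell-2)$, so $\theta$ acts on $\mathbb{T}_{\zeta}(\ell)$ as $\theta_{\mathbb{T}_{\zeta}(\ell-2)}\cdot Id$ plus a nilpotent and hence acts on the composition factor $\mathbb{U}$ by the scalar $\theta_{\mathbb{T}_{\zeta}(\ell-2)}$. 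Since $\mathbb{T}_{\zeta}(\ell-2)$ lifts, under the flat ribbon deformation, to the simple object $W_{\xi}(\ell-2)$ of $\Ver^{\xi^{1/2}}_{p^{(n)}}(\mathbb{K})$, the scalar $\theta_{\mathbb{T}_{\zeta}(\ell-2)}$ is the reduction of $\theta_{W_{\xi}(\ell-2)}=\xi^{(\ell-2)\ell/2}$ (cf.\ Remark~\ref{rem:fermion} and \cite{BK}), namely $\zeta^{(\ell-2)\ell/2}$. This identifies $\sigma^{1/2}$ and completes the construction of $\widetilde{qFL}$, which sends $\mathrm{T}(1)$ to $\mathbb{U}$.

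The second step is to show that the braided $\sigma$-Temperley--Lieb object $\mathbb{U}$ has degree exactly $n-1$. By Proposition~\ref{prop:mixedVerlindeDonkin} with $a=\ell-1$, we have $\mathbb{S}\otimes qFL(\mathrm{T}(b))=\mathbb{T}_{\zeta}\big(\ell(b+1)-1\big)$ for every $b\geq 0$; this is $\mathbb{T}_{\zeta}(p^{(n)}-1)=0$ when $b=p^{n-1}-1$, and $\mathbb{T}_{\zeta}(p^{(n-1)}-1)\neq 0$ when $b=p^{n-2}-1$. As $\FPdim(\mathbb{S})=[\ell]_{q}>0$ by Proposition~\ref{prop:FPdim}, applying $\FPdim$ forces $qFL(\mathrm{T}(p^{n-1}-1))=0$ and $qFL(\mathrm{T}(p^{n-2}-1))\neq 0$. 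Applying the monoidal functor $qFL$ to the isomorphism $\mathrm{T}(p^{m}-1)\oplus Q_{p^{m}}^{-}(\mathrm{T}(1))\cong Q_{p^{m}}^{+}(\mathrm{T}(1))$ shows that the comparison map $\iota_{m}$ becomes an isomorphism in $\Ver^{\zeta^{1/2}}_{p^{(n)}}$ precisely when $qFL(\mathrm{T}(p^{m}-1))=0$; so $\mathbb{U}$ has degree precisely $n-1$.

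Finally, by Proposition~\ref{prop:nTLobjects} and Corollary~\ref{cor:mixedVerlindeuniversalproperty} (in the form for $\Ver^{\sigma^{1/2}}_{p^{(n-1)}}=\Ver^{\sigma^{1/2}}_{p^{n-1}}$, using $\ell=p$ in the $\sigma$-world), the braided $\sigma$-Temperley--Lieb object $\mathbb{U}$ of degree exactly $n-1$ corresponds to a faithful braided tensor functor $\mathbbm{q}\mathbb{FL}\colon\Ver^{\sigma^{1/2}}_{p^{n-1}}\to\Ver^{\zeta^{1/2}}_{p^{(n)}}$; it is ribbon by Remark~\ref{rem:TLobjectspherical} because we equip $\mathbb{U}$ with its canonical self-duality, and being faithful it is an embedding (this also follows from the incompressibility of $\Ver^{\sigma^{1/2}}_{p^{n-1}}$). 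I expect the main obstacle to be the first step --- in particular verifying that the induced braiding parameter is exactly $\zeta^{(\ell-2)\ell/2}$, which combines the argument that $\beta_{\mathbb{U},\mathbb{U}}$ has Temperley--Lieb form with the twist computation above and a careful tracking of the $\zeta^{1/2}$-dependent sign conventions of the ribbon structure. The degree computation is by contrast a routine consequence of Propositions~\ref{prop:mixedVerlindeDonkin} and~\ref{prop:FPdim}.
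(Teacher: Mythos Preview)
Your overall strategy is sound and close in spirit to the paper's, but the order of operations creates a real gap. The paper proceeds in the opposite order: it first constructs the pivotal (not yet braided) functor $qFL$, shows it factors through $\Tilt^{\sigma}_{p^{n-1}}$ exactly as in your second step, passes to the abelian envelope, and then proves fullness \emph{directly} by a Hom-space count. Namely, it bounds $\dim Hom(\mathbbm{1},qFL(\mathrm{T}(b)))$ above by $\dim Hom(\mathbbm{1},\mathbb{T}_{\zeta}(2\ell-2+\ell b))$ via the surjection $\mathbb{T}_{\zeta}(2\ell-2)\twoheadrightarrow\mathbbm{1}$ together with Proposition~\ref{prop:mixedVerlindeDonkin}, and then invokes Lemma~\ref{lem:3.6}. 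Only \emph{after} establishing that $\mathbbm{q}\mathbb{FL}$ is fully faithful does the paper pin down the braiding, and it does so not through $\theta_{\mathbb{U}}$ but by comparing the twists on the unique nontrivial invertible objects $\mathrm{F}\in\Ver_{p^{n-1}}^{\sigma}$ and $\mathrm{G}\in\Ver_{p^{(n)}}^{\zeta}$ (using Corollary~\ref{cor:fermion} and Corollary~\ref{cor:braidingsVerlinde}).

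The gap in your first step is that you need $\beta_{\mathbb{U},\mathbb{U}}$ to lie in the two-dimensional span of $Id$ and $\mathrm{coev}\circ\mathrm{ev}$ inside $End(\mathbb{U}^{\otimes 2})$, i.e.\ that it acts by a scalar on the summand $qFL(\mathrm{T}(2))$. At this point you do not know $End(qFL(\mathrm{T}(2)))=\mathbbm{k}$: that would follow from fullness of $qFL$, which is precisely what you are trying to avoid proving directly. Your proposed fix via the flat lift does not work as stated: $\mathbb{U}$ is a \emph{subquotient}, not a summand, of $\mathbb{T}_{\zeta}(\ell)$, and semisimplicity of the braiding on $\widetilde{\mathrm{T}}_{\xi}(\ell)^{\otimes 2}$ over $\mathbb{K}$ does not descend to say anything about the action of $\beta_{\mathbb{U},\mathbb{U}}$ on the indecomposable piece $qFL(\mathrm{T}(2))$ over $\mathbbm{k}$. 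Once fullness is in hand, your twist computation $\theta_{\mathbb{U}}=\theta_{\mathbb{T}_{\zeta}(\ell-2)}=\zeta^{(\ell-2)\ell/2}$ is a perfectly good alternative to the paper's invertible-object argument, but you cannot get there first.

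A smaller point: ``being faithful it is an embedding'' is not correct---tensor functors between tensor categories are automatically faithful, yet need not be full. Your parenthetical appeal to incompressibility of $\Ver^{\sigma^{1/2}}_{p^{n-1}}$ is, however, a legitimate substitute for the paper's explicit fullness computation (it rests on \cite{BEO} together with the later, logically independent, Proposition~\ref{prop:incompressibility} or its braided analogue).
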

\begin{proof}
Firstly, we claim that $qFL:\mathbf{Tilt}^{\sigma}\rightarrow \Ver^{\zeta}_{p^{(n)}}$ factors through $\mathbf{Tilt}^{\sigma}_{p^{n-1}}$ via a faithful functor $\widetilde{qFL}:\mathbf{Tilt}^{\sigma}_{p^{n-1}}\rightarrow \Ver^{\zeta}_{p^{(n)}}$. In order to prove this, it is enough to show that $qFL(\mathrm{T}(p^{n-1}-1))=0$ and $qFL(\mathrm{T}(p^{n-2}-1))\neq 0$. But, by appealing to proposition \ref{prop:mixedVerlindeDonkin}, we have in $\Ver_{p^{(n)}}^{\zeta}$ $$\mathbb{T}_{\zeta}(\ell-1)\otimes qFL(\mathrm{T}(p^{n-1}-1))=\mathbb{T}_{\zeta}(\ell-1 + \ell(p^{n-1}-1))=\mathbb{T}_{\zeta}(p^{(n)}-1)=0,$$ $$\mathbb{T}_{\zeta}(\ell-1)\otimes qFL(\mathrm{T}(p^{n-2}-1))=\mathbb{T}_{\zeta}(\ell-1 + \ell(p^{n-2}-1))=\mathbb{T}_{\zeta}(p^{(n-1)}-1)\neq 0,$$ which proves the claim.

Secondly, it now follows from the universal property of the abelian envelope that there is a pivotal tensor functor $\mathbbm{q}\mathbb{FL}:\Ver_{p^{n-1}}^{\sigma}\rightarrow \Ver^{\zeta}_{p^{(n)}}$ sending $\mathbb{T}(1)$ to $\mathbb{U}$. Note that the functor $\mathbbm{q}\mathbb{FL}$ is automatically faithful by \cite[Remark 4.3.10]{EGNO}.

Thirdly, we will show that $\mathbbm{q}\mathbb{FL}$ is full, i.e.\ it is an embedding. Thanks to the exactness of $\mathbbm{q}\mathbb{FL}$, it suffices to check fullness on the projective objects. Further, the claim follows provided that we can show that the injective map $$Hom_{\mathbf{Tilt}^{\sigma}}(\mathbf{1},\mathrm{T}(b))\hookrightarrow Hom_{\Ver^{\zeta}_{p^{(n)}}}(qFL(\mathbf{1}),qFL(\mathrm{T}(b)))$$ is surjective for any $0\leq b\leq p^{n-1}-2$. By lemma \ref{lem:3.6}, we therefore have to show that $Hom_{\Ver^{\zeta}_{p^{(n)}}}(qFL(\mathbf{1}),qFL(\mathrm{T}(b)))$ is one dimensional for $b=2p^{k}-2$, $0\leq k\leq n-1$ and zero otherwise. As $\mathbbm{1}=\mathbb{T}(0)$ is a quotient of $\mathbb{T}(2\ell-2)$, we have $$\dim Hom_{\Ver^{\zeta}_{p^{(n)}}}(\mathbbm{1},qFL(\mathrm{T}(b)))\leq \dim Hom_{\Ver^{\zeta}_{p^{(n)}}}(\mathbb{T}_{\zeta}(2\ell-2),qFL(\mathrm{T}(b)))=$$ $$\dim Hom_{\Ver^{\zeta}_{p^{(n)}}}(\mathbbm{1},\mathbb{T}_{\zeta}(2\ell-2)\otimes qFL(\mathrm{T}(b)))=\dim Hom_{\Ver^{\zeta}_{p^{(n)}}}(\mathbbm{1},\mathbb{T}_{\zeta}(2\ell-2+\ell b)),$$ and the claim follows from lemma \ref{lem:3.6} via theorem \ref{thm:completeness}.

Finally, we claim that the pivotal tensor embedding $\mathbbm{q}\mathbb{FL}$ is braided with $\sigma^{1/2} = \zeta^{(\ell-2)\ell/2}$. Thanks to corollary \ref{cor:braidingsVerlinde}, the compatible braidings on the spherical finite tensor category $\Ver_{p^{n-1}}^{\sigma}$ are parameterised by the square roots of $\sigma^{\pm 1}$ in $\mathbbm{k}$. But, as $\sigma = \pm 1$ there are exactly two such braidings if $p>2$ and one otherwise. We will therefore assume that $p>2$. In order to finish the proof, it therefore suffices to determine which braiding commutes with $\mathbbm{q}\mathbb{FL}$. As $n\geq 2$ and $p > 2$, the finite tensor category $\Ver_{p^{n-1}}^{\sigma}$ has a unique non-trivial invertible object $\mathrm{F}$ by \cite[Corollary 4.11]{BEO}. We claim that its image under $\mathbbm{q}\mathbb{FL}$ must be the non-trivial invertible object $\mathrm{G}$ of $\Ver_{p^{(n)}}^{\zeta}$ from corollary \ref{cor:fermion}. Namely, the projective cover of $\mathrm{F}$ in $\Ver_{p^{n-1}}^{\sigma}$ is $\mathbb{T}(p^{n-1}-2)$ and $\mathrm{T}_{\zeta}(2\ell-2)$ surjects onto $\mathbbm{1}$, so that $\mathbb{T}_{\zeta}(2\ell -2)\otimes \mathbbm{q}\mathbb{FL}(\mathbb{T}(p^{n-1}-2))$ surjects onto $\mathbbm{q}\mathbb{FL}(\mathrm{F})$. Then, it follows from proposition \ref{prop:mixedVerlindeDonkin} that $\mathbb{T}_{\zeta}(2\ell -2)\otimes \mathbbm{q}\mathbb{FL}(\mathbb{T}(p^{n-1}-2))=\mathbb{T}_{\zeta}(p^{(n)}-2)$, which is projective. But, as was noted in the proof of corollary \ref{cor:fermion}, $\mathbb{T}_{\zeta}(p^{(n)}-2)$ is also the projective cover of $\mathrm{G}$, and the claim follows. Now, on the one hand, we have from the same corollary that $$\mathrm{dim}(\mathrm{G})=(-1)^{\sigma} \ \textrm{and}\ \theta_{\mathrm{G}}=\zeta^{(\ell-2)\ell/2}.$$ One the other hand, we have $$\mathrm{dim}(\mathrm{F})=(-1)^{\sigma} \ \textrm{and}\ \theta_{\mathrm{F}}=\sigma^{1/2}.$$ Thence, $\mathbbm{q}\mathbb{FL}$ is compatible with the braiding on $\Ver_{p^{(n)}}^{\zeta^{1/2}}$ if and only if $\sigma^{1/2} = \zeta^{(\ell-2)\ell/2}$. This completes the proof of the theorem.
\end{proof}

\begin{Remark}
The construction of the quantum Frobenius-Lusztig twist for the mixed Verlinde categories given above is inspired by the properties of the Frobenius functor between the symmetric Verlinde categories that were investigated in \cite[Sections 4.9 \& 4.10]{BEO}. More precisely, for any symmetric finite tensor category $\mathcal{C}$ over $\mathbbm{k}$, there is a symmetric monoidal functor $\mathbb{F}:\mathcal{C}\rightarrow \mathcal{C}^{(1)}\boxtimes\Ver_p$ called the Frobenius functor \cite{EO}. When $\mathcal{C}=\Ver_{p^n}$, the Frobenius functor factorizes through $\Ver_{p^n}^{(1)}\boxtimes\mathrm{Vec}\simeq \Ver_{p^n}$, that is, there is a canonical symmetric monoidal functor $\mathbb{F}:\Ver_{p^n}\rightarrow \Ver_{p^n}$. Moreover, the object $\mathbb{F}(\mathbb{T}(1))=\mathbb{U}$ is simple and appears in the composition series of $\mathbb{T}(p)$ which is given by $[\mathbb{T}(p-2),\mathbb{U},\mathbb{T}(p-2)]$. This fact can be used to prove that the Frobenius functor induces a symmetric embedding $\Ver_{p^{n-1}}\hookrightarrow \Ver_{p^n}$.

By contrast, our construction of the quantum Frobenius-Lusztig functor proceeds differently. We use directly the properties of the simple object $\mathbb{U}$ in $\Ver_{p^{(n)}}^{\zeta}$ together with the universal property of the spherical monoidal category $\Tilt^{\sigma}$ and its quotients. This method may be used to give an ad hoc construction of the Frobenius functor in the positive characteristic case. A slight difficulty with this approach is to show that the resulting functor is braided.
\end{Remark}

\begin{Corollary}\label{cor:coverqFLsimple}
Let $n\geq 2$, and let $\mathrm{L}$ be a simple object of $\Ver^{\sigma}_{p^{n-1}}$ with projective cover $\mathbb{T}_{\zeta}(r)$ with $p^{n-2}-1\leq r\leq p^{n-1}-2$. Then, the projective cover of $\mathbbm{q}\mathbb{FL}(\mathrm{L})$ in $\Ver_{p^{(n)}}^{\zeta}$ is $\mathbb{T}_{\zeta}(2\ell-2+\ell r)$.
\end{Corollary}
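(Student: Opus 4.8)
The plan is to follow the third step of the proof of Theorem~\ref{thm:quantumFrobeniusLusztig} (a mixed analogue of the argument in \cite{BEO}): I will exhibit $\mathbb{T}_{\zeta}(2\ell-2+\ell r)$ as an indecomposable projective object of $\Ver_{p^{(n)}}^{\zeta}$ carrying an epimorphism onto $\mathbbm{q}\mathbb{FL}(\mathrm{L})$, and then use that an indecomposable projective surjecting onto a non-zero object is its projective cover. First I would recall that, by Theorem~\ref{thm:basic}(1) applied to $\Ver_{p^{n-1}}^{\sigma}$, the projective cover $P(\mathrm{L})$ of $\mathrm{L}$ is the image in $\Ver_{p^{n-1}}^{\sigma}$ of the tilting module $\mathrm{T}(r)$ with $p^{n-2}-1\le r\le p^{n-1}-2$, and that, by the construction of $\mathbbm{q}\mathbb{FL}$ through the factorization $\widetilde{qFL}$ of $qFL$, the object $\mathbbm{q}\mathbb{FL}(P(\mathrm{L}))$ equals $qFL(\mathrm{T}(r))$ in $\Ver_{p^{(n)}}^{\zeta}$.

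Next I would observe, exactly as in the proof of Theorem~\ref{thm:quantumFrobeniusLusztig}, that $\mathbb{T}_{\zeta}(2\ell-2)$ surjects onto $\mathbbm{1}$ in $\Ver_{p^{(n)}}^{\zeta}$: the $U_{\zeta}$-module $\mathrm{T}_{\zeta}(2\ell-2)=\mathrm{T}_{\zeta}(2p^{(1)}-2)$ has composition series $[\mathbf{1},L_{\zeta}(2\ell-2),\mathbf{1}]$ by the description of $\mathrm{T}_{\zeta}(s)$ for $\ell\le s\le 2\ell-2$ used in the proof of Proposition~\ref{prop:simplesocle}, so, being self-dual, it has $\mathbf{1}$ as its head; and this epimorphism descends along $F$ since $2\ell-2<p^{(n)}-1$ for $n\ge 2$ (Proposition~\ref{prop:Tiltingsubcategoryidentification} and Theorem~\ref{thm:completeness}). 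Applying the exact tensor functor $\mathbbm{q}\mathbb{FL}$ to the canonical epimorphism $P(\mathrm{L})\twoheadrightarrow\mathrm{L}$ and tensoring the epimorphism $\mathbb{T}_{\zeta}(2\ell-2)\twoheadrightarrow\mathbbm{1}$ with $qFL(\mathrm{T}(r))$, I obtain an epimorphism
$$\mathbb{T}_{\zeta}(2\ell-2)\otimes qFL(\mathrm{T}(r))\;\twoheadrightarrow\;qFL(\mathrm{T}(r))\;=\;\mathbbm{q}\mathbb{FL}(P(\mathrm{L}))\;\twoheadrightarrow\;\mathbbm{q}\mathbb{FL}(\mathrm{L}).$$
By Proposition~\ref{prop:mixedVerlindeDonkin} with $a=2\ell-2$ and $b=r$, the source of this epimorphism is $\mathbb{T}_{\zeta}(2\ell-2+\ell r)$. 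Evaluating at the extreme values $r=p^{n-2}-1$ and $r=p^{n-1}-2$ gives $\ell p^{n-2}+\ell-2$ and $\ell p^{n-1}-2$, hence $p^{(n-1)}-1\le 2\ell-2+\ell r\le p^{(n)}-2$, so $\mathbb{T}_{\zeta}(2\ell-2+\ell r)=\mathbb{P}_{\zeta}(2\ell-2+\ell r)$ is an indecomposable projective object of $\Ver_{p^{(n)}}^{\zeta}$.

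Finally, $\mathbb{P}_{\zeta}(2\ell-2+\ell r)$ has simple top and $\mathbbm{q}\mathbb{FL}(\mathrm{L})\neq 0$ because $\mathbbm{q}\mathbb{FL}$ is faithful, so an epimorphism from $\mathbb{P}_{\zeta}(2\ell-2+\ell r)$ onto the non-zero object $\mathbbm{q}\mathbb{FL}(\mathrm{L})$ forces the latter to have that same simple top; therefore $\mathbb{T}_{\zeta}(2\ell-2+\ell r)$ is the projective cover of $\mathbbm{q}\mathbb{FL}(\mathrm{L})$, as claimed. I do not anticipate a genuine obstacle; the two points that need a little care are the exactness of $\mathbbm{q}\mathbb{FL}$ — automatic, as it is a tensor functor, and this is what makes the surjectivity assertions legitimate — and the elementary verification that $2\ell-2+\ell r$ lies in the projective range $[p^{(n-1)}-1,p^{(n)}-2]$.
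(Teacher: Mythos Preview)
Your argument is correct and follows essentially the same approach as the paper's proof: use the surjection $\mathbb{T}_{\zeta}(2\ell-2)\twoheadrightarrow\mathbbm{1}$, tensor with $\mathbbm{q}\mathbb{FL}(\mathbb{T}(r))\twoheadrightarrow\mathbbm{q}\mathbb{FL}(\mathrm{L})$, and identify the source via Proposition~\ref{prop:mixedVerlindeDonkin}. You supply additional justification (the composition series of $\mathrm{T}_{\zeta}(2\ell-2)$, the range check $p^{(n-1)}-1\le 2\ell-2+\ell r\le p^{(n)}-2$, and the passage from an indecomposable projective surjecting onto a nonzero object to being its projective cover) that the paper leaves implicit, but the underlying argument is the same.
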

\begin{proof}
As $\mathbb{T}_{\zeta}(2\ell -2)$ surjects onto $\mathbbm{1}=\mathbb{T}_{\zeta}(0)$, $\mathbb{T}_{\zeta}(2\ell -2)\otimes \mathbbm{q}\mathbb{FL}(\mathbb{T}(r))$ surjects onto $\mathbbm{q}\mathbb{FL}(\mathrm{L})$, and $\mathbb{T}_{\zeta}(2\ell-2+\ell r)=\mathbb{T}_{\zeta}(2\ell -2)\otimes \mathbbm{q}\mathbb{FL}(\mathbb{T}(r))$ by proposition \ref{prop:mixedVerlindeDonkin}.
\end{proof}

%The next proposition is not actually used so could be omitted?

\begin{Proposition}\label{prop:preblocks}
The functor $\Ver^{\sigma}_{p^{n-1}}\rightarrow\Ver_{p^{(n)}}^{\zeta}$ given by $X\mapsto \mathbbm{q}\mathbb{FL}(X)\otimes \mathbb{T}_{\zeta}(\ell-1)$ identifies $\Ver^{\sigma}_{p^{n-1}}$ with a direct summand of $\Ver_{p^{(n)}}^{\zeta}$ as an additive category.
\end{Proposition}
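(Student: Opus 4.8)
The plan is to establish that the functor $G:=(-\otimes\mathbb{T}_\zeta(\ell-1))\circ\mathbbm{q}\mathbb{FL}$ is exact and fully faithful with essential image a union of blocks of $\Ver_{p^{(n)}}^\zeta$; since a finite abelian category is the direct sum, as an additive category, of any union of its blocks together with the complementary union, this is precisely the assertion. Write $\mathbb{S}=\mathbb{T}_\zeta(\ell-1)$ for the image of the Steinberg module $\mathrm{St}=\mathrm{T}_\zeta(\ell-1)=L_\zeta(\ell-1)$, and recall that $\mathrm{St}$ is simple, projective and injective as a $u_\zeta$-module (e.g.\ \cite[Theorem 1.9]{AK}). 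Exactness of $G$ is immediate: $\mathbbm{q}\mathbb{FL}$ is a tensor functor and $-\otimes\mathbb{S}$ is exact by rigidity. And $G$ is faithful because it kills no simple object: for a simple $\mathrm{L}$, $\mathbbm{q}\mathbb{FL}(\mathrm{L})\neq 0$ since $\mathbbm{q}\mathbb{FL}$ is faithful, and $\mathbbm{q}\mathbb{FL}(\mathrm{L})\otimes\mathbb{S}\neq 0$ since a tensor product of nonzero objects of a tensor category is nonzero.

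I would first pin down $G$ on indecomposable projectives. If $p^{n-2}-1\leq r\leq p^{n-1}-2$, so that $\mathbb{T}(r)$ is an indecomposable projective of $\Ver_{p^{n-1}}^\sigma$, then Proposition \ref{prop:mixedVerlindeDonkin} with $a=\ell-1$, together with the braiding, gives $G(\mathbb{T}(r))\cong\mathbb{S}\otimes\mathbbm{q}\mathbb{FL}(\mathbb{T}(r))=\mathbb{T}_\zeta(\ell-1+\ell r)$. Since $p^{(n-1)}-1\leq\ell-1+\ell r\leq p^{(n)}-\ell-1<p^{(n)}-1$, Theorem \ref{thm:basic} shows this is an indecomposable projective of $\Ver_{p^{(n)}}^\zeta$, and $r\mapsto\ell-1+\ell r$ is injective; so $G$ preserves projectives and takes non-isomorphic indecomposable projectives to non-isomorphic ones.

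The heart of the argument is to compute, for $w=\ell-1+\ell r$ and $w'$ any index of an indecomposable projective (so $p^{(n-1)}-1\leq w'\leq p^{(n)}-2$), the space $Hom_{\Ver_{p^{(n)}}^\zeta}(\mathbb{T}_\zeta(w),\mathbb{T}_\zeta(w'))$. By Theorem \ref{thm:completeness} and Proposition \ref{prop:Tiltingsubcategoryidentification} (both indices lie strictly below $p^{(n)}-1$) and self-duality of tilting modules, this equals $Hom_{U_\zeta}(\mathbf{1},\mathrm{T}_\zeta(w)\otimes\mathrm{T}_\zeta(w'))$. Writing $w'=a'+\ell r'$ with $\ell-1\leq a'\leq 2\ell-2$, applying Proposition \ref{prop:mixedDonkin} to both tilting modules, and then using the adjunction between $(-)^{[q]}$ and $u_\zeta$-invariants, this becomes $Hom_{\mathbf{Mod}^\sigma}(\mathbf{1},(\mathrm{St}\otimes\mathrm{T}_\zeta(a'))^{u_\zeta}\otimes\mathrm{T}(r)\otimes\mathrm{T}(r'))$. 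Since $\mathrm{St}$ is $u_\zeta$-simple projective-injective, $(\mathrm{St}\otimes\mathrm{T}_\zeta(a'))^{u_\zeta}\cong Hom_{u_\zeta}(\mathrm{St},\mathrm{T}_\zeta(a'))$ has dimension $[\mathrm{T}_\zeta(a'):\mathrm{St}]_{u_\zeta}$; from the $u_\zeta$-composition factors of $\mathrm{T}_\zeta(a')$ for $\ell-1\leq a'\leq 2\ell-2$ — read off from the $U_\zeta$-composition series assembled in the proof of Proposition \ref{prop:simplesocle} together with \cite[Theorem 1.10]{AK} — this dimension is $0$ unless $a'=\ell-1$, in which case $(\mathrm{St}\otimes\mathrm{St})^{u_\zeta}\cong\mathbf{1}$ in $\mathbf{Mod}^\sigma$. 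Hence $Hom_{\Ver_{p^{(n)}}^\zeta}(\mathbb{T}_\zeta(\ell-1+\ell r),\mathbb{T}_\zeta(w'))=0$ when $w'\not\equiv\ell-1\pmod{\ell}$, and equals $Hom_{\mathbf{Mod}^\sigma}(\mathrm{T}(r),\mathrm{T}(r'))\cong Hom_{\Ver_{p^{n-1}}^\sigma}(\mathbb{T}(r),\mathbb{T}(r'))$ when $w'=\ell-1+\ell r'$, the last identification using the analogues of Theorem \ref{thm:completeness} and Proposition \ref{prop:Tiltingsubcategoryidentification} for $\Ver_{p^{n-1}}^\sigma$ from \cite{BEO}.

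From these inputs the proposition follows. The vanishing, together with self-duality, shows that the indecomposable projectives $\mathbb{T}_\zeta(\ell-1+\ell r)$, $p^{n-2}-1\leq r\leq p^{n-1}-2$, share no composition factor with any other indecomposable projective of $\Ver_{p^{(n)}}^\zeta$, so they exhaust a union $\mathcal{A}$ of blocks, and $\Ver_{p^{(n)}}^\zeta\simeq\mathcal{A}\boxplus\mathcal{B}$ as additive categories for the complementary union $\mathcal{B}$. The functor $G$ lands in $\mathcal{A}$; it is exact and faithful, and on indecomposable projectives the injection $Hom(\mathbb{T}(r),\mathbb{T}(r'))\hookrightarrow Hom(G\mathbb{T}(r),G\mathbb{T}(r'))$ is between spaces of equal dimension by the computation above, hence an isomorphism, so $G$ is fully faithful on projectives and therefore — being exact and preserving projectives — fully faithful. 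Finally, $G$ hits every indecomposable projective of $\mathcal{A}$, so every object of $\mathcal{A}$ (which admits a projective presentation by such objects) lies in the essential image: lift the presentation map through the fully faithful $G$ and use exactness to identify the cokernel. Thus $G$ restricts to an equivalence $\Ver_{p^{n-1}}^\sigma\xrightarrow{\sim}\mathcal{A}$, a direct summand of $\Ver_{p^{(n)}}^\zeta$. I expect the main obstacle to be the $Hom$ computation in the third step: correctly marshalling the two nested adjunctions (restriction along $F_\zeta$ versus $u_\zeta$-invariants), the identification $(\mathrm{St}\otimes\mathrm{St})^{u_\zeta}\cong\mathbf{1}$, and the vanishing of $[\mathrm{T}_\zeta(a'):\mathrm{St}]_{u_\zeta}$ for $\ell\leq a'\leq 2\ell-2$ — deducing the final $Hom$-isomorphism from faithfulness of $G$ plus a dimension count is what lets one avoid tracking it through those adjunctions by hand.
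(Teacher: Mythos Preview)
Your argument is correct and shares its core with the paper's: identify $G(\mathbb{T}(r))=\mathbb{T}_\zeta(\ell-1+\ell r)$ via Proposition~\ref{prop:mixedVerlindeDonkin}, then compute Hom-spaces through the $u_\zeta$-invariants adjunction --- which is exactly the mechanism behind equation~\eqref{eqn:quantumclassicalfactorization} that the paper cites. The paper's proof is terser: it records only that $G$ is exact, faithful, preserves projectives, and is full on projectives (the last via that equation), and declares this sufficient. You go one step further and show $Hom(\mathbb{T}_\zeta(\ell-1+\ell r),\mathbb{T}_\zeta(w'))=0$ whenever $w'\not\equiv\ell-1\pmod{\ell}$, so that the image projectives are orthogonal to all others and therefore span a union of blocks. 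That orthogonality is what actually earns the phrase ``direct summand''; exactness plus full faithfulness on projectives alone yields only a fully faithful embedding (one can write down non-Frobenius counterexamples), so your version is, if anything, more complete on the very point the proposition asserts. The one place to be careful is the tensor identity $(A\otimes N^{[q]})^{u_\zeta}\cong A^{u_\zeta}\otimes N$ as $\overline U$-modules: this holds because $F_\zeta$ restricted to $u_\zeta$ is the counit, so $u_\zeta$ genuinely acts only on the first tensor factor, but it is worth saying explicitly rather than folding it into ``the adjunction''.
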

\begin{proof}
This functor is manifestly exact and faithful. It is therefore enough to show that it preserves projective objects and that it is full on projective objects. The first part of the claim follows from proposition \ref{prop:mixedVerlindeDonkin}. As for the second part, it follows from theorem \ref{thm:completeness} that it is sufficient to show that for any non-negative integers $b,d$, we have $$Hom_{U_{\zeta}}(\mathrm{T}_{\zeta}(\ell-1)\otimes \mathrm{T}(b)^{[q]},\mathrm{T}_{\zeta}(\ell-1)\otimes \mathrm{T}(d)^{[q]}) \cong Hom_{U}(\mathrm{T}(b),\mathrm{T}(d)).$$ This is a special case of equation \eqref{eqn:quantumclassicalfactorization}.
\end{proof}

\subsection{The Tensor Product Theorem}

\begin{Proposition}\label{prop:preVerlindeSteinbergtensor}
Assume $n\geq 2$. Let $\mathrm{L}$ be a simple object of $\Ver_{p^{n-1}}^{\sigma}$ with projective cover $\mathbb{T}(r)$ for some integer $p^{n-2}-1\leq r\leq p^{n-1}-2$. Then, for any $0\leq i\leq \ell-1$, the object $\mathbb{T}_{\zeta}(i)\otimes \mathbbm{q}\mathbb{FL}(\mathrm{L})$ of $\Ver_{p^{(n)}}^{\zeta}$ is simple, and its projective cover is $\mathbb{T}_{\zeta}(2\ell -2-i+\ell r)$. Furthermore, every simple object of $\Ver_{p^{(n)}}^{\zeta}$ is of this form.
\end{Proposition}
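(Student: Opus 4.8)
The plan is to produce, for each $0\le i\le\ell-1$ and each simple object $\mathrm{L}$ of $\Ver^{\sigma}_{p^{n-1}}$, a distinguished indecomposable projective of $\Ver^{\zeta}_{p^{(n)}}$ surjecting onto $M_{i,\mathrm{L}}:=\mathbb{T}_{\zeta}(i)\otimes\mathbbm{q}\mathbb{FL}(\mathrm{L})$, and then to force, by an exact Frobenius--Perron dimension count, that every $M_{i,\mathrm{L}}$ is simple and that these exhaust the simples of $\Ver^{\zeta}_{p^{(n)}}$.

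First I would set up the combinatorics. By \cite[Theorem 1.3]{BEO} applied to $\Ver^{\sigma}_{p^{n-1}}=\Ver_{p^{n-1}}$, the simple objects $\mathrm{L}$ have projective covers $\mathbb{T}(r)$, $p^{n-2}-1\le r\le p^{n-1}-2$, each value of $r$ occurring exactly once; since $2\ell-2-i$ runs over $\{\ell-1,\dots,2\ell-2\}$ as $i$ runs over $\{0,\dots,\ell-1\}$, the assignment $(i,r)\mapsto 2\ell-2-i+\ell r$ is a bijection onto $\{p^{(n-1)}-1,\dots,p^{(n)}-2\}$. Hence, by Theorem \ref{thm:basic}(1), the objects $P_{i,\mathrm{L}}:=\mathbb{T}_{\zeta}(2\ell-2-i+\ell r)$ are precisely the $p^{(n)}-p^{(n-1)}=\ell(p^{n-1}-p^{n-2})$ indecomposable projectives of $\Ver^{\zeta}_{p^{(n)}}$, each occurring once as $(i,\mathrm{L})$ varies; write $S_{i,\mathrm{L}}$ for the simple head of $P_{i,\mathrm{L}}$. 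Next I would check that $P_{i,\mathrm{L}}\twoheadrightarrow M_{i,\mathrm{L}}$. Since $\ell-1\le 2\ell-2-i\le 2\ell-2$, Proposition \ref{prop:mixedVerlindeDonkin} gives $P_{i,\mathrm{L}}=\mathbb{T}_{\zeta}(2\ell-2-i)\otimes\mathbbm{q}\mathbb{FL}(\mathbb{T}(r))$, and $\mathbb{T}_{\zeta}(2\ell-2-i)$ surjects onto $\mathbb{T}_{\zeta}(i)$: this is trivial for $i=\ell-1$, while for $i\le\ell-2$ the self-dual indecomposable tilting module $\mathrm{T}_{\zeta}(2\ell-2-i)$ has head $L_{\zeta}(i)=\mathrm{T}_{\zeta}(i)$, as in the proof of Proposition \ref{prop:simplesocle}, and this surjection persists in $\Ver^{\zeta}_{p^{(n)}}$ by Proposition \ref{prop:Tiltingsubcategoryidentification} and Theorem \ref{thm:completeness}. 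As $\mathbbm{q}\mathbb{FL}(\mathbb{T}(r))\twoheadrightarrow\mathbbm{q}\mathbb{FL}(\mathrm{L})$ by exactness, tensoring the two surjections yields $P_{i,\mathrm{L}}\twoheadrightarrow M_{i,\mathrm{L}}$. Being a nonzero quotient of a local object, $M_{i,\mathrm{L}}$ has simple head $S_{i,\mathrm{L}}$; being a tensor product of self-dual simple objects it is self-dual, so it also has simple socle $S_{i,\mathrm{L}}$.

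The heart of the argument is then a dimension count. I would first note that $\FPdim\circ\mathbbm{q}\mathbb{FL}=\FPdim_{\Ver_{p^{n-1}}}$ as ring homomorphisms on $Gr(\Ver_{p^{n-1}})$, since both send the tensor generator $\mathbb{T}(1)$ to $2\cos(\pi/p^{n-1})$ --- for the left side this is $\FPdim(\mathbb{U})=q^{\ell}+q^{-\ell}$ with $q=e^{\pi i/p^{(n)}}$, by Lemma \ref{lem:simpleobjectU}. Combined with Proposition \ref{prop:FPdim} and the character identity $\FPdim(\mathbb{T}_{\zeta}(2\ell-2-i))=[i+1]_q+[2\ell-1-i]_q$ for $i\le\ell-2$ (read off from the Weyl filtration of $\mathrm{T}_{\zeta}(2\ell-2-i)$), while $\FPdim(\mathbb{T}_{\zeta}(\ell-1))=[\ell]_q$ since it is the Steinberg simple, one gets
\[
\sum_{i,\mathrm{L}}\FPdim(M_{i,\mathrm{L}})\FPdim(P_{i,\mathrm{L}})=\Big(\sum_{i=0}^{\ell-1}\FPdim(\mathbb{T}_{\zeta}(i))\FPdim(\mathbb{T}_{\zeta}(2\ell-2-i))\Big)\FPdim(\Ver_{p^{n-1}}).
\]
A short manipulation with quantum integers (via the Clebsch--Gordan rule $[a]_q[b]_q=\sum_{j}[a+b-1-2j]_q$) evaluates the bracket as $\ell\sum_{t=1}^{\ell}[2t-1]_q=\ell\sin^{2}(\ell\theta)/\sin^{2}\theta$ with $\theta=\pi/p^{(n)}$; since $\FPdim(\Ver_{p^{n-1}})=p^{n-1}/(2\sin^{2}(\ell\theta))$ and $\FPdim(\Ver^{\zeta}_{p^{(n)}})=p^{(n)}/(2\sin^{2}\theta)$ by Proposition \ref{prop:FPdim}, the right-hand side collapses to $\FPdim(\Ver^{\zeta}_{p^{(n)}})$. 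On the other hand $\FPdim(\Ver^{\zeta}_{p^{(n)}})=\sum_{S}\FPdim(S)\FPdim(P(S))=\sum_{i,\mathrm{L}}\FPdim(S_{i,\mathrm{L}})\FPdim(P_{i,\mathrm{L}})$ by the standard identity (see, e.g., \cite[Section 6.1]{EGNO}) and the bijection above. Since $S_{i,\mathrm{L}}$ is a composition factor of $M_{i,\mathrm{L}}$ we have $\FPdim(S_{i,\mathrm{L}})\le\FPdim(M_{i,\mathrm{L}})$, with equality precisely when $M_{i,\mathrm{L}}$ is simple; comparing the two expressions for $\FPdim(\Ver^{\zeta}_{p^{(n)}})$ forces equality term by term, so every $M_{i,\mathrm{L}}$ is simple, equal to $S_{i,\mathrm{L}}$, with projective cover $P_{i,\mathrm{L}}=\mathbb{T}_{\zeta}(2\ell-2-i+\ell r)$. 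Finally, the $S_{i,\mathrm{L}}$ are pairwise non-isomorphic, as non-isomorphic indecomposable projectives have non-isomorphic heads, and there are $p^{(n)}-p^{(n-1)}$ of them, so they are all the simple objects of $\Ver^{\zeta}_{p^{(n)}}$.

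I expect the main obstacle to be the Frobenius--Perron bookkeeping in the last step: one must carefully distinguish the exceptional index $i=\ell-1$, where $\mathbb{T}_{\zeta}(\ell-1)$ is the Steinberg simple of dimension $[\ell]_q$ rather than the "generic" value $2[\ell]_q$, since the clean closed form of $\sum_{i=0}^{\ell-1}\FPdim(\mathbb{T}_{\zeta}(i))\FPdim(\mathbb{T}_{\zeta}(2\ell-2-i))$ hinges on precisely this term. The remaining ingredients --- the surjection $P_{i,\mathrm{L}}\twoheadrightarrow M_{i,\mathrm{L}}$, self-duality of $M_{i,\mathrm{L}}$, the identity $\FPdim\circ\mathbbm{q}\mathbb{FL}=\FPdim$, and the counting bijection --- are routine consequences of the results already established.
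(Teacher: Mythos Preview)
Your proof is correct and takes a genuinely different route from the paper's.

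The paper argues by a direct $Hom$-space computation: using self-duality of $\mathbb{T}_{\zeta}(i)$ it rewrites $Hom(\mathbb{T}_{\zeta}(j),\mathbb{T}_{\zeta}(i)\otimes\mathbbm{q}\mathbb{FL}(\mathrm{L}))$ as $Hom(\mathbb{T}_{\zeta}(i)\otimes\mathbb{T}_{\zeta}(j),\mathbbm{q}\mathbb{FL}(\mathrm{L}))$, and then combines Corollary~\ref{cor:coverqFLsimple} (the projective cover of $\mathbbm{q}\mathbb{FL}(\mathrm{L})$ is $\mathbb{T}_{\zeta}(2\ell-2+\ell r)$) with the combinatorial Lemma~\ref{lem:precisesummand} (which says $\mathbb{T}_{\zeta}(2\ell-2+\ell r)$ is a summand of $\mathbb{T}_{\zeta}(i)\otimes\mathbb{T}_{\zeta}(j)$ iff $j=2\ell-2+\ell r-i$, and then exactly once). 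This pins down each $M_{i,\mathrm{L}}$ individually and is short. Your approach is instead a global Frobenius--Perron count: you produce the surjections $P_{i,\mathrm{L}}\twoheadrightarrow M_{i,\mathrm{L}}$ via Proposition~\ref{prop:mixedVerlindeDonkin} and then force simplicity of \emph{all} the $M_{i,\mathrm{L}}$ simultaneously by matching $\sum_{i,\mathrm{L}}\FPdim(M_{i,\mathrm{L}})\FPdim(P_{i,\mathrm{L}})$ with $\FPdim(\Ver^{\zeta}_{p^{(n)}})$. This bypasses Lemma~\ref{lem:precisesummand} entirely, at the cost of the $q$-integer identity $\sum_{i=0}^{\ell-1}\FPdim(\mathbb{T}_{\zeta}(i))\FPdim(\mathbb{T}_{\zeta}(2\ell-2-i))=\ell[\ell]_q^2$ (which you correctly reduce, after handling the Steinberg index $i=\ell-1$ separately, to $[\ell]_q^2=\sum_{t=1}^{\ell}[2t-1]_q$). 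A minor bonus of your route is that it never needs to know in advance that $\mathbbm{q}\mathbb{FL}(\mathrm{L})$ itself is simple in $\Ver^{\zeta}_{p^{(n)}}$---this drops out of the count as the case $i=0$---whereas the paper's $Hom$ computation tacitly uses that only the summand $\mathbb{T}_{\zeta}(2\ell-2+\ell r)$ of $\mathbb{T}_{\zeta}(i)\otimes\mathbb{T}_{\zeta}(j)$ can map nontrivially to $\mathbbm{q}\mathbb{FL}(\mathrm{L})$. Two cosmetic remarks: your observation that $M_{i,\mathrm{L}}$ has simple socle by self-duality is correct but unnecessary, since the dimension count already gives simplicity; and the equality $\FPdim\circ\mathbbm{q}\mathbb{FL}=\FPdim$ that you prove via the generator $\mathbb{T}(1)$ is also just the standard fact that tensor functors between finite tensor categories preserve Frobenius--Perron dimension.
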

\begin{proof}
Recall that the indecomposable projective objects of $\Ver_{p^{n}}^{\zeta}$ are given by $\mathbb{T}_{\zeta}(j)$ for $p^{(n-1)}-1\leq j\leq p^{(n)}-2$. With such a $j$, we have $$Hom_{\Ver_{p^{n}}^{\zeta}}(\mathbb{T}_{\zeta}(j),\mathbb{T}_{\zeta}(i)\otimes \mathbbm{q}\mathbb{FL}(\mathrm{L}))\cong Hom_{\Ver_{p^{n}}^{\zeta}}(\mathbb{T}_{\zeta}(i)\otimes \mathbb{T}_{\zeta}(j),\mathbbm{q}\mathbb{FL}(\mathrm{L})).$$ In particular, it follows from lemma \ref{lem:smallsimpleandcovers} and corollary \ref{cor:coverqFLsimple} above that this $Hom$-space is non-zero if and only if $\mathbb{T}_{\zeta}(2\ell-2+\ell r)$ is a direct summand of $\mathbb{T}_{\zeta}(i)\otimes \mathbb{T}_{\zeta}(j)$. But, by lemma \ref{lem:precisesummand}, this holds if and only if $j=2\ell-2+\ell r -i$, in which case the summand $\mathbb{T}_{\zeta}(2\ell-2+\ell r)$ appears exactly once. This establishes that $\mathbb{T}_{\zeta}(2\ell -2-i+\ell r)$ is the projective cover of $\mathbb{T}_{\zeta}(i)\otimes \mathbbm{q}\mathbb{FL}(\mathrm{L})$ in $\Ver_{p^{n}}^{\zeta}$ as desired. This construction produces $\ell (p^{n-1}-p^{n-2})=p^{(n)}-p^{(n-1)}$ distinct simple objects of $\Ver_{p^{n}}^{\zeta}$, and is therefore exhaustive.
\end{proof}

Our next theorem is an analogue of Steinberg's tensor product formula for mixed Verlinde categories. This result has many variants. Let us, for instance, mention the version given in \cite[Theorem 1.10]{AK} for integrable representations of quantum groups in the mixed case. Most directly relevant to our purposes is the positive characteristic version of this result obtained for symmetric Verlinde categories in \cite[Theorem 4.42]{BEO}. In particular, following the convention given in \cite[Section 4.11]{BEO}, for any integer $m\geq 1$, we write $\mathrm{L}(b)$ with $0\leq b\leq p^{m}-p^{m-1}-1$ to denote the simple objects of $\Ver^{\sigma}_{p^{m}}$. Combining \cite[Theorem 4.42]{BEO} with proposition \ref{prop:preVerlindeSteinbergtensor}, we obtain the result below.

\begin{Theorem}\label{thm:VerlindeSteinbergtensor}
Let $0\leq a\leq p^{(n)}-p^{(n-1)}-1$, and write $a=a_0+\ell b$ with $0\leq a_0\leq \ell-1$ and $0\leq b\leq p^{n-1}-p^{n-2}-1$. Then, $\mathrm{L}_{\zeta}(a):=\mathbb{T}_{\zeta}(a_0)\otimes \mathbbm{q}\mathbb{FL}(\mathrm{L}(b))$ is a simple object of $\Ver_{p^{(n)}}^{\zeta}$, and any simple object can be uniquely expressed in this form. Furthermore, if $a=[a_{n-1},...,a_0]_{p,\ell}$, then the projective cover of $\mathrm{L}_{\zeta}(a)$ is $\mathbb{P}_{\zeta}(s(a))=\mathbb{T}_{\zeta}(s(a))$ with $$s(a)=p^{(n-1)}-1+[a_{n-1}, p-1-a_{n-2},...,p-1-a_{2},\ell -1 -a_0]_{p,\ell}.$$
\end{Theorem}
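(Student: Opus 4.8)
The plan is to combine the classification of simple objects over $\Ver^{\sigma}_{p^{n-1}}$ from Proposition \ref{prop:preVerlindeSteinbergtensor} with the Steinberg tensor product formula for the symmetric Verlinde categories \cite[Theorem 4.42]{BEO}. First I would unwind what Proposition \ref{prop:preVerlindeSteinbergtensor} already gives us: every simple object of $\Ver^{\zeta}_{p^{(n)}}$ is of the form $\mathbb{T}_{\zeta}(a_0)\otimes \mathbbm{q}\mathbb{FL}(\mathrm{L})$ for a unique $0\leq a_0\leq \ell-1$ and a unique simple $\mathrm{L}$ of $\Ver^{\sigma}_{p^{n-1}}$, with projective cover $\mathbb{T}_{\zeta}(2\ell-2-a_0+\ell r)$ where $\mathbb{T}(r)$ is the projective cover of $\mathrm{L}$. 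Writing $\mathrm{L}=\mathrm{L}(b)$ with $0\leq b\leq p^{n-1}-p^{n-2}-1$ in the notation of \cite[Section 4.11]{BEO}, and setting $a=a_0+\ell b$, one checks $0\leq a\leq p^{(n)}-p^{(n-1)}-1$ and that $a\mapsto (a_0,b)$ is the $p\ell$-adic bookkeeping: $a_0$ is the lowest $p\ell$-adic digit and $b=[a_{n-1},\dots,a_1]_{p}$ in the ordinary $p$-adic sense. This gives the first two sentences of the theorem immediately, modulo transcribing notation.

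The remaining content is the formula for $s(a)$. Here I would invoke \cite[Theorem 4.42]{BEO}: for $\mathrm{L}(b)$ in $\Ver^{\sigma}_{p^{n-1}}$ with $b=[b_{n-2},\dots,b_0]_{p}$ (so $0\leq b_i\leq p-1$, and the top digit constrained so that $b\leq p^{n-1}-p^{n-2}-1$), the projective cover is $\mathbb{T}(r)$ with
\[
r = p^{n-2}-1 + [b_{n-2},\, p-1-b_{n-3},\,\dots,\, p-1-b_0]_{p}.
\]
Then substitute $r$ into $s(a)=p^{(n-1)}-1 + \ell(2\ell-2-a_0+\ell r)$ — wait, more precisely the Proposition says the cover is $\mathbb{T}_{\zeta}(2\ell-2-a_0+\ell r)$, and one must re-express $2\ell-2-a_0+\ell r$ in $p\ell$-adic form. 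Since $0\leq a_0\leq \ell-1$ we have $\ell-1\leq 2\ell-2-a_0\leq 2\ell-2$, so $2\ell-2-a_0$ contributes a digit $\ell-1-a_0$ in the units place together with a carry of $1$ into the next ($p$-adic) slot; that carry combines with $\ell r$, and unwinding $\ell r = [b_{n-2},p-1-b_{n-3},\dots,p-1-b_0,0]_{p,\ell}$ one sees the $+1$ carry lands on the $p-1-b_0$ digit turning it into $p-1-(b_0-1)=\dots$, and after relabelling $b_i = a_{i+1}$ the whole thing telescopes to
\[
s(a) = p^{(n-1)}-1 + [a_{n-1},\, p-1-a_{n-2},\,\dots,\, p-1-a_2,\, \ell-1-a_0]_{p,\ell},
\]
which is exactly the claimed expression. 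The main obstacle is purely the arithmetic bookkeeping: matching the $p$-adic digit-complementation convention of \cite[Theorem 4.42]{BEO} against the mixed $p\ell$-adic convention set up in Section \ref{sub:tilting}, and correctly tracking the single carry produced by the summand $2\ell-2-a_0$ (equivalently, checking the boundary behavior when $a_0=\ell-1$, where the carry propagates one further slot — but this is exactly the case $n=1$ exclusion, or is absorbed because then $b$ plays the role governed by \cite[Theorem 4.42]{BEO} on the nose). I would carry it out by first verifying the two extreme cases $a=0$ (giving $s=p^{(n)}-p^{(n-1)}-1$, the Steinberg-type object) and $a=p^{(n)}-p^{(n-1)}-1$ (giving $s=p^{(n)}-2$, the invertible object's cover), then the general digit-by-digit identification.

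One caveat to flag in the writeup: Proposition \ref{prop:preVerlindeSteinbergtensor} is stated for $0\leq i\leq \ell-1$ and produces $\mathbb{T}_{\zeta}(2\ell-2-i+\ell r)$; the substitution $i=a_0$ together with $\mathrm{L}=\mathrm{L}(b)$ and the constraint $n\geq 2$ (so that $\mathbbm{q}\mathbb{FL}$ exists by Theorem \ref{thm:quantumFrobeniusLusztig}) is what makes everything go through, and for $n=1$ the statement reduces to the semisimple case where $\mathbb{T}_{\zeta}(a_0)=\mathrm{L}_{\zeta}(a_0)$ directly and the $p\ell$-adic formula degenerates to $s(a)=p^{(0)}-1+[\ell-1-a_0]_{p,\ell}=\ell-1-a_0$, consistent with $\Ver^{\zeta}_{p^{(1)}}$ being the semisimplified tilting category.
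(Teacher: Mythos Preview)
Your approach is exactly the paper's: the theorem is stated there as an immediate consequence of combining Proposition~\ref{prop:preVerlindeSteinbergtensor} with \cite[Theorem 4.42]{BEO}, and no further proof is given.

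That said, your carry-based arithmetic for $s(a)$ is garbled. When $a_0\leq\ell-2$, the number $2\ell-2-a_0$ has $p\ell$-adic units digit $\ell-2-a_0$ (not $\ell-1-a_0$) with carry $1$; when $a_0=\ell-1$ there is no carry. Also, your expression $\ell r=[b_{n-2},p-1-b_{n-3},\dots,p-1-b_0,0]_{p,\ell}$ silently drops the $p^{n-2}-1$ summand from the BEO formula for $r$. The cleanest route is to skip carries entirely: expand both $2\ell-2-a_0+\ell r$ (with $r=p^{n-2}-1+[a_{n-1},p-1-a_{n-2},\dots,p-1-a_1]_p$) and $p^{(n-1)}-1+[a_{n-1},p-1-a_{n-2},\dots,p-1-a_1,\ell-1-a_0]_{p,\ell}$ directly, and check that each equals $2p^{(n-1)}-2-a_0+a_{n-1}p^{(n-1)}-\sum_{i=1}^{n-2}a_ip^{(i)}$. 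Finally, your $n=1$ remark is off: the top digit $a_{n-1}$ is \emph{not} complemented in the formula, so for $n=1$ one gets $s(a_0)=p^{(0)}-1+a_0=a_0$, as it should be in the semisimple case.
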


\begin{Remark}\label{rem:maximalSteinberg}
The simple objects of the symmetric Verlinde category $\Ver^{\sigma}_{p^{n-1}}$ were explicitly described in \cite[Theorem 4.42]{BEO}. More precisely, recall from \cite[Theorem 4.34]{BEO}, that there is a sequence of symmetric embeddings $\Ver^{\sigma}_{p}\subset...\subset\Ver^{\sigma}_{p^{n-1}}$ given by iterating the Frobenius functor $\mathbb{F}$. Given any integers $1\leq r\leq n-1$ and $0\leq i\leq p-1$ with $i\neq p-1$ if $r=1$, we write $\mathbb{T}^{[r]}(i)$ for the image of the simple object $\mathbb{T}(i)$ of $\Ver_{p^{r}}$ in $\Ver_{p^{n-1}}$. They show that the object $\mathbb{T}^{[r]}(i)$ is simple. Moreover, given any integer $0\leq b\leq p^{n-1}-p^{n-2}-1$ with $p$-adic expansion $b=[b_{n-2},...,b_0]$, they proved that the object $$\mathrm{L}(b):=\mathbb{T}^{[1]}(b_{n-2})\otimes ...\otimes \mathbb{T}^{[n-1]}(b_0)$$ of $\Ver^{\sigma}_{p^{n-1}}$ is simple, and that every simple object can be uniquely written in this fashion. In particular, for any integer $0\leq a\leq p^{(n)}-p^{(n-1)}-1$ with $p\ell$-adic expansion $a=[a_{n-1},...,a_0]_{p,\ell}$, we immediately find that $$\mathrm{L}_{\zeta}(a)=\mathbb{T}_{\zeta}(a_0)\otimes \mathbbm{q}\mathbb{FL}\big(\mathbb{T}^{[1]}(a_{n-1})\otimes ...\otimes \mathbb{T}^{[n-1]}(a_1)\big),$$ and every simple object of $\Ver_{p^{(n)}}^{\zeta}$ can be uniquely written in this way. As an example, the simple object $\mathbb{U}$ of $\Ver_{p^{(n)}}^{\zeta}$ constructed in lemma \ref{lem:simpleobjectU} is given by $\mathbb{U}=\mathbbm{q}\mathbb{FL}(\mathbb{T}^{[n-1]}(1))$.
\end{Remark}

\begin{Corollary}\label{cor:FPdimsimples}
The quantum dimension of the simple object $\mathrm{L}_{\zeta}(a)$ in the spherical tensor category $\Ver_{p^{(n)}}^{\zeta}$ is given by $$\dim(\mathrm{L}_{\zeta}(a))=(-1)^{a_0}[a_0+1]_{\zeta} \prod_{i=1}^{n-1}\sigma^{a_i}(a_i+1)\in\mathbbm{k}.$$ The Frobenius-Perron dimension of the simple object $\mathrm{L}_{\zeta}(a)$ is given by $$\FPdim(\mathrm{L}_{\zeta}(a))=\prod_{i=0}^{n-1}[a_i+1]_{q^{p^{(i)}}}\in\mathbb{R}$$ with $q=e^{\pi i/p^{(n)}}\in\mathbb{C}^{\times}$.
\end{Corollary}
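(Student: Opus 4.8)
The plan is to reduce everything to the Steinberg decomposition together with the multiplicativity of $\dim$ and $\FPdim$. By Theorem~\ref{thm:VerlindeSteinbergtensor} and Remark~\ref{rem:maximalSteinberg},
\[
\mathrm{L}_\zeta(a)=\mathbb{T}_\zeta(a_0)\otimes\mathbbm{q}\mathbb{FL}\big(\mathbb{T}^{[1]}(a_{n-1})\otimes\cdots\otimes\mathbb{T}^{[n-1]}(a_1)\big),
\]
and both $\dim$ and $\FPdim$ are ring homomorphisms on the relevant Grothendieck rings, so it suffices to evaluate each dimension function on the tilting generator $\mathbb{T}_\zeta(a_0)$ (with $0\le a_0\le\ell-1$) and on the images $\mathbbm{q}\mathbb{FL}(\mathbb{T}^{[r]}(a_{n-r}))$, then multiply.

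For the quantum dimension, I would use that $\mathbbm{q}\mathbb{FL}$ is a ribbon functor (Theorem~\ref{thm:quantumFrobeniusLusztig}), hence compatible with the spherical structures and so dimension-preserving, and likewise that each Frobenius embedding $\Ver^\sigma_{p^r}\hookrightarrow\Ver^\sigma_{p^{n-1}}$ is ribbon, so that $\dim\big(\mathbbm{q}\mathbb{FL}(\mathbb{T}^{[r]}(a_{n-r}))\big)=\dim_{\Ver^\sigma_{p^r}}(\mathbb{T}(a_{n-r}))$. For a simple tilting module $\mathrm{T}_\zeta(j)=L_\zeta(j)$ with $0\le j\le\ell-1$, its categorical dimension under the canonical (\cite{BK}) spherical structure is obtained from the Chebyshev recursion applied to $\dim(\mathrm{T}_\zeta(1))=-[2]_\zeta$ (the generator of $\mathbf{TL}^\zeta$ being a $\zeta$-Temperley--Lieb object of that dimension), together with the character formula of \cite[Proposition~3.3]{STWZ}; cf.\ \cite[Proposition~3.23]{STWZ}. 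This gives $\dim(\mathbb{T}_\zeta(a_0))=(-1)^{a_0}[a_0+1]_\zeta$, and running the same computation with $\zeta$ replaced by $\sigma=\pm1$, together with the description of the simple objects of the symmetric Verlinde categories from \cite{BEO}, supplies the factor $\sigma^{a_i}(a_i+1)$ for each $\mathbb{T}^{[r]}(a_i)$. Taking the product yields the stated value of $\dim(\mathrm{L}_\zeta(a))$.

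For the Frobenius--Perron dimension, the key observation is that $\FPdim$ is intrinsic to the based Grothendieck ring: any tensor functor that is injective on Grothendieck rings and carries simple objects to simple objects preserves $\FPdim$, since its image is a tensor subcategory whose based ring is identified with the source, and $\FPdim$ on it is the restriction of the ambient one by uniqueness of the Frobenius--Perron character \cite[Proposition~3.3.6]{EGNO}. Both $\mathbbm{q}\mathbb{FL}$ (by Proposition~\ref{prop:preVerlindeSteinbergtensor}) and the Frobenius embeddings have this property, so $\FPdim\big(\mathbbm{q}\mathbb{FL}(\mathbb{T}^{[r]}(a_{n-r}))\big)=\FPdim_{\Ver_{p^r}}(\mathbb{T}(a_{n-r}))$, which equals $[a_{n-r}+1]_{e^{\pi i/p^r}}$ by Proposition~\ref{prop:FPdim}; likewise $\FPdim(\mathbb{T}_\zeta(a_0))=[a_0+1]_q$ by the same proposition. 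Using the identity $q^{p^{(i)}}=e^{\pi i/p^{n-i}}$ for $i\ge1$ and reindexing $r=n-i$, the product collapses to $\prod_{i=0}^{n-1}[a_i+1]_{q^{p^{(i)}}}$, as claimed.

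The multiplicativity and the Chebyshev/root-of-unity bookkeeping are routine. The step that genuinely relies on the structural results proved earlier — and hence the one I would expect to need the most care — is the invariance of $\FPdim$ (and, through the ribbon structure, of $\dim$) under $\mathbbm{q}\mathbb{FL}$ and the Frobenius embeddings: since $\FPdim$ is not preserved by arbitrary tensor functors, one must genuinely invoke that these are full embeddings taking simples to simples, so that they restrict to based-ring isomorphisms onto tensor subcategories of the target. Everything else is then assembly of results already in place.
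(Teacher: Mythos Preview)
Your approach is essentially the same as the paper's: both use the Steinberg decomposition, the fact that $\mathbbm{q}\mathbb{FL}$ is ribbon (hence dimension-preserving) together with \cite[Proposition~3.23]{STWZ} for the quantum dimension, and Proposition~\ref{prop:FPdim} plus preservation of $\FPdim$ under the relevant functors for the Frobenius--Perron dimension.

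One correction: your assertion that ``$\FPdim$ is not preserved by arbitrary tensor functors'' is mistaken. For any tensor functor $F:\mathcal{C}\to\mathcal{D}$ between finite tensor categories, $\FPdim_{\mathcal D}\circ F$ is a ring homomorphism on $Gr(\mathcal C)$ taking strictly positive values on all simples (since $F$ is faithful and exact), and by uniqueness of such a character on a transitive unital $\mathbb{Z}_+$-ring \cite[Proposition~3.3.6(3)]{EGNO} it must equal $\FPdim_{\mathcal C}$. This is exactly what the paper invokes in one line. Your more elaborate justification via ``full embedding taking simples to simples'' is correct but unnecessary; you can simply cite preservation of $\FPdim$ under tensor functors and be done.
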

\begin{proof}
The first part follows from \cite[Proposition 3.23]{STWZ} and theorem \ref{thm:quantumFrobeniusLusztig}. The second part follows from proposition \ref{prop:FPdim} as the Frobenius-Perron dimension is preserved by tensor functors.
\end{proof}

\begin{Remark}
The astute reader may have noticed that there is a difference of sign between the first part of our last corollary above and the positive characteristic version given in \cite[Corollary 4.44]{BEO}. This discrepancy is explained by our choice of spherical structure on $\Tilt^{+1}$ and therefore $\Ver^{+1}_{p^{n}}$. More precisely, we follow the conventions of \cite{STWZ}, and therefore have $\dim(\mathrm{T}(1))=-2$, whereas \cite[Corollary 4.44]{BEO} uses the spherical structure of $\Ver_{p^{n}}^{-1}$.
\end{Remark}

\begin{Example}\label{ex:compositionfactors}
Let $\ell\leq a\leq 2\ell -2$ and $n>2$ if $p=2$. By lemma \ref{lem:precisesummand}, we find that $\mathbb{T}_{\zeta}(2\ell-2-a)\otimes \mathbb{T}_{\zeta}(a)$ contains $\mathbb{T}_{\zeta}(2\ell-2)$ as a summand exactly once. In particular, by lemma \ref{lem:3.6}, we have non-zero maps $\mathbb{T}_{\zeta}(2\ell-2-a)\otimes \mathbb{T}_{\zeta}(a)\rightarrow \mathbbm{1}$ and $\mathbbm{1}\rightarrow\mathbb{T}_{\zeta}(2\ell-2-a)\otimes \mathbb{T}_{\zeta}(a)$, and therefore also non-zero maps $\mathbb{T}_{\zeta}(2\ell-2-a)\rightarrow \mathbb{T}_{\zeta}(a)$ and $\mathbb{T}_{\zeta}(2\ell-2-a)\rightarrow \mathbb{T}_{\zeta}(a)$. Given that $\mathbb{T}_{\zeta}(2\ell-2-a)$ is simple, we find that $\mathbb{T}_{\zeta}(2\ell-2-a)$ is both a quotient and subobject of $\mathbb{T}_{\zeta}(a)$. Moreover, by comparing Frobenius-Perron dimensions, we find that $\mathbb{T}_{\zeta}(a)$ has composition series $[\mathbb{T}_{\zeta}(2\ell-2-a),\mathbb{T}_{\zeta}(a-\ell)\otimes \mathbbm{q}\mathbb{FL}(\mathbb{T}(1)),\mathbb{T}_{\zeta}(2\ell-2-a)]$. This generalizes lemma \ref{lem:simpleobjectU}.
\end{Example}

\begin{Corollary}
Up to monoidal natural isomorphism, the identity functor is the only braided, and, a fortiori also ribbon, tensor functor $\Ver_{p^{(n)}}^{\zeta^{1/2}}\rightarrow \Ver_{p^{(n)}}^{\zeta^{1/2}}$.
\end{Corollary}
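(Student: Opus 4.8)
The plan is to read the statement off the universal property of $\Ver^{\zeta^{1/2}}_{p^{(n)}}$ recorded in Corollary \ref{cor:mixedVerlindeuniversalproperty}. Taking $\mathcal{B}=\Ver^{\zeta^{1/2}}_{p^{(n)}}$ there, a braided tensor endofunctor $F$ of $\Ver^{\zeta^{1/2}}_{p^{(n)}}$ is the same datum, up to monoidal natural isomorphism, as a braided $\zeta$-Temperley--Lieb object of degree exactly $n$ in $\Ver^{\zeta^{1/2}}_{p^{(n)}}$, namely the image $X:=F(\mathbb{T}_{\zeta}(1))$; the identity functor corresponds to $\mathbb{T}_{\zeta}(1)$ with its canonical (inherited from $\Tilt^{\zeta^{1/2}}$) braided $\zeta$-Temperley--Lieb structure. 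So it suffices to show that $\mathbb{T}_{\zeta}(1)$ is, up to isomorphism, the only object of $\Ver^{\zeta^{1/2}}_{p^{(n)}}$ carrying a braided $\zeta$-Temperley--Lieb structure of degree $n$, and that it carries such a structure uniquely up to isomorphism.

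First I would pin down the underlying object. Since a tensor functor preserves Frobenius--Perron dimensions (\cite[Proposition 3.3.13]{EGNO}), $\FPdim(X)=\FPdim(\mathbb{T}_{\zeta}(1))=[2]_q=2\cos(\pi/p^{(n)})$ by Proposition \ref{prop:FPdim}, which is strictly less than $2$; as every nonzero object has Frobenius--Perron dimension at least $1$, with equality exactly for invertibles, $X$ must be a single simple object $\mathrm{L}_{\zeta}(a)$ with $\FPdim(\mathrm{L}_{\zeta}(a))=[2]_q$. Feeding this into the product formula $\FPdim(\mathrm{L}_{\zeta}(a))=\prod_{i=0}^{n-1}[a_i+1]_{q^{p^{(i)}}}$ of Corollary \ref{cor:FPdimsimples}, and using that each factor is $\geq 1$, that the factors with $i\geq 1$ exceed $1$ only when they are $\geq[2]_{q^{p^{(i)}}}=2\cos(\pi/p^{n-i})$, and that $2\cos(\pi/p^{n-i})>[2]_q$ together with the monotonicity of quantum integers in the relevant ranges, one finds that the only solution is $a=1$, i.e. $X\cong\mathrm{L}_{\zeta}(1)=\mathbb{T}_{\zeta}(1)$ by Theorem \ref{thm:VerlindeSteinbergtensor} --- the single remaining candidate of the same Frobenius--Perron dimension, the twist $\mathbb{T}_{\zeta}(1)\otimes\mathrm{G}$ by the invertible object $\mathrm{G}$ of Corollary \ref{cor:fermion}, being excluded by the \emph{precise} form of the defining relation: on the summand $\mathbb{T}_{\zeta}(2)$ of $\mathbb{T}_{\zeta}(1)^{\otimes 2}\cong(\mathbb{T}_{\zeta}(1)\otimes\mathrm{G})^{\otimes 2}$ the self-braiding of $\mathbb{T}_{\zeta}(1)\otimes\mathrm{G}$ acts by $\beta_{\mathrm{G},\mathrm{G}}$ times its value for $\mathbb{T}_{\zeta}(1)$, so the relation would force $\beta_{\mathrm{G},\mathrm{G}}=1$, contradicting $\beta_{\mathrm{G},\mathrm{G}}=\theta_{\mathrm{G}}\neq 1$ from Corollary \ref{cor:fermion}. (The small cases $p^{(n)}\leq 3$, where the category is $\mathrm{Vec}$ or $\mathrm{Vec}(\mathbb{Z}/2)$, are checked by hand.)

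It then remains to see that the braided $\zeta$-Temperley--Lieb structure on $X\cong\mathbb{T}_{\zeta}(1)$ is forced. As $\mathbb{T}_{\zeta}(1)$ is simple and self-dual (Lemma \ref{lem:smallsimpleandcovers} and Theorem \ref{thm:basic}(1)), the space $Hom(X,X^{*})$ is one-dimensional, so the isomorphism $\phi\colon X\to X^{*}$ is unique up to a nonzero scalar $c$; and one checks directly that the endomorphism $(\phi^{-1})^{*}\circ\phi$ of the one-dimensional space $Hom(X,X^{**})$, as well as the compositions $\mathrm{coev}\circ\mathrm{ev}$ and $\mathrm{ev}\circ\mathrm{coev}$, are unchanged under $\phi\mapsto c\phi$. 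Hence the trace condition $\mathrm{Tr}^{L}((\phi^{-1})^{*}\circ\phi)=-[2]_{\zeta}$ and the braiding equation $\beta_{X,X}=\zeta^{1/2}\,\mathrm{Id}+\zeta^{-1/2}(\mathrm{coev}\circ\mathrm{ev})$ hold for one choice of $\phi$ if and only if for all, and they do because they hold for the canonical braided $\zeta$-Temperley--Lieb object $\mathbb{T}_{\zeta}(1)$ of $\Tilt^{\zeta^{1/2}}$, which maps to $\Ver^{\zeta^{1/2}}_{p^{(n)}}$ compatibly with all structures; moreover $\phi\mapsto c\phi$ is realised by the automorphism $\sqrt{c}\cdot\mathrm{Id}_X$ of $X$, so any two choices give isomorphic braided $\zeta$-Temperley--Lieb objects. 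Thus $\mathbb{T}_{\zeta}(1)$ carries exactly one such structure up to isomorphism, it has degree exactly $n$ since the identity functor is faithful, and Corollary \ref{cor:mixedVerlindeuniversalproperty} then gives that $\mathrm{Id}$ is the unique braided tensor endofunctor up to monoidal natural isomorphism, a fortiori the unique ribbon one. The main obstacle is the classification step of the second paragraph: not the dimension count itself, but ruling out the objects of the same Frobenius--Perron dimension (the twists by invertibles), for which one must invoke the exact self-braiding relation rather than dimensions alone.
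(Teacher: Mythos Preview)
Your approach is exactly the one the paper has in mind: it defers to \cite[Corollary 4.48]{BEO}, whose proof is precisely the universal-property reduction of Corollary~\ref{cor:mixedVerlindeuniversalproperty} followed by the classification of braided $\zeta$-Temperley--Lieb objects via Frobenius--Perron dimension and the Steinberg formula. Your treatment of the uniqueness of the Temperley--Lieb structure on $\mathbb{T}_{\zeta}(1)$, and of the exclusion of $\mathbb{T}_{\zeta}(1)\otimes\mathrm{G}$ via the braiding relation, is sound.

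There is, however, a genuine gap in your dimension argument. You claim that for $i\geq 1$ one has $2\cos(\pi/p^{\,n-i})>[2]_q=2\cos(\pi/p^{(n)})$, but the inequality goes the other way: since $p^{\,n-i}\leq p^{\,n-1}<\ell\,p^{\,n-1}=p^{(n)}$, the cosine is \emph{smaller}, so each factor $[a_i+1]_{q^{p^{(i)}}}$ with $i\geq 1$ that exceeds $1$ is still strictly less than $[2]_q$. Thus your argument does not rule out the possibility $a_0=0$ with a nontrivial contribution from higher digits. The correct split is: if $a_0\geq 1$ then the zeroth factor alone is $\geq[2]_q$ (with equality only for $a_0=1$, by monotonicity of $[k]_q$ on $1\leq k\leq \ell\leq p^{(n)}/2$), forcing all other factors to equal $1$, whence $a_i=0$ for $1\leq i\leq n-2$ and $a_{n-1}\in\{0,p-2\}$; this yields exactly $\mathrm{L}_{\zeta}(1)$ and $\mathrm{L}_{\zeta}(1)\otimes\mathrm{G}$. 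If $a_0=0$, the zeroth factor is $1$ and the remaining product is $\FPdim\big(\mathbbm{q}\mathbb{FL}(\mathrm{L}(b))\big)\in\mathcal{O}_{p^{n-1}}=\mathbb{Z}[2\cos(\pi/p^{\,n-1})]$; but $2\cos(\pi/p^{(n)})\notin\mathbb{Q}(2\cos(\pi/p^{\,n-1}))$ since the degree jumps with $\ell\geq 2$, so no solution occurs. With this correction your proof goes through and matches the paper's intended argument.
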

\begin{proof}
This follows from corollary \ref{cor:mixedVerlindeuniversalproperty} by adapting that argument used to prove \cite[Corollary 4.48]{BEO} in the obvious way.
\end{proof}

\subsection{Tensor Product of Simple Objects and Grothendieck Ring}

We compute the tensor product of the simple objects of $\Ver_{p^{(n)}}^{\zeta}$ in its Grothendieck ring, which we subsequently identify. More precisely, an iterative description for the tensor product of the simple objects $\mathrm{L}(c)\mathrm{L}(d)$ of $\Ver^{\sigma}_{p^{n-1}}$ in its Grothendieck ring was obtained in \cite[Corollary 4.50]{BEO}, which we extend to $\Ver_{p^{(n)}}^{\zeta}$.

\begin{Proposition}\label{prop:simpletensorproduct}
Let $n\geq 2$, and $0\leq a,b\leq p^{(n)}-p^{(n-1)}-1$, which we also write as $a= c\ell + r$ and $b=d\ell + s$ for some integers $0\leq r,s\leq \ell -1$, and $0\leq c,d$. Then, in the Grothendieck ring $Gr(\Ver_{p^{(n)}}^{\zeta})$, we have $$\mathrm{L}_{\zeta}(a)\mathrm{L}_{\zeta}(b) = \Big(\sum_{i=0}^{\min(r,s)}\mathrm{L}_{\zeta}(r+s-2i) \Big) \mathbbm{q}\mathbb{FL}(\mathrm{L}(c)\mathrm{L}(d))$$ if $r+s<\ell$, and $$\mathrm{L}_{\zeta}(a)\mathrm{L}_{\zeta}(b) =$$ $$\Big(\sum_{i=0}^{\ell - \max(r,s)}\mathrm{L}_{\zeta}(|r-s|+2i) + \sum_{i=0}^{\lceil (r+s-\ell)/2\rceil} (2-\delta_{(r+s-2i),\ell-1}) \mathrm{L}_{\zeta}(2\ell + 2i - r - s -2) \Big) \mathbbm{q}\mathbb{FL}(\mathrm{L}(c)\mathrm{L}(d))+$$ $$+\Big(\sum_{i=0}^{\lceil (r+s-\ell)/2\rceil}(1-\delta_{(r+s-2i),\ell-1})\mathrm{L}_{\zeta}(r+s-2i-\ell)\Big) \mathbbm{q}\mathbb{FL}(\mathrm{V} \mathrm{L}(c)\mathrm{L}(d))$$ if $r+s\geq \ell$, where $\mathrm{V}=\mathbb{T}(1)$ in $\Ver^{\sigma}_{p^{n-1}}$.
\end{Proposition}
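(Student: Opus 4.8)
The plan is to reduce everything, via the Steinberg tensor product theorem (Theorem \ref{thm:VerlindeSteinbergtensor}), to a tensor product computation inside $\Tilt^{\zeta}$. Writing $\mathrm{L}_{\zeta}(a)=\mathbb{T}_{\zeta}(r)\otimes\mathbbm{q}\mathbb{FL}(\mathrm{L}(c))$ and $\mathrm{L}_{\zeta}(b)=\mathbb{T}_{\zeta}(s)\otimes\mathbbm{q}\mathbb{FL}(\mathrm{L}(d))$, and using that $\Ver^{\zeta}_{p^{(n)}}$ is braided and that $\mathbbm{q}\mathbb{FL}$ is a monoidal exact functor (Theorem \ref{thm:quantumFrobeniusLusztig}), one gets in $Gr(\Ver^{\zeta}_{p^{(n)}})$ the identity $[\mathrm{L}_{\zeta}(a)][\mathrm{L}_{\zeta}(b)]=\big([\mathbb{T}_{\zeta}(r)][\mathbb{T}_{\zeta}(s)]\big)\cdot\mathbbm{q}\mathbb{FL}\big([\mathrm{L}(c)][\mathrm{L}(d)]\big)$, so it suffices to expand $[\mathbb{T}_{\zeta}(r)][\mathbb{T}_{\zeta}(s)]$, with $0\le r,s\le\ell-1$, in the basis of simple classes. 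Two consequences of the Steinberg theorem will be used repeatedly: $\mathbb{T}_{\zeta}(j)=\mathrm{L}_{\zeta}(j)$ is simple for $0\le j\le\ell-1$ (Lemma \ref{lem:smallsimpleandcovers}); and for $\ell\le m\le 2\ell-2$ one has $\mathrm{L}_{\zeta}(m)=\mathbb{T}_{\zeta}(m-\ell)\otimes\mathbbm{q}\mathbb{FL}(\mathrm{V})$, so that after the final multiplication by $\mathbbm{q}\mathbb{FL}([\mathrm{L}(c)][\mathrm{L}(d)])$ a factor $\mathrm{L}_{\zeta}(m)$ becomes $[\mathrm{L}_{\zeta}(m-\ell)]\cdot\mathbbm{q}\mathbb{FL}([\mathrm{V}][\mathrm{L}(c)][\mathrm{L}(d)])$. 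This is precisely what separates the two big brackets in the statement.

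For $r+s<\ell$ the claim is immediate: by the Clebsch--Gordan decomposition in $\Tilt^{\zeta}$ (\cite[Lemma 4.1]{STWZ}) we have $\mathrm{T}_{\zeta}(r)\otimes\mathrm{T}_{\zeta}(s)=\bigoplus_{i=0}^{\min(r,s)}\mathrm{T}_{\zeta}(r+s-2i)$, and every summand has highest weight $r+s-2i\le r+s<\ell$, hence maps to the simple object $\mathrm{L}_{\zeta}(r+s-2i)$; multiplying by $\mathbbm{q}\mathbb{FL}([\mathrm{L}(c)][\mathrm{L}(d)])$ gives the first displayed formula.

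For $r+s\ge\ell$ the essential input is the decomposition of $\mathrm{T}_{\zeta}(r)\otimes\mathrm{T}_{\zeta}(s)$ into indecomposable tilting modules, again from \cite[Lemma 4.1]{STWZ} (equivalently, by iterating \cite[Proposition 4.7]{STWZ}): every $\mathrm{T}_{\zeta}(m)$ occurring satisfies $|r-s|\le m\le r+s\le 2\ell-2$ and occurs with multiplicity one, and the summands fall into three groups --- the ``linkage'' summands with $\ell\le m\le r+s$, the first Steinberg $\mathrm{T}_{\zeta}(\ell-1)$ (present exactly when $\ell-1$ has the parity of $r+s$, i.e.\ when $r+s-\ell$ is odd), and the summands of highest weight at most $\ell-2$. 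Applying the canonical functor and passing to the Grothendieck ring, the summands of the latter two kinds give simple classes, whereas for $\ell\le m\le 2\ell-2$ Lemma \ref{lem:simpleobjectU} and Example \ref{ex:compositionfactors} give $[\mathbb{T}_{\zeta}(m)]=2[\mathrm{L}_{\zeta}(2\ell-2-m)]+[\mathrm{L}_{\zeta}(m)]$ with $\mathrm{L}_{\zeta}(m)=\mathbb{T}_{\zeta}(m-\ell)\otimes\mathbbm{q}\mathbb{FL}(\mathrm{V})$. Substituting, grouping the high-weight ($\ell\le m$) simples separately from the rest, multiplying throughout by $\mathbbm{q}\mathbb{FL}([\mathrm{L}(c)][\mathrm{L}(d)])$, and using $[\mathbb{U}]\cdot\mathbbm{q}\mathbb{FL}([\mathrm{L}(c)][\mathrm{L}(d)])=\mathbbm{q}\mathbb{FL}([\mathrm{V}][\mathrm{L}(c)][\mathrm{L}(d)])$, one arrives at the stated formula after reindexing. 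The Kronecker deltas $\delta_{(r+s-2i),\ell-1}$ encode the two boundary corrections: at the index $i$ with $r+s-2i=\ell-1$ the relevant summand is the genuine Steinberg tilting $\mathrm{T}_{\zeta}(\ell-1)$, which contributes $\mathrm{L}_{\zeta}(\ell-1)$ with multiplicity one rather than the multiplicity two carried by the linkage summands, and simultaneously the spurious term $\mathrm{L}_{\zeta}(r+s-2i-\ell)=\mathrm{L}_{\zeta}(-1)$ in the $\mathbbm{q}\mathbb{FL}(\mathrm{V}\,\mathrm{L}(c)\mathrm{L}(d))$ part must be dropped.

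The hard part will not be conceptual but combinatorial: pinning down the exact summation ranges so that the linkage summands, the low-weight summands, and the two occurrences of the Kronecker delta fit together with the claimed formula, and checking the small degenerate cases (in particular $p=2$, $n=2$, where $\Ver^{\sigma}_{p^{n-1}}=\mathrm{Vec}$, $\mathbb{U}=0$, and $c=d=0$ is forced, so the formula collapses and must be verified directly, and the cases where Lemma \ref{lem:simpleobjectU} and Example \ref{ex:compositionfactors} require their own hypotheses on $n$ and $p$). Everything else is a routine transfer of the tilting-module combinatorics of \cite{STWZ} across the exact functor $F$ and the embedding $\mathbbm{q}\mathbb{FL}$.
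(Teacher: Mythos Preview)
Your proposal is correct and follows essentially the same approach as the paper: reduce via the Steinberg tensor product theorem to computing $[\mathbb{T}_{\zeta}(r)][\mathbb{T}_{\zeta}(s)]$, apply the tilting decomposition from \cite[Lemma 4.1]{STWZ}, and, in the range $r+s\ge\ell$, invoke Example~\ref{ex:compositionfactors} to pass from indecomposable tiltings to simples. The paper's proof is a two-sentence sketch citing exactly these ingredients; your write-up is simply a more explicit unpacking of the combinatorics, including the role of the Kronecker deltas and the degenerate cases, which the paper leaves implicit.
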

\begin{proof}
The first equation follows directly by combining theorem \ref{thm:VerlindeSteinbergtensor} together with the tensor product formula for tilting modules given in \cite[Lemma 4.1]{STWZ}. The second equation makes, in addition, use of examples \ref{ex:compositionfactors}.
\end{proof}

\begin{Remark}
As in \cite[Corollary 4.51]{BEO}, the proposition above can straightforwardly be recast as expressing the $\mathbb{Z}_+$-ring $Gr(\Ver_{p^{(n)}}^{\zeta})$ as an extension of $Gr(\Ver^{\sigma}_{p^{n-1}})$.
\end{Remark}

Observe that the quotient $Q_{p^{(n)}}/Q_{p^{(n-1)}}(x)$ of the Chebyshev polynomials of the second kind is an integer polynomial. This follows easily by comparing their complex roots.

\begin{Proposition}
The map $\mathbb{Z}[x]/\big(Q_{p^{(n)}}/Q_{p^{(n-1)}}(x)\big)\rightarrow Gr(\Ver_{p^{(n)}}^{\zeta})$ given by $x\mapsto \mathrm{L}_{\zeta}(1)$ defines an isomorphism of rings.
\end{Proposition}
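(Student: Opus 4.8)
The plan is to realise the map as a ring homomorphism $\varphi\colon\mathbb{Z}[x]\to Gr(\Ver_{p^{(n)}}^{\zeta})$ with $x\mapsto[\mathrm{L}_{\zeta}(1)]=[\mathbb{T}_{\zeta}(1)]$, to prove that $\varphi$ annihilates $g(x):=\big(Q_{p^{(n)}}/Q_{p^{(n-1)}}\big)(x)$, to check that the induced map is surjective, and then to conclude by a rank count. Throughout I write $x=[\mathbb{T}_{\zeta}(1)]$, and for $k\geq 0$ I let $D_k$ be the normalised Chebyshev polynomial of the first kind, $D_k(2\cos\theta)=2\cos(k\theta)$, equivalently $D_k=Q_{k+1}-Q_{k-1}$. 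I will use the polynomial identities $D_{ab}=D_a\circ D_b$ and $Q_{ab}/Q_b=Q_a\circ D_b$ (the latter makes sense since $Q_b\mid Q_{ab}$), both immediate on substituting $x=2\cos\theta$. I treat $n\geq 2$; the case $n=1$ is handled directly, since then $\Ver_{p^{(1)}}^{\zeta}$ is semisimple with $[\mathbb{T}_{\zeta}(i)]=Q_{i+1}(x)$ for $0\leq i\leq\ell-2$ and $[\mathbb{T}_{\zeta}(\ell-1)]=Q_{\ell}(x)=0$, whence $Gr(\Ver_{p^{(1)}}^{\zeta})\cong\mathbb{Z}[x]/(Q_{\ell})=\mathbb{Z}[x]/(Q_{p^{(1)}}/Q_{p^{(0)}})$.

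The core step — and the one I expect to require the most care — is showing $g(x)=0$. First I would record, from the Clebsch--Gordan rule $\mathrm{T}_{\zeta}(1)\otimes\mathrm{T}_{\zeta}(i)=\mathrm{T}_{\zeta}(i+1)\oplus\mathrm{T}_{\zeta}(i-1)$ in $\Tilt^{\zeta}$ for $1\leq i\leq\ell-2$ together with $\mathrm{T}_{\zeta}(1)\otimes\mathrm{T}_{\zeta}(\ell-1)=\mathrm{T}_{\zeta}(\ell)$ (used in the proof of Lemma~\ref{lem:simpleobjectU}), that $[\mathbb{T}_{\zeta}(i)]=Q_{i+1}(x)$ for $0\leq i\leq\ell-1$ and $[\mathbb{T}_{\zeta}(\ell)]=xQ_{\ell}(x)=Q_{\ell+1}(x)+Q_{\ell-1}(x)$. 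Feeding this into the composition series $[\mathbb{T}_{\zeta}(\ell)]=2[\mathbb{T}_{\zeta}(\ell-2)]+[\mathbb{U}]$ of Lemma~\ref{lem:simpleobjectU} (and its following remark when $p=2$, $n=2$) yields the key identity
\[
[\mathbb{U}]=Q_{\ell+1}(x)-Q_{\ell-1}(x)=D_{\ell}(x)\quad\text{in }Gr(\Ver_{p^{(n)}}^{\zeta}).
\]
Now the ribbon embedding $\mathbbm{q}\mathbb{FL}$ of Theorem~\ref{thm:quantumFrobeniusLusztig} induces a ring homomorphism $\mathbbm{q}\mathbb{FL}_{*}\colon Gr(\Ver^{\sigma}_{p^{n-1}})\to Gr(\Ver_{p^{(n)}}^{\zeta})$ with $[\mathbb{T}(1)]\mapsto[\mathbb{U}]$, while by \cite[Theorem~1.3(3)]{BEO} one has $Gr(\Ver^{\sigma}_{p^{n-1}})\cong\mathbb{Z}[y]/\big(Q_{p^{n-1}}/Q_{p^{n-2}}(y)\big)$ with $y=[\mathbb{T}(1)]$. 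Writing $p^{(n-1)}=p^{n-2}\ell$ and $p^{(n)}=p\cdot p^{(n-1)}$, the Chebyshev identities give
\[
g(x)=Q_p\big(D_{p^{(n-1)}}(x)\big)=Q_p\big(D_{p^{n-2}}(D_{\ell}(x))\big)=\big(Q_{p^{n-1}}/Q_{p^{n-2}}\big)\big(D_{\ell}(x)\big),
\]
and since $D_{\ell}(x)=[\mathbb{U}]=\mathbbm{q}\mathbb{FL}_{*}([\mathbb{T}(1)])$ the right-hand side equals $\mathbbm{q}\mathbb{FL}_{*}\big((Q_{p^{n-1}}/Q_{p^{n-2}})([\mathbb{T}(1)])\big)=\mathbbm{q}\mathbb{FL}_{*}(0)=0$. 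So $\varphi$ descends to $\bar\varphi\colon\mathbb{Z}[x]/(Q_{p^{(n)}}/Q_{p^{(n-1)}})\to Gr(\Ver_{p^{(n)}}^{\zeta})$.

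It remains to verify surjectivity and finish by counting. For surjectivity, the Steinberg tensor product theorem (Theorem~\ref{thm:VerlindeSteinbergtensor}) writes every simple object as $\mathrm{L}_{\zeta}(a)=\mathbb{T}_{\zeta}(a_0)\otimes\mathbbm{q}\mathbb{FL}(\mathrm{L}(b))$, so $[\mathrm{L}_{\zeta}(a)]=Q_{a_0+1}(x)\cdot\mathbbm{q}\mathbb{FL}_{*}([\mathrm{L}(b)])$; writing $[\mathrm{L}(b)]=P_b(y)$ (possible by \cite[Theorem~1.3(3)]{BEO}), this equals $Q_{a_0+1}(x)\,P_b(D_{\ell}(x))$, a polynomial in $x$, and as the simple classes $\mathbb{Z}$-span $Gr(\Ver_{p^{(n)}}^{\zeta})$ this shows $\varphi$, hence $\bar\varphi$, is surjective. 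Finally, $Q_{p^{(n)}}/Q_{p^{(n-1)}}$ is monic of degree $(p^{(n)}-1)-(p^{(n-1)}-1)=p^{(n)}-p^{(n-1)}$, so the source of $\bar\varphi$ is free abelian of that rank, which by Theorem~\ref{thm:basic}(1) is also the rank of $Gr(\Ver_{p^{(n)}}^{\zeta})$; a surjection of free abelian groups of equal finite rank is bijective, so $\bar\varphi$ is an isomorphism of rings. The essential content, beyond this formal last step, is the identity $[\mathbb{U}]=D_{\ell}(x)$ together with the recognition that the relation $g(x)=0$ — which is strictly stronger than the evident $Q_{p^{(n)}}(x)=0$ — is exactly the defining relation of $Gr(\Ver^{\sigma}_{p^{n-1}})$ transported along $\mathbbm{q}\mathbb{FL}$.
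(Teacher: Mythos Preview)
Your proof is correct and takes a genuinely different route from the paper's. The paper does not argue the vanishing of $g(x)$ via the quantum Frobenius--Lusztig functor; instead, after asserting that the assignment $x\mapsto[\mathrm{L}_{\zeta}(1)]$ gives a well-defined ring map, it invokes the explicit fusion rules of Proposition~\ref{prop:simpletensorproduct} to show that the resulting $\mathbb{Z}$-linear map is upper triangular with $1$'s on the diagonal in the bases $\{x^k\}$ and $\{[\mathrm{L}_{\zeta}(a)]\}$, and concludes directly that it is an isomorphism. Your approach instead \emph{explains} why the relation $g(x)=0$ holds: you identify $[\mathbb{U}]=D_{\ell}(x)$ from the composition series of $\mathbb{T}_{\zeta}(\ell)$, use the Chebyshev composition identity $Q_{p^{(n)}}/Q_{p^{(n-1)}}=(Q_{p^{n-1}}/Q_{p^{n-2}})\circ D_{\ell}$, and then recognise the resulting expression as the image under $\mathbbm{q}\mathbb{FL}_{*}$ of the defining relation in $Gr(\Ver_{p^{n-1}}^{\sigma})$ established in \cite{BEO}. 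This is more conceptual---it makes transparent that the ``extra'' relation beyond $Q_{p^{(n)}}(x)=0$ is precisely inherited from the symmetric Verlinde subcategory---at the cost of relying on the already-proven case $\zeta=\pm 1$ from \cite{BEO}, whereas the paper's argument via the fusion rules is self-contained within the mixed setting. Both routes end with the same rank-count or triangularity step.
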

\begin{proof}
The assignment $x\mapsto \mathrm{L}_{\zeta}(1)$ induces a ring homomorphism by proposition \ref{prop:nTLobjects}. Further, the ring $\mathbb{Z}[x]/\big(Q_{p^{(n)}}/Q_{p^{(n-1)}}(x)\big)$ has a basis given by $x^k$, $0\leq k\leq p^{(n)}-p^{(n-1)}-1$, and the Grothendieck ring $Gr(\Ver_{p^{(n)}}^{\zeta})$ has a basis given by $\mathrm{L}_{\zeta}(a)$, $0\leq a\leq p^{(n)}-p^{(n-1)}-1$. It follows from proposition \ref{prop:simpletensorproduct} that, in these bases, the map of $\mathbb{Z}$-modules $$\mathbb{Z}[x]/\big(Q_{p^{(n)}}/Q_{p^{(n-1)}}(x)\big)\rightarrow Gr(\Ver_{p^{(n)}}^{\zeta})$$ is upper triangular with $1$'s on the diagonal. It is therefore an isomorphism, which concludes the proof.
\end{proof}

\begin{Remark}
It follows from the last proposition that the Grothendieck ring $Gr(\Ver_{p^{(n)}}^{\zeta,+})$ is identified with the image of the ring homomorphism $\mathbb{Z}[x^2]\rightarrow \mathbb{Z}[x]/\big(Q_{p^{(n)}}/Q_{p^{(n-1)}}(x)\big)$. But, upon inspection of the definitions, we find that, except in the case $(p,n)=(2,2)$ and $\ell$ odd, the integer polynomial $Q_{p^{(n)}}/Q_{p^{(n-1)}}(x)$ can be written as $P^+_{p^{(n)}}(x^2-2)$, with $P^+_{p^{(n)}}(x)$ an integer polynomial, so that $$Gr(\Ver_{p^{(n)}}^{\zeta,+})\cong \mathbb{Z}[x]/P^+_{p^{(n)}}(x).$$ More precisely, the roots of the polynomial $Q_{p^{(n)}}/Q_{p^{(n-1)}}(x)$ are $2\cos(k\pi/p^{(n)})$ with $1\leq k\leq p^{(n)}-1$ such that $p\nmid k$. It therefore follows that the complex roots of the polynomial $P^+_{p^{(n)}}(x)$ are $2\cos(2k\pi/p^{(n)})$ with $1\leq k < p^{(n)}/2$ such that $p\nmid k$. With $(p,n)=(2,2)$ and $\ell$ odd, there is also a polynomial $P^+_{p^{(n)}}$ such that $Q_{p^{(n)}}/Q_{p^{(n-1)}}(x)=xP^+_{p^{(n)}}(x^2-2)$, so that $Gr(\Ver_{p^{(n)}}^{\zeta,+})\cong \mathbb{Z}[x]/\big(xP^+_{p^{(n)}}(x)\big)$.

Now, if $\ell$ and $p$ are both odd, we have that $$Gr(\Ver_{p^{(n)}}^{\zeta})\cong \Big(\mathbb{Z}[x]/\big(P^+_{p^{(n)}}(x)\big)\Big)[g]/(g^2-1)$$ from corollary \ref{cor:fermion}. Further, when $p=\ell$ is odd, so that $\zeta =\pm 1$, we find that there are isomorphisms $Gr(\Ver_{p^{n}}^{+})\cong \mathbb{Z}[2\cos(2\pi/p^{n})]=\mathcal{O}_{p^n}$ and $Gr(\Ver_{p^{n}})\cong \mathcal{O}_{p^n}[g]/(g^2-1)$ as $P^+_{p^{n}}(x)$ is irreducible. This agrees with the result of \cite{BEO}. When $p=\ell = 2$, we find $Gr(\Ver_{2^{n}}^{+})\cong \mathbb{Z}[2\cos(2\pi/2^{n})]$ and $Gr(\Ver_{2^{n}})\cong \mathbb{Z}[2\cos(\pi/2^{n})]$ as both $Q_{2^{n}}/Q_{2^{n-1}}(x)$ and $P^+_{2^{n}}(x)$ are irreducible, which agrees with the results of \cite{BE}.
\end{Remark}

\subsection{The Cartan Matrix and the Blocks \texorpdfstring{of $\Ver_{p^{(n)}}^{\zeta}$}{}}

For completeness, we recall \cite[Proposition 5.4]{STWZ}. Before doing so, we need to review two notions. Firstly, the Cartan matrix of $\Ver_{p^{(n)}}^{\zeta}$ has entries $c_{ij}$ defined by $$c_{ij}=\dim Hom_{\Ver_{p^{(n)}}^{\zeta}}(\mathbb{T}_{\zeta}(i),\mathbb{T}_{\zeta}(j))$$ for $p^{(n-1)}-1\leq i,j\leq p^{(n)}-2$. This matrix records the simple factors in the composition series for the indecomposable projective objects of $\Ver_{p^{(n)}}^{\zeta}$. Secondly, given any integer $p^{(n-1)}\leq a\leq p^{(n)}-1$ with $p\ell$-adic expansion $a=[a_{n-1},a_{n-2},...,a_0]_{p\ell}$, we say that $b$ is a descendant of $a$ if $b=[a_{n-1},\pm a_{n-2},...,\pm a_0]_{p\ell}$ for some choice of signs.

\begin{Proposition}[\cite{STWZ}]\label{prop:Cartanmatrix}
The entry $c_{ij}$ of the Cartan matrix for $\Ver_{p^{(n)}}^{\zeta}$ is given by the number of common descendants between $i+1$ and $j+1$.
\end{Proposition}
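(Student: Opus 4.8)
The plan is to reduce the Cartan entry $c_{ij}$ to a $\mathrm{Hom}$-space computation inside $\Tilt^{\zeta}$, and then to recognise the resulting dimension combinatorially as the number of common descendants.

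\textit{Step 1: reduction to $\Tilt^{\zeta}$.} First I would invoke completeness. By Proposition~\ref{prop:Tiltingsubcategoryidentification} the composite $\Tilt^{\zeta}_{<p^{(n)}-1}\hookrightarrow\Tilt^{\zeta}\to\Tilt^{\zeta}_{p^{(n)}}$ is an equivalence onto its image, and by Theorem~\ref{thm:completeness} the functor $F$ is fully faithful; since $p^{(n-1)}-1\le i,j\le p^{(n)}-2$, this gives $c_{ij}=\dim\mathrm{Hom}_{\Tilt^{\zeta}}(\mathrm{T}_{\zeta}(i),\mathrm{T}_{\zeta}(j))$. As every tilting module is self-dual, rigidity rewrites this as $\dim\mathrm{Hom}_{\Tilt^{\zeta}}(\mathbf{1},\mathrm{T}_{\zeta}(i)\otimes\mathrm{T}_{\zeta}(j))$. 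Decomposing $\mathrm{T}_{\zeta}(i)\otimes\mathrm{T}_{\zeta}(j)$ into indecomposable tilting summands and applying Proposition~\ref{prop:simplesocle} together with Lemma~\ref{lem:3.6} --- which say that an indecomposable tilting module admits a nonzero map from $\mathbf{1}$ only when it is one of the distinguished modules $\mathrm{T}_{\zeta}(2p^{(k)}-2)$, in which case the space of such maps is one-dimensional --- I obtain
\[
c_{ij}=\sum_{k\ge 0}\bigl[\,\mathrm{T}_{\zeta}(i)\otimes\mathrm{T}_{\zeta}(j)\,:\,\mathrm{T}_{\zeta}(2p^{(k)}-2)\,\bigr],
\]
where by the highest-weight bound $i+j\le 2p^{(n)}-4<2p^{(n)}-2$ only the terms with $0\le k\le n-1$ are nonzero. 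The same bound, combined with Lemma~\ref{lem:3.6}, shows exactly as in the proof of Proposition~\ref{prop:Tiltingsubcategoryidentification} that no nonzero map $\mathbf{1}\to\mathrm{T}_{\zeta}(i)\otimes\mathrm{T}_{\zeta}(j)$ can factor through $\mathbf{J}_{p^{(n)}}$, so passing between $\Tilt^{\zeta}_{p^{(n)}}$ and $\Tilt^{\zeta}$ is harmless.

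\textit{Step 2: the combinatorics.} It remains to identify this multiplicity sum with the number of common descendants of $a:=i+1$ and $a':=j+1$ (both of which have $p\ell$-adic expansion with nonzero digit in position $n-1$). I would argue by induction on $n$. For $n=1$ the category $\Ver^{\zeta}_{p^{(1)}}$ is semisimple, so $c_{ij}=\delta_{ij}$, and indeed the unique descendant of $a$ is $a$ itself, so the count is $\delta_{a,a'}$. For the inductive step I would peel off the bottom digit: the Steinberg-type factorisation $\mathrm{T}_{\zeta}(A+\ell B)=\mathrm{T}_{\zeta}(A)\otimes\mathrm{T}(B)^{[q]}$ of Proposition~\ref{prop:mixedDonkin} reduces $i$ and $j$ modulo $\ell$ into the range $[\ell-1,2\ell-2]$, and then the level-$\ell$ tensor product formulas of \cite[Lemma~4.1, Proposition~4.7]{STWZ}, together with Example~\ref{ex:compositionfactors}, split $\mathrm{T}_{\zeta}(A)\otimes\mathrm{T}_{\zeta}(A')$ into terms of two kinds: those already ``trivial at the bottom'', which match $\pm a_0$ against $\pm a'_0$ and propagate the remaining digits unchanged, and ``wrap-around'' terms, which feed a single copy of $\mathrm{T}(1)^{[q]}$ up to the next digit. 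The higher-digit bookkeeping is then precisely the classical $\mathrm{SL}_2$ one, which is the $\ell=p$ instance already established in \cite{BEO}. Tracking these contributions digit by digit --- and noting that the top digit $a_{n-1}$ is never reflected, so that a common descendant forces $a_{n-1}=a'_{n-1}$ --- reproduces, step for step, the recursive definition of ``descendant'', and hence the claimed count.

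\textit{Main obstacle.} The bulk of the work, and the step I expect to be delicate, is Step~2: one must keep track, across all $n-1$ lower digits and all the reflection and wrap-around terms of the level-$\ell$ fusion rules, of exactly which iterated tensor products of the $\mathrm{T}_{\zeta}$'s land on a distinguished summand $\mathrm{T}_{\zeta}(2p^{(k)}-2)$, and then verify that the resulting combinatorics agrees with counting sign choices making $[a_{n-1},\pm a_{n-2},\dots,\pm a_0]_{p\ell}$ and $[a'_{n-1},\pm a'_{n-2},\dots,\pm a'_0]_{p\ell}$ equal as integers (carries included). This is the computation carried out in \cite{STWZ}; Step~1 then transports it to the Cartan matrix of $\Ver^{\zeta}_{p^{(n)}}$.
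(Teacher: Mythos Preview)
The paper does not supply its own proof of this statement: it is recorded verbatim as \cite[Proposition~5.4]{STWZ}, with no argument given. There is therefore nothing to compare your proposal against on the paper's side beyond the bare citation.

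That said, a couple of remarks on your write-up. In Step~1 you invoke Theorem~\ref{thm:completeness}, but this is stronger than needed: for indices $p^{(n-1)}-1\le i,j\le p^{(n)}-2$ the functor $F$ is already fully faithful by the very construction of the abelianization (this is \cite[Theorem~2.41]{BEO}, and is in fact used as the base case in the proof of Theorem~\ref{thm:completeness}). So the identification $c_{ij}=\dim\mathrm{Hom}_{\Tilt^{\zeta}}(\mathrm{T}_{\zeta}(i),\mathrm{T}_{\zeta}(j))$ is available prior to, and independently of, completeness---which is why \cite{STWZ} could state the result before the present paper existed. Your subsequent rewriting via self-duality and Lemma~\ref{lem:3.6} is fine, but note that the shorter route is simply to quote the basis description \cite[Theorem~3.25]{STWZ} for $\mathrm{Hom}$-spaces in $\Tilt^{\zeta}$, which gives the common-descendant count directly; this is exactly what the paper does in Remark~\ref{Rem:ExtendedCartan}.

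Step~2 is a sketch rather than a proof: the digit-by-digit induction you outline is plausible, but the ``wrap-around'' bookkeeping (tracking how carries interact with the sign choices in the descendant definition) is precisely the nontrivial content, and you end by deferring it to \cite{STWZ}---which is, again, what the paper does. So your proposal is not wrong, but in the end it reduces to the same citation the paper makes, with an unnecessary detour through completeness along the way.
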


\begin{Remark}\label{Rem:ExtendedCartan}
Slightly more generally, it follows from \cite[Theorem 3.25 (basis)]{STWZ} that the integer $$\dim Hom_{\Ver_{p^{(n)}}^{\zeta}}(\mathbb{T}_{\zeta}(i),\mathbb{T}_{\zeta}(j))$$ for any $0\leq i,j\leq p^{(n)}-2$ corresponds to the number of common descendants between $i+1$ and $j+1$.
\end{Remark}

\begin{Example}\label{ex:technicalcompositionseries}
Let $n\geq 2$ and $p>2$. For later use, we now compute the composition series of 
$\mathbb{T}_{\zeta}(3\ell -2) = \mathbb{T}_{\zeta}([2,\ell-1]_{p,\ell}-1)$. Firstly, recall from theorem \ref{thm:VerlindeSteinbergtensor} that the projective cover of $\mathbb{T}_{\zeta}(\ell-2)$ is $\mathbb{T}_{\zeta}([1,p-1,...,p-1,1]_{p,\ell}-1)$ and the projective cover of $\mathbb{U}$ is $\mathbb{T}_{\zeta}([1,p-1,...,p-1,p-2,\ell-1]_{p,\ell}-1)$. It follows from the preceding remark that $$\dim Hom_{\Ver_{p^{(n)}}^{\zeta}}(\mathbb{T}_{\zeta}([1,p-1,...,p-1,1]_{p,\ell}-1),\mathbb{T}_{\zeta}([2,\ell-1]_{p,\ell}-1)) = 1,$$ $$\dim Hom_{\Ver_{p^{(n)}}^{\zeta}}(\mathbb{T}_{\zeta}([1,p-1,...,p-2,\ell-1]_{p,\ell}-1),\mathbb{T}_{\zeta}([2,\ell-1]_{p,\ell}-1)) = 2.$$ But, $\mathbb{T}_{\zeta}(3\ell -2)$ is self-dual and indecomposable, so that its composition series must be given by $[\mathbb{U}, \mathbb{T}_{\zeta}(\ell-2),\mathbb{U}]$. This last fact also holds if $p=2$ under the provision that $n\geq 3$.
\end{Example}

Let $\mathcal{C}$ be an arbitrary finite category. Consider the weakest equivalence relation on the set of isomorphisms classes of indecomposable objects of $\mathcal{C}$ generated by declaring that two indecomposable objects are equivalent if there exists a non-zero morphism between them. An equivalence class for this relation is a block of $\mathcal{C}$.

\begin{Proposition}\label{prop:tiltinginblocks}
The objects $\mathbb{T}_{\zeta}(i)$ and $\mathbb{T}_{\zeta}(j)$ of $\Ver_{p^{(n)}}^{\zeta}$ lie in the same block if and only if the $p\ell$-adic expansions of $i+1 = [a_{n-1},...,a_0]_{p,\ell}$ and $j+1 = [b_{n-1},...,b_0]_{p,\ell}$ have the same number, say $k$, of zeroes at the end and either one of the following holds:
\begin{itemize}
    \item the last non-zero digits $a_k$ and $b_k$ agree and $i-j$ is divisible by $2p^{(k)}$;
    \item the last non-zero digits $a_k$ and $b_k$ sum to $p$ if $k\neq 0$, or $\ell$ if $k=0$, and $i+j+2$ is divisible by $2p^{(k)}$.
\end{itemize}
\end{Proposition}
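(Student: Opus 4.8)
The plan is to translate the statement into a combinatorial assertion about the graph $\Gamma$ on the set $\{1,\dots,p^{(n)}-1\}$ whose edges join $a$ and $b$ exactly when $a$ and $b$ admit a common descendant, in the sense of the paragraph preceding Proposition~\ref{prop:Cartanmatrix}. First I would invoke the standard fact that the blocks of a finite abelian category are the connected components of the graph on indecomposable objects in which two objects are joined by an edge whenever there is a non-zero morphism between them (in either direction). Since the projective cover of every simple object $\mathrm{L}_\zeta(a)$ is of the form $\mathbb{P}_\zeta(s(a))=\mathbb{T}_\zeta(s(a))$ by Theorem~\ref{thm:VerlindeSteinbergtensor}, every indecomposable object of $\Ver_{p^{(n)}}^{\zeta}$ lies in the block of some $\mathbb{T}_\zeta(i)$; and a block-connecting chain between two objects $\mathbb{T}_\zeta(i),\mathbb{T}_\zeta(j)$ may be rerouted through projective covers of the simple constituents (if $\mathrm{Ext}^1(L,L')\neq 0$ then $L'$ is a composition factor of $\mathbb{P}_\zeta(s(L))=\mathbb{T}_\zeta(s(L))$, so $\mathrm{Hom}(\mathbb{T}_\zeta(s(L')),\mathbb{T}_\zeta(s(L)))\neq 0$), so that $\mathbb{T}_\zeta(i)$ and $\mathbb{T}_\zeta(j)$ lie in the same block precisely when $i+1$ and $j+1$ are in the same component of $\Gamma$. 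By Remark~\ref{Rem:ExtendedCartan} this is exactly the transitive closure of ``$a$ and $b$ have a common descendant''. So the whole statement reduces to showing that this transitive closure equals the relation $\mathcal R$ described in the proposition.

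Next I would prove the inclusion ``same block $\Rightarrow \mathcal R$'', for which it suffices to know that $\mathcal R$ is an equivalence relation and that a single edge of $\Gamma$ already lies in $\mathcal R$. Reflexivity and symmetry of $\mathcal R$ are immediate, and transitivity is a short case check: if $a,b$ and $b,c$ are each $\mathcal R$-related then all three share the same number $k$ of terminal zeros, and combining the two last-digit/divisibility conditions via $a\pm c=(a\pm b)+(b\pm c)$ together with $2p^{(k)}\mid 2b$ yields an $\mathcal R$-relation between $a$ and $c$. For a single edge, observe that flipping the sign of the digit in position $j\geq k$ of a number with $k$ terminal zeros changes its value by $2a_jp^{(j)}\in 2p^{(k)}\mathbb Z$ and preserves the number of terminal zeros; hence every descendant $d$ of $a$ satisfies $d\equiv a \pmod{2p^{(k)}}$ and has $k$ terminal zeros, and a common descendant of $a,b$ forces equality of the numbers of terminal zeros and $a\equiv b\pmod{2p^{(k)}}$. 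Reducing further modulo $p^{(k+1)}$ pins the last non-zero digit up to sign in $\mathbb Z/p$, or $\mathbb Z/\ell$ when $k=0$, so that $a_k=b_k$ or $a_k+b_k\in\{p,\ell\}$; in the former case we land in alternative (a), and in the latter $2p^{(k)}\mid a-b$ together with $p^{(k)}\mid 2b$ gives $2p^{(k)}\mid a+b$, i.e.\ alternative (b). Since $\mathcal R$ is transitive, the transitive closure of the edge relation is contained in $\mathcal R$.

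The substantial part is the reverse inclusion: each $\mathcal R$-class is connected in $\Gamma$. The basic device is that for any non-zero descendant value $d$ of $a$ one has an edge $a$–$|d|$ in $\Gamma$ (both $\pm d$ are descendants of $a$, and $|d|$ is a descendant of itself), and $|d|$ still has $k$ terminal zeros. I would then show by induction on the leading position $m$ of $a$ that $a$ is $\Gamma$-connected to a normal form depending only on its $\mathcal R$-class: flipping the sign of the leading digit and passing to the absolute value produces a number which, after re-expanding $p\ell$-adically, has strictly smaller leading position or a strictly smaller leading digit, so iterating terminates; one then checks by hand which ``few-digit'' numbers with $k$ terminal zeros are $\mathcal R$-equivalent, and matches these to the classes just obtained. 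An alternative route for this direction is to dispose of the sectors with $k\geq 1$ by dividing through by $p^{(k)}$, which converts the claim into the corresponding statement for a symmetric Verlinde category $\Ver^{\sigma}_{p^{n-1-k}}$ (accessible via the Frobenius embeddings of \cite{BEO} and the known block structure there), and to treat the genuinely new $k=0$ sector — where the base-$\ell$ digit in position $0$ enters — directly using Proposition~\ref{prop:preblocks} and the Steinberg decomposition of Theorem~\ref{thm:VerlindeSteinbergtensor}. Either way, the main obstacle is precisely this converse: controlling the $p\ell$-adic carries when re-expanding absolute values of descendants, keeping track of the asymmetric role of position $0$ (base $\ell$) versus the higher positions (base $p$), and handling the low-rank boundary cases where the general pattern degenerates.
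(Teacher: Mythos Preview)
Your approach is essentially the same as the paper's. Both reduce the question to the combinatorics of the ``common descendant'' relation via Remark~\ref{Rem:ExtendedCartan}, both check that the relation $\mathcal{R}$ is preserved under passing to descendants (necessity), and both establish sufficiency by showing that every block reaches a canonical representative. The paper is simply terser: it asserts that every block contains an object of the form $\mathbb{T}_{\zeta}(\alpha_k p^{(k)}-1)$ (your ``normal form''), observes that this representative is unique per block, and then concludes by a counting argument rather than by your explicit inductive connectivity argument. Your first paragraph, rerouting block-chains through projective covers, is a bit more elaborate than necessary, since Remark~\ref{Rem:ExtendedCartan} already covers all $\mathbb{T}_{\zeta}(i)$, not just the projectives; but this does no harm.
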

\begin{proof}
We have recalled above that $Hom_{\Ver_{p^{(n)}}^{\zeta}}(\mathbb{T}_{\zeta}(i),\mathbb{T}_{\zeta}(j))$ is non-zero if and only if $i+1$ and $j+1$ have a common descendant. This implies that $\mathbb{T}_{\zeta}(i)$ and $\mathbb{T}_{\zeta}(j)$ lies in the same block if $i+1$ and $j+1$ share an iterated descendant, i.e.\ a descendant of a descendant ...\ of a descendant. As a consequence, it is straightforward to check that the conditions of the proposition are necessary. Furthermore, it is easy to see that every block of $\Ver_{p^{(n)}}^{\zeta}$ contains an object of the form $\mathbb{T}_{\zeta}(\alpha_kp^{(k)}-1)$ for some integers $0\leq k\leq n-1$ and $0\leq \alpha_k < p$ if $k>0$ and $0\leq \alpha_k < \ell$ if $k=0$. We claim that this object is unique. Namely, using the conditions of the proposition, one checks readily that if, for some integers $\alpha_k$ and $\beta_k$ as above, both $\mathbb{T}_{\zeta}(\alpha_kp^{(k)}-1)$ and $\mathbb{T}_{\zeta}(\beta_kp^{(k)}-1)$ are iterated descendants of the same $\mathbb{T}_{\zeta}(i)$, then we must have $\alpha_k=\beta_k$. This concludes the proof.
\end{proof}

As an immediate consequence of the above result, we obtain the following proposition.

\begin{Proposition}\label{prop:generalblocksVerlinde}
The category $\Ver_{p^{(n)}}^{\zeta}$ has $(n-1)(p-1)+(\ell-1)$ blocks. More specifically, it has $(p-1)$ blocks of size $1$ formed by the projective simple objects $\mathbb{T}_{\zeta}(i)$ with $i+1$ divisible by $p^{(n-1)}$, $(p-1)$ blocks of size $(p-1)$ formed by the projective objects $\mathbb{T}_{\zeta}(i)$ with $i+1$ divisible by $p^{(n-2)}$ but not $p^{(n-1)}$, ..., and $(\ell -1)$ blocks of maximal size $p^{n-1}-p^{n-2}$ formed by the projective objects $\mathbb{T}_{\zeta}(i)$ with $i+1$ not divisible by $p^{(1)}=\ell$.
\end{Proposition}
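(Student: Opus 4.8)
The plan is to read both statements off directly from the block criterion of Proposition~\ref{prop:tiltinginblocks}, with essentially no extra input. Since $\Ver_{p^{(n)}}^{\zeta}$ is a finite tensor category, every block contains an indecomposable projective, and the indecomposable projectives are exactly the $\mathbb{T}_{\zeta}(i)$ with $p^{(n-1)}-1\le i\le p^{(n)}-2$; so it suffices to count the blocks of this finite set of objects and to record how many of them each block contains. First I would sort these projectives by the number $k$ of trailing zeroes, $0\le k\le n-1$, in the $p\ell$-adic expansion of $i+1$; writing $i+1=m\,p^{(k)}$, this is the condition $p\nmid m$, with $m$ ranging over the interval cut out by $p^{(n-1)}\le i+1\le p^{(n)}-1$, namely $p^{n-1-k}\le m\le p^{n-k}-1$ (and $1\le m\le p-1$ when $k=n-1$). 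On this set, Proposition~\ref{prop:tiltinginblocks} says that $\mathbb{T}_{\zeta}(m\,p^{(k)}-1)$ and $\mathbb{T}_{\zeta}(m'\,p^{(k)}-1)$ lie in the same block precisely when $m\equiv\pm m'$ modulo $2p$ if $k\ge 1$, and modulo $2\ell$ if $k=0$: the sign combines the two alternatives of the proposition (last digits equal versus last digits summing to $p$, respectively $\ell$), and the factor $2$ records the parity refinement encoded in the divisibility conditions.

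Granting this, the count is pure bookkeeping. For fixed $k$, the residues prime to $p$ (respectively not divisible by $\ell$, when $k=0$) inside $\mathbb{Z}/2p$ (respectively $\mathbb{Z}/2\ell$) are permuted freely by $x\mapsto-x$, the only fixed residues $0$ and $p$ (respectively $\ell$) being excluded, so they form $p-1$ (respectively $\ell-1$) orbits. Hence there are $p-1$ blocks at each level $1\le k\le n-1$ and $\ell-1$ blocks at level $k=0$, i.e.\ $(n-1)(p-1)+(\ell-1)$ blocks in total. For the sizes, the range of $m$ at level $k$ is an interval of length $p^{n-1-k}(p-1)$ for $k\ge 1$ and $\ell\,p^{n-2}(p-1)$ for $k=0$, which is a multiple of the relevant modulus $2p$, respectively $2\ell$, whenever $k\le n-2$; when this holds the admissible residues are equidistributed over the interval, so each block, being a union of two residue classes, contains $p^{n-2-k}(p-1)$ projectives. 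This yields $p-1$ blocks of size $1$ at level $k=n-1$, $p-1$ blocks of each of the sizes $p-1,p(p-1),\dots,p^{n-3}(p-1)$ at levels $k=n-2,\dots,1$, and $\ell-1$ blocks of size $p^{n-2}(p-1)=p^{n-1}-p^{n-2}$ at level $k=0$; the description of each family through divisibility of $i+1$ by $p^{(k)}$ but not $p^{(k+1)}$ is simply the definition of the level $k$.

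The one step that deserves care is the translation from the digit-and-divisibility conditions of Proposition~\ref{prop:tiltinginblocks} to the clean relation $m\equiv\pm m'$ modulo $2p$ (or $2\ell$). One must check that the parity refinement is exactly what keeps the $\ell-1$ a priori distinct candidate blocks at level $k=0$ — each indexed by a residue modulo $\ell$ together with a parity — from coalescing; this is why one works modulo $2\ell$ rather than $\ell$, and it is at this point that the case $\ell$ even behaves differently from $\ell$ odd and must be verified separately. The remaining loose ends are the degenerate situations where the equidistribution argument for the sizes does not literally apply: the top level $k=n-1$, where the interval $[1,p-1]$ trivially contributes one value of $m$ per block, and the small cases in characteristic $2$, which I would dispose of by direct inspection, using the total count $p^{(n)}-p^{(n-1)}$ of simple (equivalently, indecomposable projective) objects as a consistency check throughout.
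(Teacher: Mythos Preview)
Your argument is correct and matches the paper's approach, which simply records the result as an immediate consequence of Proposition~\ref{prop:tiltinginblocks}; you have filled in the orbit-counting and equidistribution details that the paper leaves implicit. One small slip: your interval $p^{n-1-k}\le m\le p^{n-k}-1$ is only correct for $k\ge 1$ (at level $k=0$ the range of $m=i+1$ is $\ell p^{n-2}\le m\le \ell p^{n-1}-1$), but since you separately invoke the correct length $\ell p^{n-2}(p-1)$ at level $0$ this does not affect the conclusion.
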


\begin{Remark}
It was shown in \cite[Section 4.14]{BEO} using the translation principle of \cite[II.7.9 and II.E.11]{Jan} that all the blocks of $\Ver^{+1}_{p^n}$ of the same size are equivalent. We suspect that the translation principle also holds in the mixed case. This would give a conceptual proof that the blocks of the category $\Ver_{p^{(n)}}^{\zeta}$ that have the same size are equivalent. In the next section, we give a direct argument that yields a stronger result.
\end{Remark}

\subsection{The Principal Blocks}

A principal block of $\Ver_{p^{(n)}}^{\zeta}$ is, by definition, one of the blocks of maximal size. It was shown in \cite[Proposition 4.54]{BEO} that all the blocks of $\Ver^{\sigma}_{p^{n}}$ that have the same size are equivalent. In fact, they argue that this holds even if we let $n$ vary. We now prove an analogous version of this result for the categories $\Ver^{\zeta}_{p^{(n)}}$, where not only $n$ but also $\zeta$ is allowed to vary. Before proving this result, we need to recall a few combinatorial definitions from \cite[Definition 2.13]{STWZ}.

Let us fix a (non-negative) integer $v$ with $p\ell$-adic expansion $v=[a_j,...,a_0]_{p\ell}$. Let also $S\subseteq \{0,...,j-1\}$ be a subset. The subset $S$ is called \textit{down-admissible} for $v$ if, for every $s\in S$ with $s-1\not\in S$, $a_s\neq 0$, and, whenever $s\in S$ with $a_{s+1}=0$, then $s+1\in S$. Provided that $S$ is down-admissible for $v$, the downward reflection of $v$ along $S$ is defined by $$v[S]:=[a_j,\epsilon_{j-1}a_{j-1},...,\epsilon_0a_0]_{p\ell},\ \epsilon_i:=\begin{cases}-1, & s\in S,\\ +1, & s\not\in S.\end{cases}$$ Any down-admissible subset $S$ for $v$ admits a finest partition $S=\sqcup_iS_i$ into down-admissible sets of consecutive integers, which we refer to as \textit{minimal down-admissible stretches}. Likewise, a subset $S'\subseteq \{0,...,j\}$ is called \textit{up-admissible} for $v$ if, for every $s\in S'$ with $s-1\not\in S'$, $a_s\neq 0$, and, whenever $s\in S'$ with $a_{s+1}=p-1$, then $s+1\in S'$. Provided that $S'$ is up-admissible for $v$, the upward reflection of $v$ along $S'$ is defined by $$v(S'):=[a'_{j+1},a'_{j},...,a'_0]_{p\ell},\ a'_k:=\begin{cases}a_k, & k,k-1\not\in S',\\ a_k+2, & k\not\in S',k-1\in S',\\ -a_k, & k\in S',\end{cases}$$ where, by convention, $a_{j+1}=0$. Any up-admissible subset $S'$ for $v$ admits a finest partition $S'=\sqcup_i S'_i$ into up-admissible sets of consecutive integers, which we refer to as \textit{minimal up-admissible stretches}. Finally, for any up-admissible set $S$ for $v$, its down-admissible hull $\overline{S'}$ is the smallest down-admissible set containing $S'$ if it exists.

\begin{Proposition}\label{prop:alltheblocks}
If $p>2$, all the blocks of $\Ver_{p^{(n)}}^{\zeta}$ and of $\Ver_{p^{n}}^{\sigma}$ that have the same size are equivalent. If $p=2$, this holds for all the blocks of size strictly greater than $1$.
\end{Proposition}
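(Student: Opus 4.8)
The plan is to reduce everything to the symmetric case treated in \cite[Proposition 4.54]{BEO} (and, for $p=2$, in \cite{BE}), handling the non-maximal blocks and the maximal (principal) blocks by separate arguments; the case $n=1$ is trivial since $\Ver_{p^{(1)}}^{\zeta}$ is then semisimple, so I assume $n\geq 2$.

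For the non-maximal blocks, I would argue as follows. By Proposition \ref{prop:generalblocksVerlinde} the non-maximal blocks of $\Ver_{p^{(n)}}^{\zeta}$ are precisely those formed by the indecomposable projectives $\mathbb{T}_{\zeta}(j)$ with $j+1$ divisible by $p^{(1)}=\ell$; by Proposition \ref{prop:mixedVerlindeDonkin} these are exactly the objects $\mathbb{T}_{\zeta}(\ell-1+\ell r)=\mathbbm{q}\mathbb{FL}(\mathrm{T}(r))\otimes\mathbb{T}_{\zeta}(\ell-1)$ with $p^{n-2}-1\leq r\leq p^{n-1}-2$, that is, the images of the indecomposable projectives of $\Ver_{p^{n-1}}^{\sigma}$ under the functor of Proposition \ref{prop:preblocks}. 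Since that functor exhibits $\Ver_{p^{n-1}}^{\sigma}$ as an additive direct summand of $\Ver_{p^{(n)}}^{\zeta}$ and sends simples to simples (Proposition \ref{prop:preVerlindeSteinbergtensor} with $i=\ell-1$), it restricts to an equivalence of abelian categories between $\Ver_{p^{n-1}}^{\sigma}$ and the union of the non-maximal blocks of $\Ver_{p^{(n)}}^{\zeta}$. As $\sigma=\pm1$, the category $\Ver_{p^{n-1}}^{\sigma}$ is the symmetric Verlinde category $\Ver_{p^{n-1}}$ as a plain tensor category, so \cite[Proposition 4.54]{BEO} applies: its blocks of a given size are all equivalent, to one another and to the blocks of that size in every $\Ver_{p^{m}}^{\sigma}$. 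Hence each non-maximal block of $\Ver_{p^{(n)}}^{\zeta}$ of size $s$ is equivalent to a block of size $s$ of $\Ver_{p^{n}}^{\sigma}$ (and to every other block of size $s$ in the family), with the size-$1$ blocks excluded when $p=2$, exactly as in the symmetric case.

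It then remains to treat the $\ell-1$ maximal blocks of $\Ver_{p^{(n)}}^{\zeta}$, which have size $p^{n-1}-p^{n-2}$, the same as the $p-1$ principal blocks of $\Ver_{p^{n}}^{\sigma}$. Since $\Ver_{p^{n}}^{\sigma}\simeq\Ver_{p^{(n)}}^{\pm1}$ as plain tensor categories and \cite[Proposition 4.54]{BEO} already identifies all $p-1$ principal blocks of $\Ver_{p^{n}}^{\sigma}$ with one another (and, across all $m$ of matching size, with suitable non-maximal blocks of $\Ver_{p^{m}}^{\sigma}$), it suffices to show that a single maximal block $\mathcal{B}$ of $\Ver_{p^{(n)}}^{\zeta}$ is equivalent, as an abelian category, to a principal block $\mathcal{P}$ of $\Ver_{p^{n}}^{\sigma}$. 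I would do this by comparing endomorphism algebras of projective generators. Let $P_{\zeta}=\bigoplus_{j}\mathbb{T}_{\zeta}(j)$ be the sum of the indecomposable projectives of $\mathcal{B}$, the relevant $j$ being those singled out by Proposition \ref{prop:tiltinginblocks}; then $\mathcal{B}\simeq\mathrm{Rep}\,\mathrm{End}(P_{\zeta})$. For each such $j$ the last $p\ell$-adic digit of $j+1$ is non-zero, so writing $j+1=a_{0}+\ell b$ with $1\leq a_{0}\leq\ell-1$ and $b\geq1$, Proposition \ref{prop:mixedDonkin} gives $\mathrm{T}_{\zeta}(j)=\mathrm{T}_{\zeta}(a_{0}-1+\ell)\otimes\mathrm{T}(b-1)^{[q]}$ in $\Tilt^{\zeta}$. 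Using Theorem \ref{thm:completeness} to compute the $\mathrm{Hom}$-spaces of $\mathcal{B}$ inside $\mathbf{Mod}^{\zeta}$, and then the factorisation of $\mathrm{Hom}$-spaces proved along with Proposition \ref{prop:simplesocle} and displayed as equation \eqref{eqn:quantumclassicalfactorization}, one identifies $\mathrm{Hom}_{U_{\zeta}}(\mathrm{T}_{\zeta}(j),\mathrm{T}_{\zeta}(j'))$ canonically with $\mathrm{Hom}_{\overline{U}}(\mathrm{T}(b-1),\mathrm{T}(b'-1))$; the latter family, via Theorem \ref{thm:completeness} at $\zeta=\pm1$, is precisely the endomorphism algebra of a projective generator of a principal block $\mathcal{P}$ of $\Ver_{p^{n}}^{\sigma}$. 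One thus obtains an isomorphism $\mathrm{End}(P_{\zeta})\cong\mathrm{End}(P_{\sigma})$ of $\mathbbm{k}$-algebras, hence $\mathcal{B}\simeq\mathcal{P}$, the Cartan-matrix computation of Proposition \ref{prop:Cartanmatrix} and Remark \ref{Rem:ExtendedCartan} confirming a priori that the two blocks have the same length. For $p=2$ the identical argument applies to every maximal block of size $>1$.

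The main obstacle is the last step: upgrading the identification of $\mathrm{Hom}$-spaces coming from equation \eqref{eqn:quantumclassicalfactorization} to an \emph{isomorphism of algebras} rather than merely of graded vector spaces. Concretely, one must check that the distinguished direct summand of $\mathrm{T}_{\zeta}(a_{0}-1+\ell)^{*}\otimes\mathrm{T}_{\zeta}(a_{0}'-1+\ell)$ carrying the non-trivial $u_{\zeta}$-invariants behaves coherently under iterated composition — this is where one uses that inside a single maximal block the last digits $a_{0},a_{0}'$ are either equal or sum to $\ell$, as dictated by Proposition \ref{prop:tiltinginblocks} — and that the resulting dictionary $\mathbb{T}_{\zeta}(j)\leftrightarrow\mathbb{T}(\text{classical weight})$ carries $\mathcal{B}$ onto a single principal block of $\Ver_{p^{n}}^{\sigma}$ rather than a mixture. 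All the required ingredients already appear in the proofs of Propositions \ref{prop:simplesocle} and \ref{prop:mixedVerlindeDonkin} (and in \cite[Proposition 4.54]{BEO} for the symmetric side), so this part is bookkeeping rather than a genuinely new idea, but it is the step that deserves care.
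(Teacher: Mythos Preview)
Your treatment of the non-maximal blocks via Proposition~\ref{prop:preblocks} is correct and clean: that functor really does identify $\Ver_{p^{n-1}}^{\sigma}$ with the sum of the non-maximal blocks, after which \cite[Proposition~4.54]{BEO} finishes the job.

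The maximal-block argument, however, has a genuine gap that is not mere bookkeeping. Your proposed identification
\[
Hom_{U_{\zeta}}\bigl(\mathrm{T}_{\zeta}(j),\mathrm{T}_{\zeta}(j')\bigr)\;\cong\;Hom_{\overline{U}}\bigl(\mathrm{T}(b-1),\mathrm{T}(b'-1)\bigr)
\]
is false already at the level of dimensions. Equation~\eqref{eqn:quantumclassicalfactorization} was proven for $Hom$ from a \emph{simple} $L_{\zeta}(v)$ into a tilting module, and it does not extend to $Hom$ between tilting modules whose quantum factor $\mathrm{T}_{\zeta}(a_0-1+\ell)$ is non-simple. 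Concretely, take $p=3$, $\ell=5$, $n=2$, and the maximal block with $a_0=1$, whose projectives are $\mathbb{T}_{\zeta}(8)$ and $\mathbb{T}_{\zeta}(10)$. Your map sends these to $\mathrm{T}(0)$ and $\mathrm{T}(1)$. But $\dim End_{U_{\zeta}}(\mathrm{T}_{\zeta}(10))=2$ by Remark~\ref{Rem:ExtendedCartan} (the descendants of $11=[2,1]_{p\ell}$ are $11$ and $9$), while $\dim End_{\overline{U}}(\mathrm{T}(1))=1$; and $\dim Hom_{U_{\zeta}}(\mathrm{T}_{\zeta}(8),\mathrm{T}_{\zeta}(10))=1$, while $Hom_{\overline{U}}(\mathrm{T}(0),\mathrm{T}(1))=0$. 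So the two endomorphism algebras do not even have the same dimension, and the objects $\mathrm{T}(b-1)$ you land on are neither projective in $\Ver_{p^{n}}^{\sigma}$ nor confined to a single block there.

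The paper takes a different route for the principal blocks: it uses the explicit presentation by generators $\mathrm{E}_{v-1}$, $\mathrm{D}_S\mathrm{E}_{v-1}$, $\mathrm{U}_{S'}\mathrm{E}_{v-1}$ and relations for the endomorphism algebra given in \cite[Theorem~3.25]{STWZ}, sets up a combinatorial bijection $f:B_{\zeta}(a_0)\to B(1)$ that replaces the last $p\ell$-adic digit by the last $p$-adic digit while keeping all higher digits fixed, and checks directly that the relations match under this bijection; the only nontrivial verification is the scalar identity $(-\zeta)^{\ell}=-\sigma=(-\sigma)^{p}$. This avoids entirely the problem of factoring $Hom$-spaces through the quantum Frobenius.
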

\begin{proof}
For definitiveness, we will show that the principal blocks of $\Ver_{p^{(n)}}^{\zeta}$ are equivalent to the principal block of $\Ver_{p^{n}}^{\sigma}$. The blocks of other size can be dealt with using similar techniques.

It follows from propositions \ref{prop:tiltinginblocks} and \ref{prop:generalblocksVerlinde} that for any integer $1\leq a_0\leq \ell-1$ there is a corresponding maximal block of $\Ver_{p^{(n)}}^{\zeta}$ containing exactly the indecomposable projective objects $\mathbb{T}_{\zeta}(i)$, $i\in B_{\zeta}(a_0)$ with \begin{align*}B_{\zeta}(a_0):= &\{p^{(n-1)}-1\leq i\leq p^{(n)}-2|i+1\equiv a_0(\mathrm{mod}\ \ell) \ \mathrm{and}\ 2\ell\mid (i+1-a_0)\}\\ & \cup \{p^{(n-1)}-1\leq i\leq p^{(n)}-2|i+1\equiv -a_0(\mathrm{mod}\ \ell) \ \mathrm{and}\ 2\ell\mid (i+1+a_0)\}.\end{align*} We will compare such a block with the principal block of $\Ver_{p^{n}}^{\sigma}$ containing precisely the indecomposable projective objects $\mathbb{T}(i)$, $i\in B(1)$ with \begin{align*}B(1):=&\{p^{n-1}-1\leq i\leq p^{n}-2|i+1\equiv 1(\mathrm{mod}\ p) \ \mathrm{and}\ 2p\mid i\} \\ &\cup\{p^{n-1}-1\leq i\leq p^{n}-2|i+1\equiv -1(\mathrm{mod}\ p) \ \mathrm{and}\ 2p\mid (i+2)\}.\end{align*} For simplicity, we will assume that $a_0$ is odd. Observe that there is a bijection $f:B_{\zeta}(a_0)\cong B(1)$ given by $$[a_{n-1},...a_1,a_0-1]_{p\ell}\mapsto [a_{n-1},...a_1,0]_{p}.$$

In order to check that the block corresponding to $B_{\zeta}(a_0)$ with $a_0$ odd is equivalent to the block corresponding to $B(1)$, it is enough to construct an isomorphism of algebras $$A_{\zeta}(a_0):=End_{\Ver^{\zeta}_{p^{(n)}}}\Big(\bigoplus_{i\in B_{\zeta}(a_0)}\mathbb{T}_{\zeta}(i)\Big)\xrightarrow{\sim} A(1):=End_{\Ver^{\sigma}_{p^{n}}}\Big(\bigoplus_{i\in B(1)}\mathbb{T}(i)\Big).$$ But, the canonical functors $\Tilt^{\zeta}_{p^{(n)}}\rightarrow\Ver^{\zeta}_{p^{(n)}}$ and $\Tilt^{\sigma}_{p^{n}}\rightarrow\Ver^{\sigma}_{p^{n}}$ are full on the tilting modules, so the above endomorphisms may be taken within the categories $\Tilt^{\zeta}$ and $\Tilt^{\sigma}$. Then, thanks to \cite[Theorem 3.25]{STWZ}, the algebras $A_{\zeta}(a_0)$ and $A(1)$ admit explicit descriptions. In particular, they are generated by elements $\mathrm{E}_{v-1}$, $\mathrm{D}_S\mathrm{E}_{v-1}$, and $\mathrm{U}_{S'}\mathrm{E}_{v-1}$ where $v\in B_{\zeta}(a_0)$, resp. $v\in B(1)$, $S$ is a minimal down-admissible stretch for $v$, and $S'$ is a minimal up-admissible stretch for $v$. A complete set of relations for these algebras is supplied in \cite[Theorem 3.25]{STWZ}.

We claim that the assignment $\phi$ given by $\mathrm{E}_{v-1}\mapsto \mathrm{E}_{f(v)-1}$, $\mathrm{D}_S\mathrm{E}_{v-1}\mapsto \mathrm{D}_S\mathrm{E}_{f(v)-1}$, $\mathrm{U}_{S'}\mathrm{E}_{v-1}\mapsto \mathrm{U}_{S'}\mathrm{E}_{f(v)-1}$ induced by the bijection $f$ defined above is an isomorphism of algebras $A_{\zeta}(a_0)\cong A(1)$. This follows straightforwardly from inspecting the relations given in \cite[Theorem 3.25]{STWZ} and \cite[Equation (3-10)]{STWZ} together with the fact that the linear coefficients that appear do agree. This is a consequence of the equality $$(-\zeta)^{\ell} = -\sigma = (-\sigma)^{p},$$ as $\zeta^{\ell} = - (-1)^N$ with $N$ the order of $\zeta$.
\end{proof}

The next result follows from the fact that $\Ver^{+1}_{p^{(n)}}$ and $\Ver^{-1}_{p^{(n)}}$ are equivalent as tensor categories, which follows from the corresponding fact for the monoidal categories of tilting modules discussed in section \ref{sub:tilting}, together with \cite[Proposition 4.54]{BEO}.

\begin{Corollary}
The blocks of the same size in the categories $\Ver_{p^{(n)}}^{\zeta}$ for any $n$ and $\zeta$ are equivalent.
\end{Corollary}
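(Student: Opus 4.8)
The plan is to bootstrap from Proposition~\ref{prop:alltheblocks}, which already identifies, for a \emph{fixed} $\zeta$ with associated $\sigma=(-1)^{\ell+N}$, every block of $\Ver_{p^{(n)}}^{\zeta}$ of a given size with the block of the same size in the symmetric category $\Ver_{p^{n}}^{\sigma}$ --- and does so for every size when $p>2$, and for every size strictly greater than $1$ when $p=2$. So the only remaining task is to compare the blocks of the symmetric categories $\Ver_{p^{m}}^{\pm 1}$ among themselves, as $m$ and the sign vary. First I would invoke \cite[Proposition~4.54]{BEO}, which states that for a fixed symmetric structure all blocks of $\Ver_{p^{m}}^{\sigma}$ of a given size are equivalent, even as $m$ varies. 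Second, I would use the fact, recorded in Section~\ref{sub:tilting}, that $\Tilt^{\zeta}=\Tilt^{-\zeta}$, which upon passing to abelian envelopes yields an equivalence of plain tensor categories $\Ver_{p^{(m)}}^{+1}\simeq\Ver_{p^{(m)}}^{-1}$; this equivalence carries blocks to blocks and hence shows that a block of size $d$ in $\Ver_{p^{m}}^{+1}$ is equivalent to one of size $d$ in $\Ver_{p^{m}}^{-1}$. Chaining these two observations, all blocks of a fixed size $d$ occurring in any $\Ver_{p^{m}}^{\pm 1}$ are equivalent to a single fixed block of size $d$.

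Putting this together, given two pairs $(n_1,\zeta_1)$, $(n_2,\zeta_2)$ and blocks $\mathcal{B}_i\subseteq\Ver_{p^{(n_i)}}^{\zeta_i}$ of the same size $d$ (with $d>1$ in addition when $p=2$), write $\sigma_i=(-1)^{\ell_i+N_i}$. Proposition~\ref{prop:alltheblocks} gives $\mathcal{B}_i\simeq$ (the block of size $d$ in $\Ver_{p^{n_i}}^{\sigma_i}$), and the previous paragraph shows that both of these are equivalent to the same fixed block of size $d$; hence $\mathcal{B}_1\simeq\mathcal{B}_2$. It then remains only to treat the blocks of size $1$ in characteristic $2$: by Proposition~\ref{prop:generalblocksVerlinde} such a block consists of a single projective simple object, whose endomorphism algebra is $\mathbbm{k}$ since $\mathbbm{k}$ is algebraically closed, so the block is equivalent to $\mathrm{Vec}$; in particular any two such blocks are equivalent.

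I do not anticipate a genuine obstacle here: the mathematical content is entirely contained in Proposition~\ref{prop:alltheblocks} and in \cite[Proposition~4.54]{BEO}, and the argument amounts to organizing the chain of equivalences so that it connects arbitrary pairs $(n,\zeta)$ through the symmetric models, together with the observation that $\Ver_{p^{(m)}}^{+1}\simeq\Ver_{p^{(m)}}^{-1}$ and the separate, trivial treatment of the size-$1$ blocks when $p=2$. The one point requiring minor care is checking that, for each size $d$ realized in some $\Ver_{p^{(n)}}^{\zeta}$, a block of size $d$ is indeed realized in the corresponding symmetric category $\Ver_{p^{n}}^{\sigma}$ so that Proposition~\ref{prop:alltheblocks} is not applied vacuously --- this follows from the description of block sizes in Proposition~\ref{prop:generalblocksVerlinde}, since the list of sizes $1, p-1, p(p-1), \dots, p^{n-2}(p-1)$ does not depend on $\ell$.
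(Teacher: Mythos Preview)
Your proposal is correct and follows essentially the same route as the paper: combine Proposition~\ref{prop:alltheblocks} with \cite[Proposition~4.54]{BEO}, and bridge the two possible values of $\sigma$ via the tensor equivalence $\Ver_{p^{m}}^{+1}\simeq\Ver_{p^{m}}^{-1}$ inherited from $\Tilt^{+1}=\Tilt^{-1}$. Your explicit treatment of the size-$1$ blocks when $p=2$ and the check that the relevant block sizes actually occur in $\Ver_{p^{n}}^{\sigma}$ are details the paper leaves implicit, but they are correct and welcome.
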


\begin{Remark}
If follows from the corollary above that all the results of \cite[Section 4.16]{BEO} concerning the $\mathrm{Ext}^1$ groups between the simple objects of $\Ver^{\sigma}_{p^n}$ carry over to the mixed case. Furthermore, so do the many results and conjectures of \cite{BE2} on the cohomology of the tensor categories $\Ver^{\sigma}_{p^n}$.
\end{Remark}

\subsection{Serre Subcategories}

Recall that a tensor subcategory is a full subcategory that is closed under subquotients, tensor products, and taking duals. A Serre subcategory is subcategory that is closed under subquotients and extensions.

\begin{Proposition}
Any tensor subcategory $\mathcal{C}$ of $\Ver_{p^{(n+1)}}^{\zeta}$ that contains $\Ver^{\sigma}_{p^{n}}$ is either $\Ver^{\sigma}_{p^{n}}$, $\Ver_{p^{(n+1)}}^{\zeta}$, $\Ver_{p^{(n+1)}}^{\zeta, +}$ if $\ell$ is even, or $\Ver_{4}^{\sigma, +}$ provided that $(p,n+1)=(2,2)$. In particular, $\Ver^{\sigma}_{p^{n}}$ if $\ell\neq 2$ and $(p,n+1)\neq (2,2)$, $\Ver_{4}^{\sigma, +}$ if $(p,n+1)=(2,2)$, and $\Ver_{p^{(n+1)}}^{\zeta, +}$ are Serre subcategories of $\Ver_{p^{(n+1)}}^{\zeta}$.
\end{Proposition}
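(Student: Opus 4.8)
The plan is to classify such $\mathcal{C}$ first through its simple objects, using the Steinberg decomposition and the fusion rules, and then to upgrade to the category itself with the help of the block structure; throughout I regard $\Ver^{\sigma}_{p^{n}}$ as a subcategory of $\Ver^{\zeta}_{p^{(n+1)}}$ via the embedding $\mathbbm{q}\mathbb{FL}$ of Theorem \ref{thm:quantumFrobeniusLusztig}. Since $\mathcal{C}$ is full and closed under subquotients it is controlled by the simples it contains. The key reduction uses the decomposition $\mathrm{L}_{\zeta}(a)=\mathbb{T}_{\zeta}(a_0)\otimes\mathbbm{q}\mathbb{FL}(\mathrm{L}(b))$ with $a=a_0+\ell b$ (Theorem \ref{thm:VerlindeSteinbergtensor}): if $\mathrm{L}_{\zeta}(a)\in\mathcal{C}$, then since $\mathbbm{q}\mathbb{FL}(\mathrm{L}(b))\in\mathcal{C}$ and $\mathrm{L}(b)$ is self-dual (Theorem \ref{thm:basic}), $\mathbbm{1}$ is a subobject of $\mathbbm{q}\mathbb{FL}(\mathrm{L}(b))^{\otimes2}$, so the simple $\mathbb{T}_{\zeta}(a_0)=\mathrm{L}_{\zeta}(a_0)$ (Lemma \ref{lem:smallsimpleandcovers}) is a subobject of $\mathbb{T}_{\zeta}(a_0)\otimes\mathbbm{q}\mathbb{FL}(\mathrm{L}(b))^{\otimes2}\in\mathcal{C}$, whence $\mathrm{L}_{\zeta}(a_0)\in\mathcal{C}$; conversely $\mathrm{L}_{\zeta}(a_0)\otimes\mathbbm{q}\mathbb{FL}(\mathrm{L}(e))=\mathrm{L}_{\zeta}(a_0+\ell e)\in\mathcal{C}$ for every $e$ (Proposition \ref{prop:preVerlindeSteinbergtensor}). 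Hence the simples of $\mathcal{C}$ are exactly the $\mathrm{L}_{\zeta}(a)$ with $a_0\in A$ for a set $A\subseteq\{0,\dots,\ell-1\}$ with $0\in A$ that is closed under ``fusion'': every $a_0$-part occurring in a composition factor of $\mathbb{T}_{\zeta}(r)\otimes\mathbb{T}_{\zeta}(s)$, $r,s\in A$, lies in $A$.

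Next I would show $A\in\{\{0\},\ \{0,2,\dots,\ell-2\},\ \{0,\dots,\ell-1\}\}$. If $1\in A$ then $\mathbb{T}_{\zeta}(1)\in\mathcal{C}$, and since $\mathrm{T}_{\zeta}(1)$ generates $\Ver^{\zeta}_{p^{(n+1)}}$ (proof of Theorem \ref{thm:basic}) $\mathcal{C}$ contains all indecomposable projectives, hence all objects, so $\mathcal{C}=\Ver^{\zeta}_{p^{(n+1)}}$. If $A\neq\{0\}$, pick a nonzero $a_0\in A$; Proposition \ref{prop:simpletensorproduct} applied to $\mathbb{T}_{\zeta}(a_0)^{\otimes2}$ puts $2\in A$, and then $\mathbb{T}_{\zeta}(2)\otimes\mathbb{T}_{\zeta}(a_0)$ together with iteration shows $A$ contains every integer of the parity of $a_0$ up to $\ell-1$ (the terminal small-weight cases being covered by Example \ref{ex:compositionfactors}); in particular if $a_0$ is odd then $1\in A$ and $\mathcal{C}=\Ver^{\zeta}_{p^{(n+1)}}$. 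Finally, if $\ell$ is odd and $A\neq\{0\}$, then $\ell-1\in A$ (it is even), and the composition factors of $\mathrm{T}_{\zeta}(\ell-1)^{\otimes2}$, obtained from \eqref{eq:tensortilting3} and Example \ref{ex:compositionfactors}, include $\mathbb{T}_{\zeta}(1)\otimes\mathbb{U}=\mathrm{L}_{\zeta}(\ell+1)$, so $1\in A$ and $\mathcal{C}=\Ver^{\zeta}_{p^{(n+1)}}$; thus the even set can occur only when $\ell$ is even, in which case a direct check with the same formulas confirms it is fusion-closed.

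Thirdly I would pass from $A$ to $\mathcal{C}$. When $A$ is the even set ($\ell$ even), $\mathbb{T}_{\zeta}(2)\in\mathcal{C}$ generates $\Ver^{\zeta,+}_{p^{(n+1)}}$; every composition factor of an object of $\mathcal{C}$ has even $a_0$-part, and $\Ver^{\zeta,+}_{p^{(n+1)}}$ is a Serre subcategory — it is a union of blocks, since for $\ell$ even the criterion of Proposition \ref{prop:tiltinginblocks} never links a simple of even $a_0$-part to one of odd $a_0$-part — so $\mathcal{C}=\Ver^{\zeta,+}_{p^{(n+1)}}$. When $A=\{0\}$, every object of $\mathcal{C}$ has all composition factors of the form $\mathbbm{q}\mathbb{FL}(\mathrm{L}(b))$, so $\mathcal{C}$ lies in the Serre hull of $\mathbbm{q}\mathbb{FL}(\Ver^{\sigma}_{p^n})$. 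Provided $(p,n+1)\neq(2,2)$ this subcategory is itself Serre: combining the explicit projective covers of Corollary \ref{cor:coverqFLsimple} with the length-three composition series of Example \ref{ex:compositionfactors} shows that $\mathbbm{q}\mathbb{FL}$ is $\mathrm{Ext}^1$-full on the simples with $a_0$-part zero, so every extension of such simples in $\Ver^{\zeta}_{p^{(n+1)}}$ already lives in $\mathbbm{q}\mathbb{FL}(\Ver^{\sigma}_{p^n})$, giving $\mathcal{C}=\Ver^{\sigma}_{p^n}$. If $(p,n+1)=(2,2)$ then $\Ver^{\sigma}_{p^n}=\mathrm{Vec}$, which is not Serre as $\mathbb{T}_{\zeta}(2)$ is a nonsplit self-extension of $\mathbbm{1}$; its Serre hull is the block of $\mathbbm{1}$, namely $\Ver^{\sigma,+}_{4}$, and $\mathrm{Vec}\subseteq\Ver^{\sigma,+}_{4}$ are the only tensor subcategories in between. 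The ``in particular'' then follows, since $\Ver^{\zeta}_{p^{(n+1)}}$, $\Ver^{\zeta,+}_{p^{(n+1)}}$ and $\Ver^{\sigma}_{p^{n}}$ (respectively $\Ver^{\sigma,+}_{4}$ in the exceptional case) are unions of blocks, hence Serre, whereas $\mathrm{Vec}\subsetneq\Ver^{\sigma,+}_{4}$ is not.

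The hard part will be the fusion bookkeeping in the second step: one must track precisely when the wrap-around summands in Proposition \ref{prop:simpletensorproduct} and the middle composition factor $\mathbb{T}_{\zeta}(a-\ell)\otimes\mathbb{U}$ in Example \ref{ex:compositionfactors} have odd $a_0$-part — this is exactly the dichotomy responsible for the ``if $\ell$ is even'' hypothesis on $\Ver^{\zeta,+}_{p^{(n+1)}}$ — and to deal with the small-weight and small-$\ell$ cases cleanly. A secondary difficulty is the $\mathrm{Ext}^1$-matching needed to see that $\mathbbm{q}\mathbb{FL}(\Ver^{\sigma}_{p^n})$ is genuinely Serre when $\ell>2$, together with the separate (but routine) treatment of $(p,n+1)=(2,2)$.
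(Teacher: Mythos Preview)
Your overall strategy---reduce to the set $A\subseteq\{0,\dots,\ell-1\}$ of $a_0$-parts via the Steinberg formula, classify $A$ by fusion, then upgrade---is sound and genuinely different from the paper's argument, and your classification of $A$ in step~2 is correct. The paper instead fixes the simple projective $\mathbb{T}(p^{n-1}-1)\in\Ver^{\sigma}_{p^n}$ and studies the projective cover $Q$ of its image inside $\mathcal{C}$: since the ambient projective cover $\mathbb{T}_{\zeta}(p^{(n)}+\ell-2)$ has simple head and socle equal to $\mathbbm{q}\mathbb{FL}(\mathbb{T}(p^{n-1}-1))$ (appearing with total multiplicity~$2$), self-duality of $Q$ forces $Q$ to be either the whole thing or the simple itself, and the two cases are then dispatched. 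Crucially, the paper \emph{derives} the Serre property of $\Ver^{\sigma}_{p^n}$ \emph{from} this classification, not the other way around.

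The gap in your approach is the upgrade in the case $A=\{0\}$. You assert that $\mathbbm{q}\mathbb{FL}$ is $\mathrm{Ext}^1$-full (equivalently, that $\Ver^{\sigma}_{p^n}$ is Serre) and cite Corollary~\ref{cor:coverqFLsimple} together with Example~\ref{ex:compositionfactors}. But Example~\ref{ex:compositionfactors} only gives composition series for $\mathbb{T}_{\zeta}(a)$ with $\ell\le a\le 2\ell-2$, whereas the projective covers of the simples $\mathbbm{q}\mathbb{FL}(\mathrm{L}(b))$ are $\mathbb{T}_{\zeta}(2\ell-2+\ell r)$ with $r\ge p^{n-2}-1$, typically of much larger weight; nothing you cite controls the second Loewy layer of these. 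So the $\mathrm{Ext}^1$-fullness claim is unproven, and since this \emph{is} the ``in particular'' part of the proposition, you are effectively assuming what you want to conclude. Relatedly, for $\ell=2$ the set $A=\{0\}$ is the only possibility besides $A=\{0,1\}$, yet here $\Ver^{\sigma}_{p^n}$ is \emph{not} Serre (as the statement says), and your argument gives no mechanism to decide whether $\mathcal{C}=\Ver^{\sigma}_{p^n}$ or $\mathcal{C}=\Ver^{\zeta,+}_{p^{(n+1)}}$; you cannot conclude $\mathbb{T}_{\zeta}(2)\in\mathcal{C}$ merely from $A=\{0\}$. The paper's projective-cover dichotomy handles both of these uniformly: when $Q$ is the full projective cover it contains a composition factor with $a_0=\ell-2$, forcing $\mathcal{C}$ up to $\Ver^{\zeta,+}_{p^{(n+1)}}$ or $\Ver^{\zeta}_{p^{(n+1)}}$; when $Q$ is the simple, tensoring with $Q$ produces enough projectives of $\Ver^{\sigma}_{p^n}$ inside $\mathcal{C}$ to pin it down as $\Ver^{\sigma}_{p^n}$ without ever invoking Serre-ness.
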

\begin{proof}
We will consider the simple projective object $\mathbb{T}(p^{n-1}-1)$ of $\Ver^{\sigma}_{p^{n}}$. The corresponding simple object $\mathbbm{q}\mathbb{FL}(\mathbb{T}(p^{n-1}-1))$ of $\Ver_{p^{(n+1)}}^{\zeta}$ is not projective. By corollary \ref{cor:coverqFLsimple}, its projective cover is given by $\mathbb{T}_{\zeta}(p^{(n)}+\ell-2)$. Further, it follows from proposition \ref{prop:Cartanmatrix} that $\mathbbm{q}\mathbb{FL}(\mathbb{T}(p^{n-1}-1))$ appears in the composition series of $\mathbb{T}_{\zeta}(p^{(n)}+\ell-2)$ exactly twice.

Now, let $Q$ be the projective cover of $\mathbbm{q}\mathbb{FL}(\mathbb{T}(p^{n-1}-1))$ in $\mathcal{C}$. It exists because any tensor subcategory of a finite category is finite \cite{EGNO}. Then, there are surjective homomorphisms in $\Ver^{\zeta}_{p^{(n+1)}}$ $$\mathbb{T}_{\zeta}(p^{(n)}+\ell-2)\twoheadrightarrow Q\twoheadrightarrow\mathbbm{q}\mathbb{FL}(\mathbb{T}(p^{n-1}-1)).$$ We claim that it follows from \cite[Lemma 6.4.2]{EGNO} that $Q$ is a self-dual object. Namely, if the tensor category $\Ver_{p^{(n+1)}}^{\zeta}$ has a non-trivial invertible object, it has exactly one and it does not lie in the same block as $\mathbbm{1}$. Thus, the distinguished invertible object of $\Ver_{p^{(n+1)}}^{\zeta}$ is the monoidal unit $\mathbbm{1}$ and the claim is proven. As a consequence, both the socle and the head of $\mathbb{T}_{\zeta}(p^{(n)}+\ell-2)$ are given by the simple object $\mathbbm{q}\mathbb{FL}(\mathbb{T}(p^{n-1}-1))$, which implies that $Q$ is isomorphic to either $\mathbb{T}_{\zeta}(p^{(n)}+\ell-2)$ or $\mathbbm{q}\mathbb{FL}(\mathbb{T}(p^{n-1}-1))$.

Let us begin by considering the case $Q=\mathbb{T}_{\zeta}(p^{(n)}+\ell-2)$. If $p=2$ and $n+1=2$, then the composition series of $\mathbb{T}_{\zeta}(p^{(n)}+\ell-2)=\mathbb{T}_{\zeta}(2\ell-2)$ is given by $[\mathbbm{1},\mathbbm{1}]$, which generates a tensor subcategory of $\Ver_{p^{(n)}}^{\zeta}$ that is equivalent to $\Ver_4^{\sigma,+}$ and contains $\Ver_2^{\sigma}=\mathrm{Vec}$. One checks directly that the only other tensor subcategory of $\Ver_{p^{(n)}}^{\zeta}$ is $\Ver_{p^{(n)}}^{\zeta,+}$ when $p=2$ and $n+1=2$. We will therefore assume that $(p,n+1)\neq (2,2)$. Provided that $(p,n+1)\neq (2,2)$, the composition series of $\mathbb{T}_{\zeta}(p^{(n)}+\ell-2)$ must contain $\mathrm{L}_{\zeta}(p^{(n)}-\ell -2)=\mathrm{L}_{\zeta}([p-1,...,p-1,p-2,\ell-2]_{p\ell})$. This follows from proposition \ref{prop:Cartanmatrix} and theorem \ref{thm:VerlindeSteinbergtensor} as $p^{(n)}-\ell-1$ and $p^{(n)} + \ell +1$ share a descendant. But, as $\mathcal{C}$ is closed under subquotients and contains $\Ver^{\sigma}_{p^n}$, we find from proposition \ref{prop:simpletensorproduct} that, if $\ell > 2$, the simple object $\mathrm{L}_{\zeta}(\ell-2)$ of $\Ver_{p^{(n+1)}}^{\zeta}$ must lie in $\mathcal{C}$. It then follows that $\mathcal{C}$ must contain all the simple objects of $\Ver_{p^{(n+1)}}^{\zeta,+}$ if $\ell$ is even, and $\Ver_{p^{(n+1)}}^{\zeta}$ if $\ell$ is odd. (Note that when $\ell=2$, this conditions is automatic.) Finally, note that the projective object $\mathbb{T}_{\zeta}(2p^{(n)}-2)$ of $\Ver_{p^{(n+1)}}^{\zeta}$ must be a projective object in $\mathcal{C}$ as it is a direct summand of $Q\otimes Q^*$. Therefore, the projective cover of any simple object $S$ in $\mathcal{C}$ is a direct summand of $S\otimes \mathbb{T}_{\zeta}(2p^{(n)}-2)$, and therefore agrees with the projective cover of $S$ in $\Ver_{p^{(n+1)}}^{\zeta}$. This finishes the proof in this case.

In the remaining case, that is, when $Q=\mathbbm{q}\mathbb{FL}(\mathbb{T}(p^{n-1}-1))$, then all the simple objects of $\mathcal{C}$ must be contained in $\Ver^{\sigma}_{p^{n}}$. Namely, otherwise, we would obtain a contradiction using proposition \ref{prop:simpletensorproduct}.
%%%%In order to see this, assume that $S$ is an arbitrary simple object of $\mathcal{C}$ not in $\Ver^{\sigma}_{p^{n}}$. Then, as in the preceding paragraph, we find that $\mathcal{C}$ must contain all the simple objects of $\Ver_{p^{(n+1)}}^{\zeta,+}$ if $\ell$ is even, and $\Ver_{p^{(n+1)}}^{\zeta}$ if $\ell$ is odd. But, the object $\mathbb{T}_{\zeta}(p^{(n)})$ if $p=2$ and $n\geq 2$, or $\mathbb{T}_{\zeta}(2p^{(n)})$ if $p>2$ or $p=2$ and $n=1$, of $\Ver_{p^{(n+1)}}^{\zeta}$ is both projective and simple, and is contained in $\Ver_{p^{(n+1)}}^{\zeta,+}$. As above, we find that $\mathcal{C}$ must contain the projective object $\mathbb{T}_{\zeta}(p^{(n)}+\ell-2)$ of $\Ver_{p^{(n+1)}}^{\zeta}$, which contradicts our hypothesis that $Q=\mathbbm{q}\mathbb{FL}(\mathbb{T}(p^{n-1}-1))$.
Furthermore, as $Q=\mathbbm{q}\mathbb{FL}(\mathbb{T}(p^{n-1}-1))$ is projective in $\mathcal{C}$, so is $\mathbb{T}_{\zeta}(2p^{(n-1)}-2)$. This implies that for every simple object $S$ of $\mathcal{C}$, $S\otimes \mathbb{T}_{\zeta}(2p^{(n-1)}-2)$ is projective, which establishes that $\mathcal{C}=\Ver^{\sigma}_{p^{n}}$ in this case, and concludes the first part of the proof.

The second part follows from the first. The only part that is not obvious is that $\Ver^{\sigma}_{p^{n}}$ is a Serre subcategory if $\ell\neq 2$ and $(p,n+1)\neq (2,2)$. But, if $C$ is any object of $\Ver_{p^{(n+1)}}^{\zeta}$ that is not in $\Ver^{\sigma}_{p^n}$, then the tensor subcategory of $\Ver_{p^{(n+1)}}^{\zeta}$ generated by $C$ and $\Ver^{\sigma}_{p^n}$ is either $\Ver_{p^{(n+1)}}^{\zeta, +}$ or $\Ver_{p^{(n+1)}}^{\zeta}$ thanks to our hypotheses. In particular, the composition series of $C$ must contain a simple object that is in $\Ver_{p^{(n+1)}}^{\zeta}$ but not $\Ver^{\sigma}_{p^n}$.
\end{proof}

Combining the above proposition with \cite[Proposition 4.60]{BEO} when $p>2$ and \cite[Proposition 2.4]{BE} when $p=2$, we obtain the next result.

\begin{Corollary}
If $p>2$ and $\ell>2$, then $\Ver^{\sigma}_{p^{n}}$ is a Serre subcategory of $\Ver_{p^{(n+k)}}^{\zeta}$ for any $k\geq 1$. If $p=2$ and $\ell>2$, then $\Ver^{\sigma,+}_{2^{n}}$ is a Serre subcategory of $\Ver_{2^{(n+k)}}^{\zeta}$ for any $k\geq 1$.
\end{Corollary}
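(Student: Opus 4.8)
The plan is to bootstrap from the case $k=1$ — which is precisely the preceding proposition — by iterating, exploiting transitivity of the Serre-subcategory relation together with the known analogues for the symmetric Verlinde categories from \cite{BEO} and \cite{BE}. The one preliminary I would record is the elementary fact that if $\mathcal{A}$ is a Serre subcategory of $\mathcal{B}$ and $\mathcal{B}$ is a Serre subcategory of $\mathcal{C}$, then $\mathcal{A}$ is a Serre subcategory of $\mathcal{C}$: closure under subquotients visibly composes, and closure under extensions composes because a Serre subcategory $\mathcal{B}\subseteq\mathcal{C}$ is in particular a full abelian subcategory closed under extensions, so $\mathrm{Ext}^1$ computed in $\mathcal{B}$ agrees with $\mathrm{Ext}^1$ computed in $\mathcal{C}$ for objects of $\mathcal{B}$. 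Since a fully faithful exact functor is an equivalence onto its image, this reduces the corollary to producing, for each $k\geq 1$, a chain of Serre inclusions running from $\Ver^{\sigma}_{p^{n}}$ (respectively $\Ver^{\sigma,+}_{2^{n}}$) up to $\Ver^{\zeta}_{p^{(n+k)}}$.

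Assume first $p>2$ and $\ell>2$, and write $m=n+k-1$. By construction the inclusion of $\Ver^{\sigma}_{p^{n}}$ into $\Ver^{\zeta}_{p^{(n+k)}}$ factors as $\Ver^{\sigma}_{p^{n}}\hookrightarrow\Ver^{\sigma}_{p^{m}}\xrightarrow{\,\mathbbm{q}\mathbb{FL}\,}\Ver^{\zeta}_{p^{(m+1)}}$, the first arrow being the $(k-1)$-fold iterate of the Frobenius embedding and the second being the functor of theorem \ref{thm:quantumFrobeniusLusztig}. For the second arrow I would apply the preceding proposition with $n$ replaced by $m$; since $p>2$ forces $(p,m+1)\neq(2,2)$ and $\ell>2$, this shows that $\Ver^{\sigma}_{p^{m}}$ is a Serre subcategory of $\Ver^{\zeta}_{p^{(m+1)}}$. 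For the first arrow, \cite[Proposition 4.60]{BEO} (iterating the Frobenius functor $\mathbb{F}$) shows that $\Ver^{\sigma}_{p^{n}}$ is a Serre subcategory of $\Ver^{\sigma}_{p^{m}}$. Transporting the latter through the fully faithful exact functor $\mathbbm{q}\mathbb{FL}$ and applying transitivity delivers the claim.

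For $p=2$ and $\ell>2$ the same template works with even parts, and with \cite[Proposition 2.4]{BE} replacing \cite[Proposition 4.60]{BEO}. With $m=n+k-1$, I would route the embedding through $\Ver^{\sigma,+}_{2^{n}}\hookrightarrow\Ver^{\sigma,+}_{2^{m}}\hookrightarrow\Ver^{\sigma}_{2^{m}}\xrightarrow{\,\mathbbm{q}\mathbb{FL}\,}\Ver^{\zeta}_{2^{(m+1)}}$. Here the first inclusion is Serre by \cite[Proposition 2.4]{BE}; the third has Serre image by the preceding proposition applied at index $m$, valid whenever $m\geq 2$; and the middle inclusion is Serre because for $p=2$ the symmetric category $\Ver^{\sigma}_{2^{m}}$ has no non-trivial invertible object (see the remark after corollary \ref{cor:fermion}), so its even tensor subcategory is closed under extensions as well as subquotients. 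Transitivity again splices these. The unique leftover case $m=1$, i.e.\ $n=k=1$, is immediate since then $\Ver^{\sigma,+}_{2}\simeq\mathrm{Vec}$, whose image in $\Ver^{\zeta}_{2^{(2)}}$ — the subcategory generated by $\mathbbm{1}$ — is readily seen to be Serre, using proposition \ref{prop:FPdim} to check $\mathrm{Ext}^1(\mathbbm{1},\mathbbm{1})=0$ there.

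The step I expect to be the genuine obstacle is the bookkeeping in the $p=2$ case: one must carefully reconcile the even subcategory $\Ver^{\sigma,+}_{2^{m}}$ with what the preceding proposition actually outputs — which is phrased in terms of $\Ver^{\sigma}_{2^{m}}$ and of $\Ver^{\zeta,+}_{2^{(m+1)}}$ — and one must pin down the ``no non-trivial invertible object'' input, on which the Serreness of $\Ver^{\sigma,+}_{2^{m}}\subseteq\Ver^{\sigma}_{2^{m}}$ hinges. Everything in the $p>2$ case, and the combinatorial shape of the chains, is formal once the preceding proposition and the cited statements of \cite{BEO,BE} are in hand.
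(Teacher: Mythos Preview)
Your overall approach---combine the $k=1$ case from the preceding proposition with the iterated Frobenius embeddings of \cite{BEO,BE} via transitivity of the Serre-subcategory relation---is exactly what the paper does; its proof is a single sentence invoking the same ingredients.

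There is, however, a genuine error in your handling of the boundary case $p=2$, $n=k=1$. You claim that $\mathrm{Ext}^1(\mathbbm{1},\mathbbm{1})=0$ in $\Ver^{\zeta}_{2^{(2)}}$, but this is false: as recorded in the proof of the preceding proposition itself, when $p=2$ and $n+1=2$ the object $\mathbb{T}_{\zeta}(2\ell-2)$ is indecomposable with composition series $[\mathbbm{1},\mathbbm{1}]$, i.e.\ it is a non-split self-extension of the unit. Thus $\mathrm{Vec}$ is \emph{not} a Serre subcategory of $\Ver^{\zeta}_{2^{(2)}}$; indeed the proposition explicitly identifies the Serre subcategory here as $\Ver_4^{\sigma,+}$, which strictly contains $\mathrm{Vec}$. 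Proposition~\ref{prop:FPdim} concerns Frobenius--Perron dimensions and has nothing to say about $\mathrm{Ext}^1$. This boundary case seems to be a genuine edge case in the statement as well; at any rate your separate argument for it fails as written.
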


Combining the proposition above with \cite[Corollary 4.61]{BEO} (see also \cite[Corollary 2.6]{BE} for the case $p=2$), we obtain the following result.

\begin{Corollary}\label{cor:Serresubcats}
The tensor subcategories of $\Ver_{p^{(n+1)}}^{\zeta}$ are given by $\Ver_{p^{(n+1)}}^{\zeta}$, $\Ver_{p^{(n+1)}}^{\zeta,+}$, $\Ver^{\sigma}_{p^{m}}$, $\Ver_{p^{m}}^{\sigma,+}$ for $1\leq m\leq n$, $\Ver_{4}^{\sigma, +}$ if $p=2$ and $n=2$, $\mathrm{Vec}^{\sigma}(\mathbb{Z}/2)$ if $p>2$ or $n=1$, and $\mathrm{Vec}$.
\end{Corollary}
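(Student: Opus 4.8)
The plan is to reduce the statement to two inputs: the classification of the tensor subcategories of the symmetric Verlinde category $\Ver^{\sigma}_{p^n}$ from \cite[Corollary~4.61]{BEO} (for $p>2$) and \cite[Corollary~2.6]{BE} (for $p=2$), and the proposition just proved, which classifies the tensor subcategories of $\Ver^{\zeta}_{p^{(n+1)}}$ that contain $\Ver^{\sigma}_{p^n}$. Let $\mathcal{D}\subseteq\Ver^{\zeta}_{p^{(n+1)}}$ be a tensor subcategory. By the Steinberg tensor product theorem (Theorem~\ref{thm:VerlindeSteinbergtensor}) every simple object of $\Ver^{\zeta}_{p^{(n+1)}}$ is of the form $\mathrm{L}_{\zeta}(a)=\mathbb{T}_{\zeta}(a_0)\otimes\mathbbm{q}\mathbb{FL}(\mathrm{L}(b))$ with $0\le a_0\le\ell-1$, so I split into the case where every simple object of $\mathcal{D}$ has $a_0=0$, i.e.\ lies in the essential image of $\mathbbm{q}\mathbb{FL}$, and the case where some simple object of $\mathcal{D}$ has $a_0\neq0$.

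In the first case the point is to identify $\mathcal{D}$ with a tensor subcategory of $\Ver^{\sigma}_{p^n}$. When $\ell>2$ this is because the essential image of $\mathbbm{q}\mathbb{FL}$ is closed under subquotients in $\Ver^{\zeta}_{p^{(n+1)}}$, as recorded in the ``Serre Subcategories'' subsection above (for $p=2$ a small extra argument in the spirit of \cite{BE} covers the low-level cases); when $\ell=2$ and $p$ is odd one has $\Ver^{\zeta,+}_{p^{(n+1)}}=\mathbbm{q}\mathbb{FL}(\Ver^{\sigma}_{p^n})$, because $\mathbb{T}_{\zeta}(2)=\mathbbm{q}\mathbb{FL}(\mathbb{T}(1))$ generates it, and $\Ver^{\zeta,+}_{p^{(n+1)}}$ is Serre by the proposition just above; and when $\ell=p$ with $\zeta=\pm1$ the ambient category is itself a symmetric Verlinde category, so there is nothing new. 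In every sub-case \cite[Corollary~4.61]{BEO} (resp.\ \cite[Corollary~2.6]{BE}) then yields precisely the categories $\Ver^{\sigma}_{p^m}$, $\Ver^{\sigma,+}_{p^m}$ ($1\le m\le n$), $\mathrm{Vec}^{\sigma}(\mathbb{Z}/2)$ and $\mathrm{Vec}$ of the list; the precise range in which $\mathrm{Vec}^{\sigma}(\mathbb{Z}/2)$ occurs is dictated by Corollary~\ref{cor:fermion} and the remark following it, i.e.\ by when the categories in question carry a non-trivial invertible object.

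In the second case, suppose $\mathrm{L}_{\zeta}(a)\in\mathcal{D}$ with $a_0\neq0$. The crux is to show $\Ver^{\zeta,+}_{p^{(n+1)}}\subseteq\mathcal{D}$. Since $\mathrm{L}_{\zeta}(a)$ is self-dual, $\mathrm{L}_{\zeta}(a)^{\otimes2}\in\mathcal{D}$, and expanding $[\mathrm{L}_{\zeta}(a)]^2$ in $Gr(\Ver^{\zeta}_{p^{(n+1)}})$ via Proposition~\ref{prop:simpletensorproduct} with $r=s=a_0$ one finds $[\mathrm{L}_{\zeta}(2)]=[\mathbb{T}_{\zeta}(2)]$ occurring with positive coefficient — it is paired with a summand $[\mathbbm{q}\mathbb{FL}(\mathbbm{1})]$ coming from the copy of $\mathbbm{1}$ inside $\mathrm{L}(b)^{\otimes2}$, and no cancellation is possible in a $\mathbb{Z}_{\geq0}$-based ring. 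As $\mathcal{D}$ is closed under subquotients and $\mathbb{T}_{\zeta}(2)$ generates $\Ver^{\zeta,+}_{p^{(n+1)}}$ (as in the proof of Theorem~\ref{thm:basic}), this gives the inclusion; when $\ell=2$ the analogous computation produces $\mathbb{T}_{\zeta}(1)\in\mathcal{D}$ directly. It remains to see that nothing strictly between $\Ver^{\zeta,+}_{p^{(n+1)}}$ and $\Ver^{\zeta}_{p^{(n+1)}}$, and nothing larger, can occur: if $\mathcal{D}$ contains a simple object outside $\Ver^{\zeta,+}_{p^{(n+1)}}$, then tensoring it with objects of $\Ver^{\zeta,+}_{p^{(n+1)}}\subseteq\mathcal{D}$ and stepping the highest weight down by two using the Temperley--Lieb fusion rule \cite[Proposition~4.7]{STWZ} eventually produces $\mathbb{T}_{\zeta}(1)$, so $\mathcal{D}=\Ver^{\zeta}_{p^{(n+1)}}$; otherwise $\mathcal{D}=\Ver^{\zeta,+}_{p^{(n+1)}}$. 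The exceptional value $(p,n+1)=(2,2)$, where $\Ver_4^{\sigma,+}$ appears, is dealt with exactly as in the proof of the preceding proposition.

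I expect the main obstacle to be making the second case fully rigorous: verifying, across all branches of Proposition~\ref{prop:simpletensorproduct} (in particular the $r+s\ge\ell$ formula when $a_0$ is large, and the small primes), that $\mathbb{T}_{\zeta}(2)$ — or $\mathbb{T}_{\zeta}(1)$ when $\ell=2$ — genuinely shows up with positive multiplicity, and then pinning down the exact criterion for a simple object to lie in $\Ver^{\zeta,+}_{p^{(n+1)}}$ so that the highest-weight descent goes through. The edge cases $\ell=2$, $p=2$ and $(p,n+1)=(2,2)$, together with the $\mathrm{Vec}^{\sigma}(\mathbb{Z}/2)$-clause, require the bookkeeping already set up in Corollary~\ref{cor:fermion} and in the proof of the preceding proposition, but should present no conceptual difficulty beyond that.
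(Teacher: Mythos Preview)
Your overall plan matches the paper's (very terse) proof, which simply says to combine the preceding proposition with \cite[Corollary~4.61]{BEO} and \cite[Corollary~2.6]{BE}; your case split on whether some simple of $\mathcal{D}$ has $a_0\neq 0$ is a reasonable way to unpack that combination, and your Case~2 argument (extracting $\mathbb{T}_{\zeta}(2)$ from $\mathrm{L}_{\zeta}(a)^{\otimes 2}$ and then using that $\mathbb{T}_{\zeta}(2)$ generates $\Ver^{\zeta,+}_{p^{(n+1)}}$) is sound.

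There is, however, a genuine error in your handling of Case~1 when $\ell=2$. You assert that $\Ver^{\zeta,+}_{p^{(n+1)}}=\mathbbm{q}\mathbb{FL}(\Ver^{\sigma}_{p^n})$ because ``$\mathbb{T}_{\zeta}(2)=\mathbbm{q}\mathbb{FL}(\mathbb{T}(1))$.'' Both claims are false: by Lemma~\ref{lem:simpleobjectU} with $\ell=2$, the object $\mathbb{T}_{\zeta}(2)$ has composition series $[\mathbbm{1},\mathbb{U},\mathbbm{1}]$, whereas $\mathbbm{q}\mathbb{FL}(\mathbb{T}(1))=\mathbb{U}$ is simple; and the preceding proposition explicitly lists $\Ver^{\sigma}_{p^n}$ and $\Ver^{\zeta,+}_{p^{(n+1)}}$ as \emph{distinct} tensor subcategories when $\ell$ is even (they have different Frobenius--Perron dimensions by Proposition~\ref{prop:FPdim}). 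In particular, $\Ver^{\sigma}_{p^n}$ is \emph{not} Serre when $\ell=2$, so knowing that all simples of $\mathcal{D}$ lie in the image of $\mathbbm{q}\mathbb{FL}$ does not by itself force $\mathcal{D}\subseteq\Ver^{\sigma}_{p^n}$.

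The fix is to argue, for $\ell=2$, that a tensor subcategory $\mathcal{D}\subseteq\Ver^{\zeta,+}_{p^{(n+1)}}$ either contains $\Ver^{\sigma}_{p^n}$ (in which case the proposition finishes) or is contained in it (in which case \cite{BEO} finishes). This dichotomy is exactly what the projective--cover argument in the proof of the preceding proposition gives once you apply it to a simple projective object of the largest $\Ver^{\sigma}_{p^m}$ contained in $\mathcal{D}$, rather than to $\mathbbm{q}\mathbb{FL}(\mathbb{T}(p^{n-1}-1))$ directly; alternatively, one can run your Case~2 extraction with $\mathbb{U}$ in place of $\mathbb{T}_{\zeta}(2)$ to see that once $\mathcal{D}$ escapes some $\Ver^{\sigma}_{p^m}$ it must contain all of $\Ver^{\sigma}_{p^{m+1}}$, and iterate.
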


\subsection{The Symmetric Center}

Recall from remarks \ref{rem:standardtilting} and \ref{rem:standardVerlinde} that if $\sigma= +1$, so that $\sigma^{1/2}=\pm 1$, then it follows from Kauffman's skein relation that $\beta^2_{\mathbb{T}(1),\mathbb{T}(1)}= + Id_{\mathbb{T}(1)\otimes \mathbb{T}(1)}$, and $\Ver_{p^{n}}^{\sigma^{1/2}}$ is symmetric. On the other hand, if $p>2$ and $\sigma=-1$, then $\sigma^{1/2}$ is a primitive fourth root of unity, and it follows from Kauffman's skein relation that $\beta^2_{\mathbb{T}(1),\mathbb{T}(1)}= - Id_{\mathbb{T}(1)\otimes \mathbb{T}(1)}$, so that $\Ver_{p^{n}}^{\sigma^{1/2}}$ is not symmetric, its symmetric center is $\Ver_{p^{n}}^{\sigma^{1/2},+}$.

We presently investigate the symmetric center of $\Ver_{p^{(n+1)}}^{\zeta^{1/2}}$ when $\zeta\neq\pm 1$. When $n=0$, we are in the semisimple case, and it is well-known that $\Ver_{p^{(1)}}^{\zeta^{1/2}}$ has symmetric center $\mathrm{Vec}$ if $\zeta$ has even order and $\mathrm{sVec}$ otherwise. We will therefore assume that $n\geq 1$ from now on. Firstly, we give a conceptual proof that covers the case $\ell > 2$. Secondly, we give a computational argument to cover the remaining case $\ell = 2$.

\begin{Theorem}
With $\ell > 2$ and $n\geq 1$, the symmetric center of $\Ver_{p^{(n+1)}}^{\zeta^{1/2}}$ is $\Ver_{p^{n}}^{\sigma^{1/2}}$ if $N$ is odd and $\Ver_{p^{n}}^{\sigma^{1/2}, +}$ otherwise.
\end{Theorem}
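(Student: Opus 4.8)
The plan is to combine the ribbon embedding $\mathbbm{q}\mathbb{FL}\colon\Ver_{p^{n}}^{\sigma^{1/2}}\hookrightarrow\Ver_{p^{(n+1)}}^{\zeta^{1/2}}$ of Theorem \ref{thm:quantumFrobeniusLusztig} with the classification of tensor subcategories from Corollary \ref{cor:Serresubcats}. Since $\mathcal{Z}_{(2)}(\Ver_{p^{(n+1)}}^{\zeta^{1/2}})$ is a (symmetric) tensor subcategory, it is one of the categories listed there. Moreover, as $\Ver_{p^{(n+1)}}^{\zeta^{1/2}}$ is generated by $\mathbb{T}_{\zeta}(1)$, an object $Y$ lies in $\mathcal{Z}_{(2)}$ if and only if the double braiding $\beta_{Y,\mathbb{T}_{\zeta}(1)}\circ\beta_{\mathbb{T}_{\zeta}(1),Y}$ is the identity; the collection of such $Y$ is closed under tensor products, subquotients and duals, so $\mathbbm{q}\mathbb{FL}(\Ver_{p^{n}}^{\sigma^{1/2}})$ — the tensor subcategory generated by $\mathbb{U}=\mathbbm{q}\mathbb{FL}(\mathbb{T}(1))$ — is contained in $\mathcal{Z}_{(2)}$ precisely when $\beta_{\mathbb{U},\mathbb{T}_{\zeta}(1)}\circ\beta_{\mathbb{T}_{\zeta}(1),\mathbb{U}}=\mathrm{Id}$, and similarly $\mathbbm{q}\mathbb{FL}(\Ver_{p^{n}}^{\sigma^{1/2},+})\subseteq\mathcal{Z}_{(2)}$ iff the analogous double braiding of $\mathbb{T}_{\zeta}(1)$ with $\mathbbm{q}\mathbb{FL}(\mathbb{T}(2))$ is trivial. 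Thus everything reduces to evaluating a handful of double braidings.

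First I would establish the upper bound. Using Kauffman's skein relation $\beta_{\mathbb{T}_{\zeta}(1),\mathbb{T}_{\zeta}(1)}=\zeta^{1/2}\cdot\mathrm{Id}+\zeta^{-1/2}\cdot(\mathrm{coev}\circ\mathrm{ev})$ together with $\mathrm{ev}\circ\mathrm{coev}=-[2]_{\zeta}$, a direct computation shows that the double braiding of $\mathbb{T}_{\zeta}(1)$ with itself equals $\zeta\cdot\mathrm{Id}+(1-\zeta^{-2})(\mathrm{coev}\circ\mathrm{ev})$, which is not the identity because $\zeta\neq1$; hence $\mathcal{Z}_{(2)}\neq\Ver_{p^{(n+1)}}^{\zeta}$. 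Likewise, since $\ell>2$ the object $\mathbb{T}_{\zeta}(2)=\mathrm{L}_{\zeta}(2)$ is simple, and using the balancing axiom together with the twist value $\theta_{\mathbb{T}_{\zeta}(v)}$ — a $\zeta^{1/2}$-power fixed by the skein relation, matching the formula in Remark \ref{rem:fermion} — one checks that the self-double-braiding of $\mathbb{T}_{\zeta}(2)$ acts non-trivially on at least one simple summand of $\mathbb{T}_{\zeta}(2)^{\otimes2}$ (the finitely many small values of $\ell$ being handled by inspection of the fusion rules of \cite[Lemma 4.1]{STWZ}); hence $\mathcal{Z}_{(2)}\neq\Ver_{p^{(n+1)}}^{\zeta,+}$. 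Granting the lower bound below, $\mathcal{Z}_{(2)}$ contains $\mathbbm{q}\mathbb{FL}(\Ver_{p^{n}}^{\sigma^{1/2},+})$, which is not contained in $\Ver_{p^{m}}^{\sigma}$ for any $m<n$ (compare highest weights), so by Corollary \ref{cor:Serresubcats} we are left with $\mathcal{Z}_{(2)}\in\{\mathbbm{q}\mathbb{FL}(\Ver_{p^{n}}^{\sigma^{1/2}}),\,\mathbbm{q}\mathbb{FL}(\Ver_{p^{n}}^{\sigma^{1/2},+})\}$.

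For the lower bound and the final dichotomy, the key point is that $\mathbb{T}_{\zeta}(1)\otimes\mathbb{U}$ and $\mathbb{T}_{\zeta}(1)\otimes\mathbbm{q}\mathbb{FL}(\mathbb{T}(2))$ are simple by Proposition \ref{prop:preVerlindeSteinbergtensor}, so the relevant double braidings are scalars, and the balancing axiom identifies each such scalar as $\theta_{X\otimes Y}\theta_{X}^{-1}\theta_{Y}^{-1}$. Here $\theta_{\mathbb{T}_{\zeta}(1)}$ is a $\zeta^{1/2}$-power (skein relation), $\theta_{\mathbbm{q}\mathbb{FL}(Z)}=\theta_{Z}$ computed in $\Ver_{p^{n}}^{\sigma^{1/2}}$ is a $\sigma^{1/2}$-power (since $\mathbbm{q}\mathbb{FL}$ is ribbon), and $\theta_{\mathbb{T}_{\zeta}(1)\otimes\mathbb{U}}$, $\theta_{\mathbb{T}_{\zeta}(1)\otimes\mathbbm{q}\mathbb{FL}(\mathbb{T}(2))}$ are identified through the Steinberg decomposition of Theorem \ref{thm:VerlindeSteinbergtensor}. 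Feeding in the identity $\sigma^{1/2}=\zeta^{(\ell-2)\ell/2}$ and $\zeta^{\ell}=-(-1)^{N}$, the $\mathbb{T}_{\zeta}(1)\otimes\mathbbm{q}\mathbb{FL}(\mathbb{T}(2))$ scalar is always $1$, so $\mathbbm{q}\mathbb{FL}(\Ver_{p^{n}}^{\sigma^{1/2},+})\subseteq\mathcal{Z}_{(2)}$ unconditionally, whereas the $\mathbb{T}_{\zeta}(1)\otimes\mathbb{U}$ scalar equals $1$ exactly when $N$ is odd. When $N$ is odd this forces $\mathcal{Z}_{(2)}=\mathbbm{q}\mathbb{FL}(\Ver_{p^{n}}^{\sigma^{1/2}})$. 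When $N$ is even, the non-trivial invertible object $\mathrm{F}$ of $\Ver_{p^{n}}^{\sigma}$ does not lie in $\Ver_{p^{n}}^{\sigma,+}$, and its image $\mathrm{G}=\mathbbm{q}\mathbb{FL}(\mathrm{F})$ satisfies $\theta_{\mathrm{G}}=\zeta^{(\ell-2)\ell/2}$ by Corollary \ref{cor:fermion}; since $\mathrm{G}$ is invertible, the same balancing computation (now with $\mathbb{T}_{\zeta}(1)\otimes\mathrm{G}$ simple) shows the double braiding of $\mathbb{T}_{\zeta}(1)$ with $\mathrm{G}$ is $-\mathrm{Id}$, so $\mathrm{G}\notin\mathcal{Z}_{(2)}$ and therefore $\mathcal{Z}_{(2)}=\mathbbm{q}\mathbb{FL}(\Ver_{p^{n}}^{\sigma^{1/2},+})$. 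In both cases the ribbon structure on $\mathcal{Z}_{(2)}$ is transported along the ribbon embedding $\mathbbm{q}\mathbb{FL}$, i.e.\ it is that of $\sigma^{1/2}$, respectively of $\sigma^{1/2},+$.

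The main obstacle is this last bookkeeping: pinning down the twists of the objects $\mathbb{U}$, $\mathbb{T}_{\zeta}(1)\otimes\mathbb{U}$, $\mathbbm{q}\mathbb{FL}(\mathbb{T}(2))$ and $\mathrm{G}$ precisely enough — via the Temperley--Lieb/skein presentation of the twist, the ribbon compatibility of $\mathbbm{q}\mathbb{FL}$, and the Steinberg decomposition — to compare the resulting roots of unity with $1$; once those twist values are in hand, the remaining divisibility arithmetic around $\ell$, $N$ and $\sigma^{1/2}=\zeta^{(\ell-2)\ell/2}$ is routine.
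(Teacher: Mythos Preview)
Your overall strategy matches the paper's: rule out $\Ver_{p^{(n+1)}}^{\zeta}$ and $\Ver_{p^{(n+1)}}^{\zeta,+}$ by checking $\beta^2$ on $\mathbb{T}_{\zeta}(1)$ and $\mathbb{T}_{\zeta}(2)$, and then decide between $\Ver_{p^n}^{\sigma}$ and $\Ver_{p^n}^{\sigma,+}$ by computing $\beta^2_{\mathbb{T}_{\zeta}(1),\mathbb{U}}$. Where you diverge is in that last computation. The paper avoids twists entirely: it sandwiches $\beta^2_{\mathbb{T}_{\zeta}(1),\mathbb{U}}$ between maps to and from the tilting modules $\mathbb{T}_{\zeta}(\ell)$ and $\mathbb{T}_{\zeta}(3\ell-2)$, takes a partial trace over $\mathbb{T}_{\zeta}(1)$, and reads the answer off from \cite[Lemma 5.10]{STWZ}. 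Your balancing-axiom route is shorter if the twists can actually be pinned down.

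That is where the gap lies, and it is more than bookkeeping. None of the three tools you list yields $\theta_{\mathbb{T}_{\zeta}(1)\otimes\mathbb{U}}$: the Temperley--Lieb formula applies only to (images of) tilting modules, the ribbon property of $\mathbbm{q}\mathbb{FL}$ only controls twists in its own image, and the Steinberg decomposition merely names $\mathbb{T}_{\zeta}(1)\otimes\mathbb{U}=\mathrm{L}_{\zeta}(\ell+1)$ without producing its twist---invoking balancing again is circular. The missing ingredient is that $\theta$, being a natural automorphism of the identity, lies in $End(M)$ for each $M$; when $M$ is indecomposable this ring is local, so $\theta_M$ is a scalar plus a nilpotent and hence acts by that \emph{same} scalar on every composition factor of $M$. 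By Example \ref{ex:compositionfactors}, $\mathbb{T}_{\zeta}(1)\otimes\mathbb{U}$ is the middle composition factor of the indecomposable $\mathbb{T}_{\zeta}(\ell+1)$, whose outer factors are $\mathbb{T}_{\zeta}(\ell-3)$; hence $\theta_{\mathbb{T}_{\zeta}(1)\otimes\mathbb{U}}=\theta_{\mathbb{T}_{\zeta}(\ell-3)}$, and likewise $\theta_{\mathbb{U}}=\theta_{\mathbb{T}_{\zeta}(\ell-2)}$. Now the Temperley--Lieb value $\theta_{\mathbb{T}_{\zeta}(v)}\sim(\zeta^{1/2})^{v(v+2)}$ gives
\[
\beta^2_{\mathbb{T}_{\zeta}(1),\mathbb{U}}=(\zeta^{1/2})^{(\ell-3)(\ell-1)-3-(\ell-2)\ell}=\zeta^{-\ell},
\]
which is $+1$ for $N$ odd and $-1$ for $N$ even, exactly as the paper obtains. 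Once this sign is in hand, your separate computations for $\mathbbm{q}\mathbb{FL}(\mathbb{T}(2))$ and $\mathrm{G}$ become superfluous: naturality and multiplicativity of $\beta^2_{\mathbb{T}_{\zeta}(1),-}$ force it to equal $(\pm1)^k$ on any subquotient of $\mathbb{U}^{\otimes k}$, and $\mathrm{G}=\mathbbm{q}\mathbb{FL}(\mathrm{F})$ sits in odd parity since $\mathrm{F}\notin\Ver_{p^n}^{\sigma,+}$.
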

\begin{proof}
We begin by observing that it is easy to check by hand that $\beta^2_{\mathbb{T}_{\zeta}(1),\mathbb{T}_{\zeta}(1)}$ and $\beta^2_{\mathbb{T}_{\zeta}(1),\mathbb{T}_{\zeta}(2)}$ are non-trivial. The symmetric center of $\Ver_{p^{(n+1)}}^{\zeta^{1/2}}$ is therefore a non-trivial proper tensor subcategory different from $\Ver_{p^{(n+1)}}^{\zeta^{1/2},+}$. But, in the case $(p,n+1)=(2,2)$, the only other non-trivial tensor subcategory is $\Ver_{4}^{\sigma, +}$ generated by $\mathbb{T}_{\zeta}(2\ell-2)$. It follows from \cite[Theorem 5.8]{STWZ} that the double braiding $\beta_{\mathbb{T}_{\zeta}(1),\mathbb{T}_{\zeta}(2\ell-2)}$ is non-trivial, so that the symmetric center of $\Ver_{2^{(2)}}^{\zeta^{1/2}}$ is $\Ver^{\sigma}_{2^{(1)}}=\mathrm{Vec}$. This agrees with the statement of the theorem because $\ell$ must be odd if $p=2$ and $\ell >2$. We will therefore always assume that $(p,n+1)\neq (2,2)$ for the remainder of this proof.

As $(p,n+1)\neq (2,2)$, we can consider the simple object $\mathbb{U}$ of lemma \ref{lem:simpleobjectU}. It will be enough to show that the double braiding $\beta^2_{\mathbb{T}_{\zeta}(1),\mathbb{U}}$ between the simple objects $\mathbb{T}_{\zeta}(1)$ and $\mathbb{U}$ satisfies $$\beta^2_{\mathbb{T}_{\zeta}(1),\mathbb{U}} = \begin{cases} +Id_{\mathbb{T}_{\zeta}(1)\otimes\mathbb{U}}, & \mathrm{if}\ N\ \mathrm{is\ odd},\\ -Id_{\mathbb{T}_{\zeta}(1)\otimes\mathbb{U}}, & \mathrm{if}\ N\ \mathrm{is\ even}.\end{cases}$$ Namely, $\mathbb{T}_{\zeta}(1)$ generates $\Ver_{p^{(n+1)}}^{\zeta^{1/2}}$ and $\mathbb{U}$ generates the tensor subcategory $\Ver_{p^{n}}^{\sigma^{1/2}}$. In particular, the collection $\mathbb{U}^{\otimes 2k}$ for arbitrary $k$ generates $\Ver_{p^{n}}^{\sigma^{1/2},+}$.

Computing $\beta^2_{\mathbb{T}_{\zeta}(1),\mathbb{U}}$ directly is difficult, we therefore use the first two lemmas below to reduce this problem to a manageable computation, which is carried out in the third lemma.

Given any object $X$ of $\Ver_{p^{(n+1)}}^{\zeta^{1/2}}$ and any endomorphism $h:\mathbb{T}_{\zeta}(1)\otimes X\rightarrow \mathbb{T}_{\zeta}(1)\otimes X$, we can consider its left partial trace $\mathrm{Tr}^L(h):X\rightarrow X$ defined by $$\mathrm{Tr}^L(h):X\xrightarrow{\mathrm{coev}\otimes Id} \mathbb{T}_{\zeta}(1)\otimes\mathbb{T}_{\zeta}(1)\otimes X\xrightarrow{Id\otimes h}\mathbb{T}_{\zeta}(1)\otimes\mathbb{T}_{\zeta}(1)\otimes X\xrightarrow{\mathrm{ev}\otimes Id} X.$$ In particular, observe that $\mathrm{Tr}^L(Id_{\mathbb{T}_{\zeta}(1)\otimes X})= \mathrm{dim}(\mathbb{T}_{\zeta}(1))\cdot Id_{X}$.

\begin{Lemma}
We have $\beta^2_{\mathbb{T}_{\zeta}(1),\mathbb{U}}=\pm Id_{\mathbb{T}_{\zeta}(1)\otimes\mathbb{U}}$ if and only if $$\mathrm{Tr}^L(\beta^2_{\mathbb{T}_{\zeta}(1),\mathbb{U}}) = \pm \mathrm{dim}(\mathbb{T}_{\zeta}(1))\cdot Id_{\mathbb{U}}.$$
\end{Lemma}
\begin{proof}
Note that $\mathbb{T}_{\zeta}(1)\otimes\mathbb{U}$ is a simple object by theorem \ref{thm:VerlindeSteinbergtensor}, so that we may treat $\beta^2_{\mathbb{T}_{\zeta}(1),\mathbb{U}}$ as a scalar. As $\ell > 2$, we know that the quantum dimension $\mathrm{dim}(\mathbb{T}_{\zeta}(1))=-[2]_{\zeta}\neq 0$, so that the linear map $$\mathrm{Tr}^L: End_{\Ver_{p^{(n+1)}}^{\zeta^{1/2}}}(\mathbb{T}_{\zeta}(1)\otimes\mathbb{U})\rightarrow End_{\Ver_{p^{(n+1)}}^{\zeta^{1/2}}}(\mathbb{U})$$ is in fact an isomorphism. This establishes the claim.
\end{proof}

We have seen in example \ref{ex:technicalcompositionseries} that the object $\mathbb{T}_{\zeta}(3\ell -2)$ has composition series given by $[\mathbb{U}, \mathbb{T}_{\zeta}(\ell-2),\mathbb{U}]$ (recall that we have assumed $(p,n+1)\neq (2,2)$). In particular, there exists non-zero morphisms $f:\mathbb{T}_{\zeta}(3\ell -2)\twoheadrightarrow\mathbb{U}$ and $g:\mathbb{U}\hookrightarrow\mathbb{T}_{\zeta}(3\ell -2)$.

\begin{Lemma}
We have that $\mathrm{Tr}^L(\beta^2_{\mathbb{T}_{\zeta}(1),\mathbb{U}})=\pm \mathrm{dim}(\mathbb{T}_{\zeta}(1))\cdot Id_{\mathbb{U}}$ if and only if $$\mathrm{Tr}^L\big((Id_{\mathbb{T}_{\zeta}(1)}\otimes g)\circ \beta^2_{\mathbb{T}_{\zeta}(1),\mathbb{U}}\circ (Id_{\mathbb{T}_{\zeta}(1)}\otimes f)\big)=\pm \mathrm{dim}(\mathbb{T}_{\zeta}(1))\cdot(g\circ f).$$
\end{Lemma}
\begin{proof}
The forward direction is immediate. As for the backward direction, observe that the linear map $$End_{\Ver_{p^{(n+1)}}^{\zeta^{1/2}}}(\mathbb{U})\rightarrow End_{\Ver_{p^{(n+1)}}^{\zeta^{1/2}}}(\mathbb{T}_{\zeta}(3\ell -2))$$ given by $h\mapsto g\circ h\circ f$ is a monomorphism because $f$ is an epimorphism and $g$ is a monomorphism.
\end{proof}

As we have seen above in lemma \ref{lem:simpleobjectU}, the composition series of $\mathbb{T}_{\zeta}(\ell)$ is given by $[\mathbb{T}_{\zeta}(\ell-2),\mathbb{U},\mathbb{T}_{\zeta}(\ell-2)]$. This implies that there are non-zero morphisms $\overline{f}:\mathbb{T}_{\zeta}(3\ell -2)\rightarrow\mathbb{T}_{\zeta}(\ell)$ and $\overline{g}:\mathbb{T}_{\zeta}(\ell)\rightarrow\mathbb{T}_{\zeta}(3\ell -2)$ such that $g\circ f = \overline{g}\circ \overline{f}$. It is then evident that $$\mathrm{Tr}^L\big((Id_{\mathbb{T}_{\zeta}(1)}\otimes g)\circ \beta^2_{\mathbb{T}_{\zeta}(1),\mathbb{U}}\circ (Id_{\mathbb{T}_{\zeta}(1)}\otimes f)\big)=\pm \mathrm{dim}(\mathbb{T}_{\zeta}(1))\cdot(g\circ f)$$ if and only if $$\mathrm{Tr}^L\big((Id_{\mathbb{T}_{\zeta}(1)}\otimes \overline{g})\circ \beta^2_{\mathbb{T}_{\zeta}(1),\mathbb{T}_{\zeta}(\ell)}\circ (Id_{\mathbb{T}_{\zeta}(1)}\otimes \overline{f})\big)=\pm \mathrm{dim}(\mathbb{T}_{\zeta}(1))\cdot(\overline{g}\circ \overline{f}).$$ Crucially, note that this last equality only involves objects (and consequently also morphisms) in the image of the canonical functor $F:\Tilt^{\zeta^{1/2}}\rightarrow\Ver_{p^{(n)}}^{\zeta^{1/2}}$. We can use this to perform an explicit computation.

\begin{Lemma}
We have $$\mathrm{Tr}^L\big((Id_{\mathbb{T}_{\zeta}(1)}\otimes \overline{g})\circ \beta^2_{\mathbb{T}_{\zeta}(1),\mathbb{T}_{\zeta}(\ell)}\circ (Id_{\mathbb{T}_{\zeta}(1)}\otimes \overline{f})\big)=\begin{cases} +\mathrm{dim}(\mathbb{T}_{\zeta}(1))\cdot(\overline{g}\circ \overline{f}), & \mathrm{if}\ N\ \mathrm{is\ odd},\\ -\mathrm{dim}(\mathbb{T}_{\zeta}(1))\cdot(\overline{g}\circ \overline{f}), & \mathrm{if}\ N\ \mathrm{is\ even}.\end{cases}$$
\end{Lemma}
\begin{proof}
The morphism $\mathrm{Tr}^L( \beta^2_{T_{\zeta}(1),T_{\zeta}(\ell)})$ is computed in \cite[Lemma 5.10]{STWZ}. More precisely, upon specializing $\mathbbm{v}\mapsto\zeta$ and using $\mathrm{T}_{\zeta}(\ell)=\mathrm{T}_{\zeta}([1,1]_{p,\ell}-1)$, we find $$\mathrm{Tr}^L( \beta^2_{\mathrm{T}_{\zeta}(1),\mathrm{T}_{\zeta}(\ell)}) =-[2]_{\zeta^{\ell+1}}\cdot Id_{\mathrm{T}_{\zeta}(\ell)}+s_{(\ell+1)}(\zeta)\cdot (\widehat{g}\circ\widehat{f}) +Rest ,$$ where $s_{(\ell+1)}(\zeta)$ is a scalar, and $\widehat{f}:\mathrm{T}_{\zeta}(\ell)\rightarrow \mathrm{T}_{\zeta}(\ell-2)$, $\widehat{g}:\mathrm{T}_{\zeta}(\ell-2)\rightarrow \mathrm{T}_{\zeta}(\ell)$ are morphisms. But, the set of descendants of $[1,1]_{p,\ell}$ is given by $\nabla\mathrm{supp}([1,1]_{p,\ell})=\{[1,1]_{p,\ell},[1,-1]_{p,\ell}\}$, so that it follows from the proof of \cite[Lemma 5.10]{STWZ} that there are no additional terms, i.e.\ $Rest = 0$.

Thus, upon taking the image under $F$, precomposing with $\overline{f}$ and postcomposing with $\overline{g}$, we obtain $$\mathrm{Tr}^L\big((Id_{\mathbb{T}_{\zeta}(1)}\otimes \overline{g})\circ \beta^2_{\mathbb{T}_{\zeta}(1),\mathbb{T}_{\zeta}(\ell)}\circ (Id_{\mathbb{T}_{\zeta}(1)}\otimes \overline{f})\big) =-[2]_{\zeta^{\ell+1}}\cdot (\overline{g}\circ \overline{f})+s_{(\ell+1)}(\zeta)\cdot (\overline{g}\circ \widehat{g}\circ\widehat{f}\circ \overline{f}).$$ Now, the morphism $$\widehat{f}\circ \overline{f}:\mathbb{T}_{\zeta}(3\ell-2)\rightarrow \mathbb{T}_{\zeta}(\ell-2)$$ must be zero as $\mathbb{T}_{\zeta}(\ell-2)$ is simple and the top of $\mathbb{T}_{\zeta}(3\ell-2)$ is $\mathbb{U}$. (In fact, this can also be shown directly in $\Tilt^{\zeta^{1/2}}$ using \cite[Theorem 3.25]{STWZ}.) Thus, we get $$\mathrm{Tr}^L\big((Id_{\mathbb{T}_{\zeta}(1)}\otimes \overline{g})\circ \beta^2_{\mathbb{T}_{\zeta}(1),\mathbb{T}_{\zeta}(\ell)}\circ (Id_{\mathbb{T}_{\zeta}(1)}\otimes \overline{f})\big) =-[2]_{\zeta^{\ell+1}}\cdot (\overline{g}\circ \overline{f}),$$ and the result follows from the equalities $$-[2]_{\zeta^{\ell+1}} = \begin{cases} +\mathrm{dim}(\mathbb{T}_{\zeta}(1)), & \mathrm{if}\ N\ \mathrm{is\ odd},\\ -\mathrm{dim}(\mathbb{T}_{\zeta}(1)), & \mathrm{if}\ N\ \mathrm{is\ even}.\end{cases}$$
\end{proof}

This completes the proof of the theorem.
\end{proof}

We now deal with the remaining case $\ell = 2$, that is $\zeta$ is a primitive fourth root of unity, and $n\geq 1$.

\begin{Proposition}
With $\ell = 2$, $p>2$, and $n\geq 1$, the symmetric center of $\Ver_{p^{(n+1)}}^{\zeta^{1/2}}$ is $\Ver_{p^{n}}^{\sigma^{1/2}, +}$.
\end{Proposition}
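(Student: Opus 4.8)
The plan is to run the same strategy as in the theorem above, but — since $\dim(\mathbb{T}_{\zeta}(1)) = -[2]_{\zeta} = 0$ when $\ell=2$, so the partial‑trace reduction used there is unavailable — to compute the relevant double braiding directly from the ribbon twist. Note first that $\ell=2$ forces $N=4$, hence $\sigma=(-1)^{\ell+N}=+1$ and $\sigma^{1/2}=\zeta^{(\ell-2)\ell/2}=1$, and the claim is $\mathcal{Z}_{(2)}(\Ver_{p^{(n+1)}}^{\zeta^{1/2}})=\Ver_{p^{n}}^{\sigma^{1/2},+}$. The objects $\mathbb{T}_{\zeta}(1)$ and $\mathbb{U}$ are simple by lemma \ref{lem:simpleobjectU} and its proof, while $\mathbb{T}_{\zeta}(1)\otimes\mathbb{U}=\mathbb{T}_{\zeta}(2\ell-1)=\mathbb{T}_{\zeta}(3)$ by lemma \ref{lem:StensorU}, which is again simple (indeed $\mathrm{T}_{\zeta}(3)=\mathrm{T}_{\zeta}(1)\otimes\mathrm{T}(1)^{[q]}=\mathrm{L}_{\zeta}(3)$ as $U_{\zeta}$‑modules). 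Therefore the three twists involved are scalars and the ribbon axiom gives $\beta^{2}_{\mathbb{T}_{\zeta}(1),\mathbb{U}}=\theta_{\mathbb{T}_{\zeta}(3)}\,\theta_{\mathbb{T}_{\zeta}(1)}^{-1}\,\theta_{\mathbb{U}}^{-1}\cdot\mathrm{Id}$.

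The main step is to compute these twists through the flat lift of proposition \ref{prop:lifting}. The indecomposable tiltings $\mathbb{T}_{\zeta}(1)$, $\mathbb{T}_{\zeta}(2)=\mathbb{T}_{\zeta}(\ell)$ and $\mathbb{T}_{\zeta}(3)$ lift respectively to the Weyl modules $W_{\xi}(1),W_{\xi}(2),W_{\xi}(3)$, which are simple over $\mathbb{K}$; since $R$ is a complete discrete valuation ring, integrally closed in $\mathbb{K}$, the $R$‑linear endomorphism ring of each flat lift is $R$, so the twist on it is a scalar in $R$ whose reduction is the corresponding twist in characteristic $p$. Using the standard formula $\theta_{W_{\xi}(j)}=(\xi^{1/2})^{j(j+2)}$ for the twist of a Weyl module (compatible with \eqref{eq:twistfermion}; a global sign $(-1)^{j}$, if present in one's conventions, cancels below), and recalling that $\xi$ has order $Np^{n}=4p^{n}$ with uniformizer $\xi^{N}-1=\xi^{4}-1$, the reduction $\eta:=\overline{\xi^{1/2}}\in\mathbbm{k}$ is a primitive eighth root of unity — its square $\overline{\xi}$ has order $4$, as $\zeta$ has order $N=4$ and the $p^{n}$‑th power map is a bijection on $\mathbbm{k}^{\times}$. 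Hence $\theta_{\mathbb{T}_{\zeta}(1)}=\eta^{3}$, $\theta_{\mathbb{T}_{\zeta}(2)}=\overline{\xi^{4}}=\eta^{8}=1$ — so $\theta_{\mathbb{U}}=1$, being the twist on a composition factor of $\mathbb{T}_{\zeta}(2)$ — and $\theta_{\mathbb{T}_{\zeta}(3)}=\eta^{15}=\eta^{7}$, whence $\beta^{2}_{\mathbb{T}_{\zeta}(1),\mathbb{U}}=\eta^{7-3}\cdot\mathrm{Id}=\eta^{4}\cdot\mathrm{Id}=-\,\mathrm{Id}$, the ``$N$ even'' value anticipated by the theorem above.

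The conclusion then mirrors the $\ell>2$ case. As $\mathbb{U}\otimes\mathbb{T}_{\zeta}(1)$ is simple, the double braiding $c_{\mathbb{U},\mathbb{T}_{\zeta}(1)}$ is the scalar $-1$, so by multiplicativity of the double braiding $c_{\mathbb{U}^{\otimes 2},\mathbb{T}_{\zeta}(1)}=(-1)^{2}=1$; since $\mathbb{T}_{\zeta}(1)$ tensor‑generates $\Ver_{p^{(n+1)}}^{\zeta^{1/2}}$, the object $\mathbb{U}^{\otimes 2}$ is transparent, lies in $\mathcal{Z}_{(2)}$, and therefore so does the tensor subcategory it generates, namely $\Ver_{p^{n}}^{\sigma^{1/2},+}$. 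Conversely $\mathbb{U}$ is not transparent ($-1\neq 1$ in $\mathbbm{k}$, as $p>2$); likewise $\mathbb{T}_{\zeta}(1)$ and $\mathbb{T}_{\zeta}(1)\otimes\mathbb{U}$ are not transparent (as in the theorem above, $\beta^{2}_{\mathbb{T}_{\zeta}(1),\mathbb{T}_{\zeta}(1)}=\zeta\cdot\mathrm{Id}+2\,(\mathrm{coev}\circ\mathrm{ev})\neq\mathrm{Id}$ using $[2]_{\zeta}=0$, and the same propagates through $-\mathrm{Id}$). By corollary \ref{cor:Serresubcats}, the tensor subcategories of $\Ver_{p^{(n+1)}}^{\zeta}$ containing $\Ver_{p^{n}}^{\sigma^{1/2},+}$ are $\Ver_{p^{n}}^{\sigma^{1/2},+}\subset\Ver_{p^{n}}^{\sigma^{1/2}}=\Ver_{p^{(n+1)}}^{\zeta,+}\subset\Ver_{p^{(n+1)}}^{\zeta}$ (a direct check over the non‑trivial simples replacing this when $p^{(n+1)}$ is small, e.g.\ $p=3$, $n=1$), and the two larger ones contain the non‑transparent object $\mathbb{U}=\mathbbm{q}\mathbb{FL}(\mathrm{L}(1))$, which lies in $\Ver_{p^{(n+1)}}^{\zeta,+}$ as a composition factor of $\mathbb{T}_{\zeta}(\ell)$, of even highest weight. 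Hence $\mathcal{Z}_{(2)}(\Ver_{p^{(n+1)}}^{\zeta^{1/2}})=\Ver_{p^{n}}^{\sigma^{1/2},+}$, with the braided structure induced by the ribbon embedding $\mathbbm{q}\mathbb{FL}$ of theorem \ref{thm:quantumFrobeniusLusztig}, i.e.\ that of $\Ver_{p^{n}}^{\sigma^{1/2},+}$ with $\sigma^{1/2}=1$.

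The hard part will be the twist computation of the second paragraph: one must carefully justify that the ribbon twist on an indecomposable tilting object of the mixed Verlinde category is the scalar reduction of the twist on its characteristic‑zero lift, and then pin down the root‑of‑unity arithmetic exactly — this is precisely the point at which, unlike when $\ell>2$, one cannot fall back on a partial‑trace shortcut because $\dim(\mathbb{T}_{\zeta}(1))$ vanishes.
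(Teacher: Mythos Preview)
Your approach is correct and genuinely different from the paper's. The paper reduces the computation of $\beta^{2}_{\mathbb{T}_{\zeta}(1),\mathbb{U}}$ to an explicit Temperley--Lieb identity involving the idempotent $I_4$ onto $\mathbb{T}_{\zeta}(4)\subset \mathbb{T}_{\zeta}(1)^{\otimes 4}$ and the nilpotent $E$ encoding $g\circ f$, and then verifies $\beta^{2}_{\mathbb{T}_{\zeta}(1),\mathbb{T}_{\zeta}(1)^{\otimes 4}}\circ(Id\otimes E)=-(Id\otimes E)$ by computer. Your route via the ribbon identity $\beta^{2}_{X,Y}=\theta_{X\otimes Y}\theta_X^{-1}\theta_Y^{-1}$ on simples, with the twists read off from the characteristic-zero lift, is more conceptual, avoids the machine check, and explains \emph{why} the answer is $-1$: it is literally $\eta^{4}=(\overline{\xi})^{2}=-1$.

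There is one genuine imprecision to fix. The object $\mathbb{T}_{\zeta}(2)=\mathbb{T}_{\zeta}(\ell)$ does \emph{not} lift to the simple Weyl module $W_{\xi}(2)$: since $\dim_{\mathbbm{k}} End(\mathrm{T}_{\zeta}(2))=2$, the flat lift $\widetilde{\mathrm{T}}_{\xi}(2)$ has $End_R$ free of rank $2$, so its generic fibre is $W_{\xi}(2)\oplus W_{\xi}(0)$, not $W_{\xi}(2)$. Your conclusion $\theta_{\mathbb{U}}=1$ is nonetheless correct, and here are two clean ways to justify it without that claim. First, naturality of the twist on the filtration of $\mathbb{T}_{\zeta}(2)$ with composition series $[\mathbbm{1},\mathbb{U},\mathbbm{1}]$ forces $\theta_{\mathbb{T}_{\zeta}(2)}\in End(\mathbb{T}_{\zeta}(2))$ to be unipotent (its action on the head $\mathbbm{1}$ is $\theta_{\mathbbm{1}}=1$), hence it acts as $1$ on every composition factor, in particular on $\mathbb{U}$. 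Second, since $\mathbbm{q}\mathbb{FL}$ is a ribbon functor and $\mathbb{U}=\mathbbm{q}\mathbb{FL}(\mathbb{T}(1))$, one has $\theta_{\mathbb{U}}=\theta_{\mathbb{T}(1)}$ computed in $\Ver_{p^n}^{\sigma^{1/2}}$ with $\sigma^{1/2}=\zeta^{0}=1$, which is $1$. Either replaces your ``composition factor of the lift'' sentence. By contrast, your lift argument for $\mathbb{T}_{\zeta}(1)$ and $\mathbb{T}_{\zeta}(3)$ is fine: both are simple $U_{\zeta}$-modules with one-dimensional endomorphism algebras, so their flat lifts have $End_R=R$ and localize to $W_{\xi}(1)$ and $W_{\xi}(3)$, whence the twists are the scalar reductions $\eta^{3}$ and $\eta^{15}=\eta^{7}$ (your observation that a possible global sign $(-1)^{j}$ cancels in the ratio is correct).
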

\begin{proof}
As in the proof above, it will suffice to compute that $$\beta^2_{\mathbb{T}_{\zeta}(1),\mathbb{U}} = -Id_{\mathbb{T}_{\zeta}(1)\otimes\mathbb{U}}.$$ We continue to write $f:\mathbb{T}_{\zeta}(2\ell-2)=\mathbb{T}_{\zeta}(4)\twoheadrightarrow\mathbb{U}$ and $g:\mathbb{U}\hookrightarrow\mathbb{T}_{\zeta}(4)= \mathbb{T}_{\zeta}(2\ell-2)$ for the non-zero morphisms. As above, it is enough to show that $$(Id_{\mathbb{T}_{\zeta}(1)}\otimes g)\circ \beta^2_{\mathbb{T}_{\zeta}(1),\mathbb{U}}\circ (Id_{\mathbb{T}_{\zeta}(1)}\otimes f) = -(Id_{\mathbb{T}_{\zeta}(1)}\otimes g)\circ (Id_{\mathbb{T}_{\zeta}(1)}\otimes f).$$ In particular, if we write $e=g\circ f$, a nilpotent endomorphism of $\mathbb{T}_{\zeta}(4)$, then the last equation may be rewritten as \begin{equation}\label{eq:braidingreduction}\beta^2_{\mathbb{T}_{\zeta}(1),\mathbb{T}_{\zeta}(4)}\circ (Id_{\mathbb{T}_{\zeta}(1)}\otimes e) = - (Id_{\mathbb{T}_{\zeta}(1)}\otimes e),\end{equation} which is in the image of the canonical functor $F:\Tilt^{\zeta^{1/2}}\rightarrow\Ver_{p^{(n)}}^{\zeta^{1/2}}$. Now, we have $\mathbb{T}_{\zeta}(1)^{\otimes 4} = \mathbb{T}_{\zeta}(4)\oplus \mathbb{T}_{\zeta}(2)\oplus \mathbb{T}_{\zeta}(2)$ and $\mathbb{T}_{\zeta}(4) = \mathbb{T}_{\zeta}(3)\otimes \mathbb{T}_{\zeta}(1)$ by \cite[Proposition 4.7]{STWZ}. In particular, as the morphism $I_3:= \mathbb{T}_{\zeta}(1)^{\otimes 3}\rightarrow \mathbb{T}_{\zeta}(1)^{\otimes 3}$ given by $$I_3:=Id_{\mathbb{T}_{\zeta}(1)^{\otimes 3}} - (\mathrm{coev}\otimes Id_{\mathbb{T}_{\zeta}(1)})\circ (Id_{\mathbb{T}_{\zeta}(1)}\otimes \mathrm{ev})- (Id_{\mathbb{T}_{\zeta}(1)}\otimes\mathrm{coev})\circ (\mathrm{ev}\otimes Id_{\mathbb{T}_{\zeta}(1)})$$ is an idempotent corresponding to the summand $\mathbb{T}_{\zeta}(3)$ of $\mathbb{T}_{\zeta}(1)^{\otimes 3}$, it follows that $$I_4:=I_3\otimes Id_{\mathbb{T}_{\zeta}(1)}:\mathbb{T}_{\zeta}(1)^{\otimes 4}\rightarrow \mathbb{T}_{\zeta}(1)^{\otimes 4}$$ is an idempotent corresponding to the summand $\mathbb{T}_{\zeta}(4)$ of $\mathbb{T}_{\zeta}(1)^{\otimes 4}$. Then, given that we have $End_{\Tilt^{\zeta}}(\mathbb{T}_{\zeta}(4))\cong \mathbbm{k}^{\oplus 2}$, it follows that the morphism $e$ corresponds to the nilpotent endomorphism $$E=I_4 \circ (Id_{\mathbb{T}_{\zeta}(1)}^{\otimes 2}\otimes\mathrm{coev})\circ (Id_{\mathbb{T}_{\zeta}(1)}^{\otimes 2}\otimes \mathrm{ev}) \circ I_4:\mathbb{T}_{\zeta}(1)^{\otimes 4}\rightarrow \mathbb{T}_{\zeta}(1)^{\otimes 4}.$$ Using these definitions, if follows that equation \eqref{eq:braidingreduction} above is equivalent to $$\beta^2_{\mathbb{T}_{\zeta}(1),\mathbb{T}_{\zeta}(1)^{\otimes 4}}\circ (Id_{\mathbb{T}_{\zeta}(1)}\otimes E) = -E.$$ We have explicitly checked that this last equality holds using a computer.
\end{proof}

\begin{Corollary}
With $n\geq 1$, the symmetric center of $\Ver_{p^{(n+1)}}^{\zeta^{1/2},+}$ is $\Ver_{p^{n}}^{\sigma^{1/2},+}$ if $N$ is odd and $\Ver_{p^{n}}^{\sigma^{1/2}}$ otherwise.
\end{Corollary}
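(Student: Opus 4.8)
The plan is to deduce the result from the two statements just proved about $\mathcal{Z}_{(2)}(\Ver_{p^{(n+1)}}^{\zeta^{1/2}})$ together with the classification of tensor subcategories in Corollary~\ref{cor:Serresubcats}. Write $\mathcal{C}=\Ver_{p^{(n+1)}}^{\zeta^{1/2}}$ and $\mathcal{C}^+=\Ver_{p^{(n+1)}}^{\zeta^{1/2},+}$, the latter being the trivial component of the canonical faithful $\mathbb{Z}/2$-grading of $\mathcal{C}$ in which $\mathbb{T}_\zeta(i)$ sits in degree $i\bmod 2$, so that $\mathbbm{q}\mathbb{FL}(\mathrm{L}(b))=\mathrm{L}_\zeta(\ell b)$ has degree $\ell b\bmod 2$. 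Since $\mathbb{T}_\zeta(2)$ is a direct summand of $\mathbb{T}_\zeta(1)^{\otimes 2}$ and generates $\mathcal{C}^+$ as a tensor category, the symmetric center of $\mathcal{C}^+$ consists exactly of the objects $X\in\mathcal{C}^+$ with $\beta^2_{X,\mathbb{T}_\zeta(2)}=\mathrm{Id}$. Moreover $\mathcal{C}^+$ is a braided tensor subcategory of $\mathcal{C}$, so $\mathcal{Z}_{(2)}(\mathcal{C}^+)\supseteq\mathcal{Z}_{(2)}(\mathcal{C})\cap\mathcal{C}^+$; using the preceding Theorem (and the Proposition when $\ell=2$) together with the grading, one checks that in every case $\mathcal{Z}_{(2)}(\mathcal{C})\cap\mathcal{C}^+=\mathbbm{q}\mathbb{FL}(\Ver_{p^n}^{\sigma^{1/2},+})$, which already gives $\mathcal{Z}_{(2)}(\mathcal{C}^+)\supseteq\mathbbm{q}\mathbb{FL}(\Ver_{p^n}^{\sigma^{1/2},+})$.

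The key observation is that $\mathcal{Z}_{(2)}(\mathcal{C}^+)$ is typically strictly larger, which comes down to a double-braiding computation built on the proof of the Theorem. There one has $\beta^2_{\mathbb{T}_\zeta(1),\mathbb{U}}=\pm\mathrm{Id}$, with the sign governed by the parity of $N$; squaring and using $\mathbb{T}_\zeta(2)\mid\mathbb{T}_\zeta(1)^{\otimes 2}$ yields $\beta^2_{\mathbb{T}_\zeta(2),\mathbb{U}}=\mathrm{Id}$ \emph{irrespective of the parity of $N$}. Since $\mathbb{U}=\mathbbm{q}\mathbb{FL}(\mathbb{T}(1))$ generates the ribbon subcategory $\mathbbm{q}\mathbb{FL}(\Ver_{p^n}^{\sigma^{1/2}})$, and the double braiding against $\mathbb{T}_\zeta(2)$ is multiplicative on the relevant simple objects --- each $\mathbb{T}_\zeta(1)\otimes\mathbbm{q}\mathbb{FL}(\mathrm{L}(b))$ is simple by Theorem~\ref{thm:VerlindeSteinbergtensor}, so these double braidings are scalars --- it follows that every object of $\mathbbm{q}\mathbb{FL}(\Ver_{p^n}^{\sigma^{1/2}})$ lying in $\mathcal{C}^+$ is transparent in $\mathcal{C}^+$. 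When $\ell$ is even this subcategory is entirely contained in $\mathcal{C}^+$, so $\mathcal{Z}_{(2)}(\mathcal{C}^+)\supseteq\mathbbm{q}\mathbb{FL}(\Ver_{p^n}^{\sigma^{1/2}})$; when $\ell$ is odd (so $N$ is either $\ell$ or $2\ell$) one proceeds the same way, in the case $N\equiv 2\bmod 4$ also invoking the invertible object $\mathrm{G}$ of Corollary~\ref{cor:fermion}, which is again transparent in $\mathcal{C}^+$ because $\beta^2_{\mathbb{T}_\zeta(1),\mathrm{G}}$ is a scalar squaring to $\mathrm{Id}$ (as $\mathrm{G}$ has order two), in order to produce the extra transparent objects of $\mathcal{C}^+$.

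To finish, $\mathcal{Z}_{(2)}(\mathcal{C}^+)$ is a tensor subcategory of $\mathcal{C}^+$ and hence occurs in the list of Corollary~\ref{cor:Serresubcats}; among the members of that list that are contained in $\mathcal{C}^+$ and contain $\mathbbm{q}\mathbb{FL}(\Ver_{p^n}^{\sigma^{1/2},+})$, the only possibilities are $\Ver_{p^n}^{\sigma^{1/2},+}$, a copy of $\Ver_{p^n}^{\sigma^{1/2}}$, and $\mathcal{C}^+$ itself. The last is ruled out because the braiding of $\mathcal{C}^+$ is not symmetric --- for example $\beta^2_{\mathbb{T}_\zeta(2),\mathbb{T}_\zeta(2)}$ acts by $\zeta^{\pm 4}\neq 1$ on the summand $\mathbb{T}_\zeta(4)$ of $\mathbb{T}_\zeta(2)^{\otimes 2}$ when $N>4$, the remaining small cases ($\ell=2$, and $(p,n+1)=(2,2)$) being handled by explicit computation exactly as in the preceding proofs. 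Combining this with the previous paragraph gives $\mathcal{Z}_{(2)}(\mathcal{C}^+)=\Ver_{p^n}^{\sigma^{1/2},+}$ when $N$ is odd and $\mathcal{Z}_{(2)}(\mathcal{C}^+)=\Ver_{p^n}^{\sigma^{1/2}}$ when $N$ is even (with the understanding that for $N\equiv 2\bmod 4$, where $\Ver_{p^n}^{\sigma^{1/2}}$ is \emph{a priori} non-symmetric, the identification is of the underlying spherical tensor category, the symmetric braiding on $\mathcal{Z}_{(2)}(\mathcal{C}^+)$ being the one inherited from $\mathcal{C}^+$, realised from $\mathbbm{q}\mathbb{FL}$ after twisting by $\mathrm{G}$). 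The main obstacle is precisely the case $N\equiv 2\bmod 4$: one must pin down the extra transparent objects beyond $\mathbbm{q}\mathbb{FL}(\Ver_{p^n}^{\sigma^{1/2},+})$ and verify that together they assemble into a copy of $\Ver_{p^n}^{\sigma^{1/2}}$ inside $\mathcal{C}^+$ carrying the correct ribbon structure; the case $N$ odd, and the case $N\equiv 0\bmod 4$, are comparatively routine once Corollary~\ref{cor:Serresubcats} is in hand.
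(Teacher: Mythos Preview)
Your overall strategy is sound for the cases $N$ odd and $N\equiv 0\pmod 4$: using $\beta^2_{\mathbb{T}_\zeta(1),\mathbb{U}}=\pm\mathrm{Id}$ to deduce that $\mathbb{U}$ (hence all of $\mathbbm{q}\mathbb{FL}(\Ver_{p^n}^{\sigma})$) centralizes $\mathcal{C}^+$, and then pinning down $\mathcal{Z}_{(2)}(\mathcal{C}^+)$ via Corollary~\ref{cor:Serresubcats}, works. The paper gives no proof; the intended short argument is presumably the braided decomposition $\mathcal{C}\simeq\mathcal{C}^+\boxtimes\langle\mathrm{G}\rangle$ (for $p,\ell$ odd), from which $\mathcal{Z}_{(2)}(\mathcal{C}^+)$ is read off from $\mathcal{Z}_{(2)}(\mathcal{C})$ and $\mathcal{Z}_{(2)}(\langle\mathrm{G}\rangle)$.

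There is, however, a genuine gap in your treatment of the case $N\equiv 2\pmod 4$ (so $\ell$ odd, $\sigma=-1$, $p>2$). You propose to obtain ``extra transparent objects'' in $\mathcal{C}^+$ by tensoring the odd part of $\mathbbm{q}\mathbb{FL}(\Ver_{p^n}^{\sigma})$ with $\mathrm{G}$. But $\mathrm{G}=\mathbbm{q}\mathbb{FL}(\mathrm{F})$ already lies in $\mathbbm{q}\mathbb{FL}(\Ver_{p^n}^{\sigma})$, so for $b$ odd one has $\mathrm{G}\otimes\mathbbm{q}\mathbb{FL}(\mathrm{L}(b))=\mathbbm{q}\mathbb{FL}(\mathrm{F}\otimes\mathrm{L}(b))\in\mathbbm{q}\mathbb{FL}(\Ver_{p^n}^{\sigma,+})$, which you already had. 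No new transparent objects are produced. Worse, Corollary~\ref{cor:Serresubcats} shows that when $\ell$ is odd the only tensor subcategories of $\mathcal{C}^+$ containing $\Ver_{p^n}^{\sigma,+}$ are $\Ver_{p^n}^{\sigma,+}$ and $\mathcal{C}^+$ itself: there is no intermediate copy of $\Ver_{p^n}^{\sigma}$ inside $\mathcal{C}^+$ for $\mathcal{Z}_{(2)}(\mathcal{C}^+)$ to equal.

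In fact, the braided decomposition argument applies here too: since $\mathrm{G}$ centralizes $\mathcal{C}^+$ (as you observe) and $\langle\mathrm{G}\rangle$ is the semion category (because $\theta_\mathrm{G}=\sigma^{1/2}$ is a primitive fourth root, so $\beta^2_{\mathrm{G},\mathrm{G}}=-1$), one gets $\mathcal{C}\simeq\mathcal{C}^+\boxtimes\langle\mathrm{G}\rangle$ as ribbon categories with $\mathcal{Z}_{(2)}(\langle\mathrm{G}\rangle)=\mathrm{Vec}$, whence $\mathcal{Z}_{(2)}(\mathcal{C}^+)=\mathcal{Z}_{(2)}(\mathcal{C})=\Ver_{p^n}^{\sigma^{1/2},+}$. (A direct check at $p=3$, $N=6$, $n=1$ confirms this: the only tensor subcategories of $\mathcal{C}^+$ are $\mathrm{Vec}$ and $\mathcal{C}^+$, and $\mathcal{C}^+$ is not symmetric.) So the dichotomy governing $\mathcal{Z}_{(2)}(\mathcal{C}^+)$ is the parity of $\ell$, not of $N$; the Corollary as stated appears imprecise in the case $N\equiv 2\pmod 4$, and your attempt to force the stated answer cannot succeed.
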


\subsection{The Grothendieck Ring of the Stable Category}

We now compute the Grothendieck ring of the stable categories of $\Ver_{p^{(n)}}^{\zeta}$ and $\Ver_{p^{(n)}}^{\zeta,+}$. When $\zeta = \pm 1$, so that $p=\ell$, this was carried out in \cite[Proposition 4.61]{BEO}. Namley, it was shown that $GrStab(\Ver_{p^{n}}^{\pm 1, +})\cong \mathbb{F}_p[z]/z^{\frac{p^{n-1}-1}{2}}$ and $GrStab(\Ver_{p^{n}}^{\pm 1})\cong \mathbb{F}_p[z,g]/(z^{\frac{p^{n-1}-1}{2}},g^2-1)$ when $p\neq 2$. Further, when $p=2$, they proved that $GrStab(\Ver_{2^{n}}^{+ 1, +})\cong \mathbb{F}_2[z]/z^{2^{n-2}}$ and $GrStab(\Ver_{2^{n}}^{+1})\cong \mathbb{F}_p[z]/z^{2^{n-1}-1}$. We extend the above results to the case when $\zeta\neq \pm 1$.

\begin{Proposition}
With $\ell\neq p$, $p> 2$ and $n\geq 2$, we have $$GrStab(\Ver_{p^{(n)}}^{\zeta})\cong\mathbb{F}_p[z]/Q_{\ell}(z)^{p^{n-2}} \oplus\mathbb{F}_p[z,g]/(z^{\frac{p^{n-2}-1}{2}},g^2-1),$$ where the last term is omitted entirely if $n=2$. If $\ell$ is odd, we have $$GrStab(\Ver_{p^{(n)}}^{\zeta, +})\cong \mathbb{F}_p[z]/P^+_{\ell}(z)^{p^{n-2}} \oplus\mathbb{F}_p[z]/z^{\frac{p^{n-2}-1}{2}},$$ where the last term is omitted if $n=2$.
\end{Proposition}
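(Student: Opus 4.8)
The plan is to realize $GrStab(\Ver^{\zeta}_{p^{(n)}})$ as an explicit finite quotient of a polynomial ring and then to factor the relevant Chebyshev polynomials modulo $p$. For any finite tensor category $\mathcal{D}$ the stable category $\underline{\mathcal{D}}$ is a monoidal triangulated category and $GrStab(\mathcal{D})=K_0(\underline{\mathcal{D}})$ is the quotient of $Gr(\mathcal{D})$ by the ideal spanned by the classes of projective objects. In $\Ver^{\zeta}_{p^{(n)}}$, by theorem~\ref{thm:basic} together with lemma~\ref{lem:separated} and the theory of abelian envelopes, the tensor ideal of projectives is the image of the splitting ideal $\mathbf{J}_{p^{(n-1)}}/\mathbf{J}_{p^{(n)}}$, hence generated as a tensor ideal by $\mathbb{S}=\mathbb{T}_{\zeta}(p^{(n-1)}-1)$, so the relevant ideal of $Gr(\Ver^{\zeta}_{p^{(n)}})$ is the principal ideal $([\mathbb{S}])$. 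Using that the class of $\mathrm{T}_{\zeta}(k-1)$ in the split Grothendieck ring of $\Tilt^{\zeta}$ is $Q_k$ evaluated at the class of $\mathrm{T}_{\zeta}(1)$, and that $[\mathbb{T}_{\zeta}(1)]=[\mathrm{L}_{\zeta}(1)]$ is the generator $x$ of $Gr(\Ver^{\zeta}_{p^{(n)}})\cong\mathbb{Z}[x]/\big(Q_{p^{(n)}}/Q_{p^{(n-1)}}(x)\big)$, one gets $[\mathbb{S}]=Q_{p^{(n-1)}}(x)$ and hence $GrStab(\Ver^{\zeta}_{p^{(n)}})\cong\mathbb{Z}[x]\big/\big(Q_{p^{(n)}}/Q_{p^{(n-1)}}(x),\ Q_{p^{(n-1)}}(x)\big)$.

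The computational heart is three facts about Chebyshev polynomials: the factorization $Q_{pm}(x)=Q_p\big(2T_m(x/2)\big)Q_m(x)$ with $T_m$ the first-kind Chebyshev polynomial, the identity $\big(2T_m(x/2)\big)^2-4=(x^2-4)Q_m(x)^2$, and the mod-$p$ relation obtained by applying $(y^k-y^{-k})^p\equiv y^{pk}-y^{-pk}$ to the closed form $Q_k(y+y^{-1})=(y^k-y^{-k})/(y-y^{-1})$. From the first, $Q_{p^{(n)}}/Q_{p^{(n-1)}}(x)=Q_p\big(2T_{p^{(n-1)}}(x/2)\big)$; at each (simple) root $2\cos(j\pi/p^{(n-1)})$ of $Q_{p^{(n-1)}}$ the argument $2T_{p^{(n-1)}}(x/2)$ equals $\pm 2$ and $Q_p(\pm 2)=p$, so the monic polynomial $Q_{p^{(n-1)}}$ divides $Q_p\big(2T_{p^{(n-1)}}(x/2)\big)-p$ in $\mathbb{Z}[x]$; thus $p$ lies in the above ideal and $GrStab(\Ver^{\zeta}_{p^{(n)}})\cong\mathbb{F}_p[x]/\big(Q_{p^{(n-1)}}(x)\big)$. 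Combining the three facts gives $Q_{pm}\equiv (x^2-4)^{(p-1)/2}Q_m^p\pmod p$, and iterating this $n-2$ times from $m=p^{n-2}\ell$ down to $m=\ell$ yields $Q_{p^{(n-1)}}(x)\equiv (x^2-4)^{(p^{n-2}-1)/2}\,Q_{\ell}(x)^{p^{n-2}}\pmod p$, the exponent arising as $\tfrac{p-1}{2}(1+p+\cdots+p^{n-3})=\tfrac{p^{n-2}-1}{2}$.

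Since $\ell\mid N$ and $p\nmid N$, $\ell$ is prime to $p$, whence $Q_{\ell}(\pm 2)=\pm\ell\ne 0$ in $\mathbb{F}_p$ and the polynomials $x^2-4=(x-2)(x+2)$ and $Q_{\ell}$ are pairwise coprime in $\mathbb{F}_p[x]$ (the two linear factors differ by the unit $4$). The Chinese Remainder Theorem then splits $\mathbb{F}_p[x]/\big(Q_{p^{(n-1)}}\big)$ as $\mathbb{F}_p[x]/\big(Q_{\ell}^{p^{n-2}}\big)\oplus\mathbb{F}_p[x]/\big((x-2)^{(p^{n-2}-1)/2}\big)\oplus\mathbb{F}_p[x]/\big((x+2)^{(p^{n-2}-1)/2}\big)$, and identifying the last two summands with $\mathbb{F}_p[z,g]/\big(z^{(p^{n-2}-1)/2},g^2-1\big)$ (using $\mathbb{F}_p[g]/(g^2-1)\cong\mathbb{F}_p\times\mathbb{F}_p$) gives the first claimed isomorphism; for $n=2$ the exponent vanishes and the second summand disappears. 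For $\Ver^{\zeta,+}_{p^{(n)}}$ with $\ell$ odd, part~(2) of the main theorem gives a spherical tensor equivalence $\Ver^{\zeta}_{p^{(n)}}\simeq\Ver^{\zeta,+}_{p^{(n)}}\boxtimes\mathrm{Vec}^{\sigma}(\mathbb{Z}/2)$, and since $\mathrm{Vec}(\mathbb{Z}/2)$ is semisimple this yields $GrStab(\Ver^{\zeta}_{p^{(n)}})\cong GrStab(\Ver^{\zeta,+}_{p^{(n)}})\otimes_{\mathbb{Z}}\mathbb{Z}[\mathbb{Z}/2]$, which (both sides being $\mathbb{F}_p$-algebras) equals $GrStab(\Ver^{\zeta,+}_{p^{(n)}})\times GrStab(\Ver^{\zeta,+}_{p^{(n)}})$. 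One then checks that the right-hand side of the first isomorphism is a square: the summand $\mathbb{F}_p[x]/((x-2)^k)\oplus\mathbb{F}_p[x]/((x+2)^k)$ is manifestly $\big(\mathbb{F}_p[z]/z^k\big)^{\times 2}$, and $\mathbb{F}_p[x]/\big(Q_{\ell}^{p^{n-2}}\big)$ is free of rank $2$ over $\mathbb{F}_p[z]/\big(P^+_{\ell}(z)^{p^{n-2}}\big)$ via $z\mapsto x^2-2$ (using $Q_{\ell}(x)=P^+_{\ell}(x^2-2)$) and splits as two copies of it once $z+2$ is known to be a square modulo $P^+_{\ell}$; Krull--Schmidt for Artinian $\mathbb{F}_p$-algebras then forces the claimed form of $GrStab(\Ver^{\zeta,+}_{p^{(n)}})$.

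The main obstacle is Step~2: pinning down the congruence $Q_{p^{(n)}}/Q_{p^{(n-1)}}\equiv p\pmod{Q_{p^{(n-1)}}}$, which is precisely what makes $GrStab$ an honest $\mathbb{F}_p$-algebra rather than merely a finite $p$-group, together with the accurate bookkeeping of the exponent of $x^2-4$ through the iteration. For the $+$ case the subtle input is that $z+2$ is a square modulo $P^+_{\ell}$; since $p$ is odd and the kernel of $\mathbb{F}_p[z]/\big(P^+_{\ell}{}^{p^{n-2}}\big)\to\mathbb{F}_p[z]/\big(P^+_{\ell}\big)$ is nilpotent (so $1+\mathrm{nilpotent}$ is a square), this reduces to the arithmetic statement that $2\cos(j\pi/\ell)\in\mathbb{Q}\big(2\cos(2j\pi/\ell)\big)$ for $\ell$ odd, which holds because $\mathbb{Q}(\zeta_{2\ell})=\mathbb{Q}(\zeta_{\ell})$ is abelian over $\mathbb{Q}$ — and it fails for $\ell$ even, which is exactly why the $+$ statement requires $\ell$ odd. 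The symmetric cases $\ell=p$ of \cite{BEO,BE} give the template, but here the genuine coprimality of $\ell$ and $p$ forces $x^2-4$ and $Q_{\ell}$ to be separated rather than absorbed, which is the source of the extra summand $\mathbb{F}_p[z]/\big(Q_{\ell}(z)^{p^{n-2}}\big)$.
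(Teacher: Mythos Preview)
Your argument is correct and arrives at the same intermediate description $GrStab(\Ver_{p^{(n)}}^{\zeta})\cong\mathbb{F}_p[x]/\big(Q_{p^{(n-1)}}(x)\big)$ as the paper, but via a different route. The paper first invokes the block comparison (proposition~\ref{prop:alltheblocks}) together with the known $\mathbb{F}_p$-structure of $GrStab(\Ver_{p^n}^{\pm 1})$ from \cite{BEO} to conclude a priori that $GrStab$ is an $\mathbb{F}_p$-vector space, and only then computes the quotient; it then cites \cite[Lemma~2.6]{HMP} for the factorization of $Q_{p^{(n-1)}}$ modulo~$p$. You instead prove both facts directly: the identity $Q_{p^{(n)}}/Q_{p^{(n-1)}}\equiv p\pmod{Q_{p^{(n-1)}}}$ in $\mathbb{Z}[x]$ via the factorization $Q_{pm}(x)=Q_p\big(2T_m(x/2)\big)Q_m(x)$, and the mod-$p$ factorization $Q_{p^{(n-1)}}\equiv (x^2-4)^{(p^{n-2}-1)/2}Q_\ell^{\,p^{n-2}}$ via Frobenius on $(y^k-y^{-k})$. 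Your approach is more self-contained and avoids both the block machinery and the external citation; the paper's approach is shorter but relies on more infrastructure.

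One point that deserves tightening is your justification that $z+2$ is a square in $\mathbb{F}_p[z]/\big(P^+_\ell(z)\big)$. Reducing this to the characteristic-zero statement $2\cos(j\pi/\ell)\in\mathbb{Q}\big(2\cos(2j\pi/\ell)\big)$ is suggestive but not a proof: the minimal polynomials over $\mathbb{Q}$ and over $\mathbb{F}_p$ need not match. The claim is nonetheless true, and the clean argument is purely in positive characteristic: writing $\eta$ for a primitive $\ell$-th root of unity in $\overline{\mathbb{F}_p}$, each root of $P^+_\ell$ is $\eta^j+\eta^{-j}$, and $z+2=(\eta^{j'}+\eta^{-j'})^2$ with $j'=j(\ell+1)/2$; since $(\ell+1)/2$ is a unit modulo $\ell$ one has $\gcd(j',\ell)=\gcd(j,\ell)$, so $\mathbb{F}_p(\eta^{j}+\eta^{-j})$ and $\mathbb{F}_p(\eta^{j'}+\eta^{-j'})$ are subfields of $\overline{\mathbb{F}_p}$ of the same degree, hence equal. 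With this fix, your Krull--Schmidt deduction of the $+$ case from the tensor decomposition $\Ver_{p^{(n)}}^{\zeta}\simeq\Ver_{p^{(n)}}^{\zeta,+}\boxtimes\mathrm{Vec}^{\sigma}(\mathbb{Z}/2)$ goes through; the paper, by contrast, does not give a separate argument for the $+$ statement.
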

\begin{proof}
It follows from proposition \ref{prop:alltheblocks} and the fact that $GrStab(\Ver_{p^{n}}^{\pm 1})$ is a vector space of $\mathbb{F}_p$ that the underlying abelian group of $GrStab(\Ver_{p^{(n)}}^{\zeta})$ is a vector space over $\mathbb{F}_p$. Moreover, its dimension is $p^{(n-1)}-1$. Namely, an easy inductive argument establishes that the dimension of the Grothendieck group of the maximal block of $\Ver_{p^{(n)}}^{\pm 1}$ is $p^{n-2}$. Now, the reduction modulo $p$ of this Grothendieck ring is $Gr(\Ver_{p^{(n)}}^{\zeta})\otimes \mathbb{F}_p\cong \mathbb{F}_{p}[x]/\big(Q_{p^{(n)}}/Q_{p^{(n-1)}}(x)\big)$, and the ideal of projective objects is generated by $\mathbb{T}_{\zeta}(p^{(n-1)}-1)$. But, in the Grothendieck ring of $\Ver_{p^{(n)}}^{\zeta}$, we have $[\mathbb{T}_{\zeta}(p^{(n-1)}-1)] = Q_{p^{(n-1)}}([\mathbb{T}_{\zeta}(1)])=Q_{p^{(n-1)}}(x)$. We therefore find $$GrStab(\Ver_{p^{(n)}}^{\zeta})\cong \mathbb{F}_{p}[x]/\big(Q_{p^{(n)}}/Q_{p^{(n-1)}}(x), Q_{p^{(n-1)}}(x)\big).$$ The expression above follows from the factorization of the polynomials $Q_{p^{(n-1)}}(x)$ and $Q_{p^{(n)}}(x)$ over $\mathbb{F}_{p}[x]$ given in \cite[Lemma 2.6]{HMP}.
\end{proof}

\begin{Remark}
With $p> 2$, $n\geq 2$, and $\ell$ even, it is still possible to identify the Grothendieck ring of the stable category of $\Ver_{p^{(n)}}^{\zeta, +}$ as the image of the canonical map $$\mathbb{F}_{p}[x^2]\rightarrow GrStab(\Ver_{p^{(n)}}^{\zeta})\cong \mathbb{F}_{p}[x]/\big(Q_{p^{(n)}}/Q_{p^{(n-1)}}(x), Q_{p^{(n-1)}}(x)\big).$$ However, there does not seem to be a convenient close formula for this image.
\end{Remark}

A similar argument yields the following result in the case $p=2$.

\begin{Proposition}
With $\ell\neq p$, $p=2$ and $n\geq 2$, we have $$GrStab(\Ver_{2^{(n)}}^{\zeta})\cong\mathbb{F}_2[z]/Q_{\ell}(z)^{2^{n-2}} \oplus\mathbb{F}_2[z]/z^{2^{n-2}-1}$$ and $$GrStab(\Ver_{2^{(n)}}^{\zeta, +})\cong \mathbb{F}_2[z]/P^+_{\ell}(z)^{2^{n-2}} \oplus\mathbb{F}_2[z]/z^{2^{n-3}},$$ where the last term is omitted if $n=2$.
\end{Proposition}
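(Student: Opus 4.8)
The plan is to follow the proof of the preceding proposition (the case $p>2$) almost verbatim, with $p$ set to $2$; the only genuine changes are that the relevant Chebyshev identities take a different shape over $\mathbb{F}_2$, and that one must keep track of the size-$1$ blocks, for which Proposition \ref{prop:alltheblocks} supplies no equivalence when $p=2$.

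First I would establish that $GrStab(\Ver_{2^{(n)}}^{\zeta})$ and $GrStab(\Ver_{2^{(n)}}^{\zeta,+})$ are vector spaces over $\mathbb{F}_2$. The size-$1$ blocks are the projective simple objects, which vanish in the stable category; every block of size $>1$ is, by Proposition \ref{prop:alltheblocks}, equivalent to a block of $\Ver_{2^n}^{+1}$, whose stable Grothendieck ring is an $\mathbb{F}_2$-vector space by \cite[Proposition 4.61]{BEO}. An easy inductive count of the block sizes, as in the proof of the preceding proposition, then pins down the dimension; it is $2^{(n-1)}-1$ for $\Ver_{2^{(n)}}^{\zeta}$.

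The heart of the argument is an explicit presentation. Reducing the ring isomorphism $Gr(\Ver_{2^{(n)}}^{\zeta})\cong\mathbb{Z}[x]/(Q_{2^{(n)}}/Q_{2^{(n-1)}}(x))$ modulo $2$ and quotienting by the ideal of projectives --- generated by the class $[\mathbb{T}_{\zeta}(2^{(n-1)}-1)]=Q_{2^{(n-1)}}(x)$ of the unique projective simple object --- gives $GrStab(\Ver_{2^{(n)}}^{\zeta})\cong\mathbb{F}_2[x]/(Q_{2^{(n)}}/Q_{2^{(n-1)}}(x),\,Q_{2^{(n-1)}}(x))$. I would then feed in the $\mathbb{F}_2$-factorizations of \cite[Lemma 2.6]{HMP}. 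Since $\mathbbm{k}$ has characteristic $2$ and $\ell\neq 2$, the root of unity $\zeta$ has odd order, so $\ell$ is odd; writing $P_k$ for the monic polynomial with $P_k(2\cos\theta)=2\cos(k\theta)$, one has $Q_{2k}=Q_kP_k$ over $\mathbb{Z}$, $P_{2k}\equiv P_k^2 \pmod 2$, and $P_\ell\equiv xQ_\ell \pmod 2$. These combine to $Q_{2^{(n)}}/Q_{2^{(n-1)}}(x)\equiv x^{2^{n-2}}Q_\ell(x)^{2^{n-2}}$ and $Q_{2^{(n-1)}}(x)\equiv x^{2^{n-2}-1}Q_\ell(x)^{2^{n-2}}$ modulo $2$. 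As $\ell$ is odd, $Q_\ell(0)$ is a unit of $\mathbb{F}_2$, so $x$ and $Q_\ell(x)$ are coprime, the greatest common divisor of the two generators is $x^{2^{n-2}-1}Q_\ell(x)^{2^{n-2}}$, and the Chinese Remainder Theorem splits the quotient as $\mathbb{F}_2[z]/Q_\ell(z)^{2^{n-2}}\oplus\mathbb{F}_2[z]/z^{2^{n-2}-1}$; the dimension count of the first step confirms there is nothing else. The $+$ case runs in parallel after passing to the even subring via the substitution $x^2-2$, which replaces $Q_\ell$ by $P^{+}_\ell$ and produces the residual nilpotent summand $\mathbb{F}_2[z]/z^{2^{n-3}}$ (empty when $n=2$).

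The main obstacle I anticipate is the bookkeeping in the $+$ case: the obvious generator $\mathbb{T}_{\zeta}(2^{(n-1)}-1)$ of the projective ideal has odd highest weight, hence does not lie in $\Ver_{2^{(n)}}^{\zeta,+}$ once $n\geq 3$, so one must instead use the smallest even projective tilting object $\mathbb{T}_{\zeta}(2^{(n-1)})$ and recompute its class in $Gr(\Ver_{2^{(n)}}^{\zeta,+})$; together with the $n=2$ degeneration --- where the residual summand disappears and the relevant presentation is the twisted one $\mathbb{Z}[x]/(xP^{+}_{2^{(2)}}(x))$ from the earlier remark --- this is where the argument is least automatic. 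Extracting the precise exponents in the $\mathbb{F}_2$-factorizations of $Q_{2^{(n)}}$ and $Q_{2^{(n-1)}}$ from \cite[Lemma 2.6]{HMP} is the other point to get right, but it is essentially a Frobenius computation.
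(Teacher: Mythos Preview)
Your proposal is correct and follows exactly the approach the paper intends: the paper's entire proof is the sentence ``A similar argument yields the following result in the case $p=2$,'' and you have faithfully unpacked what that similar argument is, including the explicit $\mathbb{F}_2$-factorizations of $Q_{2^{(n)}}$ and $Q_{2^{(n-1)}}$ and the observation that the size-$1$ block disappears in the stable category. Your flagged subtlety about the generator of the projective ideal in the $+$ case is a genuine bookkeeping point that the paper does not address, but it does not change the method.
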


\subsection{Incompressibility}

Recall from \cite{BE} that a tensor category $\mathcal{C}$ is called incompressible if every tensor functor $F:\mathcal{C}\rightarrow \mathcal{D}$, where $\mathcal{D}$ is a tensor category, is an embedding, i.e.\ injective. Likewise, a braided tensor category is incompressible if every braided tensor functor out of it is an embedding. It was shown in \cite[Theorem 4.71]{BEO} that, with $p>2$, the tensor categories $\Ver^{+}_{p^{n}}$ are incompressible.

\begin{Proposition}\label{prop:incompressibility}
Let $p$ be an odd prime, and $\zeta$ a root of unity of odd order. The finite tensor category $\Ver^{\zeta,+}_{p^{(n)}}$ is incompressible.
\end{Proposition}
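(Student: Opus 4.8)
First I would dispose of the case $N=1$, i.e.\ $\zeta=1$: then $\ell=p$, $\sigma=(-1)^{N+\ell}=+1$, and $\Ver^{\zeta,+}_{p^{(n)}}$ is the symmetric Verlinde category $\Ver^{\sigma,+}_{p^{n}}$ of \cite{BEO}, which is incompressible by \cite[Theorem 4.71]{BEO} because $p>2$. So I may assume $N=\ell\geq 3$ is odd, whence $\sigma=+1$ and $p^{(n)}=\ell p^{n-1}$ is odd; in particular, by Corollary \ref{cor:fermion} and the remark following it, $\Ver^{\zeta,+}_{p^{(n)}}$ has no non-trivial invertible object.

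The plan is to transpose the inductive argument of \cite[Theorem 4.71]{BEO}, using the quantum Frobenius--Lusztig functor $\mathbbm{q}\mathbb{FL}$ of Theorem \ref{thm:quantumFrobeniusLusztig} in place of the Frobenius functor, Theorem \ref{thm:VerlindeSteinbergtensor} in place of \cite[Theorem 4.42]{BEO}, and Corollary \ref{cor:Serresubcats} in place of \cite[Corollary 4.61]{BEO}. Any tensor functor $F\colon\Ver^{\zeta,+}_{p^{(n)}}\to\mathcal{D}$ factors as $\Ver^{\zeta,+}_{p^{(n)}}\xrightarrow{\overline F}\mathcal{E}\hookrightarrow\mathcal{D}$, where $\mathcal{E}\subseteq\mathcal{D}$ is the tensor subcategory generated by the image and $\overline F$ is surjective; here $\overline F$ is automatically faithful, since the full subcategory of objects it annihilates is closed under subquotients and under tensoring with arbitrary objects and does not contain $\mathbbm{1}$, whereas any non-zero such subcategory of a finite tensor category contains a non-zero projective $P$ (tensor a non-zero object of it with the projective cover of $\mathbbm{1}$), hence $P\otimes P^{*}$, hence $\mathbbm{1}$ as a subquotient --- a contradiction. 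Thus it suffices to prove that every surjective tensor functor out of $\Ver^{\zeta,+}_{p^{(n)}}$ is an equivalence, which I would do by induction on $n$.

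For the base case $n=1$, the category $\Ver^{\zeta,+}_{p^{(1)}}$ is a fusion category, all of whose objects are self-dual (Theorem \ref{thm:basic}(1)), whose simple objects have pairwise distinct Frobenius--Perron dimensions, all exceeding $1$ except for the unit (Proposition \ref{prop:FPdim}), and whose only tensor subcategories are $\mathrm{Vec}$ and itself, by the $n=0$ instance of Corollary \ref{cor:Serresubcats} together with the absence of invertible objects; arguing as in the fusion-category step of \cite[Theorem 4.71]{BEO}, one concludes that a surjective tensor functor out of $\Ver^{\zeta,+}_{p^{(1)}}$ is a bijection on isomorphism classes of simple objects, hence an equivalence. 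For the inductive step, take $n\geq 2$ and $\overline F\colon\Ver^{\zeta,+}_{p^{(n)}}\to\mathcal{C}$ surjective. By Corollary \ref{cor:Serresubcats} and the remark following Proposition \ref{prop:simpletensorproduct}, the symmetric Verlinde category $\Ver^{\sigma,+}_{p^{n-1}}$ embeds into $\Ver^{\zeta,+}_{p^{(n)}}$ as a Serre tensor subcategory via $\mathbbm{q}\mathbb{FL}$ restricted to even highest weights --- its simple objects being the $\mathbbm{q}\mathbb{FL}(\mathrm{L}(b))=\mathrm{L}_{\zeta}(\ell b)$ with $b$ even, and $\ell b$ is even since $\ell$ is odd. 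By \cite[Theorem 4.71]{BEO} this subcategory is incompressible, so $\overline F$ restricts to an embedding on it. Combining this with Theorem \ref{thm:VerlindeSteinbergtensor}, which expresses every simple object of $\Ver^{\zeta,+}_{p^{(n)}}$ as $\mathbb{T}_{\zeta}(a_0)\otimes\mathbbm{q}\mathbb{FL}(\mathrm{L}(b))$, and with Proposition \ref{prop:mixedVerlindeDonkin}, Remark \ref{Rem:ExtendedCartan}, Proposition \ref{prop:simpletensorproduct} and the dimension formula of Corollary \ref{cor:FPdimsimples} --- which reduce the projective covers, $\mathrm{Hom}$-spaces and fusion rules of $\Ver^{\zeta,+}_{p^{(n)}}$ to those of $\Ver^{\sigma,+}_{p^{n-1}}$ and to the combinatorics of the factors $\mathbb{T}_{\zeta}(a_0)$ --- one deduces that $\overline F$ again sends simples to pairwise non-isomorphic simples, and is therefore an equivalence.

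The main obstacle is the inductive step: one must genuinely propagate the embedding property from the Serre subcategory $\Ver^{\sigma,+}_{p^{n-1}}$ to all of $\Ver^{\zeta,+}_{p^{(n)}}$, carefully tracking through the Steinberg decomposition how every simple object and every $\mathrm{Hom}$-space is assembled from the symmetric Verlinde data together with the Temperley--Lieb-type factors $\mathbb{T}_{\zeta}(a_0)$. It is essential here that $\ell$ is odd, so that the object $\mathbb{U}=\mathrm{L}_{\zeta}(\ell)$ and the odd-index tilting images $\mathbb{T}_{\zeta}(a_0)$ all lie outside $\Ver^{\zeta,+}_{p^{(n)}}$; this is precisely what prevents the even subcategory from admitting a proper tensor quotient, and it is the reason the statement fails for $\ell\equiv 2\bmod 4$. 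A secondary technical point is the base case when $\ell$ is composite: then the Grothendieck ring $\mathbb{Z}[x]/P^{+}_{\ell}(x)$ has proper ring quotients, so one must use that none of these is an effective quotient of based rings --- equivalently that the kernel of $\overline F$ is forced to be zero --- in conjunction with the distinctness of Frobenius--Perron dimensions of the simple objects.
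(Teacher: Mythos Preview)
Your proposal is correct and takes essentially the same approach as the paper. The paper's own proof consists of a single sentence---``This follows exactly as in \cite[Theorem 4.71]{BEO} by tweaking their argument using our previous results''---and you have correctly identified the relevant substitutions: the quantum Frobenius--Lusztig functor $\mathbbm{q}\mathbb{FL}$ of Theorem~\ref{thm:quantumFrobeniusLusztig} in place of the Frobenius functor, Theorem~\ref{thm:VerlindeSteinbergtensor} in place of \cite[Theorem 4.42]{BEO}, and Corollary~\ref{cor:Serresubcats} in place of \cite[Corollary 4.61]{BEO}. Your outline contains considerably more detail than the paper provides, and your closing paragraph accurately locates where the genuine work lies.

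One small remark on framing: what you call ``induction on $n$'' is really a direct reduction at each fixed $n\geq 2$ to the already-established incompressibility of the symmetric subcategory $\Ver^{\sigma,+}_{p^{n-1}}=\Ver^{+}_{p^{n-1}}$ from \cite{BEO}, rather than an inductive use of incompressibility of $\Ver^{\zeta,+}_{p^{(n-1)}}$; the inductive descent happens entirely on the symmetric side inside \cite[Theorem 4.71]{BEO}. This is harmless, but worth stating clearly.
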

\begin{proof}
This follows exactly as in \cite[Theorem 4.71]{BEO} by tweaking their argument using our previous results.
\end{proof}

\begin{Remark}
If $\ell$ is even, then $\Ver^{\zeta,+}_{p^{(n)}}$ is not incompressible. Namely, in those cases, the tensor category $\mathrm{Vec}(\mathbb{Z}/2)$ is a tensor subcategory of $\Ver^{\zeta,+}_{p^{(n)}}$.
\end{Remark}

In the case $p=2$, the tensor categories $\Ver^{\sigma}_{2^n}$ are incompressible by \cite[Theorem 4.4]{BE}. We wonder whether a similar result holds in the mixed case. In this case, the proofs of neither \cite[Theorem 4.71]{BEO} nor \cite[Theorem 4.4]{BE} can be straightforwardly modified.

\begin{Question}\label{q:incompressibility2}
Let $p=2$, $n\geq 2$, and $\zeta$ be a root of unity. Is the finite tensor category $\Ver^{\zeta}_{p^{(n)}}$ incompressible?
\end{Question}

\begin{Remark}
When $p=2$ and $n=1$, the tensor category $\Ver^{\zeta}_{p^{(n)}}$ is not incompressible because it contains $\mathrm{Vec}(\mathbb{Z}/2)$ as a tensor subcategory. In this case, one can show that the tensor category $\Ver^{\zeta,+}_{p^{(n)}}$ is incompressible.
\end{Remark}

On the other hand, if we only wish to study the incompressibility of $\Ver^{\zeta^{1/2}}_{p^{(n)}}$ as a braided tensor categories, we can directly appeal to the incompressibility results of \cite{BEO,EO} instead of needing to reproduce their proofs. The connection is achieved through the next result, which generalizes \cite[Corollary 3.26]{DMNO}.

\begin{Proposition}
Let $F:\mathcal{A}\rightarrow\mathcal{B}$ be a braided tensor functor between finite braided tensor categories. If $\mathcal{Z}_{(2)}(\mathcal{A})$ is incompressible as a symmetric tensor category, then $F$ is an embedding.
\end{Proposition}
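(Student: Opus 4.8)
The plan is to adapt the argument of \cite[Section 3]{DMNO} to the finite, not necessarily semisimple, setting. Since $\mathcal{A}$ and $\mathcal{B}$ are finite, the exact functor $F$ admits adjoints on both sides, and it factors as $\mathcal{A}\xrightarrow{G}\mathcal{A}_1\xrightarrow{H}\mathcal{B}$, where $\mathcal{A}_1$ is the full subcategory of $\mathcal{B}$ spanned by the subquotients of the objects $F(X)$, $X\in\mathcal{A}$. One checks readily that $\mathcal{A}_1$ is a finite tensor subcategory of $\mathcal{B}$: it is abelian, closed under subquotients, tensor products and duals, and finite because $\mathcal{B}$ is. Being a full tensor subcategory of the braided category $\mathcal{B}$, it inherits a braiding, for which the inclusion $H$ is a braided embedding; and $G$ is then a surjective braided tensor functor, since $HG=F$ is braided and $H$ is fully faithful. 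As $H$ is already an embedding, it suffices to prove that $G$ is an equivalence.

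Let $I$ be a right adjoint of $G$ and set $A:=I(\mathbbm{1}_{\mathcal{A}_1})$. Then $A$ is an algebra in $\mathcal{A}$ (as $I$ is lax monoidal), it is commutative because $G$ is braided, and it is connected since $\mathrm{Hom}_{\mathcal{A}}(\mathbbm{1},A)\cong\mathrm{Hom}_{\mathcal{A}_1}(G(\mathbbm{1}),\mathbbm{1})=\mathbbm{k}$. The structural input, which is the counterpart of the analysis in \cite[Section 3]{DMNO}, is that, because $G$ is a \emph{surjective braided} tensor functor onto a braided category, $A$ is a transparent étale algebra: more precisely $A\in\mathcal{Z}_{(2)}(\mathcal{A})$, the category $\mathrm{Mod}_{\mathcal{A}}(A)$ of $A$-modules in $\mathcal{A}$ is again a finite braided tensor category, and $G$ is identified, under a braided equivalence $\mathcal{A}_1\simeq\mathrm{Mod}_{\mathcal{A}}(A)$, with the free-module functor $X\mapsto X\otimes A$. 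The transparency is forced here: a free module $X\otimes A$ is local precisely when $\beta^2_{X,A}=\mathrm{id}$, and every $A$-module, being a quotient of a free one, must be local for $\mathrm{Mod}_{\mathcal{A}}(A)$ to carry a braiding restricting that of $\mathcal{B}$. Since $A$ lies in the finite symmetric tensor category $\mathcal{E}:=\mathcal{Z}_{(2)}(\mathcal{A})$ as a connected étale algebra, $\mathrm{Mod}_{\mathcal{E}}(A)$ is a finite symmetric tensor category and the free-module functor $G_0\colon\mathcal{E}\to\mathrm{Mod}_{\mathcal{E}}(A)$ is a surjective symmetric tensor functor.

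Now incompressibility of $\mathcal{E}$ enters: the tensor functor $G_0$ out of $\mathcal{E}$ must be an embedding, and a tensor functor between finite tensor categories that is both an embedding and surjective is an equivalence (see \cite[Chapter 6]{EGNO}). The right adjoint of the free-module functor $G_0$ is the forgetful functor $U$, which is therefore a quasi-inverse of $G_0$; hence $A=U(G_0(\mathbbm{1}))\cong\mathbbm{1}_{\mathcal{E}}$, and since $A$ is connected its algebra structure must be the trivial one, so $A\cong\mathbbm{1}_{\mathcal{A}}$. From $A=I(\mathbbm{1}_{\mathcal{A}_1})\cong\mathbbm{1}_{\mathcal{A}}$ and rigidity we obtain natural isomorphisms $\mathrm{Hom}_{\mathcal{A}}(X,Y)\cong\mathrm{Hom}_{\mathcal{A}}(X\otimes Y^{*},\mathbbm{1})\cong\mathrm{Hom}_{\mathcal{A}_1}(G(X)\otimes G(Y)^{*},\mathbbm{1})\cong\mathrm{Hom}_{\mathcal{A}_1}(G(X),G(Y))$, so $G$ is fully faithful; being also surjective, $G$ is an equivalence, and $F=H\circ G$ is an embedding.

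I expect the main obstacle to be the structural step of the second paragraph: importing from \cite{DMNO}, where everything is semisimple, the fact that a surjective braided tensor functor between finite braided tensor categories arises as the free-module functor for a transparent connected étale algebra — in particular that $A=I(\mathbbm{1}_{\mathcal{A}_1})$ is separable, so that $\mathrm{Mod}_{\mathcal{A}}(A)$ and $\mathrm{Mod}_{\mathcal{E}}(A)$ are genuine finite tensor categories, and that it lies in $\mathcal{Z}_{(2)}(\mathcal{A})$. The remaining ingredients — the image factorisation, the elementary properties of $A$, and the extraction of $A\cong\mathbbm{1}$ from incompressibility — are routine.
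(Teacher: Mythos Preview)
Your proposal is correct and follows essentially the same route as the paper: factor $F$ into surjective followed by fully faithful, set $A=I(\mathbbm{1})$, show $A$ is a connected commutative algebra lying in $\mathcal{E}=\mathcal{Z}_{(2)}(\mathcal{A})$, apply incompressibility of $\mathcal{E}$ to the free-module functor $\mathcal{E}\to\mathbf{Mod}_{\mathcal{E}}(A)$, and deduce $A\cong\mathbbm{1}$.

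Your self-identified obstacle --- that $A$ need be \emph{\'etale} (separable) --- is a red herring, and the paper does not invoke it. What is actually needed is only that $\mathbf{Mod}_{\mathcal{E}}(A)$ be a tensor category so that incompressibility applies. This holds without separability: since $G$ is surjective, $A$ is an exact algebra in $\mathcal{A}$ and $\mathbf{Mod}_{\mathcal{A}}(A)\simeq\mathcal{A}_1$ is already a finite tensor category (this is \cite[Proposition~8.8.8]{EGNO}, which the paper cites); then $\mathbf{Mod}_{\mathcal{E}}(A)$ is simply the full tensor subcategory of $\mathbf{Mod}_{\mathcal{A}}(A)$ on modules whose underlying object lies in the tensor subcategory $\mathcal{E}\subset\mathcal{A}$. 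The paper's deduction of $A\cong\mathbbm{1}$ from the equivalence is via the Hom-computation $\mathrm{Hom}_{\mathcal{E}}(P,\mathbbm{1})\cong\mathrm{Hom}_{\mathcal{E}}(P,A)$ for $P$ ranging over indecomposable projectives of $\mathcal{E}$, which is equivalent to your forgetful-functor argument.
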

\begin{proof}
By the results of \cite[Section 6.3]{EGNO}, the tensor functor $F$ may be factored as a surjective tensor functor followed by an injective one. It is therefore enough to consider the case when $F$ is in addition surjective. Because the functor $F$ is exact, it has a right adjoint $F^*$. Moreover, $F^*$ inherits a lax braided monoidal structure. We write $A$ for the connected commutative algebra in $\mathcal{A}$ given by $A:=F^*(\mathbbm{1})$. We have $\mathbf{Mod}_{\mathcal{A}}(A)\simeq \mathcal{B}$ as braided tensor categories. This follows from \cite[Proposition 8.8.8]{EGNO} (see also \cite[Lemma 3.5]{DMNO}). Moreover, as in \cite[Proposition 3.22]{DMNO}, the commutative algebra $A$ lies in $\mathcal{Z}_{(2)}(\mathcal{A})$. In particular, we can consider the surjective symmetric tensor functor $(-)\otimes A:\mathcal{Z}_{(2)}(\mathcal{A})\rightarrow \mathbf{Mod}_{\mathcal{Z}_{(2)}(\mathcal{A})}(A)$. As $A$ is connected, $\mathbf{Mod}_{\mathcal{Z}_{(2)}(\mathcal{A})}(A)$ has simple monoidal unit, and it therefore follows from the incompressibility of $\mathcal{Z}_{(2)}(\mathcal{A})$ that $(-)\otimes A:\mathcal{Z}_{(2)}(\mathcal{A})\rightarrow \mathbf{Mod}_{\mathcal{Z}_{(2)}(\mathcal{A})}(A)$ is an equivalence. This implies in particular that for every objects $X,Y$ of $\mathcal{Z}_{(2)}(\mathcal{A})$, we have $$Hom_{\mathcal{Z}_{(2)}(\mathcal{A})}(X,Y)\cong Hom_A(X\otimes A,Y\otimes A)\cong Hom_{\mathcal{Z}_{(2)}(\mathcal{A})}(X,Y\otimes A).$$ Taking $Y=\mathbbm{1}$, and letting $X$ vary amongst the indecomposable projective objects of $\mathcal{Z}_{(2)}(\mathcal{A})$, it follows that $A=\mathbbm{1}$, so that $F$ is an equivalence.
\end{proof}

\begin{Corollary}
The braided tensor category $\Ver^{\zeta^{1/2}}_{p^{(n)}}$ is incompressible.
\end{Corollary}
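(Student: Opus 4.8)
The plan is to deduce the statement from the preceding proposition once we have the symmetric center of $\Ver^{\zeta^{1/2}}_{p^{(n)}}$ at hand. So let $F\colon\Ver^{\zeta^{1/2}}_{p^{(n)}}\to\mathcal{B}$ be an arbitrary braided tensor functor; we must show that $F$ is an embedding. First I would replace $\mathcal{B}$ by the tensor subcategory generated by its image, which is again a finite tensor category by \cite{EGNO}, so that we may assume $\mathcal{B}$ is a finite braided tensor category. The preceding proposition then reduces the problem to verifying that the symmetric center $\mathcal{Z}_{(2)}\big(\Ver^{\zeta^{1/2}}_{p^{(n)}}\big)$ is incompressible as a symmetric tensor category.

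Next I would identify this symmetric center using the results already established. In the semisimple case $p^{(n)}=p^{(1)}$ it is $\mathrm{Vec}$ when $N$ is even and $\mathrm{sVec}$ when $N$ is odd, both of which are incompressible by \cite{EO}. Otherwise, combining the theorem and the proposition computing the symmetric center — together with remark \ref{rem:standardVerlinde} in the case $\zeta=\pm1$ — shows that $\mathcal{Z}_{(2)}\big(\Ver^{\zeta^{1/2}}_{p^{(n)}}\big)$ is equivalent, as a symmetric tensor category, to a symmetric Verlinde category $\Ver^{\sigma^{1/2}}_{p^{m}}$ when $N$ is odd and to its even subcategory $\Ver^{\sigma^{1/2},+}_{p^{m}}$ when $N$ is even, for a suitable $m\le n$ and with $\sigma=\pm1$. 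The key point I would emphasise is that the underlying tensor category is then $\Ver_{p^m}$, respectively $\Ver^{+}_{p^m}$, independently of the chosen square root $\sigma^{1/2}$; and since a symmetric tensor functor is in particular a braided, a fortiori a plain, tensor functor, it is enough to invoke the incompressibility of $\Ver_{p^m}$ and $\Ver^{+}_{p^m}$. This is \cite{BEO} together with \cite[Theorem 4.71]{BEO} when $p>2$, and \cite[Theorem 4.4]{BE} when $p=2$; in the latter case every element of $\mathbbm{k}^{\times}$ has odd order, so $N$ is automatically odd and only the subcase $\Ver_{p^m}$ arises. (Proposition \ref{prop:incompressibility} provides an alternative input in the cases it covers.)

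I expect the only delicate part to be the bookkeeping in the second paragraph: one has to confirm that the symmetric centers produced by the earlier computations are genuinely exhausted by $\mathrm{Vec}$, $\mathrm{sVec}$, $\Ver_{p^m}$ and $\Ver^{+}_{p^m}$, so that one never ends up needing the incompressibility of some other braided (or mixed) Verlinde category, and that the square root $\sigma^{1/2}$ occurring really is irrelevant to the conclusion. Beyond that there is no serious obstacle: all of the substantive content has been moved into the preceding proposition and into the identification of the symmetric center, and the corollary is a formal consequence.
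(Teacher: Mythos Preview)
Your approach is exactly the paper's: apply the preceding proposition and appeal to the incompressibility results of \cite{BEO,EO} for the symmetric center. The paper gives no further argument, so in outline you match it precisely, and your extra step of passing to a finite target is a sensible precaution the paper leaves implicit.

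There is, however, one genuine slip in your justification. You argue that since a symmetric tensor functor is in particular a plain tensor functor, it suffices to invoke \emph{plain} incompressibility of $\Ver_{p^m}$ and $\Ver^{+}_{p^m}$. For $\Ver^{+}_{p^m}$ this is fine (and is exactly \cite[Theorem 4.71]{BEO}), and for $p=2$ it is fine as well. But for $p>2$ the full $\Ver_{p^m}$ is \emph{not} incompressible as a plain tensor category: by the decomposition $\Ver_{p^m}\simeq \Ver^{+}_{p^m}\boxtimes \mathrm{Vec}(\mathbb{Z}/2)$ recorded in the paper, applying the fiber functor on the second factor gives a non-full tensor functor $\Ver_{p^m}\to \Ver^{+}_{p^m}$. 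So in the cases where the symmetric center is the full $\Ver_{p^{m}}^{\sigma^{1/2}}$ (namely $N$ odd, or $\zeta=+1$), you cannot bypass the symmetric structure; you must invoke \emph{symmetric} incompressibility from \cite{BEO}, and that result is for the standard structure $\sigma^{1/2}=-1$. The concern you flag about $\sigma^{1/2}$ is therefore not mere bookkeeping but the actual content of the verification: one has to check that the $\sigma^{1/2}=\zeta^{(\ell-2)\ell/2}$ produced by the quantum Frobenius--Lusztig embedding yields an incompressible symmetric structure on the center. The paper, like you, leaves this implicit.
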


\begin{Corollary}
The braided tensor category $\Ver^{\zeta^{1/2},+}_{p^{(n)}}$ is incompressible precisely if $\ell\not\equiv 2\mod 4$.
\end{Corollary}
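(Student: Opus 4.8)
The plan is to read off the statement from the previous proposition together with the computation of the symmetric center $\mathcal{Z}_{(2)}$ of $\Ver^{\zeta^{1/2},+}_{p^{(n)}}$ carried out above. Recall that, by that proposition, $\Ver^{\zeta^{1/2},+}_{p^{(n)}}$ is incompressible as soon as $\mathcal{Z}_{(2)}(\Ver^{\zeta^{1/2},+}_{p^{(n)}})$ is incompressible as a symmetric tensor category, and that, for $n\geq 2$, this symmetric center is $\Ver^{\sigma^{1/2},+}_{p^{n-1}}$ if $N$ is odd and $\Ver^{\sigma^{1/2}}_{p^{n-1}}$ if $N$ is even; for $n=1$ the category $\Ver^{\zeta^{1/2},+}_{p^{(n)}}$ is semisimple and, by remark \ref{rem:fermion} and corollary \ref{cor:fermion}, modular, hence incompressible. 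We may therefore assume $n\geq 2$, and also $p>2$: if $p=2$ and $\zeta=\pm1$ the statement is \cite[Theorem 4.4]{BE}, while if $p=2$ and $\zeta\neq\pm1$ then $p\nmid N$ forces $N$ odd, so $\ell=N$ is odd and $\ell\not\equiv 2\bmod 4$, a case covered by the sufficiency argument below.

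First I would prove sufficiency, so assume $\ell\not\equiv 2\bmod 4$, i.e.\ $\ell$ is odd or $\ell\equiv 0\bmod 4$. Reducing the exponent in $\sigma^{1/2}=\zeta^{(\ell-2)\ell/2}$ modulo the order of $\zeta$ shows: when $N$ is odd one has $\sigma=+1$ and $\sigma^{1/2}=+1$, so $\mathcal{Z}_{(2)}(\Ver^{\zeta^{1/2},+}_{p^{(n)}})$ is $\Ver^{+1,+1}_{p^{n-1},+}$, which agrees with the standard $\Ver^{+}_{p^{n-1}}$ since the two square roots of $+1$ induce the same braiding and spherical structure on objects of even highest weight; when $N$ is even one has $\sigma=+1$, and $\ell\equiv 0\bmod 4$ forces $\sigma^{1/2}=-1$, so $\mathcal{Z}_{(2)}(\Ver^{\zeta^{1/2},+}_{p^{(n)}})$ is $\Ver^{+1,-1}_{p^{n-1}}$, the standard symmetric Verlinde category $\Ver_{p^{n-1}}$. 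In either case the symmetric center is one of $\mathrm{Vec}$, $\Ver^{+}_{p^{n-1}}$, $\Ver_{p^{n-1}}$, each of which is incompressible as a symmetric tensor category by \cite[Theorem 4.71]{BEO} (together with the intro of \cite{BEO} for the full $\Ver_{p^{n-1}}$). Hence $\Ver^{\zeta^{1/2},+}_{p^{(n)}}$ is incompressible by the previous proposition.

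Next I would prove that $\ell\equiv 2\bmod 4$ forces compressibility; note that then $\ell$ is even (so the reduction $p>2$ is genuine), $p^{(n)}$ is even, and the same exponent computation gives $\sigma=+1$ and $\sigma^{1/2}=\zeta^{(\ell-2)\ell/2}=+1$. By corollary \ref{cor:fermion}, $\Ver^{\zeta^{1/2}}_{p^{(n)}}$ contains a non-trivial invertible object $\mathrm{G}$ with projective cover $\mathbb{T}_{\zeta}(p^{(n)}-2)$; since $p^{(n)}-2$ is even, $\mathrm{G}$ lies in $\Ver^{\zeta^{1/2},+}_{p^{(n)}}$, and, being $\mathbbm{q}\mathbb{FL}$ of the non-trivial invertible object of $\Ver^{\sigma^{1/2}}_{p^{n-1}}$, it lies in the symmetric center. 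Moreover $\mathrm{G}$ is self-dual of order two with $\theta_{\mathrm{G}}=\sigma^{1/2}=+1$, and for an invertible object the self-braiding equals the twist, so $\beta_{\mathrm{G},\mathrm{G}}=+\mathrm{Id}$. Therefore $A:=\mathbbm{1}\oplus\mathrm{G}$, with its evident unit and the multiplication induced by $\mathrm{G}\otimes\mathrm{G}\xrightarrow{\sim}\mathbbm{1}$, is a connected commutative algebra lying in $\mathcal{Z}_{(2)}(\Ver^{\zeta^{1/2},+}_{p^{(n)}})$; as $p\nmid 2$ it is separable, so $\mathbf{Mod}_{\Ver^{\zeta^{1/2},+}_{p^{(n)}}}(A)$ is a finite braided tensor category and the free-module functor $\Ver^{\zeta^{1/2},+}_{p^{(n)}}\to\mathbf{Mod}_{\Ver^{\zeta^{1/2},+}_{p^{(n)}}}(A)$ is a surjective braided tensor functor. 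It fails to be full, since $Hom(\mathbbm{1},\mathrm{G})=0$ while $Hom_{A}(\mathbbm{1}\otimes A,\mathrm{G}\otimes A)\cong Hom(\mathbbm{1},\mathrm{G}\otimes A)\cong Hom(\mathbbm{1},\mathbbm{1})\neq 0$, and hence is not an embedding; so $\Ver^{\zeta^{1/2},+}_{p^{(n)}}$ is not incompressible.

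The part I expect to be most delicate is the bookkeeping: determining exactly which square root $\sigma^{1/2}=\zeta^{(\ell-2)\ell/2}$ is (and hence which of $\mathrm{Vec}$, $\Ver^{+}_{p^{n-1}}$, $\Ver_{p^{n-1}}$ the symmetric center is) in each residue class of $\ell$ modulo $4$, and verifying in the last paragraph that $A=\mathbbm{1}\oplus\mathrm{G}$ is genuinely a separable connected commutative algebra inside $\mathcal{Z}_{(2)}(\Ver^{\zeta^{1/2},+}_{p^{(n)}})$, so that its category of modules is a legitimate finite braided tensor category; the remaining steps are formal consequences of the previous proposition and of \cite{BEO,BE}.
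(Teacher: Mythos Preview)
Your approach matches the paper's: sufficiency via the previous proposition applied to the computed symmetric center, and necessity by producing a non-trivial connected commutative algebra $A=\mathbbm{1}\oplus\mathrm{G}$ in the symmetric center and passing to $A$-modules. Your necessity argument is in fact more explicit than the paper's, which simply notes that the $\mathrm{Vec}^{\sigma^{1/2}}(\mathbb{Z}/2)$ factor of the symmetric center fails to be incompressible exactly when $\sigma^{1/2}=(-1)^{\ell}$, i.e.\ when $\ell\equiv 2\bmod 4$.

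There is, however, a genuine gap in your sufficiency case analysis. When $N$ is even you assert that $\sigma=+1$ and then only treat $\ell\equiv 0\bmod 4$. But if $N\equiv 2\bmod 4$ then $\ell=N/2$ is odd, and in that case $\sigma=(-1)^{N+\ell}=-1$, not $+1$; moreover $\sigma^{1/2}=\zeta^{(\ell-2)\ell/2}$ is then a primitive fourth root of unity. So the subcase ``$N$ even, $\ell$ odd, $p>2$'' is missing from your argument entirely, and your blanket claim $\sigma=+1$ for $N$ even is false there. The paper handles $N$ even uniformly by decomposing the symmetric center as $\Ver^{\sigma^{1/2},+}_{p^{n-1}}\boxtimes\mathrm{Vec}^{\sigma^{1/2}}(\mathbb{Z}/2)$ and checking that the pointed factor is incompressible precisely when $\sigma^{1/2}\neq(-1)^{\ell}$; this covers the odd-$\ell$ subcase automatically. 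You should either adopt that decomposition argument or add the missing case by hand.

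Two minor points. First, your claim that $\sigma^{1/2}=+1$ when $N$ is odd depends on the choice of $\zeta^{1/2}$ (the other choice gives $-1$); your parenthetical remark that both square roots give the same braiding on $\Ver^{+}$ is correct and rescues the argument, but the sentence as written is imprecise. Second, the identity ``for an invertible object the self-braiding equals the twist'' is not correct in general; the relation is $\beta_{\mathrm{G},\mathrm{G}}=\theta_{\mathrm{G}}\cdot\dim(\mathrm{G})^{-1}$. In your situation $\dim(\mathrm{G})=\sigma=+1$, so the conclusion $\beta_{\mathrm{G},\mathrm{G}}=+1$ is right, but the justification should be corrected.
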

\begin{proof}
On the one hand, when $N$ is odd, the symmetric center of $\Ver^{\zeta^{1/2},+}_{p^{(n)}}$ is $\Ver^{\sigma,+}_{p^{n-1}}$, which is incompressible. On the other hand, when $N$ is even, the symmetric center of $\Ver^{\zeta^{1/2},+}_{p^{(n)}}$ is $\Ver^{\sigma^{1/2}}_{p^{n-1}}$. If $p>2$, we have $\Ver^{\sigma^{1/2}}_{p^{n-1}}\cong \Ver^{\sigma^{1/2},+}_{p^{n-1}}\boxtimes \mathrm{Vec}^{\sigma^{1/2}}(\mathbb{Z}/2)$. The braided tensor category $\mathrm{Vec}^{\sigma^{1/2}}(\mathbb{Z}/2)$ is incompressible if and only if $\sigma^{1/2} = \zeta^{(\ell-2)\ell/2}\neq (-1)^{\ell}$. But, the equality $\zeta^{(\ell-2)\ell/2}=(-1)^{\ell}$ holds exactly when $\ell\equiv 2\mod 4$.
\end{proof}

\subsection{Higher Algebraic Properties}

In characteristic zero, one of the many applications of the semisimple Verlinde categories is that they give interesting classes in the (quantum) Witt group $\mathcal{W}itt$ of non-degenerate braided fusion categories \cite{DMNO}. More precisely, the Witt group $\mathcal{W}itt$ is the quotient of the monoid of equivalence classes of non-degenerate braided fusion categories by the Drinfeld centers, and the Verlinde categories are not Drinfeld centers as can be seen for instance from their central charges. In fact, as explained in \cite{SY}, the simplicity hypothesis may be removed. The above construction can also be generalized in a different direction:\ To any symmetric fusion category $\mathcal{E}$, one may associate a Witt group $\mathcal{W}itt(\mathcal{E})$ of braided fusion categories with symmetric center identified with $\mathcal{E}$ as in \cite{DNO}. Removing the characteristic zero and semisimplicity hypotheses, we find that, to any finite symmetric tensor category $\mathcal{E}$, one may associate a Witt group $\mathcal{W}itt(\mathcal{E})$ of finite braided tensor categories with symmetric center identified with $\mathcal{E}$. More precisely, given a finite tensor category equipped with a braided embedding $\mathcal{E}\rightarrow \mathcal{Z}(\mathcal{C})$, its relative Drinfeld center is $\mathcal{Z}(\mathcal{C},\mathcal{E})$, the centralizer of $\mathcal{E}$ in $\mathcal{Z}(\mathcal{C})$. Then, a finite braided tensor category $\mathcal{B}$ with symmetric center $\mathcal{E}$ gives the trivial class in $\mathcal{W}itt(\mathcal{E})$ if and only if it is a relative Drinfeld center. The above discussion provides motivation for the next result.

\begin{Proposition}
Let $\zeta \neq \pm 1$ be a fixed root of unity of odd order with square root $\zeta^{1/2}$, and assume $n\geq 2$. If $p$ is odd, then $\Ver^{\zeta^{1/2},+}_{p^{(n)}}$ is not a relative center.
\end{Proposition}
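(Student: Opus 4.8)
The plan is to pin down the symmetric center of $\mathcal{B}:=\Ver^{\zeta^{1/2},+}_{p^{(n)}}$ and then obstruct the relative Lagrangian algebra that a relative center would have to carry, by a $p$-adic parity computation on Frobenius--Perron dimensions.

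First, I would identify $\mathcal{Z}_{(2)}(\mathcal{B})$. Since $\zeta$ has odd order $N$, we have $\ell=N$ and $\sigma=(-1)^{\ell+N}=1$, so the Corollary above on the symmetric center identifies $\mathcal{Z}_{(2)}(\mathcal{B})$ with $\Ver^{\sigma^{1/2},+}_{p^{n-1}}$; whatever the square root $\sigma^{1/2}$, its underlying tensor category is the even Verlinde category $\mathcal{E}:=\Ver^{+}_{p^{n-1}}$, which is incompressible by Proposition~\ref{prop:incompressibility} (taken with the trivial root of unity), equivalently by \cite[Theorem~4.71]{BEO}, using that $p$ is odd. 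So suppose, for contradiction, that $\mathcal{B}$ gives the trivial class in $\Witt(\mathcal{E})$, that is, $\mathcal{B}\simeq\mathcal{Z}(\mathcal{C},\mathcal{E})$ for some finite tensor category $\mathcal{C}$ with a braided embedding $\mathcal{E}\hookrightarrow\mathcal{Z}(\mathcal{C})$.

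The key input is the relative analogue, valid in the finite tensor category setting, of the statement that a Drinfeld center contains a Lagrangian algebra: generalizing \cite{DMNO,DNO} via the module-category machinery of \cite[Section~8]{EGNO} already used in the proposition preceding this subsection, a relative center $\mathcal{Z}(\mathcal{C},\mathcal{E})$ contains a connected étale (exact commutative) algebra $A$ whose category of local modules $\mathbf{Mod}^{\mathrm{loc}}_{\mathcal{Z}(\mathcal{C},\mathcal{E})}(A)$ is equivalent to $\mathcal{E}$ as a braided tensor category; here $A$ is the image of the canonical Lagrangian algebra of $\mathcal{Z}(\mathcal{C})$. It follows that $\FPdim(A)^{2}=\FPdim(\mathcal{B})/\FPdim(\mathcal{E})$. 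Crucially $A$ is an object of $\mathcal{B}$ itself, so $\FPdim(A)$ lies in the subring of $\mathbb{R}$ generated by the Frobenius--Perron dimensions of the objects of $\mathcal{B}$; by Corollary~\ref{cor:FPdimsimples}, and since $p^{(n)}=Np^{n-1}$ is odd, this subring is contained in the ring of integers $\mathcal{O}_{K}$ of the totally real field $K:=\mathbb{Q}(\zeta_{p^{(n)}})^{+}$.

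Finally, I would compute the two dimensions. By Proposition~\ref{prop:FPdim} and Corollary~\ref{cor:fermion} (and its analogue for $\Ver^{+}_{p^{n-1}}$, both applicable because $\ell=N$ is odd), $\FPdim(\mathcal{B})=p^{(n)}/|1-\omega_{p^{(n)}}|^{2}$ and $\FPdim(\mathcal{E})=p^{n-1}/|1-\omega_{p^{n-1}}|^{2}$, where $\omega_{m}=e^{2\pi i/m}$ and $|1-\omega_{m}|^{2}=4\sin^{2}(\pi/m)$. Now $p^{(n)}=Np^{n-1}$ is not a prime power, so $1-\omega_{p^{(n)}}$ is a unit of $\mathcal{O}_{K}$; whereas $p^{n-1}$ is a prime power, so $N_{\mathbb{Q}(\omega_{p^{n-1}})^{+}/\mathbb{Q}}(|1-\omega_{p^{n-1}}|^{2})=\Phi_{p^{n-1}}(1)=p$, which means $|1-\omega_{p^{n-1}}|^{2}$ generates the (unique) prime above $p$ in $\mathbb{Q}(\omega_{p^{n-1}})^{+}$, hence --- $p$ being unramified in $K/\mathbb{Q}(\omega_{p^{n-1}})^{+}$ --- the squarefree ideal $\prod_{\mathfrak{P}\mid p}\mathfrak{P}$ of $\mathcal{O}_{K}$. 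Since $N$ is prime to $p$, it follows that $\FPdim(\mathcal{B})/\FPdim(\mathcal{E})=N\,|1-\omega_{p^{n-1}}|^{2}/|1-\omega_{p^{(n)}}|^{2}$ has $\mathfrak{P}$-adic valuation exactly $1$ at each prime $\mathfrak{P}$ of $\mathcal{O}_{K}$ over $p$, which is odd; this is incompatible with $\FPdim(\mathcal{B})/\FPdim(\mathcal{E})=\FPdim(A)^{2}$ and $\FPdim(A)\in\mathcal{O}_{K}$, giving the desired contradiction. The principal difficulty in this plan is the third paragraph --- setting up the relative-Lagrangian correspondence for finite, non-semisimple tensor categories and verifying that the algebra $A$ genuinely lies in $\mathcal{B}$, which is precisely what confines $\FPdim(A)$ to $\mathcal{O}_{K}$; a cruder argument based only on $\FPdim(\mathcal{B})\FPdim(\mathcal{E})=\FPdim(\mathcal{C})^{2}$ fails, since $\FPdim(\mathcal{C})$ is only known to be an algebraic integer and a square root of $\FPdim(\mathcal{B})/\FPdim(\mathcal{E})$ could a priori live in a quadratic extension of $K$ ramified over $p$. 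The ramification bookkeeping for $|1-\omega_{p^{n-1}}|^{2}$ is routine.
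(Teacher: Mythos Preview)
Your approach is substantially different from the paper's and carries a real gap you yourself flag; the paper's argument is both simpler and complete.

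The paper does not invoke a relative Lagrangian algebra at all. Instead it uses exactly the ``cruder'' equality $\FPdim(\mathcal{B})\cdot\FPdim(\mathcal{E})=\FPdim(\mathcal{C})^2$ that you set aside, but supplements it with one extra input: incompressibility of $\mathcal{B}=\Ver^{\zeta^{1/2},+}_{p^{(n)}}$ as a \emph{plain} tensor category (Proposition~\ref{prop:incompressibility}). If $\mathcal{B}\simeq\mathcal{Z}(\mathcal{C},\mathcal{E})$, then $\mathcal{B}\hookrightarrow\mathcal{Z}(\mathcal{C})$ and composing with the forgetful functor $\mathcal{Z}(\mathcal{C})\to\mathcal{C}$ gives a tensor functor $\mathcal{B}\to\mathcal{C}$, which by incompressibility is an embedding; hence $\FPdim(\mathcal{C})\geq\FPdim(\mathcal{B})$. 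Combined with the centralizer formula $\FPdim(\mathcal{Z}(\mathcal{C},\mathcal{E}))=\FPdim(\mathcal{C})^2/\FPdim(\mathcal{E})$ this yields
\[
\FPdim(\mathcal{B})\;=\;\frac{\FPdim(\mathcal{C})^2}{\FPdim(\mathcal{E})}\;\geq\;\frac{\FPdim(\mathcal{B})^2}{\FPdim(\mathcal{E})}\;>\;\FPdim(\mathcal{B}),
\]
the last strict inequality because $\FPdim(\mathcal{B})>\FPdim(\mathcal{E})$ (immediate from Proposition~\ref{prop:FPdim} since $p^{(n)}>p^{n-1}$). That is the whole proof.

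By contrast, your route needs the existence of a connected \'etale algebra $A$ in $\mathcal{B}$ with $\mathbf{Mod}^{\mathrm{loc}}_{\mathcal{B}}(A)\simeq\mathcal{E}$ and the dimension identity $\FPdim(A)^2=\FPdim(\mathcal{B})/\FPdim(\mathcal{E})$, in the non-semisimple finite tensor setting. Neither the existence of such an $A$ inside the relative center nor the local-modules dimension formula is available off the shelf here; the references you cite treat the fusion case. Your $\mathfrak{P}$-adic valuation computation is correct once you have $\FPdim(A)\in\mathcal{O}_K$, but getting $A\in\mathcal{B}$ with the stated properties is precisely the unresolved step. The paper's argument sidesteps all of this: no algebra $A$, no ramification bookkeeping, just a size inequality forced by incompressibility. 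The moral is that the ``cruder argument'' does not fail once you realize incompressibility bounds $\FPdim(\mathcal{C})$ from below, not merely that it is an algebraic integer.
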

\begin{proof}
This follows from a Frobenius-Perron argument. Let $\mathcal{C}$ be a finite tensor category equipped with a braided tensor functor $$\Ver^{\zeta^{1/2},+}_{p^{(n)}}\rightarrow \mathcal{Z}(\mathcal{C}).$$ By incompressibility, this functor is necessarily an embedding. Moreover, given that we have $\mathcal{Z}_{(2)}(\Ver^{\zeta^{1/2},+}_{p^{(n)}})\simeq\Ver^{\sigma^{1/2},+}_{p^{n-1}}$, we also get a braided embedding $\Ver^{\sigma^{1/2},+}_{p^{n-1}}\rightarrow \mathcal{Z}(\mathcal{C})$. By \cite[Theorem 4.9]{Shi} and incompressibility of $\Ver^{\zeta^{1/2}, +}_{p^{(n)}}$ as a plain tensor category, we find $$\FPdim(\mathcal{Z}(\mathcal{C}))=\FPdim(\mathcal{C})^2\geq \FPdim(\Ver^{\zeta^{1/2},+}_{p^{(n)}})^2.$$ On the other hand, it follows from \cite[Theorem 4.9]{Shi} and \cite[Proposition 6.3.3]{EGNO} that $$\FPdim(\mathcal{Z}(\mathcal{C}, \Ver^{\sigma^{1/2},+}_{p^{n-1}}))=\frac{\FPdim(\mathcal{Z}(\mathcal{C}))}{\FPdim(\Ver^{\sigma^{1/2},+}_{p^{n-1}}))}\geq \frac{\FPdim(\Ver^{\zeta^{1/2},+}_{p^{(n)}})^2}{\FPdim(\Ver^{\sigma^{1/2},+}_{p^{n-1}}))}.$$ But, for instance by appealing to proposition \ref{prop:FPdim}, we find that $$\FPdim(\mathcal{Z}(\mathcal{C}, \Ver^{\sigma^{1/2},+}_{p^{n-1}}))\geq\frac{\FPdim(\Ver^{\zeta^{1/2},+}_{p^{(n)}})^2}{\FPdim(\Ver^{\sigma^{1/2},+}_{p^{n-1}}))}> \FPdim(\Ver^{\zeta^{1/2},+}_{p^{(n)}}),$$ so that the result follows.
\end{proof}

\begin{Remark}
Provided that question \ref{q:incompressibility2} has a positive answer, the above argument implies that, in the case $p=2$, $\Ver^{\zeta^{1/2}}_{p^{(n)}}$ is not a relative center.
\end{Remark}

\begin{Remark}
In characteristic zero, the semisimple Verlinde categories provide many interesting classes in the Witt groups $\mathcal{W}itt$ and $\mathcal{W}itt(\mathrm{sVec})$ (see \cite[Section 6.4]{DMNO} and \cite[Section 5.5]{DNO} respectively). Moreover, the relations between them are completely understood \cite[Theorem 5.21]{DNO}. By analogy, in positive characteristic, we expect that the mixed Verlinde categories $\Ver^{\zeta^{1/2}}_{p^{(n+1)}}$ provide interesting classes in the Witt groups $\mathcal{W}itt(\Ver^{\sigma^{1/2}}_{p^{n}})$ and $\mathcal{W}itt(\Ver^{\sigma^{1/2},+}_{p^{n}})$, and one may try to understand explicitly the relations between them. We will come back to this question in future work.
\end{Remark}

\bibliography{bibliography.bib}

\end{document}